 \DeclareSymbolFont{AMSb}{U}{msb}{m}{n}
 \DeclareSymbolFontAlphabet{\mathbb}{AMSb}
\let\SSec\S
\newcommand\lopen{\mathopen{]}}
\newcommand\ropen{\mathclose{[}}
\newcommand\rmd{\mathrm{d}}
\newcommand\rme{\mathrm{e}}
\newcommand\rmi{\mathrm{i}}
\newcommand\cI{\mathcal{I}}
\newcommand\cK{\mathcal{K}}
\newcommand\cO{\mathcal{O}}
\newcommand\cS{\mathcal{S}}
\newcommand\cX{\mathcal{X}}
\newcommand\bI{\mathbf{I}}
\newcommand\bff{\mathbf{f}}
\renewcommand\AA{\mathbb{A}}
\newcommand\CC{\mathbb{C}}
\newcommand\QQ{\mathbb{Q}}
\newcommand\RR{\mathbb{R}}
\newcommand\ZZ{\mathbb{Z}}
\newcommand\mf[1]{\mathfrak{#1}}
\newcommand\A{\mathsf{A}}
\newcommand\B{\mathsf{B}}
\newcommand\G{\mathsf{G}}
\renewcommand\H{\mathsf{H}}
\newcommand\M{\mathsf{M}}
\newcommand\N{\mathsf{N}}
\renewcommand\S{\mathsf{S}}
\newcommand\T{\mathsf{T}}
\newcommand\Z{\mathsf{Z}}
\newcommand\SL{\mathsf{SL}}
\newcommand\GL{\mathsf{GL}}
\newcommand\triv{\mathbbm{1}}
\newcommand\dpii{2\uppi\rmi}
\newcommand\bs{\backslash}
\newcommand\diff{\partial}
\renewcommand\Re{\mathop{\mathrm{Re}}}
\DeclareMathOperator\Tr{Tr}
\DeclareMathOperator\sgn{sgn}
\DeclareMathOperator\supp{supp}
\DeclareMathOperator\res{res}
\DeclareMathOperator\fp{f.p.}
\DeclareMathOperator\ord{ord}
\DeclareMathOperator\rad{rad}
\DeclareMathOperator\diam{diam}
\DeclareMathOperator\Res{Res}
\newcommand\fin{\mathrm{fin}}
\newcommand\id{\mathrm{id}}
\newcommand\el{\mathrm{ell}}
\newcommand\hyp{\mathrm{hyp}}
\newcommand\unip{\mathrm{unip}}
\newcommand\spec{\mathrm{spec}}
\newcommand\cusp{\mathrm{cusp}}
\newcommand\temp{\mathrm{temp}}
\newcommand\spl{\mathrm{spl}}
\newcommand\reg{\mathrm{reg}}
\newcommand\Kl{\mathrm{Kl}}
\DeclareMathOperator\vol{vol}
\DeclareMathOperator\orb{orb}
\renewcommand\leq{\leqslant}
\renewcommand\geq{\geqslant}
\let\oldsslash\sslash
\renewcommand\sslash{{\oldsslash}}
\newcommand\legendresymbol[2]{\genfrac{(}{)}{}{}{#1}{#2}}
\newcommand\sstirling[2]{\genfrac{\{}{\}}{0pt}{}{#1}{#2}}
\def\subsection{\@startsection{subsection}{2}%
  \z@{3pt\@plus0pt}{-.5em}%
  {\normalfont\bfseries}}
\def\subsubsection{\@startsection{subsubsection}{2}%
  \z@{0pt\@plus0pt}{-.5em}%
  {\normalfont\itshape}}
\def\@seccntformat#1{%
  \protect\textup{\protect\@secnumfont
    \ifnum\pdfstrcmp{subsection}{#1}=0 \bfseries\fi
    \csname the#1\endcsname
    \protect\@secnumpunct
  }%
}  
\newtheoremstyle{THEOREM}
{2.5pt}
{2pt}
{\itshape}
{}
{\bfseries}
{.}
{.5em}
{\thmname{#1}\thmnumber{ #2}\thmnote{ (#3)}}
\newtheoremstyle{DEFINITION}
{2.5pt}
{2pt}
{}
{}
{\bfseries}
{.}
{.5em}
{\thmname{#1}\thmnumber{ #2}\thmnote{ (#3)}}
\newtheoremstyle{EXERCISE}
{2pt}
{2pt}
{}
{}
{\scshape}
{.}
{.5em}
{\thmname{#1}\thmnumber{ #2}\thmnote{ (#3)}}
\theoremstyle{THEOREM}
\newtheorem{theorem}{Theorem}[section]
\newtheorem{lemma}[theorem]{Lemma}
\newtheorem{proposition}[theorem]{Proposition}
\newtheorem{corollary}[theorem]{Corollary}
\theoremstyle{DEFINITION}
\theoremstyle{EXERCISE}
\newtheorem{remark}[theorem]{Remark}
\numberwithin{equation}{section}
\renewenvironment{proof}[1][\proofname]{\par
  \vspace{-6pt}
  \pushQED{\qed}
  \normalfont \topsep6\p@\@plus6\p@\relax
  \trivlist
  \item[\hskip\labelsep\rmfamily\bfseries
    #1\@addpunct{:}]\ignorespaces
}{
  \popQED\endtrivlist\@endpefalse
  \vspace{-6pt}
}
\newenvironment{insertproof}[1][\proofname]{
  \par
  \vspace{-6pt}
  \pushQED{\qed}
  \normalfont \topsep6\p@\@plus6\p@\relax
  \trivlist
  \item[\hskip\labelsep\rmfamily\bfseries
    #1\@addpunct{:}]\ignorespaces
}{
  \popQED\endtrivlist\@endpefalse
  \vspace{-6pt}
}
\begin{document}
\setlength \lineskip{3pt}
\setlength \lineskiplimit{3pt}
\setlength \parskip{1pt}
\setlength \partopsep{0pt}

\title{Beyond endoscopy for $\GL_2$ over $\QQ$ with ramification 3: contribution of the elliptic part}
\author{Yuhao Cheng}
\address{Qiuzhen College, Tsinghua University, 100084, Beijing, China}
\email{chengyuhaomath@gmail.com}
\keywords{beyond endoscopy, trace formula}
\subjclass[2020]{11F72}
\date{3 May, 2026}
\begin{abstract}
We continue to work on \emph{Beyond Endoscopy} for $\GL_2$ over $\mathbb{Q}$ with ramification at $S = \{\infty, q_1, \dots, q_r\}$ (where $2 \in S$), generalizing the final step of Altu\u{g}'s work in the unramified setting. We derive an explicit asymptotic formula for the elliptic part when summing over $n<X$ with arbitrary smooth test functions at places in $S$ for the standard representation. As a consequence, we obtain the desired limit for the simple trace formula which only occurs in the ramified case. Moreover, we prove an asymptotic formula for the traces of Hecke operators on cusp forms with arbitrary level and weight $>2$, directly generalizing Altu\u{g}'s final result. Our approach differs entirely from Altu\u{g}'s: We apply a second Poisson summation with respect to the determinant, obtaining a formula on the Hitchin-Steinberg base $\mathfrak{g}\sslash \G$. By changing variables from $(T, N)$ to $(T, \Delta)$ on $\mathfrak{g}\sslash \G$, we perform analysis in the new coordinates.
\end{abstract}

\maketitle
 
\tableofcontents

\section{Introduction}
\subsection{A new way to detect functoriality}
Let $\G,\H$ be reductive algebraic groups over the field $\QQ$ of rational numbers such that $\G$ is quasi-split. Let $\AA$ denote the ad\`ele ring of $\QQ$. Let $^L\G$ be the $L$-group of $\G$. Roughly speaking we want to consider the following problem:
Conjecturally any homomorphism $\phi\colon ^L\H\to {}^L\G$ gives a functorial lift from the automorphic representations $\sigma$ of $\H$ to the automorphic representations $\phi_*\sigma$ of $\G$.
A natural question is to ask the converse, that is, how to determine whether an automorphic representation $\pi$ of $\G$ is given by $\phi_*\sigma$ for some automorphic representation $\sigma$ of $\H$, and we want to find the smallest $\H$ (which we call \emph{primitive}). 

One way to study this is to consider the order of $L$-functions. Let $\rho$ be a complex representation of $^L\G$. For a finite set $S$ of places of $\QQ$ containing the archimedean place $\infty$, the partial $L$-function $L^{S}(s,\pi,\rho)$ can be defined and expressed as a Dirichlet series
\[
L^{S}(s,\pi,\rho)=\sum_{\substack{n=1\\ \gcd(n,S)=1}}^{+\infty}\frac{a_{\pi,\rho}(n)}{n^s}
\]
for $\Re s$ sufficiently large.

If $\pi=\phi_*\sigma$ for some automorphic representation $\sigma$ of a smaller group, then we expect to have
\[
\ord_{s=1}L^S(s,\pi,\rho)=\ord_{s=1}L^S(s,\sigma,\rho\circ\phi)\geq m(\triv,\rho\circ\phi)
\]
and the equality holds when $\H$ is primitive, where $m(\triv,\rho\circ\phi)$ denotes the multiplicity of the trivial representation in $\rho\circ\phi$ which is a complex representation of $^L\H$.

Unfortunately, we know little about the analytic continuation and the order of pole at $s=1$. However, by using the trace formula, we are able to compute the average 
\begin{equation}\label{eq:beyondendoscopy}
\sum_{\pi}m_{\pi}\ord_{s=1}L^{S}(s,\pi,\rho)\prod_{v\in S}\Tr(\pi_v(f_v))
\end{equation}
in a reasonable way, which is Langlands' \emph{Beyond Endoscopy} proposal \cite{langlands2004}.
Here, $\pi$ runs over all cuspidal representations and $f_v$ are nice functions on $\G(\QQ_v)$ for each $v\in S$. $m_\pi$ denotes the multiplicity of $\pi$ in $L^2_{\mathrm{cusp}}(\G(\QQ)\bs \G(\AA)^1)$.
We have
\[
\sum_{\pi}m_\pi a_{\pi,\rho}(p)\prod_{v\in S}\Tr(\pi_v(f_v))=I_{\mathrm{cusp}}(f^{p,\rho}),
\]
where $f^{p,\rho}=\bigotimes_{v\in S}f_v\otimes\bigotimes_{v\notin S}'f_v^{p,\rho}$, with $f_v^{p,\rho}$ spherical for $v\notin S$, and $\Tr(\pi_p(f_p^{p,\rho}))=a_{\pi,\rho}(p)$ and $\Tr(\pi_\ell(f_\ell^{p,\rho}))=1$ if $\ell\notin S\cup\{p\}$.

If $L^S(s,\pi,\rho)$ admits meromorphic continuation on $\Re s>1-\delta$ with neither zeros nor poles on the vertical line $s=1+\rmi t$ except for at $s=1$, then by the Ikehara theorem, we expect the following asymptotic formula:
\[
\lim_{X\to +\infty}\frac{1}{X}\sum_{\substack{p<X\\ p\notin S}}a_{\pi,\rho}(p)\log p=\ord_{s=1}L^S(s,\pi,\rho),
\]
where $p$ runs over all primes less than $X$.
Thus we expect that 
\[
\lim_{X\to +\infty}\frac{1}{X}\sum_{\substack{p<X\\ p \notin S}}\log pI_{\mathrm{cusp}}(f^{p,\rho})=\sum_{\pi}m_\pi \prod_{v\in S}\Tr(\pi_v(f_v))\ord_{s=1}L^S(s,\pi,\rho).
\]

We can consider a more general setting proposed by Sarnak \cite{sarnak2001}. For $\gcd(n,S)=1$, we have
\[
\sum_{\pi}m_\pi a_{\pi,\rho}(n)\prod_{v\in S}\Tr(\pi_v(f_v))=I_{\mathrm{cusp}}(f^{n,\rho}),
\]
where $f^{n,\rho}=\bigotimes_{v\in S}f_v\otimes\bigotimes_{v\notin S}'f_v^{n,\rho}$, with $f_v^{n,\rho}$ spherical for $v\notin S$, and $\Tr(\pi_p(f_p^{n,\rho}))=a_{\pi,\rho}(p^{v_p(n)})$ for all $p\notin S$. By the Ikehara theorem, we expect to have
\[
\lim_{X\to +\infty}\frac{1}{X}\sum_{\substack{n<X\\ \gcd(n,S)=1}}a_{\pi,\rho}(n)=\res_{s=1}L^S(s,\pi,\rho),
\]
and we expect that
\[
\lim_{X\to +\infty}\frac{1}{X}\sum_{\substack{n<X\\ \gcd(n,S)=1}}I_{\mathrm{cusp}}(f^{n,\rho})=\sum_{\pi}m_\pi \prod_{v\in S}\Tr(\pi_v(f_v))\res_{s=1}L^S(s,\pi,\rho),
\]
where $\pi$ runs over certain representations of $\G(\AA)$ depending on $f^{n,\rho}$. For example, if $\G=\GL_2$ and $\rho$ is the standard representation, then $\res_{s=1}L^S(s,\pi,\rho)=0$ for all cuspidal $\pi$ (cf. \cite[Theorem 11.7.1]{getz2024} for example). Hence the expectation is
\begin{equation}\label{eq:standardrepresentation}
\lim_{X\to +\infty}\frac{1}{X}\sum_{\substack{n<X\\ \gcd(n,S)=1}}I_{\mathrm{cusp}}(f^{n,\rho})=0.
\end{equation}


We now assume that $\G=\GL_2$, $\rho=\mathrm{Std}$ is the standard representation.
For any prime number $p$ and $m\in \ZZ_{\geq 0}$, we define
\[
\cX_p^{m}=\{X\in \M_2(\ZZ_p)\,|\, \mathopen{|}\det X\mathclose{|}_p = p^{-m}\}.
\]
For example, if $m=0$, $\cX_p^{m}$ is just $\cK_p=\GL_2(\ZZ_p)$. By Hecke operator theory, we can choose $f^{n}$ to be $\bigotimes_{v\in \mf{S}}f^{n}_v$, where $\mf{S}$ denotes the set of places of $\QQ$, and
\begin{enumerate}[itemsep=0pt,parsep=0pt,topsep=2pt,leftmargin=0pt,labelsep=3pt,itemindent=9pt,label=\textbullet]
  \item If $v=p$, $f^{n}_p=p^{-n_p/2}\triv_{\cX_p^{n_p}}$, where $n_p=v_p(n)$.
  \item If $v=\infty$,  $f^{n}_\infty=f_\infty\in C^\infty(Z_+\bs \G(\RR))$ such that the orbital integrals are compactly supported modulo $Z_+$, and other than this condition they are arbitrary.
\end{enumerate}
Altu\u{g} \cite{altug2015,altug2017,altug2020} proved \eqref{eq:standardrepresentation} by using the Arthur-Selberg trace formula for $f_{\infty,m}$ that is a matrix coefficient of a weight $m$ discrete series for $m>2$ even. More precisely, $f_{\infty,m}$ is a function supported on
\[
\GL_2^{\!+}(\RR)=\left\{g=\begin{pmatrix}
                        a & b \\
                        c & d 
                      \end{pmatrix}\in \GL_2(\RR)\,\middle|\, \det g>0\right\}
\]
such that
\[
f_\infty\left(\begin{pmatrix}  a & b \\ c & d \end{pmatrix}\right)=\frac{m-1}{4\uppi}\frac{\det(g)^{m/2}(2\rmi)^m}{(-b+c+(a+d)\rmi )^m}
\]
for $g=(\begin{smallmatrix} a & b \\c & d \end{smallmatrix})\in \GL_2^{\!+}(\RR)$. 
Altu\u{g} actually proved that
\[
\sum_{n<X}\Tr(T_m(n))\ll_{m,\varepsilon} X^{\frac{31}{32}+\varepsilon},
\]
where $T_m(n)$ denotes the (normalized) $n^{\mathrm{th}}$ Hecke operator acting on $S_m(\SL_2(\ZZ))$, the space of holomorphic cusp forms of weight $m$ for the modular group $\Gamma=\SL_2(\ZZ)$.

\subsection{The main results}
In this paper, we will approach the formula \eqref{eq:standardrepresentation} in the case over $\QQ$ with ramification and  $n$ such that $\gcd(n,S)=1$ and $\rho=\mathrm{Sym}^1$ is the standard representation.

We consider $S=\{\infty,q_1,\dots,q_r\}$ for primes $q_1,\dots,q_r$ such that $2\in S$, and a corresponding function $f^{n}=\bigotimes_{v\in \mf{S}}'f^n_v$ such that the local components at places in $S$ are arbitrary. 
Specifically, for $\gcd(n,S)=1$, the function $f^n$ is defined as follows:
\begin{enumerate}[itemsep=0pt,parsep=0pt,topsep=2pt,leftmargin=0pt,labelsep=3pt,itemindent=9pt,label=\textbullet]
  \item If $v=p\notin S$, we choose $f^n_v=p^{-n_p/2}\triv_{\cX_p^{n_p}}$, where $n_p=v_p(n)$.
  \item If $v=q_i\in S$ and is a prime, we choose $f^n_v=f_{q_i}$ to be an arbitrary function in $C_c^\infty(\G(\QQ_{q_i}))$.
  \item If $v=\infty$, we choose $f^n_v=f_\infty\in C^\infty(Z_+\bs \G(\RR))$ such that its orbital integrals are compactly supported modulo $Z_+$.
\end{enumerate}
Note that in this case, $f_v^{n}$ is spherical for $v\notin S$, and $\Tr(\pi_p(f_p^{n}))=a_{\pi}(p^{n_p})$ for all $p\notin S$.

To prove \eqref{eq:standardrepresentation}, it suffices to give an asymptotic formula of 
\[
\sum_{\substack{n<X\\ \gcd(n,S)=1}}I_\cusp(f^n)
\]
and show that it is $o(X)$. Using the Arthur-Selberg trace formula we may split $I_\cusp(f^n)$ into various terms. For example, we have the elliptic part
\[
\sum_{\substack{n<X\\ \gcd(n,S)=1}}I_\el(f^n).
\]

Our main goal of this paper is to give such an asymptotic formula by reducing into the hyperbolic parts
\[
I_{\hyp}^{\deg=1}(f^n)\qquad\text{and}\qquad \widehat{J}_{\hyp}^{S}(f^n),
\]
which are defined in \eqref{eq:hyperbolicpartdeg1} and \eqref{eq:hyperbolicpartmodify}, respectively.

The main theorem in this paper is
\begin{theorem}[Contribution of the elliptic part]\label{thm:contributeellipticfinal}
For any $\varepsilon>0$, we have
\begin{align*}
\sum_{\substack{n<X\\\gcd(n,S)=1}}I_\el(f^n) =&\mf{A}X^{\frac32}+\mf{B}(X\log X-X)+(\mf{C}+\mf{D})X\\ -&\upgamma_S\sum_{\substack{n<X\\\gcd(n,S)=1}}I_{\hyp}^{\deg=1}(f^n) -\sum_{\substack{n<X\\\gcd(n,S)=1}}\widehat{J}_{\hyp}^{S}(f^n) +O(X^{\frac78+\varepsilon}),
\end{align*}
where $\mf{A}$ is defined in \autoref{thm:contributenontempered}, $\mf{B},\mf{C},\mf{D}$ are defined in \autoref{thm:contributealpha0}, $\upgamma_S$ is defined in \eqref{eq:defgammas}.
The implied constant only depends on $\varepsilon$, $f_\infty$ and $f_{q_i}$.
\end{theorem}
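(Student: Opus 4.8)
The plan is to start from the explicit form of $I_\el(f^n)$ obtained earlier in the paper, which expresses it as a sum over elliptic $\GL_2$-conjugacy classes parameterized by the characteristic polynomial $x^2-Tx+N$ — $T$ the trace, $N=\det$, ellipticity being $T^2-4N\notin(\QQ^\times)^2$ — with the global orbital integral factored into a compactly supported archimedean weight built from $f_\infty$, the $S$-adic weights built from the $f_{q_i}$, and, at each unramified $p$, an arithmetic density recording the orbital integral of $p^{-n_p/2}\triv_{\cX_p^{n_p}}$, which depends only on $v_p(N)$ and $v_p(T^2-4N)$. Since $f^n$ forces $v_p(\det\gamma)=v_p(n)$ for every $p\notin S$, the integer $n$ is essentially the prime-to-$S$ part of $|N|$, so summing over $n<X$ with $\gcd(n,S)=1$ recasts $\sum_n I_\el(f^n)$ as a single double sum over $(T,N)$ with $|N|\ll X$ and $T^2/N$ in a fixed compact set, weighted by a multiplicative arithmetic function of $N$ (the product of the $p$-adic densities) and the smooth archimedean and $S$-adic factors; the number of contributing pairs is of order $X^{3/2}$.

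The key device, as announced in the abstract, is a second Poisson summation in the determinant $N$ — equivalently, at fixed $T$, in the discriminant $\Delta=T^2-4N$, which is the coordinate on the Hitchin-Steinberg base $\mf{g}\sslash\G$: the elliptic locus is $\Delta\notin(\QQ^\times)^2$, the hyperbolic locus the boundary $\Delta=0$ together with square $\Delta$, and the elliptic density degenerates precisely at $\Delta=0$. After passing to the coordinates $(T,\Delta)$ on $\mf{g}\sslash\G$, Poisson summation decomposes $\sum_{n<X}I_\el(f^n)$ into a zero-frequency term — a smooth integral over the region $|N|\ll X$ of $\mf{g}\sslash\G$ against the arithmetic weight — and nonzero-frequency terms indexed by a dual variable $r\neq0$, whose coefficients are Gauss/Kloosterman-type sums in $N$ attached to $r$ and whose $\Delta$-integrals carry an oscillatory phase linear in $r\Delta$.

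For the zero-frequency term I would compute its $X$-asymptotic expansion directly, splitting the $\Delta$-integral into a bulk region where $|\Delta|$ is bounded away from $0$ and a shrinking neighbourhood of $\Delta=0$. On the bulk, the expansion has terms of orders $X^{3/2}$, $X\log X$, and $X$ — the $\log$ coming from the average of the arithmetic weight — and, with the constants matched against \autoref{thm:contributenontempered} and \autoref{thm:contributealpha0}, produces $\mf{A}X^{3/2}+\mf{B}(X\log X-X)+(\mf{C}+\mf{D})X$. Near $\Delta=0$ the elliptic density has a genuine logarithmic singularity — the trace-formula shadow of the elliptic/hyperbolic transition — and its regularized contribution, after comparison with the definitions \eqref{eq:hyperbolicpartdeg1} and \eqref{eq:hyperbolicpartmodify} (the second matching being most cleanly stated after a Fourier transform, whence the hat on $\widehat{J}_{\hyp}^{S}$), equals exactly $-\frac12\upgamma_S\sum_{n<X}I_{\hyp}^{\deg=1}(f^n)-\frac12\sum_{n<X}\widehat{J}_{\hyp}^{S}(f^n)$ with $\upgamma_S$ as in \eqref{eq:defgammas}. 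Carrying out this regularization and matching cleanly — tracking every lower-order term and showing the leftover is absorbed into the stated error — is the technical heart of the argument.

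The nonzero-frequency terms are oscillatory integrals in $\Delta$ with smooth amplitude (the archimedean weight times a $p$-adic density) and arithmetic coefficient a sum in $N$; repeated integration by parts in $\Delta$, legitimate because $f_\infty$ is smooth, plus classical bounds on the coefficients, collapses the $r$-sum. The one non-geometric input enters here: when the $X$-truncated sum over $N$ sitting inside these terms is reorganized, its size is governed by a sum of the shape $\sum_{m<Y}a_{\pi,\mathrm{Sym}^1}(m)$ with $Y$ a fixed power of $X$, controlled via $a_{\pi,\mathrm{Sym}^1}(m)\ll_\varepsilon m^{\varrho+\varepsilon}$; chasing the exponents gives $O(X^{7/8+3\varrho/4+\varepsilon})$, which is $o(X)$ exactly when $\varrho<1/6$, explaining the hypothesis. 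I expect the two main obstacles to be: the delicate regularization near $\Delta=0$ and its exact identification with $I_{\hyp}^{\deg=1}$ and $\widehat{J}_{\hyp}^{S}$; and obtaining the oscillatory power-saving estimate uniformly in the arbitrary archimedean and $S$-adic test data, which also forces one to make the $p$-adic orbital-integral densities and their Poisson transforms explicit enough to exhibit the relevant Gauss sums.
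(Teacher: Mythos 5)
Your high-level template — recast the $n$-sum as a sum over the Hitchin-Steinberg base, Poisson-sum in the determinant, pass to $(T,\Delta)$ coordinates, extract the main term from the zero frequency and use a Ramanujan-type bound to control the rest — is the correct skeleton of the paper's argument. But several load-bearing devices are missing or misidentified, and without them the plan as written does not go through.

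First, you start Poisson summation directly from the elliptic orbital sum over $(T,N)$, but the summand there contains a class number (the global volume $\vol(\gamma)$, essentially $L(1,\chi_D)$), which is not a smooth function of $\Delta$, so one cannot Poisson-sum in $N$ at that stage. What makes the problem tractable is Zagier's approximate functional equation, which introduces the smooth kernels $F$ and $V$, followed by a \emph{first} Poisson summation in the trace $T$ (done in the companion paper and producing the Kloosterman sums $\Kl_{k,f}^S$). Only after this step is the $n$-dependence through a $kf^2$-periodic factor, so that the determinant Poisson summation of \autoref{sec:secondpoisson} makes sense. Your plan skips the trace Poisson entirely.

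Second, the hyperbolic terms $I_{\hyp}^{\deg=1}$ and $\widehat{J}_{\hyp}^{S}$ do \emph{not} arise from a regularization near $\Delta=0$. The object that Poisson summation controls is $\Sigma^n(\xi)=I_\el(f^n)+\Sigma^n(\square)$, and $\Sigma^n(\square)$ is the explicit sum over the split locus $\Delta=T^2\mp 4nq^\nu=\square\neq 0$ (see \autoref{cor:sigmasquareexplicit} and \autoref{lem:sigmasquare}). After evaluating the residue $\res_{s=0}\widetilde F(s)L^S(1+s,(a-b)^2)|(a-b)^2|^{\vartheta s}$, this correction term equals $\tfrac12\upgamma_S I_{\hyp}^{\deg=1}(f^n)+\tfrac12\widehat{J}_{\hyp}^{S}(f^n)+\vartheta\cdot(\log\text{-weighted hyperbolic sum})$. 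The appearance of $\upgamma_S$ and of $\widehat J_{\hyp}^S$ comes from the Laurent coefficients of a Zagier-type $L$-series, not from the singularity of the elliptic density at $\Delta=0$.

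Third, the $\vartheta$ parameter is essential. The asymptotic \autoref{thm:contributeellipticfull} has coefficients $\mf B\vartheta'$ and $\mf C+\mf D\vartheta'$, and \autoref{thm:contributeellipticfinal} is obtained by isolating the $\vartheta$-degree-$0$ piece of a formula that is affine in $\vartheta$. Without threading $\vartheta$ through the approximate functional equation and the residue computation, you cannot match the constants to the statement as given.

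Fourth, the Ramanujan hypothesis does not enter through the nonzero-frequency estimate in the way you describe. The paper proves an asymptotic for a smooth variant $S_G(X)$, then compares $S(X)$ to a dyadic stack of smoothed sums $S_{G_{2^{j\delta},1/2}}$. The difference of the tempered parts at the dyadic edges is bounded by $\sum_{|n-Y|<Y^\delta}\Sigma^n_{\temp}(\xi)\ll Y^{\delta+\varrho+\varepsilon}$, which is where $\varrho$ is used; the nonzero-frequency analysis is purely oscillatory. Choosing $\delta=7/8-\varrho/4$ and $c=1/8+\varrho/4+\varepsilon/2$ balances the contributions and gives $X^{7/8+3\varrho/4+\varepsilon}$; the condition $\varrho<1/6$ is exactly what makes this $o(X)$. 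Your placement of the arithmetic input would not yield the stated exponent.

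Finally, the ``Gauss sums'' in the nonzero-frequency term are more delicate than indicated: after the second Poisson one must recognize the $\xi$-sum as a theta-type function (\autoref{subsec:poissontheta}) and perform a \emph{third} Poisson summation in $\xi$, producing a further dual variable $\kappa$ and a double Fourier transform with singular amplitude; the final estimate rests on the singular-Fourier bound of \autoref{thm:mainfourierestimate}, not on classical integration by parts alone.
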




Although we have not proved the asymptotic formula for the non-elliptic parts, we can prove \eqref{eq:standardrepresentation} in special cases. For example, one can prove the simple trace formula case. Let $v$ be a place of $\QQ$. We call an absolutely integrable function $f_v$ on $\G(\QQ_v)=\GL_2(\QQ_v)$ \emph{supercuspidal} if for any $g,h\in \G(\QQ_v)$, 
\[
\int_{\N(\QQ_v)}f_v(gnh)\rmd n=0,
\]
where $\N$ denotes the unipotent radical of the standard Borel subgroup $\B$ of $\G$. Suppose that additionally
\begin{enumerate}[itemsep=0pt,parsep=0pt,topsep=0pt, leftmargin=0pt,labelsep=2.5pt,itemindent=15pt,label=\upshape{(\roman*)}]
  \item At one place $v_1\in S$, $f_{v_1}$ is supercuspidal.
\item At one place $v_2\in S$ different from $v_1$, $f_{v_2}$ is supported on the regular elliptic elements.
\end{enumerate}
Then we have the \emph{Deligne-Kazhdan simple trace formula} (see \cite[Theorem 3.1 of Lecture V]{gelbart1996trace} or \cite[Corollary 18.4.1]{getz2024}, for example)
\[
I_\el(f^n)=I_\cusp(f^n).
\]
Also, it is easy to see $\mf{A}=\mf{B}=\mf{C}=\mf{D}=0$ and $I_\hyp^{\deg=1}(f^n)=\widehat{J}_{\hyp}^S(f^n)=0$ in this case once we know the definitions of them. Hence in this case we have
\[
\sum_{\substack{n<X\\ \gcd(n,S)=1}}I_\cusp(f^n)\ll X^{\frac78+\varepsilon}
\]
and thus \eqref{eq:standardrepresentation} holds in this case.
Note that this case cannot occur in the unramified case done by Altu\u{g}.

As another application, we will prove a generalization of Altu\u{g}'s final result \cite{altug2020} in the appendix.

\subsection{An overview of the proof}
In \cite{cheng2025}, we did Poisson summation for $\Sigma^n(\xi)=I_\el(f^n)+\Sigma^n(\square)$ on the semilocal space $\QQ_S$, where $\Sigma^n(\square)$ is defined to be $\Sigma(\square)$ in \cite{cheng2025}. Then, we isolated specific representations in the $\xi=0$ term. Also, in \cite{cheng2025b} we bounded the $\xi\neq 0$ term and got the $1/4$-bound. However, we will not split these two terms for the summing over $n<X$ case as in \cite{altug2020}. We will show the main result by a completely different method. 

The first step is to estimate the elliptic part adding the square term
\[
S(X)=\sum_{\substack{n<X\\ \gcd(n,S)=1}}\Sigma^n(\xi)= \sum_{\substack{n<X\\ \gcd(n,S)=1}}(I_\el(f^n)+\Sigma^n(\square))
\]
for which we did Poisson summation with respect to the trace for a single term.

We do a second Poisson summation with respect to the determinant. We need to consider a smooth variant
\[
S_G(X)=\sum_{n\in \ZZ}G\legendresymbol{n}{X}\Sigma^n(\xi)=\sum_{n\in \ZZ^S}G\legendresymbol{n^{(q)}}{X}\sum_{\xi\in \ZZ^S}\phi^n(\xi).
\]
Thus we obtain a formula with respect to $\{T,N\}$ on the  Hitchin-Steinberg base $\mf{g}\sslash \G\cong\mf{a}\sslash W$:
\[
S_G(X)=\sum_{\xi,\alpha\in \ZZ^S}\widehat{\phi}(\xi,\alpha).
\]

Next, we do a change of variable $(T,N)\mapsto (T,\Delta)$ on $\mf{g}\sslash \G$, where $\Delta$ represents the discriminant. After changing the coordinate, we consider the $\alpha=0$ term and the $\alpha\neq 0$ term separately.

For the $\alpha=0$ term, we use Poisson summation for $\xi$ in the reverse direction. Then we use Mellin inversion formula and contour shift method to compute the contribution from the residues. The contribution is almost the same as considering the $\xi=0$ term directly.

For the $\alpha\neq 0$ term, the $\xi$-sum is a theta-type function
\[
\int_{\QQ_S}f(a,b)\rme\legendresymbol{\sigma(x+ua)^2}{2}\rme_q\legendresymbol{\tau(y+vb)^2}{2}\rmd a\rmd b.
\]
Hence we can do Poisson summation on that. The expression after this process is a double Fourier transform which can be treated directly by technical analysis.

After giving an asymptotic formula for $S_G(X)$, we analyze the difference between $S(X)$ and $S_G(X)$. To do this, we need to consider the tempered part and the nontempered part separately. The asymptotic formula of the nontempered parts of them can both be computed explicitly. For the tempered part, their difference can be controlled due to the $1/4$-bound towards the Ramanujan conjecture by the main result of \cite{cheng2025b}. Hence we obtain an asymptotic formula for $S(X)$.

Finally, we compute directly the square term $\Sigma^n(\square)$ and thus reduce the contribution of the elliptic part to the hyperbolic part. The key observation is that in \cite{cheng2025} when we did the Poisson summation, there is a parameter $\vartheta$ and the above proof actually works when $|\vartheta-1/2|$ is sufficiently small. The full asymptotic formula is a linear function of $\vartheta$. Hence we yield the formulas for the degree $0$ term and the degree $1$ term. For the degree $0$ term, we obtain \autoref{thm:contributeellipticfinal}. For the degree $1$ term we also obtain the following result:

\begin{theorem}[Contribution of degree $1$ term of the elliptic part]\label{thm:contributeellipticdeg1}
For any $\varepsilon>0$, we have
\begin{align*}
    &2\prod_{i=1}^{r}(1-q_i^{-1})\sum_{\substack{n<X\\\gcd(n,S)=1}}\sum_{\pm}\sum_{\nu\in \ZZ^r}\sum_{\substack{a\neq b\in \ZZ^S\\ ab=\pm nq^\nu}}\theta_\infty\begin{pmatrix} a & 0 \\ 0 & b \end{pmatrix}\theta_q\begin{pmatrix}
 a & 0 \\ 0 & b\end{pmatrix}\log|(a-b)^{(q)}|\\
  =&-\mf{B}(X\log X-X)-\mf{D}X +O(X^{\frac78+\varepsilon}),
\end{align*}
where the implied constant only depends on $\varepsilon$, $f_\infty$ and $f_{q_i}$.
\end{theorem}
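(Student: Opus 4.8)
The plan is to rerun the whole argument behind \autoref{thm:contributeellipticfinal} with the parameter $\vartheta$ of the Poisson summation of \cite{cheng2025} kept free, and then read off the coefficient of $\vartheta$. Recall that $\Sigma^n_\vartheta(\xi)=I_\el(f^n)+\Sigma^n_\vartheta(\square)$, that $I_\el(f^n)$ does not depend on $\vartheta$, and that the Poisson summation, the second Poisson summation with respect to the determinant, the change of variables $(T,N)\mapsto(T,\Delta)$ on $\mf{g}\sslash\G$, the separate treatment of the $\alpha=0$ and $\alpha\neq0$ terms, and the comparison of $S_\vartheta(X)=\sum_{n<X,\,\gcd(n,S)=1}\Sigma^n_\vartheta(\xi)$ with its smooth variant all remain valid verbatim, with implied constants uniform, for $\vartheta$ in a fixed neighbourhood of $1/2$. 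Tracking the $\vartheta$-dependence through these steps one sees it enters only through the Mellin exponent in the $\alpha=0$ residue computation (equivalently, through the logarithmic weight of the hyperbolic part), so the resulting asymptotic formula for $S_\vartheta(X)$ is an affine-linear function of $\vartheta$,
\[
S_\vartheta(X)=\Phi_0(X)+\vartheta\,\Phi_1(X)+O\bigl(X^{\frac{7}{8}+\frac{3}{4}\varrho+\varepsilon}\bigr),
\]
with $\Phi_0,\Phi_1$ explicit combinations of $X^{3/2}$, $X\log X$, $X$ and hyperbolic-type sums; comparing with \autoref{thm:contributenontempered} and \autoref{thm:contributealpha0} shows that $\mf{A}$ and $\mf{C}$ are the $\vartheta$-free parts of the main term, while $\Phi_1(X)=-\mf{B}(X\log X-X)-\mf{D}X$. (At the true value $\vartheta=1/2$ this formula, minus the directly evaluated $\Sigma^n_{1/2}(\square)$, is \autoref{thm:contributeellipticfinal}.)

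On the other side, the square term is evaluated by hand, exactly as in the last step of the proof of \autoref{thm:contributeellipticfinal}. On the $\square$-locus the contributing element is hyperbolic with eigenvalues $a\neq b\in\ZZ^S$ satisfying $ab=\pm nq^\nu$, and the computation shows that $\Sigma^n_\vartheta(\square)$ is affine-linear in $\vartheta$ with degree-$1$ coefficient
\[
\prod_{i=1}^r(1-q_i^{-1})\sum_{\pm}\sum_{\nu\in\ZZ^r}\sum_{\substack{a\neq b\in\ZZ^S\\ ab=\pm nq^\nu}}\theta_\infty\begin{pmatrix}a & 0\\0 & b\end{pmatrix}\theta_q\begin{pmatrix}a & 0\\0 & b\end{pmatrix}\log|a-b|^{(q)},
\]
where $\prod_i(1-q_i^{-1})$ is the split-torus volume factor and $\log|a-b|^{(q)}$ is the away-from-$S$ logarithmic weight of the hyperbolic part; its $\vartheta$-free part is what, after subtraction, produces the $-\tfrac12\upgamma_S I_\hyp^{\deg=1}(f^n)-\tfrac12\widehat{J}_\hyp^S(f^n)$ contributions in \autoref{thm:contributeellipticfinal}.

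To conclude, note that since $I_\el(f^n)$ carries no $\vartheta$, the coefficient of $\vartheta$ in $S_\vartheta(X)=\sum_{n<X,\,\gcd(n,S)=1}\Sigma^n_\vartheta(\xi)$ equals $\sum_{n<X,\,\gcd(n,S)=1}$ of the degree-$1$ coefficient of $\Sigma^n_\vartheta(\square)$ displayed above, which is precisely the left-hand side of \autoref{thm:contributeellipticdeg1}. Extracting this $\vartheta$-coefficient from the asymptotic formula of the first paragraph — say by evaluating at two values $\vartheta_1\neq\vartheta_2$ near $1/2$ and forming $(\vartheta_1-\vartheta_2)^{-1}\bigl(S_{\vartheta_1}(X)-S_{\vartheta_2}(X)\bigr)$, the error staying $O(X^{\frac{7}{8}+\frac{3}{4}\varrho+\varepsilon})$ by the uniformity of the implied constants — gives that the left-hand side of \autoref{thm:contributeellipticdeg1} equals $\Phi_1(X)+O(X^{\frac{7}{8}+\frac{3}{4}\varrho+\varepsilon})=-\mf{B}(X\log X-X)-\mf{D}X+O(X^{\frac{7}{8}+\frac{3}{4}\varrho+\varepsilon})$, which is the claim; the implied constant depends only on $\varepsilon$, $f_\infty$ and $f_{q_i}$.

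The main obstacle, already overcome in the proof of \autoref{thm:contributeellipticfinal}, is to make the entire Poisson-summation-and-contour-shift apparatus genuinely uniform in $\vartheta$ near $1/2$: one has to re-examine the convergence of the Mellin integrals, the location and order of the poles picked up in the $\alpha=0$ term, and the theta-function Fourier analysis bounding the $\alpha\neq0$ term from \cite{cheng2025b}, all with $\vartheta$ a free parameter, so that no new polar contribution appears and the error stays $O(X^{\frac{7}{8}+\frac{3}{4}\varrho+\varepsilon})$. The remaining work is bookkeeping: identifying $\Phi_1(X)$ with $-\mf{B}(X\log X-X)-\mf{D}X$ by matching the $\alpha=0$ residue computation against the definitions of $\mf{B}$ and $\mf{D}$, and checking that the degree-$1$ part of $\Sigma^n_\vartheta(\square)$ is exactly the displayed hyperbolic sum, with the correct normalisations of $\theta_\infty,\theta_q$, the factor $\prod_i(1-q_i^{-1})$, and the away-from-$S$ logarithm $\log|a-b|^{(q)}$ — the spots where a sign or normalisation slip would be easiest to make.
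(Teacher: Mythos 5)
Your proposal is correct and takes essentially the same route as the paper: re-run the whole apparatus with the Poisson-summation parameter $\vartheta$ left free to obtain \autoref{thm:contributeellipticpoisson}, compute $\Sigma^n(\square)$ explicitly with its linear $\vartheta$-dependence as in \autoref{cor:sigmasquareexplicit2}, and then read off the coefficient of $\vartheta$ from \autoref{thm:contributeellipticfull} using that $I_\el(f^n)$ is $\vartheta$-free. The two-point evaluation $(\vartheta_1-\vartheta_2)^{-1}(S_{\vartheta_1}-S_{\vartheta_2})$ is exactly the paper's (tersely stated) observation that the linear-in-$\vartheta$ identity forces the $\vartheta$-coefficient to be $O(X^{7/8+3\varrho/4+\varepsilon})$; your appeal to "uniformity in $\vartheta$" is slightly more than is needed — finiteness of the implied constant at two fixed $\vartheta$-values suffices — but the argument stands.
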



\subsection{Notations}
\begin{enumerate}[itemsep=0pt,parsep=0pt,topsep=0pt,leftmargin=0pt,labelsep=3pt,itemindent=9pt,label=\textbullet]
  \item $\# X$ denotes the number of elements in a set $X$.
  \item For $A\subseteq X$, $\triv_A$ denotes the characteristic function on $X$, defined by $\triv_A(x)=1$ for $x\in A$ and $\triv_A(x)=0$ for $x\notin A$.
  \item $\triv$ also denotes the trivial character or the trivial representation.
  \item For $x\in \RR$, $\lfloor x\rfloor$ denotes the greatest integer that is less than or equal to $x$, and $\lceil x\rceil$ denotes the smallest integer that is greater than or equal to $x$.
  \item We often use the notation $a\equiv b\,(n)$ to denote $a\equiv b\pmod n$.
  \item For $D\equiv 0,1\pmod 4$, $\legendresymbol{D}{\cdot}$ denotes the Kronecker symbol.
  \item If $R$ is a ring (which we \emph{always} assume to be commutative with $1$), $R^\times$ denotes its group of units.
  \item $\mf{S}$ denotes the set of places of $\QQ$.
  \item For $S=\{\infty, q_1,\dots,q_r\}$, $\ZZ^S$ denotes the ring of $S$-integers in $\QQ$, that is,
    \[
    \ZZ^S=\{\alpha\in \QQ\ |\ v_p(\alpha)\geq 0\ \text{for all}\ p\notin S\}.
    \]
    Additionally, we define
    \[
    \QQ_S=\prod_{v\in S}\QQ_v=\RR\times \QQ_{q_1}\times\dots\times\QQ_{q_r}\quad\text{and}\quad\QQ_{S_\fin}=\prod_{v\in S_\fin}\QQ_v=\QQ_{q_1}\times\dots\times\QQ_{q_r}
    \]
  and
    \[
    \ZZ_{S_\fin}=\prod_{v\in S_\fin}\ZZ_v=\ZZ_{q_1}\times\dots\times\ZZ_{q_r}.
    \]
  \item For $n\in \ZZ$, we write $\gcd(n,S)=1$, or $n\in \ZZ_{(S)}$, if 
  $p\nmid n$ for all $p\in S$. 
  We write $n\in \ZZ_{(S)}^{>0}$ if additionally $n>0$.
  \item Let $p$ be a prime. For $a\in\QQ$, we define $a_{(p)}$ to be the $p$-part of $a$, that is, 
\[
a_{(p)}=p^{v_p(a)},
\]
and we define $a^{(p)}=a/a_{(p)}$.
Moreover, we define 
  \[
    a_{(q)}=\prod_{i=1}^{r}q_i^{v_{q_i}(a)}\quad\text{and}\quad a^{(q)}=\frac{a}{a_{(q)}}=\prod_{p\notin S}p^{v_{p}(a)}.
  \]
  \item For $n\in \ZZ_{>0}$, $\bm{d}(n)$ denotes the number of divisors of $n$, $\bm{\sigma}(n)$ denotes the sum of divisors of $n$, $\bm{\phi}(n)$ denotes the Euler totient function. $\bm{\mu}(n)$ denotes the M\"obius function, defined by
  \[
  \bm{\mu}(n)=\begin{cases}
    1, & \text{if $n=1$}, \\
    (-1)^m, & \text{if $n$ is a product of $m$ distinct primes}, \\
    0, & \text{otherwise}.
  \end{cases}
  \]
  $\bm{\Lambda}(n)$ denotes the von Mangoldt function, defined by
  \[
  \bm{\Lambda}(n)=\begin{cases}
    \log p, & \text{if $n=p^m$ with $m\in \ZZ_{>0}$}, \\
    0, & \text{otherwise}.
  \end{cases}
  \]
  \item $\Gamma(s)$ denotes the gamma function, defined by
  \[
  \Gamma(s)=\int_{0}^{+\infty}\rme^{-x}x^s\frac{\rmd x}{x},
  \]
  for $\Re s>0$, analytically continued to $\CC$. $\zeta(s)$ denotes the Riemann zeta function, defined by
  \[
    \zeta(s)=\sum_{n=1}^{+\infty}\frac{1}{n^s}=\prod_{p}\frac{1}{1-p^{-s}}
  \]
  for $\Re s>1$, analytically continued to $\CC$.
  \item For any Dirichlet character $\chi$, we define
  \[
  L(s,\chi)=\sum_{n=1}^{+\infty}\frac{\chi(n)}{n^s}=\prod_{p}\frac{1}{1-\chi(p)p^{-s}}.
  \]
  for $\Re s>1$, analytically continued to $\CC$. Moreover, we define
  \[
  L^S(s,\chi)=\sum_{\substack{n=1\\\gcd(n,S)=1}}^{+\infty}\frac{\chi(n)}{n^s}=\prod_{p\notin S}\frac{1}{1-\chi(p)p^{-s}}.
  \]
  for $\Re s>1$, analytically continued to $\CC$. Finally, we set
  \[
  \zeta^S(s)=L^S(s,\triv)=\sum_{\substack{n=1\\\gcd(n,S)=1}}^{+\infty}\frac{1}{n^s}=\prod_{p\notin S}\frac{1}{1-p^{-s}}=\prod_{i=1}^{r}(1-q_i^{-s})\zeta(s).
  \]
  \item $(\sigma)$ denotes the vertical contour from $\sigma-\rmi\infty$ to $\sigma+\rmi\infty$.
  \item Let $f$ be a meromorphic function near $z=z_0$ and suppose that 
  \[
  f(z)=\sum_{n\in \ZZ}a_n(z-z_0)^n
  \]
  is its Laurent expansion near $z_0$, then we denote $\fp_{z=z_0}f(z)=a_0$.
  \item We define $\cS$ to be the set of smooth functions $\Phi$ on $\lopen 0,+\infty\ropen$ such that for any $k\in \ZZ_{\geq 0}$, $\Phi^{(k)}(x)$ are of rapid decay as $x\to +\infty$.
  \item For $G\in C_c^\infty(\RR)$, $p\in [1,+\infty]$ and $k\in \ZZ_{\geq 0}$, $\|G\|_p$ denotes the $L^p$-norm of $G$, and $\|G\|_{k,p}$ denotes the $W^{k,p}$-Sobolev norm of $G$, namely
    \[  
    \|G\|_{k,p}=\sum_{j=0}^{k}\|G^{(j)}\|_{p}.
    \]
  \item We define $\rme(x)=\rme_\infty(x)=\rme^{\dpii x}$. For a prime $p$ and $x\in \QQ_p$, we define $\langle x\rangle_p$ to be the "fractional part" of $x$. Namely, if $x=\sum_{n\geq -N}a_np^n\in \QQ_p$, then 
      \[
      \langle x\rangle_p=\sum_{n=-N}^{-1}a_np^n\in \QQ.
      \]
      We then define $\rme_p(x)=\rme(-\langle x\rangle_p)$ for $x\in \QQ_p$. \emph{Note the minus sign}.
  \item We use $f(x)=O(g(x))$ or $f(x)\ll g(x)$ to denote that there exists a constant $C$ such that $|f(x)|\leq C|g(x)|$ for all $x$ in a specified set. If the constant depends on other variables, they will be subscripted under $O$ or $\ll$. 
  \item The notation $f(x)\asymp g(x)$ indicates that $f(x)\ll g(x)$ and $g(x)\ll f(x)$. If the constant depends on other variables, they will be subscripted under $\asymp$.
  \item We use $\square$ as the end of the proof and use $\blacksquare$ as the end of the proof of a lemma that is inserted in the middle of another proof.
\end{enumerate}

\section{Preliminaries}\label{sec:preliminaries}
In this section we will mainly recall some useful definitions and properties in previous papers \cite{cheng2025,cheng2025b}. Readers familiar with these two papers may skip this section except for \autoref{subsec:singularities} (since there are some notations changed), \autoref{subsec:global} and \autoref{subsec:mellinnorm}.



\subsection{The elliptic part of the trace formula and measure normalizations}\label{subsec:measure}
Let $\mf{S}$ be the set of places of $\QQ$ and let $\AA$ be the ad\`ele ring of $\QQ$. We want to compute the elliptic part of the trace formula for certain functions of the form $f=\bigotimes_{v\in \mf{S}}' f_v$ on $\G(\AA)$. The elliptic part of the trace formula is
\begin{equation}\label{eq:elliptictrace}
I_\el(f)=\sum_{\gamma\in \G(\QQ)^\#_\el}\vol(\gamma)\prod_{v\in \mathfrak{S}}\orb(f_v;\gamma).
\end{equation}
where $ \G(\QQ)^\#_\el$ denotes the set of elliptic conjugacy classes in $\G(\QQ)$, and
\[
\orb(f_v;\gamma)=\int_{\G_\gamma(\QQ_v)\bs \G(\QQ_v)} f_v(g^{-1}\gamma g)\rmd g,
\qquad
\vol(\gamma)=\int_{Z_+\G_\gamma(\QQ)\bs \G_\gamma(\AA)} \rmd g.
\]
with $Z_+$ the connected component of the identity matrix in the center $\Z(\RR)$ of $\G(\RR)$. The measures of $\G_\gamma(\QQ_v)$ and $\G(\QQ_v)$ are normalized as follows. For $\G(\QQ_v)$, 
\begin{enumerate}[itemsep=0pt,parsep=0pt,topsep=0pt,leftmargin=0pt,labelsep=3pt,itemindent=9pt,label=\textbullet]
  \item If $v=p$ is a prime, we normalize the Haar measure on $\G(\QQ_p)$ such that the volume of $\G(\ZZ_p)$ is $1$.
  \item If $v=\infty$, we choose an arbitrary Haar measure.
\end{enumerate} 

By Proposition 2.1 of \cite{cheng2025}, for elliptic $\gamma$, there exists a quadratic extension $E/\QQ$ such that $\G_\gamma\cong\Res_{E/\QQ}\GL_1$. For hyperbolic $\gamma$, we may take $E=\QQ\times \QQ$ so that such identification still holds. Hence $\G_\gamma(\QQ_v)\cong E_v^\times$.
\begin{enumerate}[itemsep=0pt,parsep=0pt,topsep=0pt,leftmargin=0pt,labelsep=3pt,itemindent=9pt,label=\textbullet]
  \item If $v=p$ is a prime, we know that $E_p$ is either $\QQ_p^2$ or a quadratic extension of $\QQ_p$. In the first case, we let $\G_\gamma(\ZZ_p)=\ZZ_p^\times \times \ZZ_p^\times$. In the second case, we let $\G_\gamma(\ZZ_p)=\cO_{E_p}^\times$.
  In each case, we normalize the Haar measure on $\G_\gamma(\QQ_p)=E_p^\times$ such that the volume of $\G_\gamma(\ZZ_p)$ is $1$.
  \item If $v=\infty$, we know that $E_v$ is either $\RR^2$ or $\CC$. If $E_v=\RR^2$, we define the measure on $E_v^\times=\RR^\times \times \RR^\times$ as $\rmd x\rmd y/|xy|$, where $(x,y)$ is the coordinate of $\RR^2$. The action of $Z_+$ on $E_v$ is $a\cdot(x,y)=(ax,ay)$, and we define the measure on $Z_+\bs E_v$ to be the measure of the quotient of $E_v$ by the measure $\rmd a/a$ on $Z_+$. If $E_v=\CC$, we choose the measure on $E_v^\times=\CC^\times$ to be $2\rmd r\rmd \theta/r$, where we use the polar coordinate $(r,\theta)$ on $\CC^\times$. The action of $Z_+$ on $E_v$ is $a\cdot z=az$, and we define the measure on $Z_+\bs E_v$ to be the measure of the quotient of $E_v$ by the measure $\rmd a/a$ on $Z_+$.
\end{enumerate} 
\begin{remark}
These measure normalizations are taken from \cite{finis2011} which was used in the previous two papers \cite{cheng2025,cheng2025b}, instead of those in \cite{langlands2004}, which was used in Altu\u{g}'s paper.
\end{remark}

\subsection{The modified $p$-adic norm}
For any prime $p$, the \emph{modified norm} $|\cdot|_p'$ is defined as follows: 

For $p\neq 2$ and $y\in \QQ_p$, we define $|y|_p' = p^{-2\lfloor v_p(y)/2\rfloor}$.  
This satisfies $|y|_p' = |y|_p$ if $v_p(y)$ is even, and $|y|_p' = p |y|_p$ if $v_p(y)$ is odd.

For $p=2$ and $y\in \QQ_p$, we define
\[
|y|_p' = \begin{cases}
  p^{-v_p(y)}=|y|_p & \text{if $v_p(y)$ is even, $y_0\equiv 1\,(4)$}, \\
  p^{-v_p(y)+2}=p^2|y|_p & \text{if $v_p(y)$ is even, $y_0\equiv 3\,(4)$}, \\
  p^{-v_p(y)+3}=p^3|y|_p & \text{if $v_p(y)$ is odd},
\end{cases}
\]
where $y_0=yp^{-v_p(y)}$.
Clearly $|a^2y|_p'=|a|_p^2|y|_p'$ for any $a\in \QQ_p$.

Also, for any regular element $\gamma\in\G(\QQ_p)$, we denote $T_\gamma=\Tr\gamma$ and $N_\gamma=\det\gamma$. $k_\gamma$ is defined such that $p^{k_\gamma}=|T_\gamma^2-4N_\gamma|_p'^{-1/2}$. This coincides with the original definition of \cite{cheng2025} (see Proposition 2.6 of loc. cit.).



\subsection{Singularities of the orbital integrals}\label{subsec:singularities}
We define
\[
\omega_\infty(x)=\begin{cases}
             0, & x>0, \\
             1, & x<0
           \end{cases}
\]
for $x\in \RR$ with $x\neq 0$ and
\[
\omega_p(y)=\legendresymbol{y|y|_{p}'}{p}
\]
for $p\in S$ and $y\in \QQ_p$ with $y\neq 0$. When $p=q_i$, we often write $\omega_p=\omega_i$.
For $\iota\in \{0,1\}$ we define
\[
X_{\iota}=\{x\in \RR\ |\ \omega_\infty(x)=\iota\}
\]
and for $\epsilon_i\in \{0,\pm 1\}$, we define
\[
Y_{\epsilon_i}=\{y_i\in \QQ_{q_i}\ |\ \omega_i(y_i)=\epsilon_i\}. 
\]
The notation here is different from \cite{cheng2025b}: The variables $x$ and $y$ here represent \emph{discriminants} but not traces in this paper. In loc. cit., the determinant $\pm nq^\nu$ is almost fixed but we will vary it in this paper. So it is better to use discriminant as the domain of $\omega_\infty$ and $\omega_p$. Also, we set $\omega_p(T,N)=\omega_p(T^2-4N)$ so that $\omega_p(y)$ defined in \cite{cheng2025b} is $\omega_p(y,\pm nq^\nu)$.

\subsubsection{Nonarchimedean case}
For any prime $p\in S$, we define
\[
\theta_{p}(\gamma)=\frac{1}{\mathopen{|}\det\gamma\mathclose{|}_p^{1/2}}\left(1-\frac{\chi(p)}{p}\right)^{-1}p^{-{k_\gamma}}\orb(f_{p};\gamma),
\]
where $\chi(p)=\omega_p(\Tr\gamma,\det\gamma)$. By Corollary 2.12 of \cite{cheng2025}, the local behavior of $\theta_{p}$ at $z=aI$ is
\begin{equation}\label{eq:shalikalocal}
\theta_{p}(\gamma)=\lambda_1\left(1-\frac{\chi(p)}{p}\right)^{-1}p^{-{k_\gamma}} \frac{1-\chi(p)}{1-p}+\lambda_2.
\end{equation}

Clearly $\theta_{p}(\gamma)$ is invariant under conjugation. Thus $\theta_{p}(\gamma)$ can be parametrized by $T=\Tr\gamma$ and $N=\det\gamma$, i.e., $\theta_{p}(\gamma)=\theta_p(T,N)$. 
Since $\theta_p(\gamma)$ is smooth away from the center, $\theta_p(T,N)$ is smooth except at $T^2=4N$. Moreover, for any $\epsilon\in \{0,\pm 1\}$, by \eqref{eq:shalikalocal} we have
\[
\theta_p(T,N)=\theta_{p,1}(T,N)+\theta_{p,0}(T,N),
\]
where $\theta_{p,\tau}(T,N)={|T^2-4N|_p'}^{\tau/2}\psi_\tau(T,N)$ for $\tau\in \{0,1\}$, where $\psi_\tau(T,N)$ is a smooth function on $\{(T,N)\in \QQ_p^2\,|\,\omega_p(T^2-4N)=\epsilon\}$ and up to boundary, with bounded support, for any $\epsilon\in \{0,\pm 1\}$.

For the invariance with respect to center we have the following lemma:
\begin{lemma}\label{lem:orbitalintegralsmooth}
Let $p\in S$. Then there exists $L\in \ZZ_{>0}$ such that for any $c\in 1+p^{L}\ZZ_p$, any $T\in \QQ_p$ and $N\in \QQ_p^\times$, we have $\theta_p(cT,c^2N)=\theta_p(T,N)$.
\end{lemma}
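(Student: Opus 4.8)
The plan is to write $z_c=cI$ for the central element, which has trace $cT$ and determinant $c^2N$, so that the claim becomes $\theta_\ell(z_c\gamma)=\theta_\ell(\gamma)$, and then to check this factor by factor in the definition of $\theta_\ell$. The only real choice is $L$: since $f_\ell\in C_c^\infty(\G(\QQ_\ell))$ is locally constant with compact support, a routine compactness argument produces a compact open subgroup $K_0\le\G(\QQ_\ell)$ with $f_\ell(xk)=f_\ell(x)$ for all $x$ and all $k\in K_0$, and because $K_0$ is open one can pick $L\in\ZZ_{>0}$ with $(1+\ell^L\ZZ_\ell)I\subseteq K_0$. Fix such an $L$ and any $c\in1+\ell^L\ZZ_\ell$; then $c\in\ZZ_\ell^\times$ and $z_c\in K_0$. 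It suffices to treat regular semisimple $\gamma$ (equivalently $T^2\neq4N$): the map $(T,N)\mapsto(cT,c^2N)$ will be seen to preserve each stratum $\{\omega_\ell(T^2-4N)=\epsilon\}$, on which $\theta_\ell$ extends continuously up to the boundary $\{T^2=4N\}$ by \eqref{eq:shalikalocal}, so the identity on this dense locus propagates to the boundary case $N\in\QQ_\ell^\times$, $T^2=4N$.

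Next I would check that the three normalizing factors $|\det\gamma|_\ell^{-1/2}$, $\ell^{-k_\gamma}$ and $(1-\chi(\ell)/\ell)^{-1}$ are unaffected upon replacing $\gamma$ by $z_c\gamma$. Since $|c|_\ell=1$ we get $|\det(z_c\gamma)|_\ell=|c|_\ell^2\,|\det\gamma|_\ell=|\det\gamma|_\ell$; and from $|a^2y|_\ell'=|a|_\ell|y|_\ell'$ with $a=c$ we get $|(cT)^2-4c^2N|_\ell'=|c^2(T^2-4N)|_\ell'=|T^2-4N|_\ell'$, hence $k_{z_c\gamma}=k_\gamma$. For $\chi(\ell)=\omega_\ell(T^2-4N)$ one has $\omega_\ell(c^2(T^2-4N))=\legendresymbol{c^2(T^2-4N)\,|T^2-4N|_\ell'}{\ell}$; when $\ell$ is odd this equals $\legendresymbol{c^2}{\ell}\,\omega_\ell(T^2-4N)=\omega_\ell(T^2-4N)$ because $c$ is a unit, and when $\ell=2$ one uses the elementary fact that $c^2\equiv1\pmod8$ for every odd $2$-adic unit $c$ (write $c=1+2t$, so $c^2=1+4t(t+1)$ with $t(t+1)$ even), whence $c^2(T^2-4N)\,|T^2-4N|_\ell'$ and $(T^2-4N)\,|T^2-4N|_\ell'$ have the same Kronecker symbol at $2$. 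Thus $\chi(\ell)$ is unchanged, which also shows $(T,N)$ and $(cT,c^2N)$ lie in the same stratum $\{\omega_\ell=\epsilon\}$.

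Finally, for the orbital integral: $z_c$ being central, the centralizer is unchanged, $\G_{z_c\gamma}=\G_\gamma$, with the same measure normalization (indeed $z_c\in\G_\gamma(\ZZ_\ell)$ since $c\in\ZZ_\ell^\times\subseteq\cO_{E_\ell}^\times$), and $g^{-1}(z_c\gamma)g=(g^{-1}\gamma g)z_c$. Therefore
\[
\orb(f_\ell;z_c\gamma)=\int_{\G_\gamma(\QQ_\ell)\bs\G(\QQ_\ell)}f_\ell\bigl((g^{-1}\gamma g)z_c\bigr)\,\rmd g=\orb(f_\ell;\gamma),
\]
using $z_c\in K_0$ and the right $K_0$-invariance of $f_\ell$ in the last step. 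Multiplying the four factors in the definition of $\theta_\ell$ yields $\theta_\ell(cT,c^2N)=\theta_\ell(z_c\gamma)=\theta_\ell(\gamma)=\theta_\ell(T,N)$ for regular semisimple $\gamma$, and the boundary case $T^2=4N$ follows from the continuity remark in the first paragraph.

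I do not expect a genuine obstacle here; the two points that need a little care are the behavior of $\omega_\ell$ at $\ell=2$ under multiplication by a unit square (dispatched by $c^2\equiv1\pmod8$) and the passage from the regular semisimple locus to all $N\in\QQ_\ell^\times$ (dispatched by density together with the boundary regularity supplied by \eqref{eq:shalikalocal}).
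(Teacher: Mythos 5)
Your proof is correct and follows essentially the same strategy as the paper's: use smoothness and compact support of $f_\ell$ to find $L$ so that $f_\ell$ is invariant under multiplication by $(1+\ell^L\ZZ_\ell)I$, deduce invariance of the orbital integral, observe that the normalizing factors in $\theta_\ell$ are unchanged (equivalently, that $c\gamma$ has the same split/inert/ramified type as $\gamma$), and pass to the boundary $T^2=4N$ by continuity. The paper dispatches the invariance of the normalizing factors with a one-line remark; you spell it out, including the case $\ell=2$ via $c^2\equiv1\pmod8$, which is a worthwhile check but not a different idea.
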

\begin{proof}
Since $f_p$ is smooth and compactly supported, there exists $L\geq 2$ such that $f_p(bg)=f_p(g)$ for any $c\in 1+p^{L}\ZZ_p$ and $g\in \G(\QQ_p)$. Hence $\orb(f_p;c\gamma)=\orb(f_p;\gamma)$ for any $c\in 1+p^{L}\ZZ_p$ and regular elements $\gamma$. Clearly $c\gamma$ is split (resp. inert, ramified) if and only if $\gamma$ is. Hence $\theta_p(c\gamma)=\theta_p(\gamma)$ for any $c\in 1+p^{L}\ZZ_p$ and regular elements $\gamma$. Thus for any $T\in \QQ_p$ and $N\in \QQ_p^\times$ with $T^2-4N\neq 0$, we have
\[
\theta_p(cT,c^2N)=\theta_p(T,N).
\]
By continuity, the above equation also holds for $T^2-4N=0$.
\end{proof}

\subsubsection{Archimedean case}
Recall the following theorem  (see Theorem 2.13 in \cite{cheng2025}):
\begin{theorem}\label{thm:archimedeanintegral}
For any $f_\infty\in C^\infty(\G(\RR))$, any maximal torus $\T(\RR)$ in $\G(\RR)$ and any $z$ in the center of $\G(\RR)$, there exists a neighborhood $N$ in $\T(\RR)$ of $z$ and smooth functions $g_1,g_2\in C^\infty(N)$ (depending on $f_\infty$ and $z$) such that
\begin{equation}\label{eq:archimedeanintegral}
\orb(f_\infty;\gamma)=g_1(\gamma)+\frac{|\gamma_1\gamma_2|^{1/2}}{|\gamma_1-\gamma_2|}g_2(\gamma)
\end{equation}
for any $\gamma\in \T(\RR)$, where $\gamma_1$ and $\gamma_2$ are the eigenvalues of $\gamma$.  Moreover, $g_1$ and $g_2$ can be extended smoothly to all split and elliptic elements, remaining invariant under conjugation, with $g_1(\gamma)=0$ if $\T(\RR)$ is split, and $g_2$ can further be extended smoothly to the center. If $f_\infty$ is $Z_+$-invariant, then $g_1$ and $g_2$ are also $Z_+$-invariant.
\end{theorem}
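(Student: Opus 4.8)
\emph{Plan.} The statement is purely local at $\infty$, and since the measure normalizations are fixed above and the claim is visibly insensitive to the arbitrary Haar measure on $\G(\RR)$, one can argue directly. By conjugation-invariance and $Z_+$-equivariance of the orbital integral I would first reduce to $\T(\RR)$ being one of the two standard maximal tori: the split torus $\T_s(\RR)=\{\mathrm{diag}(t_1,t_2)\}$, or the elliptic torus $\T_e(\RR)=\{(\begin{smallmatrix}a&b\\-b&a\end{smallmatrix})\}$, and $z=z_0I$ with $z_0\neq0$; the eigenvalues are then $\{t_1,t_2\}$, resp.\ $a\pm b\rmi$. For the split torus, Iwasawa reduces the orbital integral to $c\int_\K\int_{\N(\RR)}f_\infty(k^{-1}n^{-1}\gamma nk)\,\rmd n\,\rmd k$; since $n(u)^{-1}\gamma n(u)=(\begin{smallmatrix}t_1&u(t_1-t_2)\\0&t_2\end{smallmatrix})$ for $n(u)=(\begin{smallmatrix}1&u\\0&1\end{smallmatrix})$, the substitution $v=u(t_1-t_2)$ gives $\orb(f_\infty;\gamma)=c\,|t_1-t_2|^{-1}\Phi(t_1,t_2)$ with $\Phi(t_1,t_2)=\int_\K\int_\RR f_\infty(k^{-1}(\begin{smallmatrix}t_1&v\\0&t_2\end{smallmatrix})k)\,\rmd v\,\rmd k$. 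Here $\Phi$ is smooth near $(z_0,z_0)$ by differentiation under the integral sign ($\K$ is compact, no singular matrix is met), and $|t_1t_2|^{-1/2}$ is smooth near the centre; so $g_1:=0$ and $g_2:=c\,|t_1t_2|^{-1/2}\Phi$ do the job, with the stated invariances and a smooth extension to all of $\T_s(\RR)$.

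\emph{Elliptic torus.} Since $\T_e(\RR)\cong\CC^\times$ acts freely on a dense orbit in $\RR^2\setminus\{0\}$, the multiplication map $\T_e(\RR)\times B_0(\RR)\to\G(\RR)$, where $B_0:=\{(\begin{smallmatrix}1&u\\0&v\end{smallmatrix}):u\in\RR,\ v\in\RR^\times\}$, is a bijection and the invariant measure on $\T_e(\RR)\bs\G(\RR)$ is proportional to $\rmd u\,\rmd v/v^2$. A direct computation gives $g^{-1}\gamma g=(\begin{smallmatrix}a+bu/v&b(u^2+v^2)/v\\-b/v&a-bu/v\end{smallmatrix})$, and the change of variables $(u,v)\mapsto(P,Q):=(bu/v,b/v)$ turns the matrix into $(\begin{smallmatrix}a+P&(P^2+b^2)/Q\\-Q&a-P\end{smallmatrix})$ and the measure into a constant multiple of $|b|^{-1}\,\rmd P\,\rmd Q/|Q|$, so
\[
\orb(f_\infty;\gamma)=\frac{c'}{|b|}\int_\RR G(P,b)\,\rmd P,\qquad G(P,b):=\int_{\RR^\times}f_\infty(\begin{smallmatrix}a+P&(P^2+b^2)/Q\\-Q&a-P\end{smallmatrix})\,\frac{\rmd Q}{|Q|}.
\]
The substitution $Q=(P^2+b^2)/s$ gives $\rmd Q/|Q|=\rmd s/|s|$ and off-diagonal entries $s$ and $-(P^2+b^2)/s$; splitting the $s$-integral at $|s|=(P^2+b^2)^{1/2}$ and folding the inner piece by $s\leftrightarrow(P^2+b^2)/s$, I expect to find $G(P,b)=-2\log(P^2+b^2)\,\phi(a,P,b)+\psi(a,P,b)$ with $\phi,\psi$ smooth (depending on $b$ only through $b^2$) and $\phi(a,0,0)=f_\infty(aI)$.

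\emph{Finishing the elliptic case.} The one genuine analytic input is then the identity, for $\phi\in C^\infty$ with rapid decay in $P$,
\[
\int_\RR\log(P^2+b^2)\,\phi(P,b)\,\rmd P=A(b)+|b|\,B(b),\qquad A,B\in C^\infty\ \text{near }0,
\]
which I would prove from the fact that $\partial_b\log(P^2+b^2)=2b/(P^2+b^2)$ is $2\uppi$ times an approximate identity at $b=0$: the first $b$-derivative of the left-hand side jumps by $4\uppi\phi(0,0)$ across $b=0$, one subtracts off $2\uppi\phi(0,0)|b|$, and iterates (carrying the smooth $b$-dependence of $\phi$ along). Feeding this back, $\int_\RR G(P,b)\,\rmd P=\widetilde A(a,b)+|b|\widetilde B(a,b)$ with $\widetilde A,\widetilde B$ smooth, whence $\orb(f_\infty;\gamma)=c'\widetilde A(a,b)/|b|+c'\widetilde B(a,b)$. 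As $|\gamma_1-\gamma_2|=2|b|$, $|\gamma_1\gamma_2|^{1/2}=(a^2+b^2)^{1/2}$, and $(a,b)\mapsto(a^2+b^2)^{-1/2}$ is smooth near $(z_0,0)$, this is exactly $g_1+\frac{|\gamma_1\gamma_2|^{1/2}}{|\gamma_1-\gamma_2|}g_2$ with $g_1:=c'\widetilde B$ and $g_2:=2c'\widetilde A\,(a^2+b^2)^{-1/2}$, both smooth, conjugation-invariant (being assembled from conjugation-invariant data), $Z_+$-invariant, and smoothly extendable over all of $\T_e(\RR)$. Note that the absence of any $|b|$-corner on the split side — ultimately because the split conjugate $n^{-1}\gamma n$ stays upper triangular, so $\Phi$ is directly smooth with no singular contribution — is exactly why $g_1\equiv0$ there.

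\emph{The ``moreover'' and the main obstacle.} Rewriting $\Phi,\widetilde A,\widetilde B$ as functions of $(T,N)=(\Tr\gamma,\det\gamma)$ exhibits $g_1,g_2$ as conjugation-invariant functions on the split and elliptic loci with $g_1\equiv0$ on the split locus. The last assertion — the point I expect to be the main obstacle — is that $g_2$ is moreover smooth at the central elements, where the split and elliptic pictures meet: equivalently, that the discriminant-weighted orbital integral $|D(\gamma)|^{1/2}\orb(f_\infty;\gamma)$, which the above identifies with $c\Phi$ on the split side and with $2c'\widetilde A(a,b)(a^2+b^2)^{-1/2}+O(|b|)$ on the elliptic side, extends continuously — then smoothly — across $\gamma=z_0I$, i.e.\ that $c\,\Phi(z_0,z_0)$ and $2c'\,\widetilde A(z_0,0)$ agree together with all derivatives. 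I would establish this either by a direct comparison of the two explicit ``slice integrals'' of $f_\infty$ through $z_0I$ — both compute, up to the common measure constant, the coefficient of the leading $|D(\gamma)|^{-1/2}$-singularity of $\orb(f_\infty;\cdot)$, which for $\GL_2$ is one and the same Shalika germ from either side — or, more structurally, by quoting Harish-Chandra's smoothness and jump relations for orbital integrals on a real rank-one group, of which this theorem is the $\GL_2(\RR)$ instance. The work is concentrated in the $\log$-integral identity and this central matching; everything else is routine bookkeeping.
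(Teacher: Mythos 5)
This theorem is \emph{cited} in the paper (``Recall the following theorem (see Theorem 2.12 in \cite{cheng2025})'') rather than proved, so there is no internal proof to compare against; I can only assess your argument on its own terms. That said, your route — Iwasawa decomposition for the split torus, an explicit slice parameterization $\T_e(\RR)\times B_0\to\G(\RR)$ for the elliptic torus, extraction of the $\log$-singularity, and a Harish--Chandra-style matching at the center — is the classical germ-expansion computation for $\GL_2(\RR)$, and is very likely what the cited reference (following Altu\u{g} \cite[Lemma~4.1]{altug2015}) also does. The split-torus computation and the bookkeeping of the measure $\rmd u\,\rmd v/v^2$ and the substitution $(u,v)\mapsto(P,Q)=(bu/v,b/v)$ check out.

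There are, however, two spots where the argument is only a sketch and needs real work. First, in the elliptic case you write ``I expect to find $G(P,b)=-2\log(P^2+b^2)\,\phi(a,P,b)+\psi(a,P,b)$ with $\phi,\psi$ smooth.'' This is not a pointwise identity one can read off from the folding: for $(P,b)\neq(0,0)$ the $s$-integral converges absolutely and has no $\log$, so the ``coefficient of the $\log$'' has to be extracted as an asymptotic germ rather than by evaluating the integrand at a boundary value (where, as you can check, the $(2,1)$ entry blows up and the naive candidate for $\phi$ vanishes off $(P,b)=(0,0)$). You can avoid this intermediate claim entirely: integrate $G(P,b)$ in $P$ before analyzing the $b$-singularity, noting that $G$ depends on $b$ only through $b^2$, and establish the end statement $\int_\RR G(P,b)\,\rmd P=\widetilde A(a,b^2)+|b|\,\widetilde B(a,b^2)$ directly from the $\log$-lemma. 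Second, your proof of that $\log$-lemma by iterating the Poisson-kernel jump is morally correct but informal; the iteration needs to account for the fact that $\int\tfrac{2}{s^2+1}\phi(bs,b)\,\rmd s$ is itself only of the form $(\text{smooth})+\mathrm{sgn}(b)\cdot(\text{smooth})$ after subtracting the leading jump, and one must track how the ``carrying the smooth $b$-dependence along'' produces terms that remain of the target form at each step.

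Your identification of the central matching — that $|D(\gamma)|^{1/2}\orb(f_\infty;\gamma)$ extends smoothly across $\gamma=z_0I$, equivalently that $c\,\Phi$ on the split side and $2c'\widetilde A/(a^2+b^2)^{1/2}$ on the elliptic side agree to all orders — is exactly the content of the last assertion, and you are right that it is either a direct comparison of the two nilpotent slice integrals through $z_0I$ or an appeal to Harish-Chandra's rank-one jump/smoothness relations. Quoting Harish-Chandra (or Shelstad/Shalika) here is the expected move. Overall: correct approach, but the $\phi,\psi$ extraction and the $\log$-lemma need to be made rigorous before this is a proof.
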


We define
\[
\theta_\infty(\gamma)=\frac{|\gamma_1-\gamma_2|}{|\gamma_1\gamma_2|^{1/2}}\orb(f_\infty;\gamma)= \frac{|\gamma_1-\gamma_2|}{|\gamma_1\gamma_2|^{1/2}}g_1(\gamma)+g_2(\gamma).
\]
Since $g_1,g_2$ and $\theta_\infty$ are invariant under conjugation, we can parametrize them by $T_\gamma$ and $N_\gamma$ as in the nonarchimedean case.

Since $T_{z\gamma}=aT_\gamma$ and  $N_{z\gamma}=a^2N_\gamma$ for $z=aI$ with $a>0$, we have $g_i(T_\gamma,N_\gamma)=g_i(aT_\gamma,a^2N_\gamma)$ and $\theta_\infty(T_\gamma,N_\gamma)=\theta_\infty(aT_\gamma,a^2N_\gamma)$ for $i=1,2$ and any $a>0$. We also set 
\[
\theta_{\infty,0}(T,N)=g_2(T,N)\quad\text{and}\quad\theta_{\infty,1}(T,N)=\left|\frac{T^2-4N}{N}\right| ^{\frac{1}{2}}g_1(T,N).
\]
Hence $\theta_\infty(T,N)=\theta_{\infty,1}(T,N)+\theta_{\infty,0}(T,N)$. Moreover, $\theta_{\infty,\sigma}(T,N)=|T^2-4N|^{\sigma/2}\varphi_\sigma(T,N)$ for $\sigma\in \{0,1\}$, where $\varphi_\sigma(T,N)$ is a smooth function on $\{(T,N)\in \RR^2\,|\,\omega_\infty(T^2-4N)=\sigma\}$ up to boundary, with bounded support.

Also, we set $\theta_\infty^\pm(x)=\theta_\infty(x,\pm 1/4)$, which coincides with the notation in \cite{cheng2025} and \cite{cheng2025b}.

In this paper we will use a variant of $\theta_\infty^{\pm}(x)$. We define
\[
\Theta_\infty^\pm(x)=\theta_\infty\left(\pm 1,\frac{1-x}{4}\right).
\]
and write $\widehat{\Theta}_\infty(x)=\Theta_\infty^+(x)+\Theta_\infty^-(x)$. Also, for any prime $p$, we define
\[
\widehat{\Theta}_p(y)=\int_{\QQ_p^\times}\theta_p\left(z,\frac{z^2(1-y)}{4}\right)\frac{\rmd z}{|z|_p}.
\]

\subsection{Global notations}\label{subsec:global}
For $\nu\in \ZZ^r$, we usually denote $\nu_i$ by its $i^{\mathrm{th}}$ component. We define
\[
q^\nu=q_1^{\nu_1}\dots q_r^{\nu_r}.
\]

For any $y=(y_1,\dots,y_r)\in \QQ_{S_\fin}=\QQ_{q_1}\times\dots\times\QQ_{q_r}$ and $N\in \QQ$, we define
\[
\theta_{q}(y,N)=\prod_{i=1}^{r}\theta_{q_i}(y_i,N),\qquad|y|_q=\prod_{i=1}^{r}|y_i|_{q_i},\qquad
|y|_q'=\prod_{i=1}^{r}|y_i|_{q_i}',
\]
\[
\rme_q(y)=\prod_{i=1}^{r}\rme_{q_i}(y_i),\qquad \widehat{\Theta}_q(y)=\prod_{i=1}^{r}\widehat{\Theta}_{q_i}(y).
\]
Also, for $\tau=(\tau_1,\dots,\tau_r)\in \{0,1\}^r$, we define
\[
\theta_{q,\tau}(y,N)=\prod_{i=1}^{r}\theta_{q_i,\tau_i}(y_i,N).
\]
We usually embed $\QQ$ in $\QQ_S$ or $\QQ_{S_\fin}$ diagonally.

\subsection{The functions $F$ and $V$}
Let
\[
F(x)=\frac{1}{2K_0(2)}\int_{x}^{+\infty}\rme^{-t-1/t}\frac{\rmd t}{t},
\]
where $K_s(y)$ is the modified Bessel function of the second kind. The Mellin transform of $F$ is
\begin{equation}\label{eq:mellinf}
\widetilde{F}(s)=\frac{1}{s}\frac{K_s(2)}{K_0(2)}.
\end{equation}
From this one can show that (see Proposition 3.7 in \cite{cheng2025} for example) $\widetilde{F}$ is an odd function, and for $s=\sigma+\rmi t\in \CC$ such that $\sigma_1\leq \sigma\leq \sigma_2$, we have
\begin{equation}\label{eq:frapiddecay}
\widetilde{F}(s)\ll_{\sigma_1,\sigma_2} |s|^{|\sigma|-1}\rme^{-\uppi|t|/2}.
\end{equation}
Also, we define
\begin{equation}\label{eq:defv}
V_{\iota,\epsilon}(x)=V_{\iota,\epsilon,1}(x)=\frac{\uppi^{1/2}}{2\uppi \rmi}\int_{(1)}\widetilde{F}(s)\prod_{i=1}^{r}\frac{1-\epsilon_i q_i^{s-1}}{1-\epsilon_i q_i^{-s}}\frac{\Gamma((\iota+s)/2)}{\Gamma((\iota+1-s)/2)}(\uppi x)^{-s}\rmd s.
\end{equation}
The functions $F$ and $V$ occur in the approximate functional equation in \cite[Section 3]{cheng2025}.

Let $\cS$ be the set of smooth functions $\Phi$ on $\lopen 0,+\infty\ropen$ such that for any $k\in \ZZ_{\geq 0}$, $\Phi^{(k)}(x)$ are of rapid decay as $x\to +\infty$. The following propositions are proved in \cite{cheng2025b}:
\begin{proposition}\label{prop:propertyf}
We have $F\in \cS$. Moreover, $\widetilde{F}(s)$ has a meromorphic continuation to the whole complex plane, holomorphic except for a simple pole at $s=0$, and is of rapid decay in $t$ when $\sigma$ is fixed.
\end{proposition}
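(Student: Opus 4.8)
The plan is to deduce every assertion from the closed form $\widetilde{F}(s)=\frac1s\frac{K_s(2)}{K_0(2)}$ recorded in \eqref{eq:mellinf}: first justify that formula, then read off the analytic properties of $K_s(2)$ as a function of $s$. For membership in $\cS$ I would differentiate under the integral sign: $F'(x)=-\frac{1}{2K_0(2)}\frac{e^{-x-1/x}}{x}$, and then argue by an easy induction that $F^{(k)}(x)=e^{-x-1/x}R_k(x)$ for $k\ge 1$, where $R_k$ is a rational function whose only pole is at $x=0$ (from $R_{k+1}=(1/x^2-1)R_k+R_k'$ and $R_1(x)=c/x$); in particular each $F^{(k)}$ is smooth on $(0,+\infty)$. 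Rapid decay at $+\infty$ is then immediate, since $e^{-x-1/x}R_k(x)\ll_k e^{-x/2}$ for $x\ge 1$, while for $F$ itself $0\le F(x)\le\frac{1}{2K_0(2)}\int_x^{+\infty}e^{-t}\,dt=\frac{e^{-x}}{2K_0(2)}$. Hence $F\in\cS$.

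Next I would establish \eqref{eq:mellinf} and the continuation. The integral representation $K_\nu(2)=\frac12\int_0^{+\infty}e^{-(x+1/x)}x^{\nu-1}\,dx$ with $\nu=0$ gives $F(0^+)=1$; combined with the rapid decay at $+\infty$ this shows $\int_0^{+\infty}F(x)x^{s-1}\,dx$ converges absolutely and is holomorphic for $\Re s>0$. Integrating by parts once (the boundary terms vanish when $\Re s>0$ because $F(x)\ll e^{-x}$ at infinity and $x^s\to 0$ at zero) turns this into $-\frac1s\int_0^{+\infty}F'(x)x^s\,dx=\frac{1}{2K_0(2)\,s}\int_0^{+\infty}e^{-x-1/x}x^{s-1}\,dx=\frac{K_s(2)}{K_0(2)\,s}$, which is \eqref{eq:mellinf}. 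Now $K_s(2)$ is entire in $s$ — either because the last integral converges locally uniformly for every $s\in\CC$, or from $K_\nu=\frac{\pi(I_{-\nu}-I_\nu)}{2\sin(\pi\nu)}$ with removable singularities at the integers — and $K_0(2)>0$, so $\widetilde{F}(s)=K_s(2)/(K_0(2)s)$ extends meromorphically to $\CC$ with a single simple pole at $s=0$, of residue $\lim_{s\to 0}K_s(2)/K_0(2)=1$. (The relation $K_{-s}=K_s$ moreover makes $\widetilde{F}$ odd, with $\fp_{s=0}\widetilde{F}(s)=0$.)

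For the decay in $t$, write $s=\sigma+\rmi t$ with $\sigma$ fixed and substitute $x=e^u$ to get $K_s(2)=\frac12\int_{-\infty}^{+\infty}e^{-2\cosh u}e^{su}\,du$. Since $\lvert e^{-2\cosh u}\rvert=e^{-2\cosh(\Re u)\cos(\Im u)}$ decays on every horizontal strip $\lvert\Im u\rvert<\pi/2$, for $t>0$ one may shift the contour to $\Im u=\pi/2-1/t$; this produces the factor $e^{-t(\pi/2-1/t)}=e\cdot e^{-\pi t/2}$, while the remaining integral in $v=\Re u$ is $\ll_\sigma t^{\lvert\sigma\rvert}+\log t$ (the bulk coming from $\lvert v\rvert\lesssim\log t$), giving $\widetilde{F}(\sigma+\rmi t)\ll_\sigma\lvert t\rvert^{\lvert\sigma\rvert-1}e^{-\pi\lvert t\rvert/2}\log(2+\lvert t\rvert)$, which is of rapid (indeed exponential) decay. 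This already essentially reproves \eqref{eq:frapiddecay} up to a logarithm; alternatively, since \eqref{eq:frapiddecay} is recorded, one may simply note that its right-hand side $\lvert s\rvert^{\lvert\sigma\rvert-1}e^{-\pi\lvert t\rvert/2}$ decays faster than any power of $\lvert t\rvert$ for $\sigma$ fixed.

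The main obstacle is the sharp exponential decay in $t$ of $K_s(2)$ on vertical strips. The contour shift above is the natural route, but the decay of the integrand degenerates exactly on the lines $\Im u=\pm\pi/2$, so eliminating the spurious logarithm and obtaining the clean exponent $\lvert s\rvert^{\lvert\sigma\rvert-1}$ of \eqref{eq:frapiddecay} requires either a genuine steepest-descent analysis at the saddle points of $-2\cosh u+su$ (which migrate toward $u=\pm\rmi\pi/2$ as $\lvert t\rvert\to\infty$), or invoking the known uniform asymptotics of $K_\nu$ for large imaginary order. For the proposition as stated — merely meromorphic continuation with a simple pole at $s=0$ and rapid decay in $t$ — the crude bound obtained above is already enough.
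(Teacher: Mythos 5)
Your proof is correct and takes the approach the paper itself points to: the paper does not reprove this proposition but cites \cite{cheng2025b} for it, while already recording the key identity $\widetilde F(s)=\tfrac1s K_s(2)/K_0(2)$ (\eqref{eq:mellinf}) and the bound \eqref{eq:frapiddecay} as the ingredients. Your argument reconstructs exactly that route: differentiating $F$ to get the inductive closed form $F^{(k)}(x)=\rme^{-x-1/x}R_k(x)$ with $R_k$ rational and pole only at $0$ gives $F\in\cS$; integration by parts (with $F(0^+)=1$ and exponential decay at infinity) gives \eqref{eq:mellinf} for $\Re s>0$; entirety of $s\mapsto K_s(2)=\tfrac12\int_0^\infty\rme^{-u-1/u}u^{s-1}\,\rmd u$ then yields the meromorphic continuation with a single simple pole at $s=0$; and the $x=\rme^u$ substitution plus a contour shift to $\Im u=\pi/2-1/t$ delivers the exponential decay in $t$. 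You are right that your contour-shift estimate leaves a spurious $\log$ relative to the sharp form of \eqref{eq:frapiddecay}, but as you note, that is more than enough for the ``rapid decay'' actually asserted in the proposition. No gaps.
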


\begin{proposition}\label{prop:propertyh}
For any $\iota$ and $\epsilon$, we have $V_{\iota,\epsilon}\in \cS$. Moreover, $\widetilde{V_{\iota,\epsilon}}(s)$ is holomorphic on $\Re s>0$, and is of rapid decay in $t$ when $\sigma>0$ is fixed.
\end{proposition}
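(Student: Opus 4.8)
The plan is to pass to the Mellin transform, which one reads off from \eqref{eq:defv} (writing $(\uppi x)^{-s}=\uppi^{-s}x^{-s}$):
\[
\widetilde{V_{\iota,\epsilon}}(s)=\uppi^{1/2-s}\,\widetilde{F}(s)\prod_{i=1}^{r}\frac{1-\epsilon_i q_i^{s-1}}{1-\epsilon_i q_i^{-s}}\cdot\frac{\Gamma((\iota+s)/2)}{\Gamma((\iota+1-s)/2)},
\]
and then to estimate this explicit product on vertical lines $\Re s=\sigma>0$; the assertion $V_{\iota,\epsilon}\in\cS$ will come out of \eqref{eq:defv} afterwards by absolute convergence and a contour shift.

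For holomorphy on $\Re s>0$ I would handle the three types of factors in turn. The factor $\widetilde{F}(s)$ is holomorphic on $\Re s>0$ by \autoref{prop:propertyf}, its only pole lying at $s=0$ on the boundary. Each factor $\frac{1-\epsilon_i q_i^{s-1}}{1-\epsilon_i q_i^{-s}}$ is holomorphic there, since the numerator is entire and, because $\epsilon_i\in\{0,\pm1\}$, the denominator satisfies $|1-\epsilon_i q_i^{-s}|\geq 1-q_i^{-\sigma}>0$ when $\sigma>0$. Finally $\Gamma((\iota+s)/2)$ has poles only at $s\in\{-\iota,-\iota-2,\dots\}\subseteq\RR_{\leq 0}$ (using $\iota\in\{0,1\}$) while $1/\Gamma((\iota+1-s)/2)$ is entire, so the Gamma ratio, and hence $\widetilde{V_{\iota,\epsilon}}$, is holomorphic on $\Re s>0$.

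For rapid decay in $t$ with $\sigma>0$ fixed I would multiply three bounds. By \eqref{eq:frapiddecay}, $\widetilde{F}(\sigma+\rmi t)\ll_\sigma(1+|t|)^{\sigma-1}\rme^{-\uppi|t|/2}$. On $\Re s=\sigma>0$ the Euler factors have modulus bounded above and below, the numerator and denominator having $t$-independent modulus and the denominator being bounded away from $0$ as above. For the Gamma ratio, Stirling applied to $\Gamma$ and to the entire function $1/\Gamma$ gives $\Gamma((\iota+\sigma+\rmi t)/2)\ll(1+|t|)^{(\iota+\sigma-1)/2}\rme^{-\uppi|t|/4}$ and $1/\Gamma((\iota+1-\sigma-\rmi t)/2)\ll(1+|t|)^{(\sigma-\iota)/2}\rme^{\uppi|t|/4}$, whence the Gamma ratio is $\ll_{\sigma,\iota}(1+|t|)^{\sigma-1/2}$; equivalently, the reflection and duplication formulas rewrite it as $\tfrac{2^{1-s}}{\uppi^{1/2}}\Gamma(s)\cos(\uppi s/2)$ for $\iota=0$ and $\tfrac{2^{1-s}}{\uppi^{1/2}}\Gamma(s)\sin(\uppi s/2)$ for $\iota=1$, which makes the estimate transparent. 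Multiplying, $\widetilde{V_{\iota,\epsilon}}(\sigma+\rmi t)\ll_{\sigma,\iota}(1+|t|)^{2\sigma-3/2}\rme^{-\uppi|t|/2}$, which is of rapid decay in $t$.

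Finally, for $V_{\iota,\epsilon}\in\cS$: the decay just obtained makes \eqref{eq:defv} absolutely convergent on every line $\Re s=\sigma>0$, and differentiating $k$ times under the integral only inserts the factor $(-1)^k x^{-k}\prod_{j=0}^{k-1}(s+j)$, which remains dominated by $\rme^{-\uppi|t|/2}$; hence $V_{\iota,\epsilon}$ is smooth on $\lopen 0,+\infty\ropen$. For decay as $x\to+\infty$, shift the contour in \eqref{eq:defv}, and in its $k$-th derivative, from $\Re s=1$ to $\Re s=A$ for an arbitrary $A>0$: by the holomorphy above there are no poles in $1\leq\Re s\leq A$, the horizontal segments vanish by the rapid decay in $t$, and on $\Re s=A$ one gets $\partial_x^k V_{\iota,\epsilon}(x)\ll_{A,k}x^{-A}$. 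Since $A$ is arbitrary, all derivatives of $V_{\iota,\epsilon}$ are of rapid decay, so $V_{\iota,\epsilon}\in\cS$. The one step carrying any subtlety is the Stirling estimate for the Gamma ratio: one must check that the two exponential factors cancel and that the bound is uniform for $\sigma$ in compact subsets of $(0,+\infty)$, which is exactly what legitimizes the contour shift; the remainder is routine contour-integral bookkeeping.
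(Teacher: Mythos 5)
The paper does not actually give a proof of this proposition; it cites \cite{cheng2025b} for both this and \autoref{prop:propertyf}, so a direct comparison is not possible. Your argument is correct and is the natural one given \eqref{eq:defv}: you read off the Mellin transform, verify holomorphy on $\Re s>0$ from the pole/zero structure of each factor, combine \eqref{eq:frapiddecay} with Stirling for the $\Gamma$-quotient to get (in fact exponential) decay in $t$, and then obtain $V_{\iota,\epsilon}\in\cS$ by differentiation under the integral and a rightward contour shift. One cosmetic slip: the clause ``the numerator and denominator having $t$-independent modulus'' is inaccurate — only $|q_i^{s-1}|$ and $|q_i^{-s}|$ are $t$-independent, not $|1-\epsilon_i q_i^{s-1}|$ — and the asserted lower bound on the Euler factors is neither needed nor, in general, true (take $\epsilon_i=1$, $\sigma=1$, $t=2\pi k/\log q_i$); since you only use the upper bound, this does not affect the proof.
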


\subsection{The generalized Kloosterman sum} 

 The \emph{partial generalized Kloosterman sum} is defined by
\[
\Kl_{k,f}^S(\xi,m)=
  \sum_{\substack{a \bmod kf^2\\ a^2-4m\equiv 0\,(f^2)}}\legendresymbol{(a^2-4m)/f^2}{k}\rme\legendresymbol{a\xi}{kf^2}\rme_{q}\legendresymbol{a\xi}{kf^2},
\]
for $k,f\in \ZZ_{(S)}$ and $\xi,m\in \ZZ^S$. 

For any prime $p\notin S$ and $\xi,m\in \ZZ_p$, we define the \emph{local generalized Kloosterman sum} to be
\begin{equation}\label{eq:localgeneralizedkloosterman}
\Kl_{p^u,p^v}^{(p)}(\xi,m)=
\sum_{\substack{a \bmod p^{u+2v}\\ a^2-4m\equiv 0\,(p^{2v})}}\legendresymbol{(a^2-4m)/p^{2v}}{p^u}\rme_p\legendresymbol{-a\xi}{p^{u+2v}}.
\end{equation}

We have proved in \cite{cheng2025b} that
\begin{equation}\label{eq:localgeneralizedkloostermaneq0}
\Kl_{p^u,p^v}^{(p)}(\xi,m)=
\sum_{\substack{a \bmod p^{u+2v}\\ a^2-4m\equiv 0\,(p^{2v})}}\legendresymbol{(a^2-4m)/p^{2v}}{p^u} \rme\legendresymbol{a\xi}{p^{u+2v}}\rme_{q}\legendresymbol{a\xi}{p^{u+2v}}.
\end{equation}

\subsection{Norm with respect to the Mellin transform}\label{subsec:mellinnorm}
 Fix $\delta>0$. We consider functions $G$ in $C_c^\infty(\lopen \delta^{-1},\delta\ropen)$.

\begin{lemma}\label{lem:grapiddeacy}
Let $\widetilde{G}(u)$ be the Mellin transform of $G$, namely
\[
\widetilde{G}(u)=\int_{0}^{+\infty}G(x)x^{u}\frac{\rmd x}{x}.
\]
Then $\widetilde{G}(u)$ is well defined on the whole complex plane. Moreover $\widetilde{G}$ has rapid decay vertically. More precisely, for any $\sigma\in \RR$ and $A\geq 0$,
\[
\widetilde{G}(\sigma+\rmi t)\ll_{\sigma,A,\delta}\|G\|_{\lceil A\rceil,1}(1+|t|)^{-A}.
\]
\end{lemma}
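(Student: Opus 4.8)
The plan is the classical ``integrate by parts and take absolute values'' argument. First I would record that
\[
\widetilde{G}(u)=\int_0^{+\infty}G(x)\,x^{u-1}\,\rmd x
\]
is well defined and entire: since $\supp G$ is a compact subset of $\lopen\delta^{-1},\delta\ropen\subset\lopen 0,+\infty\ropen$, the integrand is continuous and compactly supported in $x$ for every $u\in\CC$, and $x^{u-1}$ is entire in $u$ with all its $x$-derivatives locally uniformly bounded on $\supp G$; hence differentiation under the integral sign (or Morera's theorem) gives holomorphy of $\widetilde{G}$ on all of $\CC$.

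For the vertical decay I would set $k=\lceil A\rceil$ and iterate integration by parts using $x^{w}=\frac{1}{w+1}\frac{\rmd}{\rmd x}x^{w+1}$. Since $G$ and all its derivatives vanish at the endpoints of its support, the boundary terms disappear at each step, and after $k$ iterations one gets, for every $u\notin\{0,-1,\dots,-(k-1)\}$,
\[
\widetilde{G}(u)=\frac{(-1)^k}{u(u+1)\cdots(u+k-1)}\int_0^{+\infty}G^{(k)}(x)\,x^{u+k-1}\,\rmd x.
\]
On $\supp G$ we have $x^{\sigma+k-1}\leq\max(\delta,\delta^{-1})^{|\sigma|+k-1}$, so the integral is bounded by $\ll_{\sigma,k,\delta}\|G^{(k)}\|_1\leq\|G\|_{k,1}$. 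For the denominator, when $|t|\geq 1$ each factor satisfies $|u+j|\geq|t|\geq\tfrac12(1+|t|)$, so $|u(u+1)\cdots(u+k-1)|\gg_k(1+|t|)^k$; since $k\geq A$ this gives $\widetilde{G}(\sigma+\rmi t)\ll_{\sigma,A,\delta}\|G\|_{\lceil A\rceil,1}(1+|t|)^{-A}$. For $|t|\leq 1$ (which in particular covers the finitely many excluded real points $u=0,-1,\dots,-(k-1)$) I would instead bound $\widetilde{G}(\sigma+\rmi t)$ directly by $\max(\delta,\delta^{-1})^{|\sigma|}\|G\|_1\leq\max(\delta,\delta^{-1})^{|\sigma|}\|G\|_{\lceil A\rceil,1}$, and the factor $(1+|t|)^{-A}\geq 2^{-A}$ absorbs this into the implied constant.

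There is essentially no obstacle: the only points needing a little care are that the integration-by-parts identity is valid only away from the nonpositive integers $0,-1,\dots,-(k-1)$ (dealt with by the trivial bound on the compact range $|t|\leq 1$), and that the implied constant is allowed to depend on $\sigma$ through $\max(\delta,\delta^{-1})^{|\sigma|+k}$, which is harmless since $\sigma$ is fixed in the statement.
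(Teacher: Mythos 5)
Your proof is correct, and it takes a genuinely different route from the paper. The paper substitutes $x=\rme^a$ first, turning the Mellin transform into a Fourier integral $\int_\RR G(\rme^a)\rme^{a\sigma}\rme^{\rmi at}\,\rmd a$ and then integrating by parts $N$ times in $a$; since the phase $\rme^{\rmi at}$ has no singularities, there are no exceptional points to worry about, but the price is expanding $\frac{\rmd^N}{\rmd a^N}G(\rme^a)$ via Stirling numbers. You instead integrate by parts directly in $x$, producing the prefactor $1/(u(u+1)\cdots(u+k-1))$ and thus a finite set of excluded points on the real axis, which you correctly dispose of by reverting to the trivial bound on $|t|\leq 1$ (where $(1+|t|)^{-A}\geq 2^{-A}$). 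Both arguments are elementary and give the same dependence on $\|G\|_{\lceil A\rceil,1}$; yours avoids the logarithmic change of variable and the combinatorics but requires the small case split, while the paper's substitution eliminates the poles at the cost of a messier derivative formula. One cosmetic note: your bound $x^{\sigma+k-1}\leq\max(\delta,\delta^{-1})^{|\sigma|+k-1}$ silently uses $\max(\delta,\delta^{-1})\geq 1$, which is automatic, so it is fine — just worth being aware that the exponent $|\sigma|+k-1$ is a (harmless) over-estimate of $|\sigma+k-1|$.
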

\begin{proof}
The first assertion holds since $G\in C_c^\infty(\lopen \delta^{-1},\delta\ropen)$. Now we prove the second assertion. We have
\begin{equation}\label{eq:mellingestimate}
\widetilde{G}(\sigma+\rmi t)=\int_{0}^{+\infty}G(x)x^{\sigma+\rmi t}\frac{\rmd x}{x}=\int_{\RR}G(\rme^a)\rme^{a\sigma}\rme^{\rmi at}\rmd a.
\end{equation}
For any $N\in \ZZ_{\geq 0}$, we use integration by parts $N$ times and obtain
\[
\widetilde{G}(\sigma+\rmi t)=\frac{1}{(\rmi t)^N}\int_{\RR}\frac{\rmd^N}{\rmd a^N}(G(\rme^a)\rme^{a\sigma})\rme^{\rmi at}\rmd a.
\] 
Using induction we know that for any $n\in \ZZ_{>0}$,
\[
\frac{\rmd^n}{\rmd a^n}G(\rme^a)=\sum_{j=0}^{n}\sstirling{n}{j}\rme^{ja}G^{(j)}(\rme^a),
\]
where $\sstirling{n}{j}\in \ZZ_{\geq 0}$ depend only on $j$ and $n$ (they are actually the \emph{Stirling numbers of the second kind}). Hence
\[
\frac{\rmd^N}{\rmd a^N}(G(\rme^a)\rme^{a\sigma})= \sum_{n=0}^{N}\binom{N}{n}\sum_{j=0}^{n}\sstirling{n}{j}\rme^{ja}G^{(j)}(\rme^a)\rme^{a\sigma}\sigma^{N-n}
\]

Since $G\in C_c^\infty(\lopen\delta^{-1},\delta\ropen)$, for $|t|\gg 1$ we have
\begin{align*}
|\widetilde{G}(\sigma+\rmi t)|&\leq \frac{1}{|t|^N}\int_{\RR} \sum_{n=0}^{N}\binom{N}{n}\sum_{j=0}^{n}\sstirling{n}{j}|G^{(j)}(\rme^a)|\rme^{a\sigma+aj}|\sigma|^{N-n}\rmd a\\
&\ll_{\sigma,N} \frac{1}{|t|^N}\int_{0}^{+\infty}\sum_{n=0}^{N}\binom{N}{n}\sum_{j=0}^{n}\sstirling{n}{j} |G^{(j)}(x)|x^{\sigma+j}\frac{\rmd x}{x}\\
&\ll_{\sigma,N,\delta}\frac{1}{|t|^N}\int_{0}^{+\infty}\sum_{j=0}^{N}|G^{(j)}(x)|\rmd x \ll_{\sigma,N,\delta}\|G\|_{N,1}(1+|t|)^{-N}.
\end{align*}
If $|t|\ll 1$ we just bound trivially on \eqref{eq:mellingestimate} and obtain the same bound as above. Now we take $N=\lceil A\rceil$ and obtain the result.
\end{proof}

For $G\in C_c^\infty(\lopen\delta^{-1},\delta\ropen)$ and $K,\sigma\in \RR$, we define the norm
\[
\|G\|_{M^K_\sigma}=\int_{(\sigma)}|\widetilde{G}(u)|(1+|u|)^{K}\rmd |u|.
\]
\begin{corollary}\label{cor:mellinnorm}
For any $G\in C_c^\infty(\lopen\delta^{-1},\delta\ropen)$ and $K\geq -1$, we have
\[
\|G\|_{M^K_\sigma}\ll \|G\|_{\lfloor K\rfloor +2,1},
\]
and for $K<-1$ we have
\[
\|G\|_{M^K_\sigma}\ll \|G\|_{1},
\]
where the implied constants only depend on $K,\sigma$ and $\delta$.
\end{corollary}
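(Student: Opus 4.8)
The plan is to derive the bound directly from the vertical decay estimate of \autoref{lem:grapiddeacy}, which asserts $\widetilde{G}(\sigma+\rmi t)\ll_{\sigma,A,\delta}\|G\|_{\lceil A\rceil,1}(1+|t|)^{-A}$ for every $A\geq 0$. Writing $u=\sigma+\rmi t$ and using $1+|u|\asymp_\sigma 1+|t|$, it suffices to estimate $\int_\RR|\widetilde{G}(\sigma+\rmi t)|(1+|t|)^K\rmd t$; the whole point is to choose the exponent $A$ in \autoref{lem:grapiddeacy} so that $(1+|t|)^{K-A}$ is integrable while $\lceil A\rceil$ stays as small as the statement allows.

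For $K\geq -1$ I would apply \autoref{lem:grapiddeacy} with $A=K+1+\eta$, where $\eta>0$ is chosen small. Then $A>K+1$, so
\[
\|G\|_{M^K_\sigma}\ll_{\sigma}\int_\RR|\widetilde{G}(\sigma+\rmi t)|(1+|t|)^K\rmd t\ll_{\sigma,\eta,K,\delta}\|G\|_{\lceil A\rceil,1}\int_\RR(1+|t|)^{-1-\eta}\rmd t=\frac{2}{\eta}\|G\|_{\lceil A\rceil,1}.
\]
It then remains to check that $\lceil K+1+\eta\rceil=\lfloor K\rfloor+2$ for $\eta$ sufficiently small. If $K\in\ZZ$ this is clear since $K+1<K+1+\eta<K+2$ for $0<\eta<1$. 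If $K\notin\ZZ$ then $K+1\notin\ZZ$, hence for $0<\eta<\lceil K+1\rceil-(K+1)$ we get $\lceil K+1+\eta\rceil=\lceil K+1\rceil=\lceil K\rceil+1=\lfloor K\rfloor+2$. Since $\eta$ may be taken to depend only on $K$, this yields $\|G\|_{M^K_\sigma}\ll_{K,\sigma,\delta}\|G\|_{\lfloor K\rfloor+2,1}$.

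For $K<-1$ the argument is shorter: I would apply \autoref{lem:grapiddeacy} with $A=0$ to get $\widetilde{G}(\sigma+\rmi t)\ll_{\sigma,\delta}\|G\|_{0,1}=\|G\|_1$ uniformly in $t$, so that
\[
\|G\|_{M^K_\sigma}\ll_{\sigma,\delta}\|G\|_1\int_\RR(1+|t|)^K\rmd t=\frac{-2}{K+1}\|G\|_1,
\]
the integral being finite precisely because $K<-1$.

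The only subtle point is the bookkeeping in the first case: one must take $A$ just above $K+1$ rather than, say, $A=K+2$, so that the Sobolev index produced by \autoref{lem:grapiddeacy} is $\lfloor K\rfloor+2$ and not the potentially larger $\lceil K\rceil+2$; since integrability of $(1+|t|)^{K-A}$ only requires $A>K+1$, there is enough room to do so. Beyond this I do not anticipate any genuine obstacle.
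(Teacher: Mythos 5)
Your proof is correct and follows the paper's own route: for $K\geq-1$ it takes $A=K+1+\eta$ in Lemma~\ref{lem:grapiddeacy} with $\eta$ small enough that $\lceil A\rceil=\lfloor K\rfloor+2$ (which you verify explicitly where the paper only says "$\varepsilon$ sufficiently small"), and for $K<-1$ it bounds the integrand uniformly by $\|G\|_1$ and uses convergence of $\int(1+|t|)^K\rmd t$. The only cosmetic difference is that for $K<-1$ you cite the statement of Lemma~\ref{lem:grapiddeacy} with $A=0$ whereas the paper appeals to the $N=0$ case inside its proof; these are the same estimate.
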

\begin{proof}
The first assertion follows by taking $A=K+1+\varepsilon$ in the above lemma, where $\varepsilon>0$ is sufficiently small. For the second assertion, we have
\[
\|G\|_{M^K_\sigma}=\int_{(\sigma)}|\widetilde{G}(u)|(1+|u|)^{K}\rmd |u|\ll_{K} \sup_{\Re u=\sigma} |\widetilde{G}(u)|.
\]
By taking $N=0$ in the proof of the above lemma we know that the right hand side is $\ll_{\sigma,\delta} \|G\|_{1}$, as desired.
\end{proof}

\subsection{The height on $\ZZ^S$}
For $\xi\in \ZZ^S$ and $a\in \RR$, we define
\[
\llbracket a \star \xi\rrbracket=(1+|a\xi|_\infty)\prod_{i=1}^{r}(1+|\xi|_{q_i}),
\]
which is $\asymp|a\xi|$ if $\xi\in \ZZ-\{0\}$ and $a\gg 1$.
Note that if $a\in \ZZ_{(S)}$, then $\llbracket a \star \xi\rrbracket=\llbracket 1 \star a\xi\rrbracket$.

We give some lemmas involving the estimates of the sums of such heights. See Lemma 5.9 and Lemma 5.11 of \cite{cheng2025b} for the proofs of the lemmas below.

\begin{lemma}\label{lem:estimatexilargesum}
Suppose that $M>1$ and $\varepsilon>0$.
\begin{enumerate}[itemsep=0pt,parsep=0pt,topsep=0pt, leftmargin=0pt,labelsep=2.5pt,itemindent=15pt,label=\upshape{(\arabic*)}]
\item We have
\[
\sum_{\xi\in \ZZ^S-\{0\}}\llbracket a\star \xi\rrbracket^{-M} \ll_{\varepsilon} |a|^{-M+\varepsilon}.
\]
\item We have
\[
\sum_{\llbracket a\star\xi\rrbracket\gg b}\llbracket a\star\xi\rrbracket^{-M}\ll_\varepsilon |a|^{-1}|b|^{-M+1+\varepsilon}.
\]
\end{enumerate}
\end{lemma}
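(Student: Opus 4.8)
The plan is to write each $S$-integer in lowest terms, which turns the height $\llbracket a\star\xi\rrbracket$ into a completely classical object, and then to estimate a two-step sum. \textbf{Reduction.} Every $\xi\in\ZZ^S-\{0\}$ has a unique expression $\xi=t/d$ with $t\in\ZZ-\{0\}$, $d\in\ZZ_{\geq1}$ divisible only by $q_1,\dots,q_r$, and $\gcd(t,d)=1$: the denominator of $\xi$ in lowest terms is divisible only by the $q_i$, since $v_\ell(\xi)\geq0$ for $\ell\notin S$. For such $\xi$ we have $|a\xi|_\infty=|a|\,|t|/d$, and, because $\gcd(t,d)=1$, one checks $\max(1,|\xi|_{q_i})=q_i^{v_{q_i}(d)}$ for every $i$, whence $\prod_{i=1}^r(1+|\xi|_{q_i})\asymp\prod_{i=1}^rq_i^{v_{q_i}(d)}=d$ with constants depending only on $r$. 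Therefore
\[
\llbracket a\star\xi\rrbracket\asymp\Bigl(1+\frac{|a|\,|t|}{d}\Bigr)d=d+|a|\,|t|,
\]
a classical-looking height, and it suffices to bound
\[
\sum_{d\mid(q_1\cdots q_r)^\infty}\ \sum_{t\in\ZZ-\{0\}}(d+|a|\,|t|)^{-M},
\]
with the extra condition $d+|a||t|\gg b$ in part~(2), since this dominates the sum in question.

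\textbf{Auxiliary estimates.} The argument then rests on two elementary inputs: (i) integral comparison, $\sum_{t\in\ZZ-\{0\}}(d+|a||t|)^{-M}\ll_M|a|^{-1}(d+|a|)^{1-M}$, valid because $M>1$; and (ii) two facts about integers $d$ supported on the fixed prime set $\{q_1,\dots,q_r\}$, namely the divisor-type bound $\#\{d\mid(q_1\cdots q_r)^\infty:\,d\leq Y\}\ll_\varepsilon Y^\varepsilon$ (multiply each term by $(Y/d)^\varepsilon\geq1$) and the sharp tail bound $\sum_{d\mid(q_1\cdots q_r)^\infty,\,d>Y}d^{1-M}\ll_\varepsilon Y^{1-M+\varepsilon}$ (on $d>Y$ write $d^{1-M}=d^{1-M+\varepsilon}d^{-\varepsilon}\leq Y^{1-M+\varepsilon}d^{-\varepsilon}$ and sum $\sum_{d\mid(q_1\cdots q_r)^\infty}d^{-\varepsilon}=\prod_i(1-q_i^{-\varepsilon})^{-1}<\infty$).

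\textbf{Conclusion.} For (1): by (i) the sum is $\ll_M|a|^{-1}\sum_{d\mid(q_1\cdots q_r)^\infty}(d+|a|)^{1-M}$, and splitting the $d$-sum at $d=|a|$ and applying the two bounds in (ii) to the ranges $d\leq|a|$ and $d>|a|$ gives $\ll_\varepsilon|a|^{-1}\cdot|a|^{1-M+\varepsilon}=|a|^{-M+\varepsilon}$. For (2): the condition $d+|a||t|\gg b$ forces $d\gg b$ or $|a||t|\gg b$, and we bound the two pieces separately. On $\{d\gg b\}$ we repeat the argument for (1) but split the $d$-sum at $\max(b,|a|)$, obtaining $\ll_\varepsilon|a|^{-1}\max(b,|a|)^{1-M+\varepsilon}$. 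On $\{|a||t|\gg b\}$ we do the $d$-sum first: for fixed $t$, splitting at $d=|a||t|$ and using (ii) gives $\sum_{d}(d+|a||t|)^{-M}\ll_\varepsilon(|a||t|)^{-M+\varepsilon}$, and summing $|a|^{-M+\varepsilon}|t|^{-M+\varepsilon}$ over $|t|\gg b/|a|$ gives $\ll_\varepsilon|a|^{-M+\varepsilon}\max(1,b/|a|)^{1-M+\varepsilon}$. In both pieces this equals $|a|^{-1}b^{1-M+\varepsilon}$ when $b\geq|a|$, and when $b\leq|a|$ it is $\ll_\varepsilon|a|^{-M+\varepsilon}$, still $\ll_\varepsilon|a|^{-1}b^{1-M+\varepsilon}$ because $1-M+\varepsilon<0$ forces $b^{1-M+\varepsilon}\geq|a|^{1-M+\varepsilon}$. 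Since the two regimes $b\geq|a|$ and $b\leq|a|$ exhaust all $b>0$, this proves (2).

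\textbf{The main difficulty.} There is no serious obstacle: once the reduction $\llbracket a\star\xi\rrbracket\asymp d+|a||t|$ is in place, the rest is routine summation of a classical height. The only points requiring care are keeping the exponents sharp — one must use the full decay in the $t$-sum (integral comparison, not a crude $\ll1+|a|^{-1}$-type bound, which diverges after summing over $d$) and the \emph{sharp} tail estimate $\sum_{d>Y}d^{1-M}\ll_\varepsilon Y^{1-M+\varepsilon}$ rather than the weaker $\ll_\varepsilon Y^{-\varepsilon}$, so that the precise factor $|a|^{-1}$ survives in (2) — and one must verify that the split into ``$d$ large'' and ``$t$ large'' contributions in (2) yields the same bound uniformly whether $b\geq|a|$ or $b\leq|a|$.
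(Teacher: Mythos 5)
Your proof is correct. The paper itself does not present a proof of this lemma — it cites Lemma 5.9 and Lemma 5.11 of \cite{cheng2025b} — so I cannot compare line by line, but the reduction you give is the natural one and almost certainly mirrors what is done there: write $\xi=t/d$ in lowest terms with $d$ supported on $\{q_1,\dots,q_r\}$, observe $\prod_i(1+|\xi|_{q_i})\asymp d$ (using $\gcd(t,d)=1$), hence $\llbracket a\star\xi\rrbracket\asymp d+|a|\,|t|$, and then sum $t$ by integral comparison and $d$ by the sparseness of $\{q_1,\dots,q_r\}$-smooth integers. Two minor points worth recording: the bound is only nontrivial when $\varepsilon<M-1$ (otherwise for $|a|<1$ the right-hand side of (1) can drop below the trivial lower bound $\gg|a|^{-1}$ coming from $\xi\in\ZZ$ with $|\xi|\leq|a|^{-1}$), which is the usual convention for such $\varepsilon$-statements and is how the lemma is used in \S6; and in your estimate (ii) the tail bound is stated for exponent $1-M$ but you later apply the identical argument to exponent $-M$, which is fine but could be flagged as ``for any exponent $<0$.''
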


\begin{lemma}\label{lem:estimatexismallsum}
Suppose that $b\gg 1$ and $\alpha\in \lopen -\infty, -1\ropen$. Then we have
\[
\sum_{\substack{\xi\in \ZZ^S\\\llbracket a\star \xi\rrbracket \ll b}}\llbracket a\star \xi\rrbracket^{-\alpha}\ll  \frac{b^{1-\alpha}}{a}\log^r b.
\]
\end{lemma}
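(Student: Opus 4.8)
The plan is to pass from the sum over $\xi\in\ZZ^S$ to an explicit sum over the numerator and denominator of $\xi$, and then to estimate the resulting elementary sum. We may assume $a>0$, since only $|a|$ enters the height and $a=0$ makes the claim vacuous. First I would record the parametrisation: writing $\ZZ^S=\ZZ[q_1^{-1},\dots,q_r^{-1}]$, every nonzero $\xi\in\ZZ^S$ has a unique form $\xi=\pm m\,q^{\beta}/q^{\gamma}$ with $m\in\ZZ_{>0}$, $\gcd(m,S)=1$, and $\beta,\gamma\in\ZZ_{\geq 0}^r$ of disjoint support, obtained by writing $\xi$ in lowest terms and factoring the powers of $q_1,\dots,q_r$ out of the numerator. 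Then $v_{q_i}(\xi)=\beta_i-\gamma_i$, so $1+|\xi|_{q_i}=1+q_i^{\gamma_i-\beta_i}\asymp q_i^{\gamma_i}$ in all cases (it is $\leq 2$ when $\gamma_i=0$ and lies in $[q_i^{\gamma_i},2q_i^{\gamma_i}]$ when $\gamma_i>0$), whence $\prod_i(1+|\xi|_{q_i})\asymp q^{\gamma}$ with implied constants depending only on $q_1,\dots,q_r$. Combined with $|\xi|_\infty=mq^{\beta}/q^{\gamma}$, this gives the clean comparison
\[
\llbracket a\star\xi\rrbracket=\Bigl(1+\frac{amq^{\beta}}{q^{\gamma}}\Bigr)\prod_i(1+|\xi|_{q_i})\asymp q^{\gamma}+amq^{\beta}
\]
for every $\xi\neq0$; the point $\xi=0$ contributes the bounded term $1$.

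With this comparison in hand, the sum over $\xi\neq0$ becomes, up to a bounded factor, $\sum(q^{\gamma}+amq^{\beta})^{-\alpha}$ over disjoint-support triples with $q^{\gamma}+amq^{\beta}\ll b$; relaxing the disjoint-support and coprimality conditions only adds positive terms, so for an upper bound I may sum over all $\gamma,\beta\in\ZZ_{\geq 0}^r$ and all $m\geq1$ with $q^{\gamma}+amq^{\beta}\ll b$. Assume first $0\leq\alpha<1$. Then $(q^{\gamma}+amq^{\beta})^{-\alpha}\leq(amq^{\beta})^{-\alpha}$, so I split the constraint into $q^{\gamma}\ll b$ and $amq^{\beta}\ll b$ and do the $m$-sum first: it is empty unless $aq^{\beta}\ll b$, and in that range, using $\sum_{m\leq M}m^{-\alpha}\asymp M^{1-\alpha}$ for $\alpha<1$,
\[
\sum_{1\leq m\ll b/(aq^{\beta})}(amq^{\beta})^{-\alpha}=(aq^{\beta})^{-\alpha}\sum_{m\ll b/(aq^{\beta})}m^{-\alpha}\ll(aq^{\beta})^{-\alpha}\Bigl(\frac{b}{aq^{\beta}}\Bigr)^{1-\alpha}=\frac{b^{1-\alpha}}{aq^{\beta}}.
\]

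Summing next over $\beta$ produces a \emph{convergent} geometric series, $\sum_{\beta\in\ZZ_{\geq 0}^r}\frac{b^{1-\alpha}}{aq^{\beta}}\leq\frac{b^{1-\alpha}}{a}\prod_i(1-q_i^{-1})^{-1}\ll\frac{b^{1-\alpha}}{a}$, contributing no logarithm. Finally the $\gamma$-sum runs over $\gamma\in\ZZ_{\geq 0}^r$ with $q^{\gamma}\ll b$, of which there are $\ll\prod_i(1+\log b/\log q_i)\ll\log^r b$; since the inner estimate is independent of $\gamma$, this contributes exactly the factor $\log^r b$, giving $\ll\frac{b^{1-\alpha}}{a}\log^r b$, as claimed. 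For $\alpha<0$ I would argue the same way, replacing $q^{\gamma}+amq^{\beta}$ by $\max(q^{\gamma},amq^{\beta})$ and bounding $(\max(q^{\gamma},amq^{\beta}))^{-\alpha}\leq(q^{\gamma})^{-\alpha}+(amq^{\beta})^{-\alpha}$: the second term is handled as above, and for the first one uses $\sum_{q^{\gamma}\ll b}(q^{\gamma})^{-\alpha}\ll b^{-\alpha}\log^{r-1}b$.

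I expect the only real obstacle to be bookkeeping. The three "directions" of the height behave differently: the archimedean one produces the main factor $b^{1-\alpha}/a$, the denominator directions $\gamma$ produce $\log^r b$, and the numerator directions $\beta$ produce only $O(1)$; the delicate point is to carry out the summation in the right order ($m$, then $\beta$, then $\gamma$) so that precisely one logarithmic factor, from the $\gamma$-sum, survives, matching the exponent $r$ in the statement — a less careful ordering easily produces too large a power of $\log b$. One also has to get the comparison $\prod_i(1+|\xi|_{q_i})\asymp$ (denominator of $\xi$) right, with the disjoint-support normalisation handled carefully. Reconciling the trivial term $\xi=0$, and the degenerate regime of very large $a$ (where the left-hand side is $O(1)$), with the stated $\ll$-bound is harmless in the range in which the lemma is applied, but should be acknowledged.
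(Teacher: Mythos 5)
Your proof is correct, and it is the natural elementary argument. The paper does not reproduce the proof here (it cites Lemma 5.11 of its companion paper), so there is no direct comparison text, but your route -- parametrise $\xi=\pm mq^\beta/q^\gamma$ in lowest terms with disjoint supports, observe $\prod_i(1+|\xi|_{q_i})\asymp q^\gamma$ and hence $\llbracket a\star\xi\rrbracket\asymp q^\gamma+amq^\beta$, drop the coprimality/disjointness to an over-count, and sum in the order $m$, then $\beta$, then $\gamma$ -- is precisely the argument one would expect, and every step checks out for $0\leq\alpha<1$.

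Two small points are worth tightening. In the $\alpha<0$ branch, after bounding $(q^\gamma+amq^\beta)^{-\alpha}\leq(q^\gamma)^{-\alpha}+(amq^\beta)^{-\alpha}$, the $(q^\gamma)^{-\alpha}$ term still carries the constraint $amq^\beta\ll b$; one must multiply $\sum_{q^\gamma\ll b}(q^\gamma)^{-\alpha}\ll b^{-\alpha}\log^{r-1}b$ by the count of admissible $(m,\beta)$, which is $\sum_{aq^\beta\ll b}\lfloor Cb/(aq^\beta)\rfloor\ll b/a$. That yields $\frac{b^{1-\alpha}}{a}\log^{r-1}b$, comfortably within the stated bound, but the factor $b/a$ should be written out. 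Second, you correctly flag the degenerate range: as literally written the inequality can fail when $a$ exceeds $b^{1-\alpha}\log^r b$ (the $\xi=0$ term alone contributes $1$), so the statement implicitly presumes $a$ is not that large, which is how the lemma is used. Otherwise this is a complete proof.
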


\section{A second Poisson summation}\label{sec:secondpoisson}
Fix $\vartheta\in \lopen 0,1\ropen$. We will consider $\vartheta$ as a parameter for the contribution of the elliptic part. For simplicity we set $\vartheta'=1-\vartheta$. In \cite[Section 4]{cheng2025} we computed that
\[
\Sigma^n(\xi)=\sum_{\pm}\sum_{\nu\in \ZZ^r}\sum_{k,f\in \ZZ_{(S)}^{>0}}\frac{1}{k^2f^3}\sum_{\xi\in \ZZ^S}\Kl_{k,f}^S(\xi, \pm nq^\nu)I_{k,f}(\xi, \pm nq^\nu),
\]
where 
\begin{align*}
&I_{k,f}(\xi,m)=2|4m|^{1/2}\int_{x\in\RR}\int_{y\in\QQ_{S_\fin}}\theta_\infty^{\sgn m}(x)\theta_{q}(y, m)\left[F\legendresymbol{kf^2|4m|^{-\vartheta}}{|x^2\mp 1|_\infty^{\vartheta}|y^2- 4m|_q'^{\vartheta}}\right.\\
+&\left.\frac{kf^2|4m|^{-1/2}}{\sqrt{|x^2\mp 1|_\infty|y^2- 4m|_q'}}V\legendresymbol{kf^2|4m|^{-\vartheta'}}{|x^2\mp 1|_\infty^{\vartheta'}|y^2- 4m|_q'^{\vartheta'}}\right]\rme\legendresymbol{-x\xi |4m|^{1/2}}{kf^2}\rme_{q}\legendresymbol{-y\xi}{kf^2}\rmd x\rmd y.
\end{align*}
After making the change of variable $x\mapsto |4m|^{-1/2}x$, and recalling that $\theta_\infty^\pm(x)=\theta_\infty(x,\pm 1/4)$, we obtain
\begin{align*}
&I_{k,f}(\xi,m)=2\int_{x\in\RR}\int_{y\in\QQ_{S_\fin}}\theta_\infty(x,m)\theta_{q}(y, m)\left[F\legendresymbol{kf^2}{|x^2-4m|_\infty^{\vartheta}|y^2-4m|_q'^{\vartheta}}\right.\\
+&\left.\frac{kf^2}{\sqrt{|x^2-4m|_\infty|y^2-4m|_q'}}V\legendresymbol{kf^2}{|x^2-4m|_\infty^{\vartheta'} |y^2- 4m|_q'^{\vartheta'}}\right]\rme\legendresymbol{-x\xi }{kf^2}\rme_{q}\legendresymbol{-y\xi}{kf^2}\rmd x\rmd y.
\end{align*}

Let $K=\lfloor\log_2 X\rfloor$. Then we have
\begin{align*}
  S(X)=\sum_{\substack{n<X\\\gcd(n,S)=1}}\Sigma^n(\xi) & =\sum_{j=1}^{K}\sum_{\substack{2^{j-1}\leq n<2^{j}\\n\in \ZZ_{(S)}^{>0}}}\sum_{\pm}\sum_{\nu\in \ZZ^r}\sum_{k,f\in \ZZ_{(S)}^{>0}}\sum_{\xi\in \ZZ^S}\frac{1}{k^2f^3}\Kl_{k,f}^S(\xi,\pm nq^\nu)I_{k,f}(\xi,\pm nq^\nu)\\
  &+\sum_{\substack{2^{K}\leq n<X\\n\in \ZZ_{(S)}^{>0}}}\sum_{\pm}\sum_{\nu\in \ZZ^r}\sum_{k,f\in \ZZ_{(S)}^{>0}}\sum_{\xi\in \ZZ^S}\frac{1}{k^2f^3}\Kl_{k,f}^S(\xi,\pm nq^\nu)I_{k,f}(\xi,\pm nq^\nu).
\end{align*}
Now we define
\[
S(X/2,X)=\sum_{\substack{X/2\leq n<X\\n\in \ZZ_{(S)}^{>0}}}\sum_{\pm}\sum_{\nu\in \ZZ^r}\sum_{k,f\in \ZZ_{(S)}^{>0}}\sum_{\xi\in \ZZ^S}\frac{1}{k^2f^3}\Kl_{k,f}^S(\xi,\pm nq^\nu)I_{k,f}(\xi,\pm nq^\nu).
\]
Since we have a canonical decomposition $\ZZ^{S}-\{0\}=\{\pm 1\}\times \ZZ_{(S)}^{>0}\times q^{\ZZ^r}$, we obtain
\[
S(X/2,X)=\sum_{\substack{X/2\leq |n^{(q)}|<X\\n\in \ZZ^S}}\sum_{k,f\in \ZZ_{(S)}^{>0}}\sum_{\xi\in \ZZ^S}\frac{1}{k^2f^3}\Kl_{k,f}^S(\xi,n)I_{k,f}(\xi,n).
\]
Similarly, the last term can be written as
\[
S(2^K,X)=\sum_{\substack{2^{K}\leq |n^{(q)}|<X\\n\in \ZZ^S}}\sum_{k,f\in \ZZ_{(S)}^{>0}}\sum_{\xi\in \ZZ^S}\frac{1}{k^2f^3}\Kl_{k,f}^S(\xi,n)I_{k,f}(\xi,n).
\]

We will approximate the following two sums by using 
\[
S_G(X)=\sum_{\substack{n=1\\\gcd(n,S)=1}}^{+\infty}G\legendresymbol{n}{X}\Sigma^n(\xi)=\sum_{n\in \ZZ^S}G\legendresymbol{|n^{(q)}|}{X}\sum_{k,f\in \ZZ_{(S)}^{>0}}\sum_{\xi\in \ZZ^S}\frac{1}{k^2f^3}\Kl_{k,f}^S(\xi,n)I_{k,f}(\xi,n)
\]
that is roughly $\triv_{[1/2,1]}$ in the first case and is roughly $\triv_{[2^K/X,1]}$ in the second case. 

The main task of the following sections is to estimate $S_G(X)$ (see \autoref{thm:asymptoticsg} for the result). We assume that $G\in C_c^\infty(\lopen 1/4,5/4\ropen)$ in the following.

\subsection{A second Poisson summation}
Recall that
\[
\Kl_{k,f}^S(\xi,n)=
  \sum_{\substack{a \bmod kf^2\\ a^2-4n\equiv 0\,(f^2)}}\legendresymbol{(a^2-4n)/f^2}{k}\rme\legendresymbol{a\xi}{kf^2}\rme_{q}\legendresymbol{a\xi}{kf^2}.
\]
Hence $\Kl_{k,f}^S(\xi,n)$ is $kf^2$-periodic with respect to the variable $n$. 

Also, recall that
\begin{align*}
&I_{k,f}(\xi,n)=2\int_{x\in\RR}\int_{y\in\QQ_{S_\fin}}\theta_\infty(x,n)\theta_{q}(y, n)\left[F\legendresymbol{kf^2}{|x^2-4n|_\infty^{\vartheta}|y^2-4n|_q'^{\vartheta}}\right.\\
+&\left.\frac{kf^2}{\sqrt{|x^2-4n|_\infty|y^2- 4n|_q'}}V\legendresymbol{kf^2}{|x^2-4n|_\infty^{\vartheta'}|y^2- 4n|_q'^{\vartheta'}}\right]\rme\legendresymbol{-x\xi}{kf^2}\rme_{q}\legendresymbol{-y\xi}{kf^2}\rmd x\rmd y.
\end{align*}

Consider the following function defined on $(a,b)=(a,b_1,\dots,b_r)\in \QQ_S=\RR\times \QQ_{q_1}\times\dots\times \QQ_{q_r}$:
\begin{align*}
&J_{k,f}(X,\xi,\eta,a,b)=2G\legendresymbol{|a|_\infty|b|_q}{X}\int_{x\in\RR}\int_{y\in\QQ_{S_\fin}} \theta_\infty(x,a)\theta_{q}(y,b)\left[F\legendresymbol{kf^2}{|x^2-4a|_\infty^{\vartheta}|y^2- 4b|_q'^{\vartheta}}\right.\\
+&\left.\frac{kf^2}{\sqrt{|x^2-4a|_\infty|y^2- 4b|_q'}}V\legendresymbol{kf^2}{|x^2-4a|_\infty^{\vartheta'}|y^2- 4b|_q'^{\vartheta'}}\right]\rme\legendresymbol{-x\xi}{kf^2}\rme_{q}\legendresymbol{-y\eta}{kf^2}\rmd x\rmd y.
\end{align*}
\begin{lemma}\label{lem:secondsmooth}
$J_{k,f}(X,\xi,\eta,a,b)$ is a smooth and compactly supported function for $(a,b)\in \QQ_S$.
\end{lemma}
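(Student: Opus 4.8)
I would treat the two assertions — compact support and smoothness — separately, writing $J_{k,f}(X,\xi,\eta,a,b)=G(|a|_\infty|b|_q/X)\,I(a,b)$, where $I(a,b)$ denotes the inner double integral over $(x,y)\in\RR\times\QQ_{S_\fin}$; here ``smooth on $\QQ_S$'' means $C^\infty$ in the archimedean coordinate $a$ together with local constancy in each $b_i$. For compact support, the plan is to play the constraint imposed by $G$ against the compact support of the data defining the orbital integrals: since $G\in C_c^\infty(\lopen 1/4,5/4\ropen)$, on the support $|a|_\infty|b|_q\asymp X$; and $\theta_{q_i}(y_i,b_i)\ne0$ forces $\orb(f_{q_i};\gamma)\ne0$ for $\gamma$ with $\Tr\gamma=y_i,\ \det\gamma=b_i$ (the other factors in $\theta_{q_i}$ are finite and nonzero once $b_i\ne0$), hence $\gamma$ conjugate into $\supp f_{q_i}$, so $y_i$ and $b_i$ lie in the compact sets $\Tr(\supp f_{q_i})\subset\QQ_{q_i}$ and $\det(\supp f_{q_i})\subset\QQ_{q_i}^\times$. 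In particular $|b_i|_{q_i}$ takes only finitely many values, so $|b|_q$ is bounded above and away from $0$; with the $G$-constraint this pins $a$ into a compact subset of $\RR^\times$, and then the $Z_+$-invariance $\theta_\infty(x,a)=\theta_\infty^{\sgn a}(x/(2\sqrt{|a|}))$ together with the compact support of $\theta_\infty^{\pm}$ (which is precisely what the hypothesis ``$\orb(f_\infty;\cdot)$ compactly supported modulo $Z_+$'' delivers) confines $x$ to a compact set. Thus $J_{k,f}$ is supported in a compact subset of $\QQ_S$, where $I(a,b)$ integrates a bounded function over a fixed compact set; this yields both well-definedness and compact support.

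For local constancy in $b_i$, the key idea is a scaling change of variables. Fix $i$ and a large integer $L$; for $c\in1+q_i^L\ZZ_{q_i}$ I would compare $J_{k,f}$ at $b$ and at $b'=(\dots,c^2b_i,\dots)$ via the substitution $y_i\mapsto cy_i$ in the $y_i$-integral. Since $|c|_{q_i}=1$, this leaves $\rmd y_i$, $|b'|_q=|b|_q$ (hence the $G$-factor), and $|c^2y_i^2-4c^2b_i|_{q_i}'=|y_i^2-4b_i|_{q_i}'$ (hence $|y^2-4b|_q'$ and all the $F,V$ data) unchanged, while $|x^2-4a|_\infty$ is untouched; \autoref{lem:orbitalintegralsmooth} gives $\theta_{q_i}(cy_i,c^2b_i)=\theta_{q_i}(y_i,b_i)$ for $L$ large; and the excess phase $\rme_{q_i}(-(c-1)y_i\eta/(kf^2))$ is trivial once $L$ is large (in terms of $k,f,\eta$), because on the support $y_i$ stays in $\Tr(\supp f_{q_i})$, on which $v_{q_i}(y_i)$ is bounded below. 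Hence $J_{k,f}$ is constant on $\{c^2b_i:c\in1+q_i^L\ZZ_{q_i}\}$, which is a $q_i$-adic ball around $b_i$ (the squaring map carries $1+q_i^L\ZZ_{q_i}$ onto $1+q_i^L\ZZ_{q_i}$ for $q_i$ odd, onto $1+q_i^{L+1}\ZZ_{q_i}$ for $q_i=2$). The same argument applies verbatim to every $a$-derivative of $J_{k,f}$.

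For smoothness in $a$, I would differentiate under the integral sign. As $J_{k,f}\equiv0$ near $a=0$ and $G(|a|_\infty|b|_q/X)$ is $C^\infty$ in $a$ for $a\ne0$, it suffices to show $I(a,b)\in C^\infty$ in $a$. The $a$-dependence sits in $\theta_\infty(x,a)=\theta_{\infty,0}(x,a)+\theta_{\infty,1}(x,a)$ and in $|x^2-4a|_\infty$, which occurs only inside $F$, $V$ and the prefactor of $V$. The crucial input is that $F,V\in\cS$ (\autoref{prop:propertyf}, \autoref{prop:propertyh}): with $\vartheta,\vartheta'>0$, and since $x,a$ range over compacta while $|y^2-4b|_q'$ stays bounded, the bracket $\bigl[F(\cdots)+\tfrac{kf^2}{\sqrt{|x^2-4a|_\infty|y^2-4b|_q'}}V(\cdots)\bigr]$ vanishes to infinite order as $|x^2-4a|_\infty\to0$ from either side of the wall $\{x^2=4a\}$. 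Writing $\theta_{\infty,\sigma}(x,a)=|x^2-4a|^{\sigma/2}\varphi_\sigma(x,a)$ with $\varphi_\sigma$ smooth up to the wall on each side and compactly supported, and substituting $v=x^2-4a$, I claim the product of $\theta_{\infty,\sigma}(x,a)$ with the bracket is, on each side of the wall, a function flat in $v$ at $v=0$ times the bounded-derivative factor $\varphi_\sigma$, and the two one-sided flat pieces glue together smoothly; so the integrand is $C^\infty$ in $(x,a)$ across the wall. The remaining factors ($\theta_q(y,b)$, powers of $|y^2-4b|_q'$, the two additive characters) are bounded and $a$-free; the null set $\{|y^2-4b|_q'=0\}$, where the integrand vanishes, is harmless; and all $a$-derivatives of the integrand are bounded on the fixed compact support by the same estimates. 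Differentiation under the integral is then legitimate, iterating gives $I(a,b)\in C^\infty$ in $a$, and combined with the previous step $J_{k,f}$ is smooth on $\QQ_S$.

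The step I expect to be the main obstacle is the last one: the orbital integrals carry a genuine $|x^2-4a|^{1/2}$ singularity along the wall $\{x^2=4a\}$ (and the $V$-term carries a $|x^2-4a|^{-1/2}$ prefactor), so $C^\infty$ regularity is recovered only because $F,V$ decay faster than any power at $+\infty$ — which is exactly where $\vartheta,\vartheta'>0$ is used — making the bracket flat at the wall; one must then check carefully that the two one-sided flat pieces, which come from different branches of $\theta_\infty$ and of the weights $V_{\iota,\epsilon}$, really glue smoothly across the wall. The $q_i=2$ peculiarity of the squaring map in the local-constancy step, and the bookkeeping around the null set $\{|y^2-4b|_q'=0\}$, are by comparison minor points.
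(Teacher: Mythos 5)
Your proof is correct and follows the same structure as the paper's: compact support by playing the constraint from $G\in C_c^\infty(\lopen 1/4,5/4\ropen)$ against the compact support of the orbital-integral data; $q_i$-adic local constancy via the scaling $y_i\mapsto c_iy_i$, $b_i\mapsto c_i^2b_i$ together with \autoref{lem:orbitalintegralsmooth} and the triviality of the excess character for $L$ large; and archimedean $C^\infty$-regularity across the wall $\{x^2=4a\}$ from the rapid decay of $F,V\in\cS$ (\autoref{prop:propertyf}, \autoref{prop:propertyh}) overwhelming the $|x^2-4a|^{\pm 1/2}$ factors. The only difference is that for the archimedean step the paper simply cites Altu\u{g}'s Lemma 4.1 where you spell out the flatness-at-the-wall argument, and your remark about the image of the squaring map on $1+2^L\ZZ_2$ is a correct refinement the paper's write-up elides.
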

\begin{proof}
We first prove that the set when $J_{k,f}(X,\xi,\eta,a,b)\neq 0$ for $(a,b)$ is compactly supported. If $J_{k,f}(X,\xi,\eta,a,b)\neq 0$, then we must have $|b_i|_{q_i}\asymp 1$ for all $i$. Since $G\in C_c^\infty(\lopen 1/4,5/4\ropen)$, it follows that $|a|\asymp 1$. Hence $J_{k,f}(X,\xi,\eta,a,b)\neq 0$ for $(a,b)$ is compactly supported.

Next we prove smoothness. Since $J_{k,f}(X,\xi,\eta,a,b)=0$ if $b_i$ nears $0$ for some $i$, it suffices to consider the case $b_i\neq 0$ for all $i$. It suffices to prove that it is smooth with respect to $a\in \RR$ and is smooth with respect to $b\in \QQ_{S_\fin}$ that is uniform in $a$.

For the archimedean case, one may use the argument in \cite[Lemma 4.1]{altug2015} to show that the integral is smooth. Now we consider the nonarchimedean case.

By \autoref{lem:orbitalintegralsmooth} we can find $L_1,\dots,L_r>2$ such that $\theta_{q_i}(c_iy_i,c_i^2b_i)=\theta_{q_i}(y_i,b_i)$. Also we have $|y_i^2-4b_i|_{q_i}'=|c_i^2y_i^2-4c_ib_i|_{q_i}'$ and $\omega_i(c_iy_i,c_i^2b_i)=\omega_i(y_i,b_i)$. (Here $\omega_i(T,N)=\omega_i(T^2-4N)$ if we use the definition of this paper. $\omega_i(y_i)$ in \cite{cheng2025b} is thus $\omega_i(y_i,\pm nq^\nu)$.)

For fixed $\eta$ we can further assume that $\rme_{q_i}(-y_i\eta_i/kf^2)$ is invariant under the transformation $y_i\mapsto c_iy_i$ for all $c_i\in 1+q_i^{L_i}\ZZ_{q_i}$.

Hence the function defining  $J_{k,f}(X,\xi,\eta,a,b)$ is invariant under the transformation $y_i\mapsto c_iy_i$ and $b_i\mapsto c_i^2b_i$. Making the change of variable $y_i\mapsto c_i^{-1}y_i$, we obtain $J_{k,f}(X,\xi,\eta,a,b)=J_{k,f}(X,\xi,\eta,a,c_ib)$, where $c_i\in \QQ_{q_i}\hookrightarrow \QQ_S$. Since $|b_i|\asymp 1$ we obtain smoothness.
\end{proof}

Clearly $|n^{(q)}|=|n|_\infty|n|_q$. Hence
\[
S_G(X)=\sum_{k,f\in \ZZ_{(S)}^{>0}}\frac{1}{k^2f^3}\sum_{\xi,n\in \ZZ^S}J_{k,f}(X,\xi,n)\Kl_{k,f}^S(\xi,n),
\]
where $J_{k,f}(X,\xi,n)=J_{k,f}(X,\xi,\xi,n,n)$.

By \autoref{lem:secondsmooth} we can apply the Poisson summation formula to the above sum. Since $\Kl_{k,f}^S(\xi,n)$ is $kf^2$-periodic, by Corollary B.4 of \cite{cheng2025} we obtain
\[
S_G(X)=\sum_{k,f\in \ZZ_{(S)}^{>0}}\frac{1}{k^3f^5}\sum_{\xi,\alpha\in \ZZ^S}\widehat{J}_{k,f}(X,\xi,\alpha)\widehat{\Kl}_{k,f}^S(\xi,\alpha)
\]
and the sum converges absolutely, where
\begin{align*}
&\widehat{J}_{k,f}(X,\xi,\eta,\alpha,\beta)=2\int_{(x,a)\in\RR^2}\int_{(y,b)\in\QQ_{S_\fin}^2}G\legendresymbol{|a|_\infty|b|_q}{X} \theta_\infty(x,a)\theta_{q}(y,b)\left[F\legendresymbol{kf^2}{|x^2-4a|_\infty^{\vartheta}|y^2- 4b|_q'^{\vartheta}}\right.\\
+&\left.\frac{kf^2}{\sqrt{|x^2-4a|_\infty|y^2- 4b|_q'}}V\legendresymbol{kf^2}{|x^2-4a|_\infty^{\vartheta'}|y^2- 4b|_q'^{\vartheta'}}\right]\rme\legendresymbol{-x\xi-a\alpha}{kf^2}\rme_{q}\legendresymbol{-y\eta-b\beta}{kf^2}\rmd x\rmd a\rmd y\rmd b,
\end{align*}
$\widehat{J}_{k,f}(X,\xi,\alpha)=\widehat{J}_{k,f}(X,\xi,\xi,\alpha,\alpha)$, and the \emph{transformed Kloosterman sum} (for the standard representation) is defined as
\[
\widehat{\Kl}_{k,f}^S(\xi,\alpha)=\sum_{m\bmod kf^2}\Kl_{k,f}^S(\xi,m)\rme\legendresymbol{m\alpha}{kf^2}\rme_q\legendresymbol{m\alpha}{kf^2}.
\]

\subsection{The critical transformation on the Hitchin-Steinberg base}
We make the change of variable 
\begin{equation}\label{eq:harishchandratransform}
x\mapsto a,\quad a\mapsto \frac{1}{4}a^2-x\qquad\text{and}\qquad y\mapsto b,\quad b\mapsto \frac{1}{4}b^2-y
\end{equation}
in $\widehat{J}_{k,f}(X,\xi,\eta,\alpha,\beta)$. Since making change of variable in the $p$-adic case satisfies a similar formula as in the real case \cite{weil1982adele}, we can proceed as in the real case. 

The Jacobians of the transformations are both $1$. Hence $\widehat{J}_{k,f}(X,\xi,\eta,\alpha,\beta)$ becomes
\begin{equation}\label{eq:transformedj}
\begin{split}
2\int_{(x,a)\in\RR^2}\int_{(y,b)\in\QQ_{S_\fin}^2}& G\left(\frac{1}{X}\left|\frac{a^2}{4}-x\right|_\infty\left|\frac{b^2}{4}-y\right|_q \right) \theta_\infty\left(a,\frac{a^2}{4}-x\right)\theta_{q}\left(b,\frac{b^2}{4}-y\right)\\
\times&\left[F\legendresymbol{kf^2}{|4x|_\infty^{\vartheta}|4y|_q'^{\vartheta}}
+\frac{kf^2}{\sqrt{|4x|_\infty|4y|_q'}}V\legendresymbol{kf^2}{|4x|_\infty^{\vartheta'}|4y|_q'^{\vartheta'}}\right] \\
\times&\rme\legendresymbol{-a\xi-a^2\alpha/4+x\alpha}{kf^2}\rme_q\legendresymbol{-b\eta-b^2\beta/4+y\beta}{kf^2}\rmd x\rmd a\rmd y\rmd b.
\end{split}
\end{equation}

We will see in the following sections that the above change of variable simplifies our computation.
\begin{remark}
$\widehat{J}_{k,f}$ is an integral over the $\QQ_S$-point of the $\QQ$-scheme $\mf{g}\sslash \G\cong\mf{a}\sslash W\cong\mathbf{A}^2$ (which is the \emph{Hitchin-Steinberg base}), where $\mathbf{A}$ denotes the affine space, $\mf{g}$ denotes the Lie algebra of $\G$, $\mf{a}$ denotes the Lie algebra of the diagonal torus $\A$ of $\G$ and $W$ denotes the Weyl group. The identification is given by the trace and the determinant. This transformation essentially changes the coordinate $(T,N)$ to $(T,\Delta)$, where $\Delta$ denotes the discriminant. 

\end{remark}

\section{Computation of the transformed Kloosterman sum}
In this section, we compute the transformed Kloosterman sum $\widehat{\Kl}_{k,f}^S(\xi,\alpha)$. Recall that
\[
\widehat{\Kl}_{k,f}^S(\xi,\alpha)=\sum_{m\bmod kf^2}\Kl_{k,f}^S(\xi,m)\rme\legendresymbol{m\alpha}{kf^2} \rme_q\legendresymbol{m\alpha}{kf^2}.
\]
For any prime $p\notin S$ and $\xi,\alpha\in \ZZ_p$, we define
\begin{equation}\label{eq:deflocalkloostermancharacter}
\widehat{\Kl}_{p^u,p^v}^{(p)}(\xi,\alpha)=\sum_{m\bmod p^{u+2v}}\Kl_{p^u,p^v}^{(p)}(\xi,m)\rme_p\legendresymbol{-m\alpha}{p^{u+2v}}.
\end{equation}
Suppose that  $\alpha\in \ZZ^S$. Since $\rme(x)\prod_{p}\rme_p(x)=1$ for $x\in \QQ$ and $m\alpha/p^{u+2v}\in \ZZ_{\ell}$ if $\ell\notin S\cup\{p\}$, we have
\[
\rme_p\legendresymbol{-m\alpha}{p^{u+2v}}=\prod_{\ell\notin S}\rme_{\ell}\legendresymbol{m\alpha}{p^{u+2v}}^{-1}= \rme\legendresymbol{m\alpha}{p^{u+2v}}\rme_{q}\legendresymbol{m\alpha}{p^{u+2v}}.
\]
Hence for $\alpha\in \ZZ^S$, we have
\begin{equation}\label{eq:localgeneralizedkloostermaneq}
\widehat{\Kl}_{p^u,p^v}^{(p)}(\xi,\alpha)=
\sum_{m \bmod p^{u+2v}}\Kl_{p^u,p^v}^{(p)}(\xi,m) \rme\legendresymbol{m\alpha}{p^{u+2v}}\rme_{q}\legendresymbol{m\alpha}{p^{u+2v}}.
\end{equation}

\begin{proposition}\label{prop:prodkloostermanlocal}
We have
\[
\widehat{\Kl}_{k,f}^S(\xi,\alpha)=\prod_{p\notin S}\widehat{\Kl}_{k_{(p)},f_{(p)}}^{(p)}\left(((kf^2)^{(p)})^{-1}\xi,((kf^2)^{(p)})^ {-1}\alpha\right),
\]
where $(a^{(p)})^{-1}$ denotes the inverse of $a^{(p)}$ modulo $a_{(p)}$.
\end{proposition}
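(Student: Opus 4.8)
The statement is a multiplicativity / Chinese-remainder-type factorization of the transformed Kloosterman sum $\widehat{\Kl}_{k,f}^S(\xi,\alpha)$ over the primes $\ell \notin S$. The natural approach is to first establish the corresponding factorization \emph{before} transforming, i.e.\ for $\Kl_{k,f}^S(\xi,m)$ itself, and then push it through the finite Fourier transform in the $m$-variable that defines $\widehat{\Kl}$. Concretely, I would begin from the definition
\[
\Kl_{k,f}^S(\xi,m)=\sum_{\substack{a\bmod kf^2\\ a^2-4m\equiv 0\,(f^2)}}\legendresymbol{(a^2-4m)/f^2}{k}\rme\legendresymbol{a\xi}{kf^2}\rme_q\legendresymbol{a\xi}{kf^2},
\]
and observe that $\rme(\cdot)\rme_q(\cdot)$ evaluated at a rational with denominator $kf^2$ (coprime to $S$) equals $\prod_{\ell\notin S}\rme_\ell(-\,\cdot\,)$ by the product formula $\rme(x)\prod_\ell \rme_\ell(x)=1$, exactly as in the computation of \eqref{eq:localgeneralizedkloostermaneq0} and \eqref{eq:localgeneralizedkloostermaneq}. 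This rewrites $\Kl_{k,f}^S$ as a product of local $\rme_\ell$-factors; then the CRT decomposition of the residue $a\bmod kf^2\cong\prod_{\ell} a\bmod (kf^2)_{(\ell)}$, together with multiplicativity of the Kronecker/Jacobi symbol $\legendresymbol{\cdot}{k}=\prod_\ell\legendresymbol{\cdot}{k_{(\ell)}}$ and of the congruence condition $a^2-4m\equiv 0\pmod{f^2}$, gives
\[
\Kl_{k,f}^S(\xi,m)=\prod_{\ell\notin S}\Kl_{k_{(\ell)},f_{(\ell)}}^{(\ell)}\!\left(((kf^2)^{(\ell)})^{-1}\xi,\ m\right),
\]
where the extra inverse factor $((kf^2)^{(\ell)})^{-1}$ appears because, under CRT, the argument $a\xi/kf^2$ at the prime $\ell$ becomes $a\xi ((kf^2)^{(\ell)})^{-1}/(kf^2)_{(\ell)}$ up to an element of $\ZZ_\ell$ (which does not affect $\rme_\ell$). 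This local factorization of the untransformed sum is really the content of Proposition 4.2 of \cite{cheng2025b} or an immediate variant, and I would cite it if available.

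The second and final step is to take the finite Fourier transform in $m$. Writing $m\bmod kf^2$ via CRT as $(m_\ell)_\ell$ with $m_\ell\bmod (kf^2)_{(\ell)}$, and using $\rme(m\alpha/kf^2)\rme_q(m\alpha/kf^2)=\prod_{\ell\notin S}\rme_\ell(-m\alpha/kf^2)$ together with $m\alpha/kf^2\equiv m_\ell\alpha((kf^2)^{(\ell)})^{-1}/(kf^2)_{(\ell)}\pmod{\ZZ_\ell}$, the sum over $m$ splits as a product of sums over $m_\ell$. Since $\Kl_{k_{(\ell)},f_{(\ell)}}^{(\ell)}(\cdot,m)$ depends on $m$ only through $m\bmod (kf^2)_{(\ell)}$ (it is $(kf^2)_{(\ell)}$-periodic in $m$, as is clear from the defining congruence), each factor is exactly
\[
\sum_{m_\ell\bmod (kf^2)_{(\ell)}}\Kl_{k_{(\ell)},f_{(\ell)}}^{(\ell)}\!\big(((kf^2)^{(\ell)})^{-1}\xi,\ m_\ell\big)\,\rme_\ell\!\legendresymbol{-m_\ell\,((kf^2)^{(\ell)})^{-1}\alpha}{(kf^2)_{(\ell)}},
\]
which by definition \eqref{eq:deflocalkloostermancharacter} is $\widehat{\Kl}_{k_{(\ell)},f_{(\ell)}}^{(\ell)}(((kf^2)^{(\ell)})^{-1}\xi,\ ((kf^2)^{(\ell)})^{-1}\alpha)$, completing the proof. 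One should also note that all but finitely many factors equal $1$: when $\ell\nmid kf$ we have $u=v=0$, the sum over $m\bmod 1$ is trivial, and the local transformed sum is $1$, so the product is really finite and the manipulations above are legitimate.

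The main obstacle is purely bookkeeping rather than conceptual: keeping track of how the rational argument $a\xi/(kf^2)$ (and later $m\alpha/(kf^2)$) decomposes under CRT at each prime $\ell$, and in particular verifying that the "co-part" $(kf^2)^{(\ell)}$ is invertible modulo $(kf^2)_{(\ell)}$ and enters precisely as the inverse claimed in the statement. A secondary subtlety is to make sure the congruence condition $a^2-4m\equiv 0\pmod{f^2}$ and the symbol $\legendresymbol{(a^2-4m)/f^2}{k}$ factor compatibly — this uses that $k,f\in\ZZ_{(S)}$ so that $k_{(\ell)}$ and $f_{(\ell)}$ are prime powers and the Kronecker symbol is completely multiplicative in its lower argument, and that $(a^2-4m)/f^2$ at the prime $\ell$ means $(a^2-4m)/f_{(\ell)}^2$ up to an $\ell$-adic unit. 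I would present the argument as: (i) reduce to the untransformed factorization via the product formula for $\rme$ and CRT, citing \cite{cheng2025b} if the statement is already there; (ii) apply the finite Fourier transform prime-by-prime; (iii) remark on finiteness of the product.
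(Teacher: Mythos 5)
Your proposal is correct and takes essentially the same route as the paper's proof: set up the CRT decomposition of $\ZZ/kf^2\ZZ$, invoke the factorization of the untransformed $\Kl_{k,f}^S$ at each prime $\ell\notin S$ (the paper cites Proposition~B.1 of \cite{cheng2025b} for this, rather than Proposition~4.2), split the finite Fourier transform in $m$ prime-by-prime, and recognize each local factor as $\widehat{\Kl}_{k_{(\ell)},f_{(\ell)}}^{(\ell)}$ via \eqref{eq:localgeneralizedkloostermaneq}. Your additional remark on the finiteness of the product is implicit in the paper since the CRT decomposition is taken only over primes dividing $kf^2$.
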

\begin{proof}
Let
\[
kf^2=\prod_{j=1}^{t}(kf^2)_{(p_j)}
\]
be the prime factorization. By Chinese remainder theorem, we have an isomorphism
\[
\begin{split}
   \varphi: \prod_{j=1}^{t}\ZZ/(kf^2)_{(p_j)}&\to \ZZ/kf^2 \\
     (a_1,\dots,a_t) &\mapsto \sum_{j=1}^{t} a_j(kf^2)^{(p_j)}((kf^2)^{(p_j)})^{-1}.
\end{split}
\]
Note that $\varphi(a_1,\dots,a_t)\equiv a_j\pmod{(kf^2)_{(p_j)}}$.

Since
\[
\frac{\varphi(a_1,\dots,a_t)\alpha}{kf^2}=\frac{\alpha}{kf^2}\sum_{j=1}^{t} a_j(kf^2)^{(p_j)}((kf^2)^{(p_j)})^{-1}=\sum_{j=1}^{t} \frac{a_j((kf^2)^{(p_j)})^{-1}\alpha}{(kf^2)_{(p_j)}},
\]
we obtain
\[
\rme\legendresymbol{\varphi(a_1,\dots,a_t)\alpha}{kf^2}=\prod_{j=1}^{t} \rme\legendresymbol{a_j((kf^2)^{(p_j)})^{-1}\alpha}{(kf^2)_{(p_j)}}.
\]
By Proposition B.1 in \cite{cheng2025b} we have
\[
\Kl_{k,f}^S(\xi,\varphi(a_1,\dots,a_t) )=\prod_{j=1}^{t}\Kl_{k_{(p_j)},f_{(p_j)}}^{(p_j)}(((kf^2)^{(p_j)})^{-1}\xi,\varphi(a_1,\dots,a_t) )=\prod_{j=1}^{t}\Kl_{k_{(p_j)},f_{(p_j)}}^{(p_j)}(((kf^2)^{(p_j)})^{-1}\xi, a_j).
\]
Hence by \eqref{eq:localgeneralizedkloostermaneq},
\begin{align*}
  \widehat{\Kl}_{k,f}^S(\xi,\alpha)&=\sum_{\substack{a_j \bmod (kf^2)_{(p_j)}\\ 1\leq j\leq t}}\Kl_{k,f}^S(\xi,\varphi(a_1,\dots,a_t))\rme\legendresymbol{\varphi(a_1,\dots,a_t) \alpha}{kf^2} \rme_q\legendresymbol{\varphi(a_1,\dots,a_t) \alpha}{kf^2}\\
   & =\sum_{\substack{a_j \bmod (kf^2)_{(p_j)}\\ 1\leq j\leq t}} \prod_{j=1}^{t} \Kl_{k_{(p_j)},f_{(p_j)}}^{(p_j)}(((kf^2)^{(p_j)})^{-1}\xi, a_j) \rme\legendresymbol{a_j((kf^2)^{(p_j)})^{-1}\alpha}{(kf^2)_{(p_j)}} \rme_q\legendresymbol{a_j((kf^2)^{(p_j)})^{-1}\alpha}{(kf^2)_{(p_j)}}\\
   & = \prod_{j=1}^{t}\widehat{\Kl}_{k_{(p_j)},f_{(p_j)}}^{(p_j)}\left(((kf^2)^{(p_j)})^{-1}\xi, ((kf^2)^{(p_j)})^{-1}\alpha\right).\qedhere
\end{align*}
\end{proof}

Hence it suffices to consider the local sums $\widehat{\Kl}_{p^u,p^v}^{(p)}(\xi,\alpha)$.
The local computations were mainly done by Altu\u{g} \cite{altug2020} and we only need to do slight modifications in the ramified case.
\begin{proposition}\label{prop:kloostermancongruent}
Suppose that $\xi,\alpha\in \ZZ_p$. Let $\xi',\alpha'\in \ZZ$ such that $\xi\equiv \xi'\pmod {p^{u+2v}}$ and $\alpha\equiv \alpha'\pmod {p^{u+2v}}$. Then
\[
\widehat{\Kl}_{p^u,p^v}^{(p)}(\xi,\alpha)=\widehat{\Kl}_{p^u,p^v}^{(p)}(\xi',\alpha')=
                               \sum_{m \bmod p^{u+2v}}\Kl_{p^u,p^v}^{(p)}(\xi',m) \rme\legendresymbol{m\alpha'}{p^{u+2v}}.
\]
\end{proposition}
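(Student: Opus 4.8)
The plan is to reduce the statement to two elementary facts: that $\rme_\ell$ factors through $\QQ_\ell/\ZZ_\ell$, and that $\rme_\ell(-y)=\rme(y)$ for any $y\in\QQ$ whose denominator is a power of $\ell$.

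First I would note that $\Kl_{\ell^u,\ell^v}^{(\ell)}(\xi,m)$, as defined by \eqref{eq:localgeneralizedkloosterman}, is well-defined for $m$ a residue class modulo $\ell^{u+2v}$ and depends on $\xi\in\ZZ_\ell$ only through its class in $\ZZ_\ell/\ell^{u+2v}\ZZ_\ell$. Well-definedness in $m$ holds because both the condition $a^2-4m\equiv 0\,(\ell^{2v})$ and the residue $(a^2-4m)/\ell^{2v}\bmod\ell^u$ see $m$ only modulo $\ell^{u+2v}$. For the dependence on $\xi$: it enters only through $\rme_\ell(-a\xi/\ell^{u+2v})$ with $a\in\ZZ$, and if $\xi-\xi'\in\ell^{u+2v}\ZZ_\ell$ then $a\xi/\ell^{u+2v}-a\xi'/\ell^{u+2v}\in\ZZ_\ell$, so the two values of $\rme_\ell$ coincide; hence $\Kl_{\ell^u,\ell^v}^{(\ell)}(\xi,m)=\Kl_{\ell^u,\ell^v}^{(\ell)}(\xi',m)$ for every $m$. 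Running the same argument on the definition \eqref{eq:deflocalkloostermancharacter} of $\widehat{\Kl}_{\ell^u,\ell^v}^{(\ell)}$, where $\alpha$ occurs only through $\rme_\ell(-m\alpha/\ell^{u+2v})$ with $m\in\ZZ$, shows that $\widehat{\Kl}_{\ell^u,\ell^v}^{(\ell)}(\xi,\alpha)$ depends on $\alpha\in\ZZ_\ell$ only through its class in $\ZZ_\ell/\ell^{u+2v}\ZZ_\ell$. Combining these two reductions with $\xi\equiv\xi'$ and $\alpha\equiv\alpha'\pmod{\ell^{u+2v}}$ gives the first claimed equality $\widehat{\Kl}_{\ell^u,\ell^v}^{(\ell)}(\xi,\alpha)=\widehat{\Kl}_{\ell^u,\ell^v}^{(\ell)}(\xi',\alpha')$.

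For the second equality I would substitute into \eqref{eq:deflocalkloostermancharacter}, applied to $(\xi',\alpha')$, the identity $\rme_\ell(-m\alpha'/\ell^{u+2v})=\rme(m\alpha'/\ell^{u+2v})$, which holds because $m,\alpha'\in\ZZ$ makes $y=m\alpha'/\ell^{u+2v}$ a rational with $\ell$-power denominator: then $\langle -y\rangle_\ell$ differs from $-y$ by an integer, so $\rme_\ell(-y)=\rme(-\langle -y\rangle_\ell)=\rme(y)$; equivalently this is the product formula $\rme(x)\prod_{\ell'}\rme_{\ell'}(x)=1$ together with $\rme_{\ell'}(y)=1$ for $\ell'\neq\ell$ (as $y\in\ZZ_{\ell'}$). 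This turns \eqref{eq:deflocalkloostermancharacter} into
\[
\widehat{\Kl}_{\ell^u,\ell^v}^{(\ell)}(\xi',\alpha')=\sum_{m\bmod\ell^{u+2v}}\Kl_{\ell^u,\ell^v}^{(\ell)}(\xi',m)\,\rme\legendresymbol{m\alpha'}{\ell^{u+2v}},
\]
which is the asserted formula.

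I do not expect a real obstacle: the whole argument is bookkeeping about the periodicity of the generalized Kloosterman sum in its two arguments and the conversion between $\rme_\ell$ and $\rme$ on rationals with $\ell$-power denominator. The only point needing mild care is keeping track that $\xi,\alpha$ range over $\ZZ_\ell$ while the summation indices $a,m$ are ordinary integers (representatives of residue classes modulo $\ell^{u+2v}$), which is exactly what makes replacing $\xi,\alpha$ by the integer representatives $\xi',\alpha'$ legitimate.
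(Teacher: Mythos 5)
Your proposal is correct and follows essentially the same route as the paper: establish that both arguments of the transformed local Kloosterman sum only matter modulo $\ell^{u+2v}$, then convert $\rme_\ell$ to $\rme$ on rationals with $\ell$-power denominator. The only cosmetic difference is that the paper outsources the $\xi$-periodicity of $\Kl_{\ell^u,\ell^v}^{(\ell)}$ to Proposition 2.12 of \cite{cheng2025b} while you verify it inline from the definition, and the paper phrases the additive-character identity via $\langle\cdot\rangle_\ell$ where you invoke the product formula; both verifications are correct and equivalent.
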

The right hand side is precisely the character sum defined by Altu\u{g} in \cite[Section 5.2]{altug2020}.
\begin{proof}
Since $\alpha\equiv \alpha'\pmod {p^{u+2v}}$, we obtain
\[
\widehat{\Kl}_{p^u,p^v}^{(p)}(\xi,\alpha)=\sum_{m\bmod p^{u+2v}}\Kl_{p^u,p^v}^{(p)}(\xi,m)\rme_p\legendresymbol{-m\alpha}{p^{u+2v}}=\sum_{m\bmod p^{u+2v}}\Kl_{p^u,p^v}^{(p)}(\xi,m)\rme_p\legendresymbol{-m\alpha'}{p^{u+2v}}.
\]
Since $\xi\equiv \xi'\pmod {p^{u+2v}}$, by Proposition 2.12 of \cite{cheng2025b},
\[
\widehat{\Kl}_{p^u,p^v}^{(p)}(\xi,\alpha)=\sum_{m\bmod p^{u+2v}}\Kl_{p^u,p^v}^{(p)}(\xi',m)\rme_p\legendresymbol{-m\alpha'}{p^{u+2v}} =\widehat{\Kl}_{p^u,p^v}^{(p)}(\xi',\alpha').
\]
Hence the first equality follows. The second equality follows from
\[
\rme_p\left(\frac{-m\alpha'}{p^{u+2v}}\right)=\rme\left(\left\langle \frac{m\alpha'}{p^{u+2v}}\right\rangle_p\right)=\rme\left(\frac{m\alpha'}{p^{u+2v}}\right).\qedhere
\]
\end{proof}

\begin{proposition}\label{prop:kloostermanlocalcompute}
Let $p\notin S$ and $\xi\in \ZZ_p$.
\begin{enumerate}[itemsep=0pt,parsep=0pt,topsep=0pt,leftmargin=0pt,labelsep=2.5pt,itemindent=15pt,label=\upshape{(\arabic*)}]
\item We have
\begin{equation}\label{eq:kloostermancompute0}
\widehat{\Kl}_{p^u,p^v}^{(p)}(\xi,0)= \begin{dcases}
p^{u+2v}\bm\phi(p^u), & \text{if $v_p(\xi)\geq u+2v$ and $u$ is even}, \\
                                                        0, & \text{otherwise},
                                                      \end{dcases}
\end{equation}
where $\bm{\phi}(k)$ denotes the Euler totient function.
\item For $\alpha\in \ZZ_p-\{0\}$ we have
\begin{equation}\label{eq:kloostermancomputeneq0}
\widehat{\Kl}_{p^u,p^v}^{(p)}(\xi,\alpha)= \begin{dcases}
c^{(p)}_{p^u,p^v}(\alpha)\rme_p\legendresymbol{\xi^2}{\alpha p^{u+2v}}, & \text{if $v_p(\xi)\geq\min\{v_p(\alpha),u+2v\}$}, \\
                                                        0, & \text{otherwise},
                                                      \end{dcases}
\end{equation}
where $c^{(p)}_{p^u,p^v}(\alpha)$ is defined by
\begin{equation}\label{eq:kloostermancomputecell}
p^{\frac{u+2v+w}{2}}\kappa(p^{u+2v-w}) \legendresymbol{p^{-v_p(\alpha)}\alpha}{p^{u+2v-w}}\cdot \begin{dcases}
\bm\phi(p^u), & \text{$v_p(\alpha)\geq u$, $u\equiv 0\,(2)$}, \\
-p^{u-1},&\text{$v_p(\alpha)= u-1$, $u\equiv 0\,(2)$},\\
\legendresymbol{-p^{-v_p(\alpha)}\alpha}{p},&\text{$v_p(\alpha)= u-1$, $u\equiv 1\,(2)$},\\
                                                        0, & \text{otherwise},
                                                      \end{dcases}
\end{equation}
where $w=\min\{v_p(\alpha),u+2v\}$ and
\[
\kappa(m)=\begin{cases}
            1, & m\equiv 1\,(4), \\
            \rmi, & m\equiv 3\,(4).
          \end{cases}
\]
\end{enumerate} 
\end{proposition}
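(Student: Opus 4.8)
Since $2\in S$, every $\ell\notin S$ is odd, so $2$ and $4$ are units in $\ZZ_\ell$; put $N=u+2v$. By \eqref{eq:localgeneralizedkloosterman} and \eqref{eq:deflocalkloostermancharacter}, $\widehat{\Kl}_{\ell^u,\ell^v}^{(\ell)}(\xi,\alpha)$ is the double sum over $m\bmod\ell^N$ and over $a\bmod\ell^N$ with $a^2\equiv 4m\,(\ell^{2v})$ of $\legendresymbol{(a^2-4m)/\ell^{2v}}{\ell^u}$ against $\rme_\ell\legendresymbol{-a\xi}{\ell^N}\rme_\ell\legendresymbol{-m\alpha}{\ell^N}$. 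The plan is to interchange the two sums and, for each fixed $a$, to run over the admissible $m$ by writing $m=4^{-1}a^2+\ell^{2v}t$ with $t$ ranging over $\ZZ/\ell^u$ (the inverse $4^{-1}$ taken mod $\ell^N$); this is legitimate because $4$ is invertible. Then $(a^2-4m)/\ell^{2v}=-4t$, and since $\rme_\ell$ is an additive character the phase splits as $\rme_\ell\legendresymbol{-4^{-1}a^2\alpha}{\ell^N}\rme_\ell\legendresymbol{-t\alpha}{\ell^u}$, the first piece recombining with $\rme_\ell(-a\xi/\ell^N)$. One is then left with a clean factorization
\[
\widehat{\Kl}_{\ell^u,\ell^v}^{(\ell)}(\xi,\alpha)=A(\xi,\alpha)\,B(\alpha),\qquad A(\xi,\alpha)=\sum_{a\bmod\ell^N}\rme_\ell\legendresymbol{-a\xi-4^{-1}\alpha a^2}{\ell^N},\qquad B(\alpha)=\sum_{t\bmod\ell^u}\legendresymbol{-4t}{\ell^u}\rme_\ell\legendresymbol{-t\alpha}{\ell^u},
\]
where $\legendresymbol{-4t}{\ell^u}=\legendresymbol{-1}{\ell^u}\legendresymbol{t}{\ell^u}$ because $4$ is a square. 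Both $A$ and $B$ are classical Gauss-type sums; this reduction puts one essentially in the situation of Altu\u{g}'s \cite[Section 5.2]{altug2020}.

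\textbf{The $t$-sum $B(\alpha)$.} Its value depends only on $v_\ell(\alpha)$. If $v_\ell(\alpha)\geq u$ the additive character is trivial and $B(\alpha)=\legendresymbol{-1}{\ell^u}\sum_t\legendresymbol{t}{\ell^u}$, which equals $\bm\phi(\ell^u)$ for $u$ even and $0$ for $u$ odd. If $v_\ell(\alpha)=u-1$, writing $\alpha=\ell^{u-1}\alpha_1$, the summand depends on $t$ only modulo $\ell$, so $B(\alpha)$ collapses to $\ell^{u-1}$ times a Ramanujan sum for $u$ even (contributing $-\ell^{u-1}$) and to $\ell^{u-1}$ times a classical quadratic Gauss sum modulo $\ell$ for $u$ odd (contributing a factor $\kappa(\ell)\ell^{1/2}$ and a Legendre symbol in $\alpha_1$). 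If $v_\ell(\alpha)<u-1$ the Gauss sum vanishes, so $B(\alpha)=0$; this is the source of the vanishing conditions. In particular, when $\alpha=0$ one has $A(\xi,0)=\sum_{a\bmod\ell^N}\rme_\ell(-a\xi/\ell^N)$, equal to $\ell^N$ if $\ell^N\mid\xi$ and $0$ otherwise, and $B(0)=\bm\phi(\ell^u)$ or $0$; multiplying gives part (1).

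\textbf{The $a$-sum $A(\xi,\alpha)$, and assembling.} If $v_\ell(\alpha)\geq N$ then $\alpha a^2\equiv0\pmod{\ell^N}$ and $A(\xi,\alpha)$ is again $\ell^N$ or $0$ according as $\ell^N\mid\xi$ or not. Otherwise set $j=v_\ell(\alpha)<N$. Replacing $a$ by $a+\ell^{N-j}s$ with $s\bmod\ell^j$ changes $-a\xi-4^{-1}\alpha a^2$ by $-\ell^{N-j}s\xi$ modulo $\ell^N$, so summing over $s$ within each coset shows $A(\xi,\alpha)=0$ unless $v_\ell(\xi)\geq j$, i.e.\ unless $v_\ell(\xi)\geq\min\{v_\ell(\alpha),N\}$. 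When $v_\ell(\xi)\geq j$, completing the square $-a\xi-4^{-1}\alpha a^2=-4^{-1}\alpha(a+2\xi\alpha^{-1})^2+\xi^2\alpha^{-1}$ (valid since $2\xi\alpha^{-1}\in\ZZ_\ell$) and translating $a$ gives $A(\xi,\alpha)=\rme_\ell\legendresymbol{\xi^2}{\alpha\ell^N}\sum_{b\bmod\ell^N}\rme_\ell\legendresymbol{-4^{-1}\alpha b^2}{\ell^N}$; collapsing the $\ell^j$ trivial fibres reduces the remaining sum to the quadratic Gauss sum modulo $\ell^{N-j}$, whose standard evaluation is $\ell^{(N+j)/2}\kappa(\ell^{N-j})\legendresymbol{\ell^{-j}\alpha}{\ell^{N-j}}$. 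Multiplying $A$ by $B$, identifying $j$ with $w=\min\{v_\ell(\alpha),N\}$, pulling out the phase $\rme_\ell(\xi^2/(\alpha\ell^N))$, and simplifying with $\kappa(\ell)^2=\legendresymbol{-1}{\ell}$ and the fact that $\kappa(\ell^k)$ is $\kappa(\ell)$ or $1$ according to the parity of $k$, yields the formulas of the proposition. The only point at which ramification could enter is the passage from the adelic sum to these local factors, which is \autoref{prop:prodkloostermanlocal}; everything above uses $\ell\notin S$ only through "$\ell$ odd", so the primes $q_i\in S$ play no role here.

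\textbf{Main obstacle.} The substantive part is the Gauss-sum bookkeeping in the last step and its interaction with the penultimate one: the exponent $N-j=u+2v-v_\ell(\alpha)$ that governs $A$ and the exponent $u$ that governs $B$ have parities varying independently, so $\kappa$ is $1$ or $\rmi$ depending on the case, and the half-integer powers of $\ell$ produced separately by $A$ and by $B$ must be combined (they merge into an integer power in the final answer). Getting the argument of the quadratic Gauss sum right under the minus-sign convention $\rme_\ell(x)=\rme(-\langle x\rangle_\ell)$, and keeping the moduli of the various Legendre symbols ($\ell^{N-j}$ versus $\ell$) straight across the parity cases, is where errors are most likely to occur.
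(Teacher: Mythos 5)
Your proof takes a genuinely different route from the paper's. The paper proves this proposition essentially by citation: Proposition \ref{prop:kloostermancongruent} replaces $\xi,\alpha\in\ZZ_\ell$ by integer lifts and identifies $\widehat{\Kl}_{\ell^u,\ell^v}^{(\ell)}$ with the character sum evaluated in \cite[Proposition 5.6]{altug2020}, and the only computation carried out is the translation of the phase from $\rme(-\alpha'(\xi_0\alpha_0^{-1})^2/\ell^{u+2v})$ to $\rme_\ell(\xi^2/(\alpha\ell^{u+2v}))$. You instead give a self-contained derivation: you interchange the $a$- and $m$-sums, parametrize the admissible $m$ as $m=4^{-1}a^2+\ell^{2v}t$, observe that $(a^2-4m)/\ell^{2v}=-4t$, and obtain the clean factorization $\widehat{\Kl}_{\ell^u,\ell^v}^{(\ell)}(\xi,\alpha)=A(\xi,\alpha)\,B(\alpha)$ into two Gauss-type sums. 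This factorization is correct, the completing-the-square argument for $A$ and the collapse-to-residues arguments for both $A$ and $B$ are correct, and the vanishing conditions are obtained exactly as you say. What your approach buys is transparency: one can see at a glance where each piece of the formula comes from, and it avoids relying on a reference that the paper itself flags as containing misprints.

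However, you did not actually assemble the final answer, and when one does, there appears to be a discrepancy with the stated formula in the case $v_\ell(\alpha)=u-1$ with $u$ odd. By your own (correct) evaluations, $A$ contributes $\rme_\ell(\xi^2/(\alpha\ell^{u+2v}))\,\ell^{(u+2v+w)/2}\kappa(\ell^{u+2v-w})\bigl(\tfrac{\ell^{-v_\ell(\alpha)}\alpha}{\ell^{u+2v-w}}\bigr)$, while $B$ contributes not merely $\bigl(\tfrac{-\ell^{-v_\ell(\alpha)}\alpha}{\ell}\bigr)$ but $\bigl(\tfrac{-\ell^{-v_\ell(\alpha)}\alpha}{\ell}\bigr)\kappa(\ell)\,\ell^{u-1/2}$, as you note yourself in the sentence "contributing a factor $\kappa(\ell)\ell^{1/2}$ and a Legendre symbol in $\alpha_1$''. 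The extra factor $\kappa(\ell)\ell^{u-1/2}$ does not cancel: multiplying out, $A\cdot B=\rme_\ell(\xi^2/(\alpha\ell^{u+2v}))\,\ell^{2u+v-1}$ (using $\kappa(\ell)^2\bigl(\tfrac{-1}{\ell}\bigr)=1$ and $\kappa(\ell^{2v+1})=\kappa(\ell)$), whereas \eqref{eq:kloostermancomputecell} would give $\rme_\ell(\xi^2/(\alpha\ell^{u+2v}))\,\ell^{u+v-1/2}\kappa(\ell)\bigl(\tfrac{-1}{\ell}\bigr)$. A sanity check at $\ell=3,\ u=1,\ v=0,\ \xi=0,\ \alpha=1$ confirms that the sum evaluates to $3$, consistent with $\ell^{2u+v-1}$ and not with the stated $\ell^{1/2}\kappa(\ell)\bigl(\tfrac{-1}{\ell}\bigr)=-\rmi\sqrt3$. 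In other words, your method actually exposes what is very likely one of the misprints in \cite[Proposition 5.6]{altug2020} alluded to in the paper's proof; the odd-$u$ case of $c^{(\ell)}_{\ell^u,\ell^v}(\alpha)$ should carry an additional factor $\kappa(\ell^u)\ell^{u-1/2}$. (One can check that this does not affect the bound in Proposition \ref{prop:estimateckf}, since $\ell^{u-1/2}\leq\ell^u$, so the downstream estimates in the paper are unharmed.) You should carry the assembly through explicitly, reconcile your answer with the stated formula, and either correct the statement or identify the error in this comparison.
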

\begin{proof}
Choose $\xi',\alpha'\in \ZZ$ such that $\xi\equiv \xi'\pmod {p^{u+2v}}$ and $\alpha\equiv \alpha'\pmod {p^{u+2v}}$. By \autoref{prop:kloostermancongruent}, 
\[
\widehat{\Kl}_{p^u,p^v}^{(p)}(\xi,0)=\widehat{\Kl}_{p^u,p^v}^{(p)}(\xi',0) \quad\text{and}\quad 
\widehat{\Kl}_{p^u,p^v}^{(p)}(\xi,\alpha)=\widehat{\Kl}_{p^u,p^v}^{(p)}(\xi',\alpha').
\]

(1) By the first statement of \cite[Proposition 5.6]{altug2020} we have
\[
\widehat{\Kl}_{p^u,p^v}^{(p)}(\xi,0)=\widehat{\Kl}_{p^u,p^v}^{(p)}(\xi',0)= \begin{dcases}
p^{u+2v}\bm\phi(p^u), & \text{if $v_p(\xi')\geq u+2v$ and $u$ is even}, \\
                                                        0, & \text{otherwise},
                                                      \end{dcases}
\]
which equals the right hand side of \eqref{eq:kloostermancompute0} since $\xi\equiv \xi'\pmod {p^{u+2v}}$.

(2) By the second statement of \cite[Proposition 5.6]{altug2020} we have
\[
\widehat{\Kl}_{p^u,p^v}^{(p)}(\xi',\alpha')= \begin{dcases}
c^{(p)}_{p^u,p^v}(\alpha')\rme\legendresymbol{-\alpha'(\xi_0\alpha_0^{-1})^{2}}{p^{u+2v}}, & \text{if $v_p(\xi')\geq\min\{v_p(\alpha'),u+2v\}$}, \\
                                                        0, & \text{otherwise},
                                                      \end{dcases}
\]
where $\xi'=\gcd(\xi',\alpha')\xi_0$ and $\alpha'=\gcd(\xi',\alpha')\alpha_0$. (Note that there are some misprints in \cite[Proposition 5.6]{altug2020}, some $v_p(\alpha)$ stated in loc. cit. should be $\min\{v_p(\alpha),u+2v\}$ due to the case $v_p(\alpha)\geq u+2v$.)

We have
\[
\rme\legendresymbol{-\alpha'(\xi_0\alpha_0^{-1})^{2}}{p^{u+2v}}= \rme\left(-\left\langle\frac{\alpha'(\xi_0\alpha_0^{-1})^{2}}{p^{u+2v}}\right\rangle_p\right)= \rme_p\legendresymbol{\alpha'(\xi_0\alpha_0^{-1})^{2}}{p^{u+2v}}=\rme_p\legendresymbol{\xi'^2}{\alpha'p^{u+2v}}.
\]
Since  $\xi\equiv \xi'\pmod {p^{u+2v}}$ and $\alpha\equiv \alpha'\pmod {p^{u+2v}}$,
\[
\rme_p\legendresymbol{\xi'^2}{\alpha'p^{u+2v}}=\rme_p\legendresymbol{\xi^2}{\alpha p^{u+2v}}.
\]
By \eqref{eq:kloostermancomputecell}, $c^{(p)}_{p^u,p^v}(\alpha)=c^{(p)}_{p^u,p^v}(\alpha')$. Recall that $\alpha\equiv \alpha'\pmod {p^{u+2v}}$. Hence $\widehat{\Kl}_{p^u,p^v}^{(p)}(\xi,\alpha)$ equals the right hand side of \eqref{eq:kloostermancomputeneq0}.
\end{proof} 

Now we can derive global computations of the transformed Kloosterman sum.
\begin{corollary}\label{cor:kloostermanlocalcompute}
Let $\xi,\alpha\in \ZZ^S$ and $k,f\in \ZZ_{(S)}^{>0}$.
\begin{enumerate}[itemsep=0pt,parsep=0pt,topsep=0pt,leftmargin=0pt,labelsep=2.5pt,itemindent=15pt,label=\upshape{(\arabic*)}]
\item We have
\[
\widehat{\Kl}_{k,f}^{S}(\xi,0)= \begin{dcases}
kf^2\bm{\phi}(k), & \text{if $kf^2\mid \xi$ and $k$ is a square}, \\
                                                        0, & \text{otherwise},
                                                      \end{dcases}
\]
\item If $\alpha\neq 0$, we have
\[
\widehat{\Kl}_{k,f}^S(\xi,\alpha)= \begin{dcases}
c_{k,f}(\alpha)\rme\legendresymbol{-\xi^2}{\alpha kf^2}\rme_q\legendresymbol{-\xi^2}{\alpha kf^2}, & \text{if $\gcd(\alpha,kf^2)\mid \xi$}, \\
                                                        0, & \text{otherwise},
                                                      \end{dcases}
\]
where
\[
c_{k,f}(\alpha)=\prod_{p\notin S}c^{(p)}_{k_{(p)},f_{(p)}}\left(((kf^2)^{(p)})^{-1}\alpha\right).
\]
\end{enumerate} 
\end{corollary}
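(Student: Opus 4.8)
The plan is to deduce the corollary from the multiplicativity statement \autoref{prop:prodkloostermanlocal} together with the explicit local evaluations in \autoref{prop:kloostermanlocalcompute}, and then to reassemble the local data by the Chinese remainder theorem and the product formula $\prod_{\ell}\rme_\ell(x)=\rme(-x)$ valid for $x\in\QQ$. For part (1), \autoref{prop:prodkloostermanlocal} gives $\widehat{\Kl}_{k,f}^S(\xi,0)=\prod_{\ell\notin S}\widehat{\Kl}_{k_{(\ell)},f_{(\ell)}}^{(\ell)}(((kf^2)^{(\ell)})^{-1}\xi,0)$, a finite product since $\widehat{\Kl}_{\ell^0,\ell^0}^{(\ell)}(\cdot,0)=1$ whenever $\ell\nmid kf^2$. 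Because $((kf^2)^{(\ell)})^{-1}$ is an $\ell$-adic unit, \eqref{eq:kloostermancompute0} shows the $\ell$-factor equals $\ell^{v_\ell(kf^2)}\bm\phi(\ell^{v_\ell(k)})$ if $v_\ell(\xi)\geq v_\ell(kf^2)$ and $v_\ell(k)$ is even, and $0$ otherwise. Multiplying over $\ell$ and using $\prod_\ell\ell^{v_\ell(kf^2)}=kf^2$ together with multiplicativity of $\bm\phi$, one gets $kf^2\bm\phi(k)$ exactly when $v_\ell(\xi)\geq v_\ell(kf^2)$ for all $\ell$ (i.e.\ $kf^2\mid\xi$) and $k$ is a perfect square, and $0$ otherwise; this is the claim.

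For part (2) the same reduction gives $\widehat{\Kl}_{k,f}^S(\xi,\alpha)=\prod_{\ell\notin S}\widehat{\Kl}_{k_{(\ell)},f_{(\ell)}}^{(\ell)}(\tilde\xi_\ell,\tilde\alpha_\ell)$ with $\tilde\xi_\ell=((kf^2)^{(\ell)})^{-1}\xi$ and $\tilde\alpha_\ell=((kf^2)^{(\ell)})^{-1}\alpha$, again finite. Since $((kf^2)^{(\ell)})^{-1}$ is an $\ell$-adic unit we have $v_\ell(\tilde\xi_\ell)=v_\ell(\xi)$ and $v_\ell(\tilde\alpha_\ell)=v_\ell(\alpha)$, so \eqref{eq:kloostermancompute0} and \eqref{eq:kloostermancomputeneq0} show the $\ell$-factor vanishes unless $v_\ell(\xi)\geq\min\{v_\ell(\alpha),v_\ell(kf^2)\}$; imposing this at every $\ell$ is exactly the condition $\gcd(\alpha,kf^2)\mid\xi$, which I assume henceforth. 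Each nonzero $\ell$-factor then has the shape $c^{(\ell)}_{k_{(\ell)},f_{(\ell)}}(\tilde\alpha_\ell)\cdot E_\ell$, where $E_\ell=1$ when $v_\ell(\alpha)\geq v_\ell(kf^2)$ (here \autoref{prop:kloostermancongruent} reduces $\tilde\alpha_\ell$ to $0$ and $c^{(\ell)}$, as computed from \eqref{eq:kloostermancomputecell}, absorbs the whole factor), and $E_\ell=\rme_\ell(\tilde\xi_\ell^2/(\tilde\alpha_\ell\ell^{v_\ell(kf^2)}))$ when $v_\ell(\alpha)<v_\ell(kf^2)$, the ratio $\tilde\xi_\ell^2/\tilde\alpha_\ell$ being read through a modular inverse of $\tilde\alpha_\ell$ as in the proof of \autoref{prop:kloostermanlocalcompute}. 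By the definition $c_{k,f}(\alpha)=\prod_{\ell\notin S}c^{(\ell)}_{k_{(\ell)},f_{(\ell)}}(((kf^2)^{(\ell)})^{-1}\alpha)$ it only remains to prove the exponential identity $\prod_{\ell\notin S}E_\ell=\rme(-\xi^2/(\alpha kf^2))\rme_q(-\xi^2/(\alpha kf^2))$, where the right-hand side is likewise understood through an inverse of $\alpha$ modulo $kf^2$.

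This last identity is the heart of the argument. The plan is: by the Chinese remainder theorem, an inverse $\bar\alpha$ of $\alpha$ modulo $kf^2$ is obtained by patching the local inverses $\overline{\tilde\alpha_\ell}$ of $\tilde\alpha_\ell$ modulo $\ell^{v_\ell(kf^2)}$, so that $\xi^2\bar\alpha/kf^2\equiv\sum_{\ell\notin S}\tilde\xi_\ell^2\overline{\tilde\alpha_\ell}/\ell^{v_\ell(kf^2)}\pmod{\ZZ}$ once one has checked that at the primes with $v_\ell(\alpha)\geq v_\ell(kf^2)$ the corresponding $\ell$-adic fractional part is trivial — which follows from the divisibility $\gcd(\alpha,kf^2)\mid\xi$ together with the nonvanishing of $c^{(\ell)}$, and makes the choice $E_\ell=1$ there consistent. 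Substituting into the product formula $\prod_{\ell\notin S}\rme_\ell(x)=\rme(-x)\rme_q(-x)$ with $x=\xi^2\bar\alpha/kf^2$ then gives $\prod_{\ell\notin S}E_\ell=\rme(-x)\rme_q(-x)$, which is the displayed formula. I expect this bookkeeping to be the main obstacle: one must track the modular inverses of $\alpha$ prime by prime, control the unit twists $((kf^2)^{(\ell)})^{-1}$ and the truncation of the conductor from $\alpha kf^2$ to $\alpha\ell^{v_\ell(kf^2)}$ inside $\rme_\ell$, and verify that primes dividing $\alpha$ to higher order than $kf^2$ contribute trivially, so that the local exponentials glue into the single global exponential asserted in the corollary; the rest is formal manipulation with $\ell$-adic valuations.
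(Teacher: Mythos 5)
Your argument follows the same skeleton as the paper's: multiplicativity (Proposition~\ref{prop:prodkloostermanlocal}) together with the local evaluations (Proposition~\ref{prop:kloostermanlocalcompute}) and the adelic product formula, and part (1) is handled exactly as in the text. In part (2), however, the last step has a real gap. You glue the local modular inverses $\overline{\tilde\alpha_\ell}$ into a global inverse $\bar\alpha$ of $\alpha$ modulo $kf^2$ and conclude $\prod_{\ell\notin S}E_\ell=\rme(-\xi^2\bar\alpha/kf^2)\rme_q(-\xi^2\bar\alpha/kf^2)$, asserting this ``is the displayed formula.'' It is not: the corollary states $\rme(-\xi^2/(\alpha kf^2))\rme_q(-\xi^2/(\alpha kf^2))$, with $\xi^2/(\alpha kf^2)\in\QQ$ a genuine fraction, not a modular-inverse expression. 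The two arguments differ by $\xi^2(\alpha\bar\alpha-1)/(\alpha kf^2)$, and since $\rme(-x)\rme_q(-x)=\prod_{\ell\notin S}\rme_\ell(x)$ depends on $x$ only through its class modulo $\ZZ_\ell$ at each $\ell\notin S$, you would need this difference to be $\ell$-integral for every $\ell\notin S$ --- in particular at primes $\ell\nmid kf^2$ dividing $\alpha$, where the hypothesis $\gcd(\alpha,kf^2)\mid\xi$ gives no control over $\xi$. You never verify this, so as written your proof establishes a different formula than the one claimed.

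The paper sidesteps the issue entirely: for each $\ell$ it rewrites the local exponential argument
\[
\frac{((kf^2)^{(\ell)})^{-2}\xi^2}{((kf^2)^{(\ell)})^{-1}\alpha(kf^2)_{(\ell)}}
\]
as the single global rational $\xi^2/(\alpha kf^2)$ inside $\rme_\ell$, using only that $\rme_\ell$ sees its argument modulo $\ZZ_\ell$ and that the integer $((kf^2)^{(\ell)})^{-1}$ agrees with the genuine $\ell$-adic inverse $1/(kf^2)^{(\ell)}$ modulo $(kf^2)_{(\ell)}\ZZ_\ell$. Once every local factor is $\rme_\ell$ evaluated at the \emph{same} rational number, the product formula $\rme(x)\rme_q(x)\prod_{\ell\notin S}\rme_\ell(x)=1$ applies in one stroke and no global modular inverse $\bar\alpha$ ever enters. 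That identification of all local arguments with a single global rational is the step your write-up is missing; with it in place, the Chinese remainder theorem machinery you set up at the end becomes unnecessary.
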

\begin{proof}
(1) By \autoref{prop:prodkloostermanlocal} we have
\[
\widehat{\Kl}_{k,f}^S(\xi,0)=\prod_{p\notin S}\widehat{\Kl}_{k_{(p)},f_{(p)}}^{(p)}\left(((kf^2)^{(p)})^{-1}\xi,0\right).
\]
Since $v_p(\xi)=v_p(((kf^2)^{(p)})^{-1}\xi)$, by \autoref{prop:kloostermanlocalcompute} (1),
\[
\widehat{\Kl}_{k_{(p)},f_{(p)}}^{(p)}\left(((kf^2)^{(p)})^{-1}\xi,0\right)=(kf^2)_{(p)} \bm{\phi}(k_{(p)})
\]
if $v_p(\xi)\geq v_p(kf^2)$ and $v_p(k)$ is even, and is $0$ otherwise. By taking product over $p$, 
\[
\widehat{\Kl}_{k,f}^S(\xi,0)=\prod_{p\notin S}(kf^2)_{(p)} \bm\phi(k_{(p)})=kf^2\bm{\phi}(k)
\]
if $kf^2\mid \xi$ and $k$ is a square, and is $0$ otherwise.

(2) By \autoref{prop:prodkloostermanlocal} we have
\[
\widehat{\Kl}_{k,f}^S(\xi,\alpha)=\prod_{p\notin S}\widehat{\Kl}_{k_{(p)},f_{(p)}}^{(p)}\left(((kf^2)^{(p)})^{-1}\xi,((kf^2)^{(p)})^{-1}\alpha \right).
\]
Since $v_p(\xi)=v_p(((kf^2)^{(p)})^{-1}\xi)$ and $v_p(\alpha)=v_p(((kf^2)^{(p)})^{-1}\alpha)$, by \autoref{prop:kloostermanlocalcompute} (2),
\[
\widehat{\Kl}_{k_{(p)},f_{(p)}}^{(p)}\left(((kf^2)^{(p)})^{-1}\xi,((kf^2)^{(p)})^{-1}\alpha\right) =c^{(p)}_{p^u,p^v}(\alpha) \rme_p\legendresymbol{((kf^2)^{(p)})^{-2}\xi^2}{((kf^2)^{(p)})^{-1}\alpha(kf^2)_{(p)}} =c^{(p)}_{k_{(p)},f_{(p)}}(\alpha) \rme_p\legendresymbol{\xi^2}{\alpha kf^2}
\]
if $v_p(\xi)\geq \min\{v_p(\alpha),v_p(kf^2)\}$ and is $0$ otherwise. Hence by taking product over $p$, 
\[
\widehat{\Kl}_{k,f}^S(\xi,\alpha)=\prod_{p\notin S}c^{(p)}_{k_{(p)},f_{(p)}}(\alpha) \rme_p\legendresymbol{\xi^2}{\alpha kf^2}=c_{k,f}(\alpha)\rme\legendresymbol{-\xi^2}{\alpha kf^2}\rme_q\legendresymbol{-\xi^2}{\alpha kf^2}
\]
if $\gcd(\alpha,kf^2)\mid \xi$, and is $0$ otherwise, where in the last step we used the fact that $\rme(x)\rme_q(x)\prod_{p\notin S}\rme_p(x)=1$ for $x\in \QQ$.
\end{proof}

Finally we bound $c_{k,f}(\alpha)$ for $\alpha\neq 0$.

\begin{proposition}\label{prop:estimateckf}
Suppose that $\alpha\in \ZZ^S$ with $\alpha\neq 0$, $k,f\in \ZZ_{(S)}^{>0}$. Then we have
\[
|c_{k,f}(\alpha)|\leq\begin{dcases}
                    k^{3/2}f\sqrt{\gcd(kf^2,\alpha)}, & \frac{k}{\rad k}\mid \alpha, \\
                    0, & \text{otherwise},
                  \end{dcases}
\]
where for $a\in \ZZ_{>0}$,
\[
\rad a=\prod_{p\mid a}p.
\]
\end{proposition}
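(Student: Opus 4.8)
The plan is to reduce everything to the local factors and bound them by inspection of \eqref{eq:kloostermancomputecell}. By \autoref{cor:kloostermanlocalcompute}(2),
\[
c_{k,f}(\alpha)=\prod_{\ell\notin S}c^{(\ell)}_{k_{(\ell)},f_{(\ell)}}\bigl(((kf^2)^{(\ell)})^{-1}\alpha\bigr),
\]
so it suffices to bound each factor $c^{(\ell)}_{\ell^u,\ell^v}(\gamma_\ell)$ with $u=v_\ell(k)$, $v=v_\ell(f)$ and $\gamma_\ell=((kf^2)^{(\ell)})^{-1}\alpha$. First I would note that $((kf^2)^{(\ell)})^{-1}$ is a unit in $\ZZ_\ell$, so $v_\ell(\gamma_\ell)=v_\ell(\alpha)$, and twisting the unit part of $\alpha$ by a unit does not change the modulus of the Kronecker symbol appearing in \eqref{eq:kloostermancomputecell}; hence $|c^{(\ell)}_{\ell^u,\ell^v}(\gamma_\ell)|$ depends only on $u$, $v$, and $v_\ell(\alpha)$.

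Next I would run through the branches of \eqref{eq:kloostermancomputecell}. With $w=\min\{v_\ell(\alpha),u+2v\}$, the explicit prefactor has modulus $\ell^{(u+2v+w)/2}$, while $\kappa$ and the Kronecker symbol together contribute modulus $\leq 1$. For the bracketed factor one checks branch by branch that its modulus is at most $\ell^{u}$: $\bm{\phi}(\ell^u)\leq\ell^u$; $\ell^{u-1}\leq\ell^u$; and in the remaining nonzero branch ($u$ odd, hence $u\geq 1$) the symbol $\legendresymbol{-\ell^{-v_\ell(\alpha)}\alpha}{\ell}$ has modulus $\leq 1\leq\ell^u$. Thus in every case
\[
\bigl|c^{(\ell)}_{\ell^u,\ell^v}(\gamma_\ell)\bigr|\leq \ell^{(u+2v+w)/2}\cdot\ell^{u}=\ell^{\,3u/2+v+w/2}.
\]
Moreover, each nonzero branch forces $v_\ell(\alpha)\geq u-1$ (which is automatic when $u=0$, since $v_\ell(\alpha)\geq 0$ for $\ell\notin S$); equivalently, $c^{(\ell)}_{\ell^u,\ell^v}(\gamma_\ell)=0$ whenever $v_\ell(\alpha)<v_\ell(k)-1$.

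Finally I would multiply over $\ell\notin S$. Since $k,f\in\ZZ_{(S)}^{>0}$ we get $\prod_{\ell\notin S}\ell^{3v_\ell(k)/2}=k^{3/2}$ and $\prod_{\ell\notin S}\ell^{v_\ell(f)}=f$, while $\prod_{\ell\notin S}\ell^{w_\ell}=\gcd(kf^2,\alpha)$ with $w_\ell=\min\{v_\ell(kf^2),v_\ell(\alpha)\}$; hence $|c_{k,f}(\alpha)|\leq k^{3/2}f\sqrt{\gcd(kf^2,\alpha)}$ whenever all local factors are nonzero, and $c_{k,f}(\alpha)=0$ as soon as $v_\ell(\alpha)<v_\ell(k)-1$ for some $\ell\mid k$, which is exactly the negation of $\frac{k}{\rad k}\mid\alpha$. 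That gives both clauses of the statement. There is no genuine difficulty here; the only steps needing attention are the uniform bound $\ell^u$ for the bracketed factor across all branches, the bookkeeping that assembles the exponents $w_\ell$ into $\sqrt{\gcd(kf^2,\alpha)}$, and the minor observation that the unit twist $((kf^2)^{(\ell)})^{-1}$ leaves the modulus of $c^{(\ell)}$ unchanged.
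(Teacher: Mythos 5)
Your proof is correct and follows essentially the same route as the paper's: reduce to the local factors via \autoref{cor:kloostermanlocalcompute}(2), note that the unit twist by $((kf^2)^{(\ell)})^{-1}$ preserves $v_\ell(\alpha)$ and hence the modulus bound, read off from \eqref{eq:kloostermancomputecell} that each nonzero local factor has modulus at most $\ell^{3u/2+v+w/2}$ with $w=\min\{v_\ell(\alpha),u+2v\}$ and vanishes unless $v_\ell(\alpha)\geq u-1$, then multiply. The paper states the local bound directly, while you verify it branch by branch; the content is identical.
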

\begin{proof}
We have
\[
c_{k,f}(\alpha)=\prod_{p\mid kf^2}c^{(p)}_{k_{(p)},f_{(p)}}\left(((kf^2)^{(p)})^{-1}\alpha\right).
\]
Since $v_p(((kf^2)^{(p)})^{-1}\alpha)=v_p(\alpha)$, by \eqref{eq:kloostermancomputecell} we obtain
\[
\left|c^{(p)}_{k_{(p)},f_{(p)}}\left(((kf^2)^{(p)})^{-1}\alpha\right)\right|\leq (kf^2)_{(p)}^{1/2}\gcd(kf^2,\alpha)_{(p)}^{1/2} k_{(p)}=k_{(p)}^{3/2}f_{(p)}\gcd(kf^2,\alpha)_{(p)}^{1/2} 
\]
if $v_p(\alpha)\geq v_p(k)-1$, and is $0$ otherwise. Therefore
\[
|c_{k,f}(\alpha)|\leq \prod_{p\mid kf^2}k_{(p)}^{3/2}f_{(p)}\gcd(kf^2,\alpha)_{(p)}^{1/2}= k^{3/2}f\sqrt{\gcd(kf^2,\alpha)}
\]
if $\frac{k}{\rad k}\mid \alpha$, and is $0$ otherwise.
\end{proof}

\section{Analysis of the main term}
Recall that
\[
S_G(X)=\sum_{k,f\in \ZZ_{(S)}^{>0}}\frac{1}{k^3f^5}\sum_{\xi,\alpha\in \ZZ^S}\widehat{J}_{k,f}(X,\xi,\alpha)\widehat{\Kl}_{k,f}^S(\xi,\alpha).
\]
In this section, we will analyze the $\alpha=0$ term, namely
\[
S_G^{\alpha=0}(X)=\sum_{k,f\in \ZZ_{(S)}^{>0}}\frac{1}{k^3f^5}\sum_{\xi\in \ZZ^S}\widehat{J}_{k,f}(X,\xi,0)\widehat{\Kl}_{k,f}^S(\xi,0).
\]
By \autoref{cor:kloostermanlocalcompute} (1), we obtain
\begin{equation}\label{eq:alpha0}
S_G^{\alpha=0}(X)=\sum_{\substack{k,f\in \ZZ_{(S)}^{>0}\\k=\square}}\frac{\bm{\phi}(k)}{k^2f^3}\sum_{\xi\in kf^2\ZZ^S}\widehat{J}_{k,f}(X,\xi,0).
\end{equation}

We will give an estimate of $S_G^{\alpha=0}(X)$ in this section. 
\begin{theorem}\label{thm:contributealpha0}
For any $A>0$ and $\varepsilon>0$, we have the following asymptotic formula for $S_G^{\alpha=0}(X)$:
\begin{align*}
  S_G^{\alpha=0}(X)=&\int_{\RR}\int_{\QQ_{S_\fin}} \widetilde{G}\legendresymbol{3}{2}|1-x|_\infty^{-\frac32} |1-y|_q^{-\frac32}\widehat{\Theta}_\infty(x)\widehat{\Theta}_{q}(y)\zeta^S(2)\rmd x\rmd yX^{\frac32}+\mf{M}(G)\\
+&O(X^{-A}\|G\|_{\lfloor \frac{15}{2}+2A\rfloor+2,1}+X^{\frac12+\varepsilon}\|G\|_1),
\end{align*}
where
\begin{equation}\label{eq:formulaag}
\mf{M}(G)=\mf{B}\vartheta'(\widetilde{G}'(1)X+ \widetilde{G}(1)X\log X)+(\mf{C}+\mf{D}\vartheta')\widetilde{G}(1)X
\end{equation}
with
\[
\mf{B}=-2^{r}\prod_{i=1}^{r}(1-q_i^{-1})\int_{X_0}\int_{Y_\mathbf{1}} |1-x|_\infty^{-1}|1-y|_q^{-1}\widehat{\Theta}_\infty(x)\widehat{\Theta}_{q}(y) |x|_\infty^{-\frac{1}{2}}|y|_q'^{-\frac{1}{2}}\rmd x\rmd y
\]
and
\begin{align*}
  \mf{C}=&-\frac12\left(2\upgamma-2\log (2\uppi)+\sum_{i=1}^{r}(1+q_i^{-1})\frac{\log q_i}{1-q_i^{-1}}\right)\mf{B}\\
-&2^{r-1}\prod_{i=1}^{r}(1-q_i^{-1})\uppi\int_{X_1}\int_{Y_{\mathbf{1}}} |1-x|_\infty^{-1} |1-y|_q^{-1}\widehat{\Theta}_\infty(x)\widehat{\Theta}_{q}(y) |x|_\infty^{-\frac{1}{2}}|y|_q'^{-\frac{1}{2}}\rmd x\rmd y-2^{r}\prod_{i=1}^{r}(1-q_i^{-1})\\
\times&\sum_{i=1}^{r}\sum_{\epsilon_i\in\{0,-1\}} \frac{1-\epsilon_iq_i^{-1}}{1-\epsilon_i}\frac{\log q_i}{1-q_i^{-1}}\int_{X_0}\int_{Y_{\epsilon^i}}|1-x|_\infty^{-1} |1-y|_q^{-1}\widehat{\Theta}_\infty(x)\widehat{\Theta}_{q}(y) |x|_\infty^{-\frac{1}{2}}|y|_q'^{-\frac{1}{2}} \rmd x\rmd y,
\end{align*}
\begin{align*}
\mf{D}=&2^{r}\prod_{i=1}^{r}(1-q_i^{-1})\int_{X_0}\int_{Y_{\mathbf{1}}}(\log|1-x|_\infty+\log |1-y|_q)|1-x|_\infty^{-1} |1-y|_q^{-1} \widehat{\Theta}_\infty(x)\widehat{\Theta}_{q}(y) |x|_\infty^{-\frac{1}{2}}|y|_q'^{-\frac{1}{2}}\rmd x\rmd y\\
-&2^{r}\prod_{i=1}^{r}(1-q_i^{-1})\int_{X_0}\int_{Y_{\mathbf{1}}}|1-x|_\infty^{-1} |1-y|_q^{-1}\widehat{\Theta}_\infty(x)\widehat{\Theta}_{q}(y)(\log |x|_\infty+\log |y|'_q )|x|_\infty^{-\frac{1}{2}}|y|_q'^{-\frac{1}{2}}\rmd x\rmd y,
\end{align*}
$\mathbf{1}=(1,\dots,1)$, $\epsilon^i=(1,\dots,\epsilon_i,\dots,1)$ with $\epsilon_i$ in the $i^{\mathrm{th}}$ place, and
\[
\widehat{\Theta}_\infty(x)=\sum_{\pm}\theta_\infty\left(\pm 1 ,\frac{1-x}{4}\right),
\]
\[
\widehat{\Theta}_{q_i}(y_i)=\int_{\QQ_{q_i}}\frac{1}{|z_i|_{q_i}}\theta_{q_i}\left( z_i,\frac{z_i^2}{4}(1-y_i)\right)\rmd z_i,\qquad \widehat{\Theta}_{q}(y)=\prod_{i=1}^{r}\widehat{\Theta}_{q_i}(y_i).
\]
The implied constant only depends on $A$, $\vartheta$ and $\varepsilon$.
\end{theorem}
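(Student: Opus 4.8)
The plan is to undo the second Poisson summation in \eqref{eq:alpha0} and then extract the asymptotics by Mellin inversion. Fix $k,f$ with $k$ a perfect square. In the coordinates before the change of variables \eqref{eq:harishchandratransform}, $\widehat J_{k,f}(X,\xi,0)$ depends on $\xi$ only through the additive character $\rme(-x\xi/kf^2)\rme_q(-y\xi/kf^2)$ of the ``trace'' variables $(x,y)\in\QQ_S$. Writing $\xi=kf^2m$, the sum over $\xi\in kf^2\ZZ^S$ becomes the sum of the characters $(x,y)\mapsto\rme(-xm)\rme_q(-ym)$ over $m\in\ZZ^S$, and since $\ZZ^S$ is a covolume-one self-dual lattice in $\QQ_S$ (Corollary B.4 of \cite{cheng2025}), Poisson summation collapses $(x,y)$ onto the diagonal copy of $\ZZ^S$. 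Hence
\[
\sum_{\xi\in kf^2\ZZ^S}\widehat J_{k,f}(X,\xi,0)
=\sum_{n\in\ZZ^S}\int_{\RR}\int_{\QQ_{S_\fin}}
G\legendresymbol{|a|_\infty|b|_q}{X}\,\theta_\infty(n,a)\,\theta_q(n,b)\Bigl[F(\cdots)+(\cdots)V(\cdots)\Bigr]\rmd a\,\rmd b,
\]
which is just the original elliptic sum over \emph{integer} traces $n$ with the determinant $(a,b)$ smoothed out; this is the precise sense in which the $\alpha=0$ contribution is ``almost the same as the $\xi=0$ term''.

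\textbf{Step 2: repackaging via scale invariance.}
First separate off the $n=0$ term (bounded directly; after the $(k,f)$-sum it contributes only to the error). For $n\neq 0$, use $\ZZ^S\setminus\{0\}=\{\pm1\}\times\ZZ_{(S)}^{>0}\times q^{\ZZ^r}$: scale invariance $\theta_\infty(n,a)=\theta_\infty(\pm1,a/n^2)$ turns the $\pm$-sum into $\widehat\Theta_\infty$, and the $q^{\ZZ^r}$-part together with the $\QQ_{S_\fin}$-integral in $b$ and \autoref{lem:orbitalintegralsmooth} produces $\widehat\Theta_q$ (the orbital integral $\int_{\QQ_{q_i}^\times}\theta_{q_i}(z,z^2(1-y)/4)\,\rmd z/|z|_{q_i}$). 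The substitution $a=\tfrac{n^2}{4}(1-x)$ and its $q$-adic analogue introduces a Jacobian $n^2/4$, replaces $|4a-n^2|_\infty^\vartheta$ by $n^{2\vartheta}|x|_\infty^\vartheta$ and creates the factor $|1-x|_\infty$, and turns the argument of $G$ into $\tfrac{n^2}{4X}|1-x|_\infty|1-y|_q$. Thus, up to the $n=0$ error, $\sum_{\xi\in kf^2\ZZ^S}\widehat J_{k,f}(X,\xi,0)$ becomes a sum over $n\in\ZZ_{(S)}^{>0}$ of $n^2$ times integrals over the discriminant variables of $\widehat\Theta_\infty(x)\widehat\Theta_q(y)$ against the $F$/$V$-factors.

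\textbf{Step 3: Mellin inversion and the $\zeta^S$-factors.}
Next write $G(t)=\tfrac1{2\uppi\rmi}\int_{(\sigma)}\widetilde G(s)t^{-s}\,\rmd s$ with $\sigma$ large, pull out $X^s$, and Mellin-expand $F$ and $V$ via \eqref{eq:mellinf}, \autoref{prop:propertyf} and \autoref{prop:propertyh}. The $n$-sum then contributes $\zeta^S(2s-2\vartheta w-2)$ for the $F$-term and a $\vartheta'$-twisted, shifted analogue for the $V$-term (the shift because of the extra $kf^2/n$ there), while the $(k,f)$-sum over $k=\square$ telescopes,
\[
\sum_{\substack{k,f\in\ZZ_{(S)}^{>0}\\k=\square}}\frac{\bm{\phi}(k)}{k^2f^3}(kf^2)^{-w}=\zeta^S(2+2w).
\]
Moving the $w$-contour left to collect the residue of $\widetilde F$ at $w=0$ (and the analogous residue of $\widetilde V$, which has a \emph{double} pole there when the archimedean index is $0$) turns $S_G^{\alpha=0}(X)$, up to the Step~1 and Step~2 errors, into $\tfrac1{2\uppi\rmi}\int_{(\sigma)}\widetilde G(s)X^sZ(s)\,\rmd s$, where $Z(s)$ is built from $\zeta^S(2)$, a $\zeta^S(2s-2)$-type factor (from the $F$-term) and a $\zeta^S(2s-1)$-type factor (from the $V$-term) times the elementary discriminant integrals $\int|1-x|_\infty^{-s}\widehat\Theta_\infty(x)\,\rmd x$ and $\int|1-y|_q^{-s}\widehat\Theta_q(y)\,\rmd y$. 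Absolute convergence of every sum and every interchange is guaranteed by the rapid decay of $\widetilde F,\widetilde V$ in \eqref{eq:frapiddecay} and of $\widetilde G$ in \autoref{lem:grapiddeacy} and \autoref{cor:mellinnorm}.

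\textbf{Step 4: the contour shift and the residue constants.}
Finally shift the $s$-contour leftwards. The simple pole at $s=\tfrac32$ (from $\zeta^S(2s-2)$) gives the $X^{3/2}$-term; the $F$- and $V$-contributions combine so the $\vartheta$-dependence cancels and the coefficient is $\widetilde G(\tfrac32)\cdot\tfrac12\zeta^S(2)\int\int|1-x|_\infty^{-3/2}|1-y|_q^{-3/2}\widehat\Theta_\infty(x)\widehat\Theta_q(y)\,\rmd x\,\rmd y$. The double pole at $s=1$ (from $\zeta^S(2s-1)$ together with the extra $w$-derivative forced by the double pole of $\widetilde V$) gives $\mf{M}(G)$: the $\widetilde G(1)X\log X$ and $\widetilde G'(1)X$ pieces pick up the coefficient $\mf{B}\vartheta'$, the $\vartheta'$ appearing because the $V$-term's Mellin variable is $\vartheta'$-twisted and the double pole forces a derivative of the associated $\zeta^S$-factor; the remaining piece is $(\mf{C}+\mf{D}\vartheta')\widetilde G(1)X$, where $\mf{C}$ collects the constants $2\upgamma-2\log(2\uppi)$ (from $\Gamma'/\Gamma$ in the $\gamma$-factor relating $F$ to $V$ and from $(\zeta^S)'/\zeta^S$ at $1$), the contributions $\tfrac{\log q_i}{1-q_i^{-1}}$ of the Euler factors at $S$, and the $\sum_{\epsilon_i\in\{0,-1\}}$ terms coming from the residual $\omega_{q_i}$-sectors $Y_{\epsilon_i}$, while $\mf{D}$ collects the $\log|1-x|_\infty$, $\log|1-y|_q$, $\log|x|_\infty$, $\log|y|'_q$ terms obtained by differentiating $|1-x|_\infty^{-s}$, $|x|_\infty^{-s}$, $\ldots$ in $s$ at $s=1$. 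The shifted integral on $\Re s=\tfrac12+\varepsilon$ is the $O(X^{1/2+\varepsilon}\|G\|_1)$ error (using $\|G\|_{M^K_\sigma}\ll\|G\|_1$ for $K<-1$ from \autoref{cor:mellinnorm}), and the Step~1 Poisson tail (the frequencies $m\neq0$), estimated by repeated integration by parts in the Poisson variable, is the $O(X^{-A}\|G\|_{\lfloor15/2+2A\rfloor+2,1})$ error. \textbf{The main difficulty} is this last step: pinning down $\mf{C}$ and $\mf{D}$ is a delicate Laurent-expansion computation at the double pole $s=1$ that must track every occurrence of $\upgamma$, $\log(2\uppi)$, $\log q_i$, the $\zeta^S(2)$-normalization, the $Y_{\epsilon_i}$-sectors and the $\vartheta'$-weights; a secondary difficulty is making the Step~1/Step~2 errors genuinely take the stated shapes uniformly in $k,f$ and in the Sobolev and Mellin norms of $G$, and justifying all interchanges of the $(k,f)$- and $n$-sums with the contour integrals.
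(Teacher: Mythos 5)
Your high-level route does match the paper's: undo the determinant--Poisson in $\xi$ to recover an $n$-sum (paper's Lemma~\ref{lem:poisson2smooth}), repackage via scale invariance, double-Mellin in $G$ and in $F,V$ with $\zeta^S$-factors appearing via \autoref{lem:zetaphi}, and then shift contours and collect residues. There are, however, genuine gaps in the error-term bookkeeping and in the pole analysis.

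The most serious problem is your attribution of the $O\bigl(X^{-A}\|G\|_{\lfloor15/2+2A\rfloor+2,1}\bigr)$ error to ``the Step~1 Poisson tail (the frequencies $m\neq0$), estimated by repeated integration by parts in the Poisson variable.'' There is no such tail. \autoref{lem:poisson2smooth} is proved precisely so that the test function appearing in the reverse Poisson summation is smooth and compactly supported on $\QQ_S$; the Poisson summation formula then holds as an \emph{exact identity}, with no remainder to estimate. In the actual argument the $X^{-A}$-error is produced after the Mellin step: one first pushes the $(F,V)$-contour far enough (to $\Re s = 3/\vartheta$ or $7/(2\vartheta')$) that the Dirichlet series $L^S(2u-2-2\vartheta s,\chi)$ has no pole in $\Re u \leq 4$, and then shifts the $G$-contour to $\Re u = -A$; the polynomial growth of $L^S$ in the critical strip is absorbed by $\|G\|_{M^{15/2+2A}_{-A}}\ll\|G\|_{\lfloor15/2+2A\rfloor+2,1}$ from \autoref{cor:mellinnorm}. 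Your Step~3, in which you move the $F,V$-contour left to collect the residue of $\widetilde F$ at $0$, necessarily produces a leftover shifted contour integral which you have silently discarded; that is where the $X^{-A}$-bound has to come from.

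Two further misstatements. You write that ``the $F$- and $V$-contributions combine so the $\vartheta$-dependence cancels and the coefficient is $\widetilde G(3/2)\cdot\tfrac12\zeta^S(2)\cdots$.'' In the actual proof (Lemmas~\ref{lem:asympalpha0firstterm} and \ref{lem:asympalpha0secondterm}), the $X^{3/2}$-main term comes \emph{only} from the $F$-piece; what cancels between $F$ and $V$ is the transient $X^{(3-\vartheta)/2}$-term, using that $\widetilde F$ is odd so $\widetilde F(-1/2)=-\widetilde F(1/2)$ and $(3-\vartheta)/2=(2+\vartheta')/2$. You also refer to a ``double pole of $\widetilde V$'' at $w=0$ when the archimedean index is $0$; but by \autoref{prop:propertyh}, $\widetilde{V_{\iota,\epsilon}}$ is holomorphic on $\Re s>0$, and in any case the $V$-piece is expanded through $\widetilde F$ via the defining integral \eqref{eq:defv}. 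The double pole responsible for the $X\log X$-term at $u=1$ is the product of the simple pole of $\widetilde F\bigl((u-1)/\vartheta'\bigr)$ with the simple pole of $\Gamma\bigl((u-1)/(2\vartheta')\bigr)$ in the $\Gamma$-ratio, which occurs only when $\iota=0$ and $\epsilon=\mathbf{1}$; the sectors with $\#(S\setminus S_{\spl})=1$ give a simple pole and feed only $\mf{C}$, and $\#(S\setminus S_{\spl})\geq 2$ gives no pole at all. The three-way case analysis on $\#(S\setminus S_{\spl})$, plus the Fourier inversion over characters of $\ZZ_{S_\fin}^\times/U$ that converts the $\nu$-sum into Dirichlet $L$-functions, are both essential and are only hinted at in your sketch.
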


We need some preparations to derive \autoref{thm:contributealpha0}. The reader may go directly to the proof of \autoref{thm:contributealpha0} and go back to the proof of the lemmas below if necessary.

\subsection{Preparations}
\begin{lemma}\label{lem:poisson2smooth}
For any $\alpha\in \ZZ^S$, the function
\begin{align*}
\Phi_\alpha(a,b)=&2\int_{x\in\RR}\int_{y\in\QQ_{S_\fin}} G\left(\frac{1}{X}\left|\frac{a^2}{4}-x\right|_\infty\left|\frac{b^2}{4}-y\right|_q \right) \theta_\infty\left(a,\frac{a^2}{4}-x\right)\theta_{q}\left(b,\frac{b^2}{4}-y\right)\\
\times&\left[F\legendresymbol{kf^2}{|4x|_\infty^{\vartheta}|4y|_q'^{\vartheta}}
+\frac{kf^2}{\sqrt{|4x|_\infty|4y|_q'}}V\legendresymbol{kf^2}{|4x|_\infty^{\vartheta'}|4y|_q'^{\vartheta'}}\right]\rme \legendresymbol{\alpha x}{kf^2}\rme_q\legendresymbol{\alpha y}{kf^2} \rmd x\rmd y
\end{align*}
is smooth and compactly supported on $(a,b)\in \QQ_S$.
\end{lemma}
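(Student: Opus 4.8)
The plan is to follow the proof of \autoref{lem:secondsmooth}, exploiting a simplification produced by \eqref{eq:harishchandratransform}: in $\Phi_\alpha(a,b)$ the weight factors $F(\cdot)$, $\frac{kf^2}{\sqrt{|4x|_\infty|4y|_q'}}V(\cdot)$ and the additive characters $\rme(\alpha x/kf^2)$, $\rme_q(\alpha y/kf^2)$ involve only the integration variables $(x,y)$, so that the dependence of $\Phi_\alpha$ on $(a,b)$ is carried entirely by $G(\cdot)$ and by $\theta_\infty\bigl(a,\frac{a^2}{4}-x\bigr)\,\theta_q\bigl(b,\frac{b^2}{4}-y\bigr)$. (This is exactly what makes $\Phi_\alpha$ useful downstream: $\widehat{J}_{k,f}(X,\xi,0)$ is then the Fourier transform of the smooth compactly supported $\Phi_0$ at $\xi/kf^2$, to which Poisson summation applies.) I would first note that the inner $(x,y)$-integral converges absolutely: by the support discussion below its domain is bounded, the factor $|4x|_\infty^{-1/2}$ multiplying $V$ is absorbed by the rapid decay of $F,V\in\cS$ (\autoref{prop:propertyf}, \autoref{prop:propertyh}), whose arguments tend to $+\infty$ as $x\to 0$ since $\vartheta,\vartheta'>0$, while the $|x|^{1/2}$-type behaviour of $\theta_\infty$ along the discriminant locus is harmless; likewise near each $y_i=0$.

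Next I would establish compact support by combining three mechanisms. Since $f_{q_i}\in C_c^\infty(\G(\QQ_{q_i}))$ has support compact inside $\GL_2(\QQ_{q_i})=\{\det\neq 0\}$, the trace and the determinant carry $\supp f_{q_i}$ into compact subsets of $\QQ_{q_i}$ and of $\QQ_{q_i}^\times$, so $\theta_{q_i}(T,N)\ne 0$ forces $|T|_{q_i}\ll 1$ and $|N|_{q_i}\asymp 1$; with $T=b_i$ and $N=\frac{b_i^2}{4}-y_i$ this confines $b$ to a fixed compact subset of $\QQ_{S_\fin}$ and gives $\bigl|\frac{b^2}{4}-y\bigr|_q\asymp 1$. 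Then $G\in C_c^\infty(\lopen 1/4,5/4\ropen)$ forces $\bigl|\frac{a^2}{4}-x\bigr|_\infty\asymp X$, and since $\orb(f_\infty;\cdot)$ is compactly supported modulo $Z_+$ one has $\theta_\infty(T,N)\ne 0\Rightarrow T^2\ll|N|$ (because $\theta_\infty$ is $Z_+$-invariant, and the $Z_+$-orbit of $(T,N)$ with $N\ne 0$ meets a fixed compact set only if $T^2/|N|$ is bounded); hence $a^2\ll\bigl|\frac{a^2}{4}-x\bigr|_\infty\asymp X$, i.e.\ $|a|\ll X^{1/2}$. So $(a,b)$ ranges over a compact set, depending on $X,k,f,\alpha$ — which is harmless, as these are fixed.

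For smoothness in $a\in\RR$ I would differentiate under the integral sign. The factor $a\mapsto G\bigl(\frac{1}{X}|\frac{a^2}{4}-x|_\infty|\frac{b^2}{4}-y|_q\bigr)$ is $C^\infty$ because $a\mapsto\frac{a^2}{4}-x$ is smooth and $t\mapsto G(\frac{1}{X}|t|c)$ is $C^\infty$ on all of $\RR$ (it vanishes identically near $t=0$). For $\theta_\infty$ the key observation is that the discriminant of $(T,N)=\bigl(a,\frac{a^2}{4}-x\bigr)$ equals $T^2-4N=4x$, independent of $a$; thus, with $\theta_\infty=\theta_{\infty,0}+\theta_{\infty,1}$ as in the archimedean subsection,
\[
\theta_{\infty,1}\Bigl(a,\tfrac{a^{2}}{4}-x\Bigr)=\Bigl|\frac{4x}{\tfrac{a^{2}}{4}-x}\Bigr|^{1/2}g_{1}\Bigl(a,\tfrac{a^{2}}{4}-x\Bigr)=2|x|^{1/2}\,\Bigl|\tfrac{a^{2}}{4}-x\Bigr|^{-1/2}g_{1}\Bigl(a,\tfrac{a^{2}}{4}-x\Bigr),
\]
where $2|x|^{1/2}$ does not involve $a$, while $\bigl|\frac{a^{2}}{4}-x\bigr|^{-1/2}$ is $C^\infty$ in $a$ on the support (there $\bigl|\frac{a^{2}}{4}-x\bigr|_\infty\asymp X$ is bounded away from $0$) and $g_1,g_2$ extend smoothly to all split and elliptic pairs by \autoref{thm:archimedeanintegral}. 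Hence $\partial_a^{k}$ of the integrand is, uniformly over the compact support, dominated by an integrable function of $(x,y)$, so $\Phi_\alpha(\cdot,b)\in C^\infty(\RR)$.

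Finally, for local constancy in $b\in\QQ_{S_\fin}$ uniformly in $a$, I would reuse the device of \autoref{lem:secondsmooth}: by \autoref{lem:orbitalintegralsmooth} choose $L_i\in\ZZ_{>0}$ with $\theta_{q_i}(c_iT,c_i^2N)=\theta_{q_i}(T,N)$ for all $c_i\in 1+q_i^{L_i}\ZZ_{q_i}$, enlarging $L_i$ so that also $\rme_{q_i}(\alpha c_i^2y_i/kf^2)=\rme_{q_i}(\alpha y_i/kf^2)$ for such $c_i$ and all $y_i$ in the (compact) support — legitimate since $kf^2\in\ZZ_{q_i}^\times$, $\alpha\in\ZZ^S$ and $y_i$ stays in a fixed compact set. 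Then, for $c=(c_i)$ in the resulting neighbourhood of $1$ and any fixed $a$, substituting $y_i\mapsto c_i^2y_i$ (Jacobian $\prod_i|c_i^2|_{q_i}=1$) sends $\frac{(c_ib_i)^2}{4}-y_i$ to $c_i^2\bigl(\frac{b_i^2}{4}-y_i\bigr)$ and fixes each factor of the integrand: $\theta_{q_i}\bigl(c_ib_i,c_i^2(\frac{b_i^2}{4}-y_i)\bigr)=\theta_{q_i}\bigl(b_i,\frac{b_i^2}{4}-y_i\bigr)$; $\bigl|c_i^2(\frac{b_i^2}{4}-y_i)\bigr|_{q_i}=\bigl|\frac{b_i^2}{4}-y_i\bigr|_{q_i}$ keeps $G$ unchanged; $|4c_i^2y_i|_{q_i}'=|c_i|_{q_i}|4y_i|_{q_i}'=|4y_i|_{q_i}'$ keeps the $[F+V]$-factor unchanged; and the character is unchanged by construction. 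Hence $\Phi_\alpha(a,c\cdot b)=\Phi_\alpha(a,b)$ with $L_i$ independent of $a$, which together with the previous paragraph gives smoothness on $\QQ_S$. I expect the main obstacle, relative to \autoref{lem:secondsmooth}, to be precisely the smoothness in $a$: after \eqref{eq:harishchandratransform} the variable $a$ enters the determinant slot of $\theta_\infty$, so one must check that this does not re-introduce the square-root non-smoothness that $\theta_\infty$ carries transverse to the discriminant locus — it does not, exactly because that discriminant is $4x$ in the new coordinates, independent of $a$; a secondary nuisance is that compactness must now be assembled from the three support mechanisms above rather than from $G$ alone.
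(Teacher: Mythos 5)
Your proof is correct and follows essentially the same route as the paper's: compact support is assembled from the compact support of $G$, of the $\theta_{q_i}$, and of $\theta_\infty$ modulo $Z_+$; smoothness in $a$ rests on the observation that the transformation \eqref{eq:harishchandratransform} sends the archimedean discriminant to $4x$, independent of $a$; and local constancy in $b$ uses \autoref{lem:orbitalintegralsmooth} together with the harmless multiplicative shift $y\mapsto c^{2}y$.

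The one place your write-up genuinely diverges is the archimedean smoothness. Where the paper notes that $\theta_\infty(a,\tfrac{a^{2}}{4}-x)$ is smooth away from $x=0$ and then delegates the $x=0$ case to the argument of Lemma~4.1 of Altu\u{g}'s first paper, you make the argument self-contained: you pull out the factor $|4x|^{1/2}$ (independent of $a$), note that $|\tfrac{a^{2}}{4}-x|^{-1/2}$ is bounded away from its singularity on the support, and invoke the smooth extensions of $g_1,g_2$ from \autoref{thm:archimedeanintegral} to justify differentiating under the integral. That is a somewhat more explicit account of why the square-root singularity is transverse to the $a$-slices, and it is a sound replacement for the citation. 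One small point worth being precise about: $g_1$ vanishes identically on the split chamber, so $\theta_{\infty,1}(a,\tfrac{a^{2}}{4}-x)$ is supported in $x\le 0$; your domination estimate still goes through because on that half-line $g_1$ is $C^\infty$ up to the boundary $x=0$ and $|4x|^{1/2}$ is integrable there. You are also right (and more explicit than the paper) that the $b$-argument requires enlarging $L_i$ so that the character $\rme_{q_i}(\alpha y_i/kf^{2})$ is invariant under $y_i\mapsto c_i^{2}y_i$; the paper carries this out in the proof of \autoref{lem:secondsmooth} and only alludes to it here.
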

\begin{proof}
Suppose that $\Phi_\alpha(a,b)\neq 0$.
Since $\theta_q(\gamma)$ is compactly supported, we obtain
\[
|b|\ll 1\quad\text{and}\quad\left|\frac{b^2}{4}-y\right|_q\asymp 1.
\]
Since $G$ is compactly supported on $\lopen 1/4,5/4\ropen$,
\[
\left|\frac{a^2}{4}-x\right|_\infty\asymp 1.
\]
Since $\theta_\infty(T,N)$ is compactly supported modulo center, we obtain $|a|\ll 1$. Hence $\Phi_\alpha(a,b)$ is compactly supported.

Next we prove that $\Phi_\alpha(a,b)$ is smooth. It suffices to prove that it is smooth with respect to $a\in \RR$ and is smooth with respect to $b\in \QQ_{S_\fin}$ that is uniform in $a$.

By the results of \autoref{subsec:singularities}, $\theta_\infty(a,a^2/4-x)$ is smooth on $(a,x)\in \RR^2$ unless $x=0$. If $x=0$, one may use the argument in \cite[Lemma 4.1]{altug2015} to show that the integral is smooth on $(a,x)\in \RR^2$. Hence the integrand is smooth with respect to the archimedean place and thus the first claim follows.

For the second claim we argue as follows.
For any $q_i\in S$, we choose a corresponding $L_i$ in \autoref{lem:orbitalintegralsmooth}. Hence
\[
\theta_{q}\left(cb,\frac{c^2b^2}{4}-c^2y\right)= \theta_{q}\left(b,\frac{b^2}{4}-y\right)
\]
for any $c\in U=\prod_{i=1}^{r} 1+q_i^{L_i}\ZZ_{q_i}$, where the multiplication on $\QQ_{S_\fin}$ is defined componentwise.

By making the change of variable $y\mapsto c^{-2}y$ and noting that for any $c\in U$, we have
\[
|c^2y|_q=|y|_q\qquad\text{and}\qquad |c^2y|'_q=|y|'_q,
\]
we find that $\Phi_\alpha(a,b)=\Phi_\alpha(a,cb)$ for $c\in U$.

If $b\neq 0$, then there exists a neighborhood $V$ of $b$ (independent of $a$) such that $b'/b\in U$ for all $b'\in V$, so that
\[
\Phi_\alpha(a,b)=\Phi_\alpha(a,b')
\]
for all $b'\in V$. Hence $\Phi_\alpha(a,b)$ is smooth at $b$ which is independent of $a$.

Now suppose that $b=0$. Since $\theta_{q_i}(\gamma)$ is compactly supported, $|b_i^2/4-y_i|_{q_i}\asymp 1$ for all $i$ if $y_i$ is in the support. Hence $|y_i|_{q_i}\asymp 1$ for all $i$ in this case. By \autoref{subsec:singularities}, $\theta_{q_i}(\gamma)$ and $|b^2/4-y|_q$ are smooth away from the parabola $\{(b,y)\in \QQ_{S_\fin}^2\,|\,b^2/4-y=0\}$. Hence there exists a neighborhood $V$ of $b$ such that
\[
\theta_{q}\left(b,\frac{b^2}{4}-y\right)=\theta_{q}\left(b',\frac{b'^2}{4}-y\right)\quad\text{and}\quad \left|\frac{b^2}{4}-y\right|_q=\left|\frac{b'^2}{4}-y\right|_q
\]
for all $b'\in V$ and all $|y|_q\asymp 1$. Hence $\Phi_\alpha(a,b)$ is smooth at $b$ which is independent of $a$. Therefore the second claim follows and hence $\Phi_\alpha(a,b)$ is smooth.
\end{proof}

\begin{lemma}\label{lem:zetaphi}
Suppose that $\Re s>2$. Then we have
\begin{equation}\label{eq:seriesphi}
\sum_{\substack{k,f\in \ZZ_{(S)}^{>0}\\ k=\square}}\frac{\bm{\phi}(k)}{k^{1+s}f^{1+2s}}=\zeta^S(2s).
\end{equation}
\end{lemma}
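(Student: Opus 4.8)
For $\Re s>2$ every term in sight is positive and the sums below converge absolutely, so all rearrangements are justified; I would first split the double sum as a product
\[
\sum_{\substack{k,f\in \ZZ_{(S)}^{>0}\\ k=\square}}\frac{\bm{\phi}(k)}{k^{1+s}f^{1+2s}}
=\Bigg(\sum_{\substack{k\in \ZZ_{(S)}^{>0}\\ k=\square}}\frac{\bm{\phi}(k)}{k^{1+s}}\Bigg)\Bigg(\sum_{f\in \ZZ_{(S)}^{>0}}\frac{1}{f^{1+2s}}\Bigg),
\]
and note that the second factor is by definition $\zeta^S(1+2s)$.

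\textbf{The square sum.} Writing $k=m^2$ with $m\in\ZZ_{(S)}^{>0}$ sets up a bijection between squares in $\ZZ_{(S)}^{>0}$ and $\ZZ_{(S)}^{>0}$; using the elementary identity $\bm{\phi}(m^2)=m\,\bm{\phi}(m)$ (which holds for prime powers and hence, by multiplicativity, for all $m$), the first factor becomes
\[
\sum_{m\in \ZZ_{(S)}^{>0}}\frac{m\,\bm{\phi}(m)}{m^{2(1+s)}}=\sum_{m\in \ZZ_{(S)}^{>0}}\frac{\bm{\phi}(m)}{m^{1+2s}}.
\]
Set $w=1+2s$, so $\Re w>5$. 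Since $\bm{\phi}$ is multiplicative and the summation is restricted to $m$ coprime to the primes of $S$, this sum factors into an Euler product over $p\notin S$, with local factor
\[
\sum_{j\geq 0}\frac{\bm{\phi}(p^j)}{p^{jw}}=1+\frac{p-1}{p}\sum_{j\geq 1}(p^{1-w})^{j}=1+\frac{p-1}{p}\cdot\frac{p^{1-w}}{1-p^{1-w}}=\frac{1-p^{-w}}{1-p^{1-w}}.
\]
Hence $\sum_{m\in \ZZ_{(S)}^{>0}}\bm{\phi}(m)m^{-w}=\prod_{p\notin S}\frac{1-p^{-w}}{1-p^{1-w}}=\zeta^S(w-1)/\zeta^S(w)$, which with $w=1+2s$ equals $\zeta^S(2s)/\zeta^S(1+2s)$.

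\textbf{Conclusion.} Multiplying the two factors gives
\[
\zeta^S(1+2s)\cdot\frac{\zeta^S(2s)}{\zeta^S(1+2s)}=\zeta^S(2s),
\]
which is the claimed identity \eqref{eq:seriesphi}. There is no real obstacle here; the only point needing attention is bookkeeping with the region of convergence, and $\Re s>2$ is comfortably sufficient: it makes $\Re(2s)>1$ (so that $\zeta^S(2s)$ and the Euler product for $\sum\bm{\phi}(m)m^{-w}$ with $\Re(w-1)=\Re(2s)>1$ both converge absolutely) and $\Re(1+2s)>1$ (so $\zeta^S(1+2s)$ converges and is nonzero, legitimizing the cancellation).
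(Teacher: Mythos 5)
Your proof is correct. It is essentially the same approach as the paper's: split off the $f$-sum as $\zeta^S(1+2s)$, compute the $k$-sum as an Euler product, and cancel. The only difference is cosmetic: you reindex $k=m^2$ first and invoke $\bm{\phi}(m^2)=m\,\bm{\phi}(m)$, which turns the $k$-sum into the standard Dirichlet series $\sum\bm{\phi}(m)m^{-(1+2s)}=\zeta^S(2s)/\zeta^S(1+2s)$; the paper instead evaluates the local factor $\sum_{u\geq 0}\bm{\phi}(\ell^{2u})\ell^{-2u(1+s)}=\frac{1-\ell^{-2s-1}}{1-\ell^{-2s}}$ directly and reads off the same ratio. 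Both routes reduce to the same elementary geometric-series computation, and your convergence discussion is accurate.
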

\begin{proof}
We have
\[
\sum_{u=0}^{+\infty}\frac{\bm\phi(p^{2u})}{p^{2u(1+s)}}=1+\sum_{u=1}^{+\infty} \frac{p^{2u}-p^{2u-1}}{p^{2u(s+1)}} =1+\frac{(1-p^{-1})p^{-2s}}{1-p^{-2s}}=\frac{1-p^{-2s-1}}{1-p^{-2s}}
\]
Hence for $\Re s>2$,
\[
\sum_{\substack{k\in \ZZ_{(S)}^{>0}\\ k=\square}}\frac{\bm{\phi}(k)}{k^{1+s}}=\prod_{p\notin S}\sum_{u=0}^{+\infty}\frac{\bm\phi(p^{2u})}{p^{2u(1+s)}}=\prod_{p\notin S}\frac{1-p^{-2s-1}}{1-p^{-2s}}=\frac{\zeta^S(2s)}{\zeta^S(2s+1)}.
\]
The lemma now follows from
\[
\sum_{f\in \ZZ_{(S)}^{>0}}\frac{1}{f^{1+2s}}=\zeta^S(1+2s).\qedhere
\]
\end{proof}

\subsection{Proof of the asymptotic formula of the $\alpha=0$ term}
We now give the proof of the main theorem \autoref{thm:contributealpha0} in this section.

\begin{proof}[Proof of \autoref{thm:contributealpha0}]
By \eqref{eq:alpha0} and the expression \eqref{eq:transformedj} of $\widehat{J}_{k,f}(X,\xi,0)$ after the transformation \eqref{eq:harishchandratransform}, we have
\begin{align*}
&S_G^{\alpha=0}(X)=2\sum_{\substack{k,f\in \ZZ_{(S)}^{>0}\\k=\square}}\frac{\bm{\phi}(k)}{k^2f^3}\sum_{\xi\in kf^2\ZZ^S}\int_{(x,a)\in\RR^2}\int_{(y,b)\in\QQ_{S_\fin}^2} G\left(\frac{1}{X}\left|\frac{a^2}{4}-x\right|_\infty\left|\frac{b^2}{4}-y\right|_q \right) \theta_\infty\left(a,\frac{a^2}{4}-x\right)\\
\times&\theta_{q}\left(b,\frac{b^2}{4}-y\right)\left[F\legendresymbol{kf^2}{|4x|_\infty^{\vartheta} |4y|_q'^{\vartheta}}
+\frac{kf^2}{\sqrt{|4x|_\infty|4y|_q'}}V\legendresymbol{kf^2}{|4x|_\infty^{\vartheta'}|4y|_q'^{\vartheta'}}\right] \rme\legendresymbol{-a\xi}{kf^2}\rme_q\legendresymbol{-b\xi}{kf^2}\rmd x\rmd a\rmd y\rmd b.
\end{align*}

By \autoref{lem:poisson2smooth}, the integral over $(x,y)\in \RR\times \QQ_{S_\fin}$ of the function inside is smooth in $(a,b)\in \RR\times \QQ_{S_\fin}$. Hence we can use the Poisson summation formula in the reverse direction and obtain
\begin{align*}
S_G^{\alpha=0}(X)=&2\sum_{\substack{k,f\in \ZZ_{(S)}^{>0}\\k=\square}}\frac{\bm{\phi}(k)}{k^2f^3}\sum_{n\in \ZZ^S}\int_{\RR}\int_{\QQ_{S_\fin}} G\left(\frac{1}{X}\left|\frac{n^2}{4}-x\right|_\infty\left|\frac{n^2}{4}-y\right|_q \right) \theta_\infty\left(n,\frac{n^2}{4}-x\right)\theta_{q}\left(n,\frac{n^2}{4}-y\right)\\
\times&\left[F\legendresymbol{kf^2}{|4x|_\infty^{\vartheta}|4y|_q'^{\vartheta}}
+\frac{kf^2}{\sqrt{|4x|_\infty|4y|_q'}}V\legendresymbol{kf^2}{|4x|_\infty^{\vartheta'}|4y|_q'^{\vartheta'}}\right] \rmd x\rmd y.
\end{align*}
By making the change of variable $x\mapsto n^2 x/4$ and $y\mapsto n^2 y/4$ and using the fact that $|a^2y|_p'=|a|_p^2|y|_p'$ for any $a,y\in \QQ_p$, we obtain
\begin{align*}
S_G^{\alpha=0}(X)=&2\sum_{\substack{k,f\in \ZZ_{(S)}^{>0}\\k=\square}}\frac{\bm{\phi}(k)}{k^2f^3}\sum_{n\in \ZZ^S}\int_{\RR}\int_{\QQ_{S_\fin}} \left|\frac{n^2}{4}\right|_\infty\left|\frac{n^2}{4}\right|_q G\left(\frac{1}{X}\left|\frac{n^2}{4}\right|_\infty\left|\frac{n^2}{4}\right|_q |1-x|_\infty |1-y|_q \right)\\
\times&\theta_\infty\left(n,\frac{n^2}{4}(1-x)\right) \theta_{q}\left(n,\frac{n^2}{4}(1-y)\right)\left[F\legendresymbol{kf^2} {|n|_\infty^{2\vartheta}|n|_q^{2\vartheta}|x|_\infty^{\vartheta}|y|_q'^{\vartheta}}\right.\\
+&\left.\frac{kf^2}{|n|_\infty|n|_q\sqrt{|x|_\infty|y|_q'}}V\legendresymbol{kf^2}{|n|_\infty^{2\vartheta'} |n|_q^{2\vartheta'}|x|_\infty^{\vartheta'}|y|_q'^{\vartheta'}}\right] \rmd x\rmd y\\
=&2\sum_{\substack{k,f\in \ZZ_{(S)}^{>0}\\k=\square}}\frac{\bm{\phi}(k)}{k^2f^3}\sum_{\pm}\sum_{\nu\in \ZZ^r}\sum_{n\in \ZZ_{(S)}^{>0}}\int_{\RR}\int_{\QQ_{S_\fin}} n^2 G\left(\frac{n^2}{X} |1-x|_\infty |1-y|_q \right)\theta_\infty\left(\pm nq^\nu,\frac{n^2q^{2\nu}}{4}(1-x)\right)\\
\times& \theta_{q}\left(\pm nq^\nu,\frac{n^2q^{2\nu}}{4}(1-y)\right)\left[F\legendresymbol{kf^2}{n^{2\vartheta}|x|_\infty^{\vartheta} |y|_q'^{\vartheta}}
+\frac{kf^2}{n\sqrt{|x|_\infty|y|_q'}}V\legendresymbol{kf^2}{n^{2\vartheta'}|x|_\infty^{\vartheta'} |y|_q'^{\vartheta'}}\right] \rmd x\rmd y,
\end{align*}
where in the last step we used the factorization $\ZZ^S=\{\pm 1\}\times q^{\ZZ^r}\times \ZZ_{(S)}^{>0}$.

Since $\theta_\infty$ is invariant under the action of $Z_+$, we have
\[
\theta_\infty\left(\pm nq^\nu,\frac{n^2q^{2\nu}}{4}(1-x)\right)=\theta_\infty\left(\pm 1 ,\frac{1-x}{4}\right)
\]
and we define this quantity to be $\Theta_\infty^\pm(x)$.

For the nonarchimedean part, we define
\[
\Theta_q^{\pm,\nu}(z,y)=\theta_{q}\left(\pm zq^\nu,\frac{z^2q^{2\nu}}{4}(1-y)\right)=\prod_{i=1}^{r}\theta_{q_i}\left(\pm z_iq^\nu,\frac{z_i^2q^{2\nu}}{4}(1-y_i)\right)
\]
for $z=(z_1,\dots,z_r)\in \ZZ_{S_\fin}^\times=\ZZ_{q_1}^\times\times\dots\times\ZZ_{q_r}^\times$. 
By \autoref{lem:orbitalintegralsmooth}, there exist $L_1,\dots,L_r\geq 2$ independent of $\pm,\nu$ and $y$, such that for any $c_i\in 1+q_i^{L_i}\ZZ_{q_i}$, 
\[
\theta_{q_i}\left(\pm c_iz_iq^\nu,\frac{c_i^2z_i^2q^{2\nu}}{4}(1-y_i)\right)=\theta_{q_i}\left(\pm z_iq^\nu,\frac{z_i^2q^{2\nu}}{4}(1-y_i)\right).
\]
Hence $\Theta_q^{\pm,\nu}(z,y)$ is invariant under the multiplication of 
\[
U=\prod_{i=1}^{r}(1+q_i^{L_i}\ZZ_{q_i})
\]
with respect to $z$. By Fourier inversion formula,
\[
\Theta_q^{\pm,\nu}(z,y)=\prod_{i=1}^{r}(1-q_i^{-1})^{-1}\sum_{\chi}\widehat{\Theta}_q^{\pm,\nu}(\chi,y)\chi(z),
\]
where the Fourier transform is defined by
\[
\widehat{\Theta}_q^{\pm,\nu}(\chi,y)=\int_{\ZZ_{S_\fin}^\times}\Theta_q^{\pm,\nu}(z,y)\overline{\chi(z)}\rmd z
\]
and $\chi$ runs over all characters on $\ZZ_{S_\fin}^\times$ that are trivial on $U$. Since $\ZZ_{S_\fin}^\times/U$ is finite, the sum over $\chi$ is finite (which is independent of $\pm,\nu$ and $y$). We also consider them as Dirichlet characters such that $\chi(m)=0$ if $\gcd(m,S)\neq 1$. By considering the variable $N$ of $\theta_{q_i}(T,N)$ we know that $\Theta_q^{\pm,\nu}(z,y)$ is compactly supported in $y$ when $\pm$ and $\nu$ are fixed and the support is independent of $z$. Hence so is $\widehat{\Theta}_q^{\pm,\nu}(\chi,y)$ for each $\chi$.

Hence we obtain
\begin{align*}
&S_G^{\alpha=0}(X)
=2\sum_{\chi}\sum_{\substack{k,f\in \ZZ_{(S)}^{>0}\\k=\square}}\frac{\bm{\phi}(k)}{k^2f^3}\sum_{\pm}\sum_{\nu\in \ZZ^r}\sum_{n\in \ZZ_{(S)}^{>0}}\int_{\RR}\int_{\QQ_{S_\fin}} n^2 G\left(\frac{n^2}{X} |1-x|_\infty |1-y|_q \right)\Theta_\infty^\pm(x)\\
\times& \prod_{i=1}^{r}(1-q_i^{-1})^{-1}\widehat{\Theta}_{q}^{\pm,\nu}(\chi,y)\chi(n) \left[F\legendresymbol{kf^2}{n^{2\vartheta}|x|_\infty^{\vartheta} |y|_q'^{\vartheta}}
+\frac{kf^2}{n\sqrt{|x|_\infty|y|_q'}}V\legendresymbol{kf^2}{n^{2\vartheta'}|x|_\infty^{\vartheta'} |y|_q'^{\vartheta'}}\right] \rmd x\rmd y.
\end{align*}

We claim that the sum over $\nu$ is finite in the above sum. Suppose that the integrand is not zero. Since $\theta_\infty$ is compactly supported modulo the center, we must have $|1-x|\asymp 1$. Since $G\in C_c^\infty(\lopen 1/4,5/4\ropen)$, we have $n^2|1-y|_q\asymp 1$ and hence $|1-y|_q\ll 1$. 
Recall that
\[
\Theta_q^{\pm,\nu}(z,y)=\theta_{q}\left(\pm zq^\nu,\frac{z^2q^{2\nu}}{4}(1-y)\right).
\]
Hence $\Theta_q^{\pm,\nu}(z,y)=0$ unless $|q^\nu|_{q_i}\ll 1$ and $|q^{2\nu}(1-y_i)|_{q_i}\asymp 1$. Therefore
\[
|q^{2\nu}|_{q_i}\gg \frac{1}{|1-y_i|_{q_i}}\gg \prod_{j\neq i}|1-y_j|_{q_j}\gg \prod_{j\neq i}|q^{2\nu}|_{q_j}\gg 1.
\]
Hence $q_i^{-\nu_i}\asymp 1$. Therefore, the sum over $\nu$ is finite.

Under the transformation \eqref{eq:harishchandratransform}, the definition of $V$ becomes
\[
V\legendresymbol{kf^2}{n^{2\vartheta'}|x|_\infty^{\vartheta'}|y|_q'^{\vartheta'}} =\frac{\uppi^{1/2}}{\dpii}\int_{(2)} \widetilde{F}(s)\prod_{i=1}^{r}\frac{1-\epsilon_i q_i^{s-1}}{1-\epsilon_i q_i^{-s}}\frac{\Gamma((\iota+s)/2)}{\Gamma((\iota+1-s)/2)} \legendresymbol{\uppi kf^2}{n^{2\vartheta'}|x|_\infty^{\vartheta'}|y|_q'^{\vartheta'}}^{-s}\rmd s,
\]
where 
\[
\iota=\omega_\infty(n^2x)=\omega_\infty(x)=\begin{cases}
             0, & x>0, \\
             1, & x<0,
           \end{cases}
\quad
\text{and}
\quad
\epsilon_i=\omega_i(n^2y_i)=\omega_i(y_i)=\legendresymbol{y_i|y_i|_{q_i}'}{q_i}.
\]
Combining the Mellin inversion formula for $G$ and $F$, we obtain
\begin{align*}
&S_G^{\alpha=0}(X)
=2\sum_{\chi}\sum_{\substack{k,f\in \ZZ_{(S)}^{>0}\\k=\square}}\frac{\bm{\phi}(k)}{k^2f^3}\sum_{\pm}\sum_{\nu\in \ZZ^r}\sum_{n\in \ZZ_{(S)}^{>0}}\int_{\RR}\int_{\QQ_{S_\fin}}n^2\frac{1}{\dpii}\int_{(4)} \widetilde{G}(u)\left(\frac{n^2}{X} |1-x|_\infty |1-y|_q \right)^{-u}\rmd u\\
\times& \Theta_\infty^\pm(x)\prod_{i=1}^{r}(1-q_i^{-1})^{-1} \widehat{\Theta}_{q}^{\pm,\nu}(\chi,y)\chi(n) \left[ \frac{1}{\dpii}\int_{(2)} \widetilde{F}(s)\legendresymbol{kf^2}{n^{2\vartheta}|x|_\infty^{\vartheta}|y|_q'^{\vartheta}}^{-s}\rmd s+\frac{kf^2}{n|x|_\infty^{1/2}|y|_q'^{1/2}}\right.\\
\times&\left.\frac{\uppi^{1/2}}{\dpii}\int_{(2)} \widetilde{F}(s)\prod_{i=1}^{r}\frac{1-\epsilon_i q_i^{s-1}}{1-\epsilon_i q_i^{-s}}\frac{\Gamma((\iota+s)/2)}{\Gamma((\iota+1-s)/2)} \legendresymbol{\uppi kf^2}{n^{2\vartheta'}|x|_\infty^{\vartheta'}|y|_q'^{\vartheta'}}^{-s}\rmd s\right] \rmd x\rmd y\\
=&2\sum_{\chi}\sum_{\substack{k,f\in \ZZ_{(S)}^{>0}\\k=\square}}\sum_{\pm}\sum_{\nu\in \ZZ^r}\sum_{n\in \ZZ_{(S)}^{>0}}\int_{\RR}\int_{\QQ_{S_\fin}}\frac{1}{\dpii}\int_{(4)} \widetilde{G}(u)|1-x|_\infty^{-u} |1-y|_q^{-u}\Theta_\infty^\pm(x)\prod_{i=1}^{r}(1-q_i^{-1})^{-1}\\
\times& \widehat{\Theta}_{q}^{\pm,\nu}(\chi,y)\chi(n) \left[ \frac{1}{\dpii}\int_{(2)} \widetilde{F}(s)\frac{1}{n^{2u-2-2\vartheta s}} \frac{\bm{\phi}(k)}{k^{2+s}f^{3+2s}} |x|_\infty^{\vartheta s}|y|_q'^{\vartheta s}\rmd s+\frac{1}{\dpii}\int_{(2)} \widetilde{F}(s)\uppi^{-s+\frac12}\right.\\
\times&\left.\prod_{i=1}^{r}\frac{1-\epsilon_i q_i^{s-1}}{1-\epsilon_i q_i^{-s}}\frac{\Gamma((\iota+s)/2)}{\Gamma((\iota+1-s)/2)} \frac{1}{n^{2u-1-2\vartheta's}}\frac{\bm{\phi}(k)}{k^{1+s}f^{1+2s}}|x|_\infty^{\vartheta's-\frac12} |y|_q'^{\vartheta's-\frac12}\rmd s\right] X^u\rmd u\rmd x\rmd y.
\end{align*}

Since $\theta_\infty(\gamma)$ is compactly supported modulo center and $\theta_{q_i}(\gamma)$ are compactly supported, the integrand is zero if $x$ is near $1$ or $y_i$ is near $1$ for some $i$. Hence the integrand has no singularities at $x=1$ and $y_i=1$ for any $u$. Also, the sum over $\nu$ is finite and the series
\[
\frac{\chi(n)}{n^{2u-2-2\vartheta s}} \frac{\bm{\phi}(k)}{k^{2+s}f^{3+2s}} \quad\text{and}\quad\frac{\chi(n)}{n^{2u-1-2\vartheta's}} \frac{\bm{\phi}(k)}{k^{1+s}f^{1+2s}} 
\]
both converge absolutely for $\Re u=4$ and $\Re s=2$. Hence the above integrals and sums converge absolutely and we can change the orders of summation and integration. By \autoref{lem:zetaphi} we have
\begin{align*}
S_G^{\alpha=0}(X)
=&2\sum_{\chi}\sum_{\pm}\sum_{\nu\in \ZZ^r}\int_{\RR}\int_{\QQ_{S_\fin}}\frac{1}{\dpii}\int_{(4)} \widetilde{G}(u)|1-x|_\infty^{-u} |1-y|_q^{-u}\Theta_\infty^\pm(x)\prod_{i=1}^{r}(1-q_i^{-1})^{-1} \\
\times&\widehat{\Theta}_{q}^{\pm,\nu}(\chi,y)  \left[ \frac{1}{\dpii}\int_{(2)} \widetilde{F}(s)L^S(2u-2-2\vartheta s,\chi) \zeta^S(2s+2) |x|_\infty^{\vartheta s}|y|_q'^{\vartheta s}\rmd s+\frac{1}{\dpii}\int_{(2)} \widetilde{F}(s)\uppi^{-s+\frac12}\right.\\
\times&\left.\prod_{i=1}^{r}\frac{1-\epsilon_i q_i^{s-1}}{1-\epsilon_i q_i^{-s}}\frac{\Gamma((\iota+s)/2)}{\Gamma((\iota+1-s)/2)} L^S(2u-1-2\vartheta' s,\chi)\zeta^S(2s)|x|_\infty^{\vartheta's-\frac12}|y|_q'^{\vartheta's-\frac12}\rmd s\right] X^u\rmd u\rmd x\rmd y.
\end{align*}

Now we consider separately the following two terms
\begin{equation}\label{eq:alpha0firstterm}
\begin{split}
&\sum_{\chi}\sum_{\pm}\sum_{\nu\in \ZZ^r}\int_{\RR}\int_{\QQ_{S_\fin}}\frac{1}{\dpii}\int_{(4)} \widetilde{G}(u)|1-x|_\infty^{-u} |1-y|_q^{-u}\Theta_\infty^\pm(x)\prod_{i=1}^{r}(1-q_i^{-1})^{-1}\\
\times&\widehat{\Theta}_{q}^{\pm,\nu}(\chi,y) \frac{1}{\dpii}\int_{(2)} \widetilde{F}(s)L^S(2u-2-2\vartheta s,\chi) \zeta^S(2s+2) |x|_\infty^{\vartheta s}|y|_q'^{\vartheta s}\rmd sX^u\rmd u\rmd x\rmd y.
\end{split}
\end{equation}
and
\begin{equation}\label{eq:alpha0secondterm}
\begin{split}
&\sum_{\chi}\sum_{\pm}\sum_{\nu\in \ZZ^r}\int_{\RR}\int_{\QQ_{S_\fin}}\frac{1}{\dpii}\int_{(4)} \widetilde{G}(u)|1-x|_\infty^{-u} |1-y|_q^{-u}\Theta_\infty^\pm(x)\prod_{i=1}^{r}(1-q_i^{-1})^{-1}\widehat{\Theta}_{q}^{\pm,\nu}(\chi,y)\frac{1}{\dpii}\int_{(2)} \widetilde{F}(s) \\
\times&\uppi^{-s+\frac12}\prod_{i=1}^{r}\frac{1-\epsilon_i q_i^{s-1}}{1-\epsilon_i q_i^{-s}}\frac{\Gamma(\frac{\iota+s}{2})}{\Gamma(\frac{\iota+1-s}{2})} L^S(2u-1-2\vartheta's,\chi)\zeta^S(2s)|x|_\infty^{\vartheta's-\frac12}|y|_q'^{\vartheta's-\frac12}\rmd s X^u\rmd u\rmd x\rmd y.
\end{split}
\end{equation}
so that $S_G^{\alpha=0}(X)$ is twice of the sum of \eqref{eq:alpha0firstterm} and \eqref{eq:alpha0secondterm}.

We will use the notation
\[
\widehat{\Theta}_{q}^{\pm}(y)=\sum_{\nu\in \ZZ^r}\widehat{\Theta}_{q}^{\pm,\nu}(\triv,y).
\]
By definition, we have
\[
\widehat{\Theta}_{q}^{\pm}(y)=\sum_{\nu\in \ZZ^r}\int_{\ZZ_{S_\fin}^\times}\Theta_q^{\pm,\nu}(z,y)\rmd z
=\sum_{\nu\in \ZZ^r}\int_{\ZZ_{S_\fin}^\times}\prod_{i=1}^{r}\theta_{q_i}\left(\pm z_iq^\nu,\frac{z_i^2q^{2\nu}}{4}(1-y_i)\right)\rmd z.
\]
By making change of variable $z_i\mapsto \pm \prod_{j\neq i}q^{-\nu_j}z_i$, we obtain
\[
\widehat{\Theta}_{q}^{\pm}(y)=\prod_{i=1}^{r}\left(\sum_{\nu\in \ZZ}\int_{\ZZ_{q_i}^\times}\theta_{q_i}\left(z_iq_i^\nu,\frac{z_i^2q_i^{2\nu}}{4}(1-y_i)\right)\rmd z_i\right)=\prod_{i=1}^{r}\int_{\QQ_{q_i}}\frac{1}{|z_i|_{q_i}}\theta_{q_i}\left( z_i,\frac{z_i^2}{4}(1-y_i)\right)\rmd z_i.
\]
Hence $\widehat{\Theta}_{q}^{+}(y)=\widehat{\Theta}_{q}^{-}(y)=\widehat{\Theta}_{q}(y)$. Also, by definition, $\widehat{\Theta}_\infty(x)=\Theta_\infty^+(x)+\Theta_\infty^-(x)$.

We will prove the following asymptotic formulas to conclude our result.
\begin{lemma}\label{lem:asympalpha0firstterm}
For any $A>0$ and $\varepsilon>0$, \eqref{eq:alpha0firstterm} equals
\begin{align*}
  &\frac14 \prod_{i=1}^{r}(1-q_i^{-1})\int_{\RR}\int_{\QQ_{S_\fin}} \widetilde{G}\legendresymbol{3-\vartheta}{2}|1-x|_\infty^{-\frac{3-\vartheta}{2}} |1-y|_q^{-\frac{3-\vartheta}{2}}\widehat{\Theta}_\infty(x)\widehat{\Theta}_{q}(y) \widetilde{F}\!\left(-\frac12\right) |x|_\infty^{-\frac{\vartheta}{2}}|y|_q'^{-\frac{\vartheta}{2}}X^{\frac{3-\vartheta}{2}}\rmd x\rmd y\\
  +&\int_{\RR}\int_{\QQ_{S_\fin}} \widetilde{G}\legendresymbol{3}{2}|1-x|_\infty^{-\frac32} |1-y|_q^{-\frac32}\widehat{\Theta}_\infty(x)\widehat{\Theta}_{q}(y) \frac12 \zeta^S(2)X^{\frac32}\rmd x\rmd y\\
+&O(X^{-A}\|G\|_{\lfloor \frac{15}{2}+2A\rfloor+2,1}+X^{\frac12+\varepsilon}\|G\|_1),
\end{align*}
where the implied constant only depends on $A$ and $\varepsilon$.
\end{lemma}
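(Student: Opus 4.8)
The plan is to evaluate \eqref{eq:alpha0firstterm} by a two-step Mellin--Barnes contour shift --- first in $s$, then in $u$ --- after splitting the (finite) sum over $\chi$ into the trivial character and the nontrivial ones; the sum over $\nu$ is likewise finite, so it plays no role beyond bookkeeping. The only analytic inputs are those already assembled above: $\widetilde{F}(s)$ has a single simple pole, at $s=0$, with residue $1$, and decays exponentially on vertical lines (\autoref{prop:propertyf}, \eqref{eq:frapiddecay}); $\zeta^S(w)$ has a single simple pole, at $w=1$, with residue $\prod_{i=1}^{r}(1-q_i^{-1})$; $L^S(w,\chi)$ is entire for $\chi\ne\triv$, with polynomial vertical growth controlled by the convexity bound; and $\widetilde{G}$ is entire with the rapid-decay/Sobolev control of \autoref{lem:grapiddeacy}. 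On the starting contours $\Re u=4$, $\Re s=2$ everything converges absolutely (as noted in the proof of \autoref{thm:contributealpha0}), so all the interchanges below are legitimate.

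For the nontrivial characters I would shift the $u$-contour from $(4)$ directly to $(-A)$. Since $L^S(2u-2-2\vartheta s,\chi)$ is entire in $u$ and the remaining factors do not involve $u$, no poles are crossed; on $\Re u=-A$ the $x,y$-integrals stay harmless ($\Theta_\infty^\pm$ and $\widehat{\Theta}_q^{\pm,\nu}(\chi,\cdot)$ are bounded with compact support, and $|1-x|_\infty^{-u}$, $|1-y|_q^{-u}$ cause no trouble on that support). Bounding $L^S(2u-2-2\vartheta s,\chi)$ by convexity and $\widetilde{F}(s)$ by its exponential decay, the $s$-integral over $(2)$ converges and leaves a factor growing like $(1+|\Im u|)^{2A+5/2+4\vartheta}$; absorbing it through \autoref{lem:grapiddeacy} requires roughly $2A+7/2+4\vartheta$ derivatives of $G$, and $7/2+4\vartheta<15/2$ since $\vartheta<1$, which is exactly what fixes the Sobolev index and gives a total contribution $O\bigl(X^{-A}\|G\|_{\lfloor 15/2+2A\rfloor+2,1}\bigr)$.

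For the trivial character I would first move the $s$-contour leftward from $(2)$ to $\Re s=-1/\vartheta+\varepsilon'$ with $\varepsilon'>0$ small. This is the delicate step: the $x$- and $y$-integrals of $|x|_\infty^{\vartheta s}\,\Theta_\infty^\pm(x)$ and $|y|_q'^{\vartheta s}\,\widehat{\Theta}_q^{\pm,\nu}(\triv,y)$ converge exactly for $\Re s>-1/\vartheta$ (the orbital data being merely $O(1)$, not vanishing, along the discriminant locus $x=0$, $y=0$), so one must stop just to the right of $-1/\vartheta$; one must also arrange $\varepsilon'<\varepsilon/\vartheta$, so that the later descent of the $u$-contour does not re-cross the $\zeta^S$-pole below. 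On the way one picks up the residues at $s=0$ (pole of $\widetilde{F}$) and at $s=-1/2$ (pole of $\zeta^S(2s+2)$), namely
\[
R_0(u)=\zeta^S(2)\,\zeta^S(2u-2),\qquad
R_{-1/2}(u)=\tfrac12\prod_{i=1}^{r}(1-q_i^{-1})\,\widetilde{F}(-1/2)\,\zeta^S(2u-2+\vartheta)\,|x|_\infty^{-\vartheta/2}|y|_q'^{-\vartheta/2},
\]
together with a remainder integral over $\Re s=-1/\vartheta+\varepsilon'$. In each of $R_0$, $R_{-1/2}$ and the remainder I then shift the $u$-contour from $(4)$ down to $(\tfrac12+\varepsilon)$. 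For $R_0$ one crosses only the pole of $\zeta^S(2u-2)$ at $u=\tfrac32$; its residue $\widetilde{G}(\tfrac32)\cdot\tfrac12\prod_i(1-q_i^{-1})\cdot\zeta^S(2)\,X^{3/2}$, multiplied by the $\prod_i(1-q_i^{-1})^{-1}$ present in \eqref{eq:alpha0firstterm} and collapsing $\sum_\pm\Theta_\infty^\pm=\widehat{\Theta}_\infty$ and $\sum_\nu\widehat{\Theta}_q^{\pm,\nu}(\triv,\cdot)=\widehat{\Theta}_q$, is exactly the $X^{3/2}$ main term. For $R_{-1/2}$ one crosses only the pole of $\zeta^S(2u-2+\vartheta)$ at $u=\tfrac{3-\vartheta}{2}$; the same computation, now with $\widetilde{G}(\tfrac{3-\vartheta}{2})$ and a second factor $\tfrac12\prod_i(1-q_i^{-1})$, yields the $X^{(3-\vartheta)/2}$ main term with coefficient $\tfrac14\prod_i(1-q_i^{-1})\,\widetilde{F}(-1/2)$. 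The remainder integral and all three leftover $u$-integrals on $\Re u=\tfrac12+\varepsilon$ (no further pole is crossed, thanks to $\varepsilon>\vartheta\varepsilon'$) are $O(X^{1/2+\varepsilon})$ times a norm of $G$, because there the arguments of the relevant $\zeta^S$-factors have real part $>1$ and are therefore bounded, while $\widetilde{F}$ still decays exponentially; this produces the $O\bigl(X^{1/2+\varepsilon}\|G\|_1\bigr)$ error, and altogether the claimed asymptotic.

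The main obstacle is not the residue calculus --- which is routine once \autoref{prop:propertyf}, \eqref{eq:frapiddecay} and \autoref{lem:grapiddeacy} are in hand --- but the simultaneous constraints on the $s$-contour for $\chi=\triv$, which must lie in the narrow window $\Re s\in(-1/\vartheta,-1/\vartheta+\varepsilon/\vartheta)$, nonempty precisely because $\vartheta<1$; together with tracking the vertical growth carefully enough in the $\chi\ne\triv$ case to land that contribution inside $X^{-A}\|G\|_{\lfloor 15/2+2A\rfloor+2,1}$.
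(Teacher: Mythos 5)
Your proposal takes a genuinely different route from the paper's proof --- you shift the $s$-contour to the \emph{left}, picking up the poles of $\widetilde{F}$ at $s=0$ and of $\zeta^S(2s+2)$ at $s=-1/2$, whereas the paper shifts $s$ to the \emph{right} (to $(3/\vartheta)$) and collects the residue of $L^S(2u-2-2\vartheta s,\triv)$ at the $u$-dependent point $s=(2u-3)/(2\vartheta)$. Your decomposition into $\chi=\triv$ and $\chi\ne\triv$, the computation of $R_0$ and $R_{-1/2}$, the identification of the poles at $u=3/2$ and $u=(3-\vartheta)/2$, and the bookkeeping constraint $\varepsilon>\vartheta\varepsilon'$ are all correct, and the two main terms you extract match the lemma exactly.

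However, the error-term bound has a genuine gap, and the gap is exactly where the two directions of contour shift differ in cost. You assert that the leftover $u$-integrals for $R_0$, $R_{-1/2}$ on $\Re u=\tfrac12+\varepsilon$ are $O(X^{1/2+\varepsilon}\|G\|_1)$ "because there the arguments of the relevant $\zeta^S$-factors have real part $>1$ and are therefore bounded, while $\widetilde{F}$ still decays exponentially." Neither clause is true for these two terms. For $R_0$ the remaining zeta factor is $\zeta^S(2u-2)$, which on $\Re u=\tfrac12+\varepsilon$ has argument with real part $-1+2\varepsilon<1$, hence grows like $(1+|\Im u|)^{3/2-2\varepsilon}$; for $R_{-1/2}$ the factor $\zeta^S(2u-2+\vartheta)$ has argument with real part $-1+2\varepsilon+\vartheta<1$ and again grows polynomially. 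Moreover $R_0$ contains no $\widetilde{F}$ at all, and $R_{-1/2}$ contains only the constant $\widetilde{F}(-1/2)$ --- neither residue retains a $\widetilde{F}$ factor depending on $u$, because you took the $s$-residue at the \emph{fixed} points $s=0$ and $s=-1/2$. Thus there is no exponential decay in $\Im u$ to quote, and absorbing the polynomial growth into $\widetilde{G}$ costs additional Sobolev derivatives of $G$: by \autoref{cor:mellinnorm} one lands on $\|G\|_{3,1}$ for $R_0,R_{-1/2}$ and at least $\|G\|_{2,1}$ for the remainder, not $\|G\|_1$. This is precisely what the paper's rightward shift avoids: the residue at $s=(2u-3)/(2\vartheta)$ keeps $\widetilde{F}((2u-3)/(2\vartheta))$ in the integrand, whose exponential decay in $\Im u$ lets the subsequent $u$-shift to $(\tfrac12+\varepsilon)$ close with only $\|G\|_1$. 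Your approach can be partially repaired --- for $R_0$ and $R_{-1/2}$ you may push $u$ all the way to $(-A)$ after taking the residues, absorbing those remainders into the $X^{-A}$ error --- but the main remainder with $\Re s=-1/\vartheta+\varepsilon'$ still needs at least $\|G\|_{2,1}$ on $\Re u=\tfrac12+\varepsilon$, so the lemma as stated (with $\|G\|_1$) is not recovered by the leftward shift without a further idea.
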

\begin{insertproof}
We begin by moving the $s$-contour from $(2)$ to $(3/\vartheta)$. The integrand is holomorphic on $\Re s\geq 2/\vartheta$ and $\Re u\geq 4$ except for the function $L^S(2u-2-2\vartheta s,\chi)$ when $\chi=\triv$, which has a pole at $s=(2u-3)/(2\vartheta)$ with residue
\[
\res_{s=\frac{2u-3}{2\vartheta}}L^S(2u-2-2\vartheta s,\mathbbm{1})=\res_{s=\frac{2u-3}{2\vartheta}}\prod_{i=1}^{r}\left(1-\frac{1}{q_i^{2u-2-2\vartheta s}}\right) \zeta(2u-2-2\vartheta s)=-\frac{1}{2\vartheta}\prod_{i=1}^{r}(1-q_i^{-1}).
\]
Hence by residue formula, we know that \eqref{eq:alpha0firstterm} equals
\begin{align*}
&\sum_{\chi}\sum_{\pm}\sum_{\nu\in \ZZ^r}\int_{\RR}\int_{\QQ_{S_\fin}}\frac{1}{\dpii}\int_{(4)} \widetilde{G}(u)|1-x|_\infty^{-u} |1-y|_q^{-u}\Theta_\infty^\pm(x)\prod_{i=1}^{r}(1-q_i^{-1})^{-1}\\
\times&\widehat{\Theta}_{q}^{\pm,\nu}(\chi,y) \frac{1}{\dpii}\int_{(\frac{3}{\vartheta})} \widetilde{F}(s)L^S(2u-2-2\vartheta s,\chi) \zeta^S(2s+2) |x|_\infty^{\vartheta s}|y|_q'^{\vartheta s}\rmd sX^u\rmd u\rmd x\rmd y\\
  +&\frac{1}{2\vartheta}\sum_{\pm}\sum_{\nu\in \ZZ^r}\int_{\RR}\int_{\QQ_{S_\fin}}\frac{1}{\dpii}\int_{(4)} \widetilde{G}(u)|1-x|_\infty^{-u} |1-y|_q^{-u}\Theta_\infty^\pm(x)\\
\times&\widehat{\Theta}_{q}^{\pm,\nu}(\triv,y)  \widetilde{F}\legendresymbol{2u-3}{2\vartheta} \zeta^S\left(\frac{2u-3}{\vartheta}+2\right) |x|_\infty^{\frac{2u-3}{2}}|y|_q'^{\frac{2u-3}{2}}X^u\rmd u\rmd x\rmd y.
\end{align*}
Note that since we move the contour from the left to the right, the residue formula has a minus sign.

By \eqref{eq:frapiddecay} and \autoref{lem:grapiddeacy}, $\widetilde{F}(s)$ and $\widetilde{G}(u)$ have rapid decay. Since the sums over $\nu$ and $\chi$ are finite, $\theta(\pm 1,(1-x)/4)$ is compactly supported in $x$, and $\widehat{\Theta}_{q}^{\pm,\nu}(\chi,y)$ is compactly supported in $y$. Hence the integrand of the first term above is holomorphic on $\Re s=3/\vartheta$ and $\Re u\leq 4$. Hence we may move the $u$-contour from $(4)$ to $(-A)$ for any $A>0$. 

Using the functional equation of Dirichlet $L$-functions and the Stirling formula, we have
\begin{equation}\label{eq:boundlvertical}
L^S(\sigma+\rmi t,\chi)\ll_\sigma (1+|t|)^{\frac12-\sigma}
\end{equation}
for $\sigma<0$.
Hence for fixed $\chi$ and $\nu$,
\begin{align*}
&\int_{\RR}\int_{\QQ_{S_\fin}}\left|\Theta_\infty^\pm(x)\right||\widehat{\Theta}_{q}^{\pm,\nu}(\chi,y)| |x|_\infty^{4}|y|_q'^{4}\rmd x\rmd y\\
\times&\int_{(-A)} |\widetilde{G}(u)|\int_{(3/\vartheta)}|\widetilde{F}(s)||L^S(2u-2-2\vartheta s,\chi)||\zeta^S(2s+2)| \rmd |s|X^{-A}\rmd |u|\\
\ll &\int_{(-A)} |\widetilde{G}(u)|\int_{(3/\vartheta)}|\widetilde{F}(s)|(1+|2u-2-2\vartheta s|)^{\frac{15}{2}+2A}|\zeta^S(2s+2)|\rmd |s|\rmd |u|X^{-A}\\
\ll &X^{-A}\int_{(-A)} |\widetilde{G}(u)|(1+|2u|)^{\frac{15}{2}+2A}\rmd|u|\int_{(3/\vartheta)}|\widetilde{F}(s)|(1+|2+2\vartheta s|)^{\frac{15}{2}+2A}|\zeta^S(2s+2)|\rmd |s|\\
\ll& X^{-A}\|G\|_{M_{-A}^{15/2+2A}}\ll X^{-A}\|G\|_{\lfloor\frac{15}{2}+2A\rfloor+2,1}
\end{align*}
by \autoref{cor:mellinnorm}. Since the sums over $\chi$ and $\nu$ are finite, the first term is bounded by $ X^{-A}\|G\|_{\lfloor\frac{15}{2}+2A\rfloor+2,1}$.

The second term can be rewritten as
\begin{align*}
&\frac{1}{2\vartheta}\int_{\RR}\int_{\QQ_{S_\fin}}\frac{1}{\dpii}\int_{(4)} \widetilde{G}(u)|1-x|_\infty^{-u} |1-y|_q^{-u}\widehat{\Theta}_\infty(x)\widehat{\Theta}_{q}(y)\\
\times&  \widetilde{F}\legendresymbol{2u-3}{2\vartheta} \zeta^S\left(\frac{2u-3}{\vartheta}+2\right) |x|_\infty^{\frac{2u-3}{2}}|y|_q'^{\frac{2u-3}{2}}X^u\rmd u\rmd x\rmd y.
\end{align*}

Since $(2u-3)/2>-1$ when $\Re u>1/2$, the integrand the second term converges absolutely when $\Re u>1/2$ and defines a meromorphic function there. It is holomorphic except for simple poles at $u=(3-\vartheta)/2$ and $u=3/2$ with 
\[
\res_{u=\frac{3-\vartheta}{2}}\widetilde{F}\legendresymbol{2u-3}{2\vartheta} \zeta^S\left(\frac{2u-3}{\vartheta}+2\right) =\frac{\vartheta}{2}\widetilde{F}\left(-\frac12\right)\prod_{i=1}^{r}(1-q_i^{-1})
\]
and
\[
\res_{u=\frac32}\widetilde{F}\legendresymbol{2u-3}{2\vartheta} \zeta^S\left(\frac{2u-3}{\vartheta}+2\right) =\vartheta\zeta^S(2).
\]
By moving the $u$-contour from $(4)$ to $(\frac12+\varepsilon)$, the second term becomes
\begin{align*}
&\frac{1}{2\vartheta}\int_{\RR}\int_{\QQ_{S_\fin}}\frac{1}{\dpii}\int_{(\frac12+\varepsilon)} \widetilde{G}(u)|1-x|_\infty^{-u} |1-y|_q^{-u}\widehat{\Theta}_\infty(x)\widehat{\Theta}_{q}(y)\\
\times&\widetilde{F}\legendresymbol{2u-3}{2\vartheta} \zeta^S\left(\frac{2u-3}{\vartheta}+2\right) |x|_\infty^{\frac{2u-3}{2}}|y|_q'^{\frac{2u-3}{2}}X^u\rmd u\rmd x\rmd y\\
+&\frac14 \prod_{i=1}^{r}\left(1-\frac{1}{q_i}\right)\int_{\RR}\int_{\QQ_{S_\fin}} \widetilde{G}\legendresymbol{3-\vartheta}{2}|1-x|_\infty^{-\frac{3-\vartheta}{2}} |1-y|_q^{-\frac{3-\vartheta}{2}}\widehat{\Theta}_\infty(x)\widehat{\Theta}_{q}(y) \widetilde{F}\left(-\frac12\right) |x|_\infty^{-\frac{\vartheta}{2}}|y|_q'^{-\frac{\vartheta}{2}}X^{\frac{3-\vartheta}{2}}\rmd x\rmd y\\
+&\frac12 \int_{\RR}\int_{\QQ_{S_\fin}} \widetilde{G}\legendresymbol{3}{2}|1-x|_\infty^{-\frac32} |1-y|_q^{-\frac32}\widehat{\Theta}_\infty(x)\widehat{\Theta}_{q}(y) \zeta^S(2)X^{\frac32}\rmd x\rmd y.
\end{align*}
Since $\widetilde{F}$ has rapid decay vertically, the first term above is bounded by
\begin{align*}
&\int_{\RR}\int_{\QQ_{S_\fin}}\int_{(\frac12+\varepsilon)} \widetilde{G}(u)|1-x|_\infty^{-\frac12-\varepsilon} |1-y|_q^{-\frac12-\varepsilon}|\widehat{\Theta}_\infty(x)||\widehat{\Theta}_{q}(y)|  \widetilde{F}\legendresymbol{2u-3}{2\vartheta} \zeta^S\bigg(\frac{2u-3}{\vartheta}+2\bigg)\\
\times& |x|_\infty^{-1+\frac{\varepsilon}{2}}|y|_q'^{-1+\frac{\varepsilon }{2}}X^{\frac12+\varepsilon}\rmd u\rmd x\rmd y\\
\ll&\int_{(\frac12+\varepsilon)} |\widetilde{G}(u)|\left|\widetilde{F}\legendresymbol{2u-3}{2\vartheta}\right| \left|\zeta^S\bigg(\frac{2u-3}{\vartheta}+2\bigg)\right|X^{\frac12+\varepsilon}\rmd u\ll X^{\frac 12+\varepsilon}\int_{(\frac12+\varepsilon)} |\widetilde{G}(u)|(1+|u|)^{-2}\rmd u\\
\ll& X^{\frac 12+\varepsilon}\|G\|_{1},
\end{align*}
where in the last step we used \autoref{cor:mellinnorm}. Combining all the things we have proved we obtain the asymptotic formula in the lemma.
\end{insertproof}

\begin{lemma}\label{lem:asympalpha0secondterm}
For any $A>0$ and $\varepsilon>0$, \eqref{eq:alpha0secondterm} equals
\begin{align*}
&\frac14 \prod_{i=1}^{r}(1-q_i^{-1})\int_{\RR}\int_{\QQ_{S_\fin}} \widetilde{G}\legendresymbol{3-\vartheta}{2}|1-x|_\infty^{-\frac{3-\vartheta}{2}} |1-y|_q^{-\frac{3-\vartheta}{2}}\widehat{\Theta}_\infty(x)\widehat{\Theta}_{q}(y) \widetilde{F}\legendresymbol{1}{2} |x|_\infty^{-\frac{\vartheta}{2}}|y|_q'^{-\frac{\vartheta}{2}}X^{\frac{3-\vartheta}{2}}\rmd x\rmd y\\
+&\frac12\mf{M}(G)+O(X^{-A}\|G\|_{\lfloor \frac{15}{2}+2A\rfloor+2,1}+X^{\frac12+\varepsilon}\|G\|_1),
\end{align*}
where implied constant depends only on $f_\infty$, $f_{q_i}$, $A$ and $\varepsilon$.
\end{lemma}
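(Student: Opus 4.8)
\begin{insertproof}
The plan is to mimic the proof of \autoref{lem:asympalpha0firstterm}: shift the $s$-contour to the right to collect the pole of the Dirichlet $L$-function, then shift the resulting $u$-contour to the left. The new feature is that the $V$-function carries the gamma quotient $\Gamma(\tfrac{\iota+s}{2})/\Gamma(\tfrac{\iota+1-s}{2})$ and the factors $\prod_i\tfrac{1-\epsilon_iq_i^{s-1}}{1-\epsilon_iq_i^{-s}}$, and these, together with $\widetilde F$ and the zeros of $\zeta^S$, produce a second pole in $u$ which is precisely the source of $\mf M(G)$.

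First, in \eqref{eq:alpha0secondterm} I would move the $s$-contour from $(2)$ to $(\sigma_1)$ for a fixed $\sigma_1>3/\vartheta'$. For $\Re s>1/2$ the only singularity of the $s$-integrand is the pole of $L^S(2u-1-2\vartheta's,\chi)$ for $\chi=\triv$, at $s=(u-1)/\vartheta'$, with residue $-\tfrac1{2\vartheta'}\prod_i(1-q_i^{-1})$ (the gamma quotient contributes only zeros for $\Re s>0$, and $\widetilde F$, the Euler factors and $\zeta^S(2s)$ are holomorphic there). Hence \eqref{eq:alpha0secondterm} becomes a remainder integral on $\Re s=\sigma_1$ plus a residue term; in the latter, exactly as in \autoref{lem:asympalpha0firstterm}, the outer $\prod_i(1-q_i^{-1})^{-1}$ cancels and, using $\sum_\pm\Theta_\infty^\pm=\widehat\Theta_\infty$ and $\sum_\nu\widehat\Theta_q^{\pm,\nu}(\triv,\cdot)=\widehat\Theta_q$, one is left with
\[
\frac1{2\vartheta'}\int_\RR\int_{\QQ_{S_\fin}}\frac1{\dpii}\int_{(4)}\widetilde G(u)|1-x|_\infty^{-u}|1-y|_q^{-u}\widehat\Theta_\infty(x)\widehat\Theta_q(y)\,\Xi_{\iota,\epsilon}(u)\,|x|_\infty^{u-\frac32}|y|_q'^{u-\frac32}X^u\,\rmd u\,\rmd x\,\rmd y,
\]
where $\iota=\omega_\infty(x)$, $\epsilon_i=\omega_i(y_i)$, and $\Xi_{\iota,\epsilon}(u)$ is the product of $\widetilde F(\tfrac{u-1}{\vartheta'})$, $\zeta^S(\tfrac{2(u-1)}{\vartheta'})$, $\uppi^{-\frac{u-1}{\vartheta'}+\frac12}$, the gamma quotient at $s=\tfrac{u-1}{\vartheta'}$, and $\prod_i\tfrac{1-\epsilon_iq_i^{(u-1)/\vartheta'-1}}{1-\epsilon_iq_i^{-(u-1)/\vartheta'}}$. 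For the remainder integral the $u$-integrand is holomorphic for $\Re u\le4$ on $\Re s=\sigma_1$, so I would push the $u$-contour to $(-A)$ and bound it, just as in \autoref{lem:asympalpha0firstterm}, by $O(X^{-A}\|G\|_{\lfloor\frac{15}{2}+2A\rfloor+2,1})$, using the rapid decay \eqref{eq:frapiddecay}, Stirling for the gamma quotient, the functional-equation bound \eqref{eq:boundlvertical}, and \autoref{cor:mellinnorm}.

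Next, I would split the $x$-integral over $X_0,X_1$ and the $y$-integral over the $Y_\epsilon$ (so that $\iota,\epsilon$, hence $\Xi_{\iota,\epsilon}$, are fixed on each piece) and move the $u$-contour from $(4)$ to $(1/2+\varepsilon)$; the $x$- and $y$-integrals are holomorphic for $\Re u>1/2$ since the integrands vanish near $x=1$, $y_i=1$ (the support considerations used in \autoref{lem:asympalpha0firstterm}) and $|x|_\infty^{u-3/2}$, $|y_i|_{q_i}'^{u-3/2}$ are integrable near $0$ exactly there. The only poles of $\Xi_{\iota,\epsilon}$ in $1/2<\Re u<4$ are at $u=(3-\vartheta)/2$ and at $u=1$: the trivial zeros of $\zeta^S(\tfrac{2(u-1)}{\vartheta'})$ cancel the would-be poles of the gamma quotient away from $u=1$, and the zeros of $\zeta^S$ on $\Re u=1$ cancel the spurious poles of the Euler factors there. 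At $u=(3-\vartheta)/2$ one has $s=1/2$, so the gamma quotient and all Euler factors equal $1$ and only $\zeta^S$ has a simple pole; using $\res_{w=1}\zeta^S(w)=\prod_i(1-q_i^{-1})$, its residue recombines the $X_\iota\times Y_\epsilon$-pieces into $\RR\times\QQ_{S_\fin}$ and gives exactly the $X^{(3-\vartheta)/2}$-term of the lemma (with the value $\widetilde F(1/2)$ and prefactor $\tfrac14\prod_i(1-q_i^{-1})$). The leftover $u$-integral on $(1/2+\varepsilon)$ is $O(X^{1/2+\varepsilon}\|G\|_1)$ by the same estimates with \autoref{cor:mellinnorm} (the case of exponent $<-1$).

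The heart of the argument, and the step I expect to be hardest, is the residue at $u=1$, which must equal $\mf M(G)$. Counting pole orders at $u=1$: on $X_0\times Y_{\mathbf 1}$ it is a double pole (simple poles from $\widetilde F$, from the gamma quotient when $\iota=0$, and $r$ of them from the Euler factors, against an $r$-fold zero of $\zeta^S$); on $X_0\times Y_{\epsilon^i}$ with $\epsilon_i\in\{0,-1\}$ and on $X_1\times Y_{\mathbf 1}$ it is simple; elsewhere there is no pole. I would Laurent/Taylor expand about $u=1$ each factor — $\widetilde F(\tfrac{u-1}{\vartheta'})=\tfrac{\vartheta'}{u-1}+O(u-1)$ (no constant term, since $\widetilde F$ is odd), $\zeta^S(\tfrac{2(u-1)}{\vartheta'})$ via $\zeta(0)=-\tfrac12$, $\zeta'(0)=-\tfrac12\log(2\uppi)$ and $1-q_i^{-w}=w\log q_i+O(w^2)$, the gamma quotient via $\psi(\tfrac12)=-\upgamma-2\log 2$, the Euler factors by the same expansions, and $\uppi^{-\frac{u-1}{\vartheta'}+\frac12}$ — and read off the residues. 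On $X_0\times Y_{\mathbf 1}$ the double-pole residue is a derivative at $u=1$: differentiating $X^u$ produces the $X\log X$-term with coefficient $\vartheta'\mf B\widetilde G(1)$; differentiating $\widetilde G$ produces $\vartheta'\mf B\widetilde G'(1)X$; differentiating $|1-x|_\infty^{-u}|1-y|_q^{-u}|x|_\infty^{u-3/2}|y|_q'^{u-3/2}$ produces $\vartheta'\mf D\widetilde G(1)X$; and the constant terms of the expansions together with $-\tfrac{\log\uppi}{\vartheta'}$ collapse to the $-\tfrac12\bigl(2\upgamma-2\log(2\uppi)+\sum_i(1+q_i^{-1})\tfrac{\log q_i}{1-q_i^{-1}}\bigr)\mf B$ part of the $X$-coefficient. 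The simple pole on $X_1\times Y_{\mathbf 1}$ gives the $\uppi$-term of $\mf C$, and the simple poles on $X_0\times Y_{\epsilon^i}$ produce the value $\tfrac{1-\epsilon_iq_i^{-1}}{1-\epsilon_i}$ of the $i$-th Euler factor at $s=0$, hence (on summing over $i$ and $\epsilon_i$) the last term of $\mf C$; throughout, the prefactors $\tfrac{(\vartheta')^k}{(u-1)^k}$, $\prod_i\log q_i$ and $\sqrt\uppi$ cancel against the overall $\tfrac1{2\vartheta'}$. Collecting the four contributions gives the claimed asymptotic; the delicate point is exactly the verification that the $\log 2$, $\log\uppi$, $\upgamma$ and $q_i$-dependent constants line up.
\end{insertproof}
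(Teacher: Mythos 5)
Your proposal is correct and follows essentially the same route as the paper's proof: shift the $s$-contour right to pick up the pole of $L^S(2u-1-2\vartheta's,\triv)$ at $s=(u-1)/\vartheta'$ (giving the residue term with $\Xi_{\iota,\epsilon}$) and bound the remainder by pushing $u$ to $(-A)$; then shift $u$ from $(4)$ to $(1/2+\varepsilon)$, pick up the simple pole at $u=(2+\vartheta')/2=(3-\vartheta)/2$ and the pole at $u=1$, and identify the residue at $u=1$ with $\mf M(G)$ by the same case analysis on $(\iota,\epsilon)$ — double pole on $X_0\times Y_{\mathbf 1}$, simple poles on $X_1\times Y_{\mathbf 1}$ and $X_0\times Y_{\epsilon^i}$, none otherwise. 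The minor cosmetic differences (your $\sigma_1>3/\vartheta'$ vs.\ the paper's $7/(2\vartheta')$; splitting the $x,y$-integrals before vs.\ after moving the $u$-contour; carrying $\zeta^S$ whole vs.\ grouping its local factors with the Euler factors to make the zero/pole cancellation explicit) do not change the argument.
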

\begin{insertproof}
First we move the $s$-contour from $(2)$ to $(7/(2\vartheta'))$. The integrand is holomorphic on $\Re s\geq 2$ and $\Re u\geq 4$ except for the function $L^S(2u-1-2\vartheta' s,\chi)$ when $\chi=\triv$, which has a pole at $s=(u-1)/\vartheta'$ with residue
\[
\res_{s=\frac{u-1}{\vartheta'}}L^S(2u-1-2\vartheta's,\mathbbm{1})=-\frac{1}{2\vartheta'} \prod_{i=1}^{r}(1-q_i^{-1}).
\]
Now by residue formula, \eqref{eq:alpha0secondterm} becomes
\begin{align*}
&\sum_{\chi}\sum_{\pm}\sum_{\nu\in \ZZ^r}\int_{\RR}\int_{\QQ_{S_\fin}}\frac{1}{\dpii}\int_{(4)} \widetilde{G}(u)|1-x|_\infty^{-u} |1-y|_q^{-u}\Theta_\infty^\pm(x)\prod_{i=1}^{r}(1-q_i^{-1})^{-1}\widehat{\Theta}_{q}^{\pm,\nu}(\chi,y)\\
\times&\frac{1}{\dpii}\int_{(\frac{7}{2\vartheta'})} \widetilde{F}(s) \uppi^{-s+\frac12}\prod_{i=1}^{r}\frac{1-\epsilon_i q_i^{s-1}}{1-\epsilon_i q_i^{-s}}\frac{\Gamma(\frac{\iota+s}{2})}{\Gamma(\frac{\iota+1-s}{2})} L^S(2u-1-2\vartheta's,\chi)\zeta^S(2s)|x|_\infty^{\vartheta's-\frac{1}{2}}|y|_q'^{\vartheta's-\frac{1}{2}}\rmd s X^u\rmd u\rmd x\rmd y\\
+&\frac{1}{2\vartheta'}\sum_{\pm}\sum_{\nu\in \ZZ^r}\int_{\RR}\int_{\QQ_{S_\fin}}\frac{1}{\dpii}\int_{(4)} \widetilde{G}(u)|1-x|_\infty^{-u} |1-y|_q^{-u}\Theta_\infty^\pm(x)\widehat{\Theta}_{q}^{\pm,\nu}(\triv,y)\\
\times&\widetilde{F}\legendresymbol{u-1}{\vartheta'} \uppi^{-\frac{u-1}{\vartheta'}+\frac12}\prod_{i=1}^{r}\frac{1-\epsilon_i q_i^{(u-1)/\vartheta'-1}}{1-\epsilon_iq_i^{-(u-1)/\vartheta'}} \frac{\Gamma(\frac{\iota}{2}+\frac{u-1}{2\vartheta'})}{\Gamma(\frac{\iota+1}{2}-\frac{u-1}{2\vartheta'})} \zeta^S\legendresymbol{2u-2}{\vartheta'}|x|_\infty^{\frac{2u-3}{2}}|y|_q'^{\frac{2u-3}{2}}X^u\rmd u\rmd x\rmd y.
\end{align*}
As in the proof of \autoref{lem:asympalpha0firstterm}, the integrand of the first term above is holomorphic on $\Re s=7/(2\vartheta')$ and $\Re u\leq 4$. Hence we may move the $u$-contour from $(4)$ to $(-A)$ for any $A>0$. 

Using \eqref{eq:boundlvertical}, we have
\begin{align*}
&\int_{\RR}\int_{\QQ_{S_\fin}}\frac{1}{\dpii}\int_{(4)} \widetilde{G}(u)|1-x|_\infty^{-u} |1-y|_q^{-u}\Theta_\infty^\pm(x)\prod_{i=1}^{r}(1-q_i^{-1})^{-1}\widehat{\Theta}_{q}^{\pm,\nu}(\chi,y)\\
\times&\frac{1}{\dpii}\int_{(\frac{7}{2\vartheta'})} \widetilde{F}(s) \uppi^{-s+\frac12}\prod_{i=1}^{r}\frac{1-\epsilon_i q_i^{s-1}}{1-\epsilon_i q_i^{-s}}\frac{\Gamma(\frac{\iota+s}{2})}{\Gamma(\frac{\iota+1-s}{2})} L^S(2u-1-2\vartheta's,\chi)\zeta^S(2s)|x|_\infty^{\vartheta's-\frac{1}{2}}|y|_q'^{\vartheta's-\frac{1}{2}}\rmd s X^u\rmd u\rmd x\rmd y\\
\ll&\int_{\RR}\int_{\QQ_{S_\fin}}\int_{(-A)} |\widetilde{G}(u)||1-x|_\infty^{A} |1-y|_q^{A}|\Theta_\infty^\pm(x)|\prod_{i=1}^{r}(1-q_i^{-1})^{-1} |\widehat{\Theta}_{q}^{\pm,\nu}(\chi,y)|\\
\times&\int_{(\frac{7}{2\vartheta'})}|\widetilde{F}(s)|\prod_{i=1}^{r}\left|\frac{1-\epsilon_i q_i^{s-1}}{1-\epsilon_i q_i^{-s}}\right|\left|\frac{\Gamma(\frac{\iota+s}{2})}{\Gamma(\frac{\iota+1-s}{2})}\right| (1+|2u-1-2\vartheta' s|)^{\frac{15}{2}+2A}|\zeta^S(2s)||x|_\infty^{2}|y|_q'^{2}\rmd s X^{-A}\rmd u\rmd x\rmd y\\
\ll&\int_{(-A)} |\widetilde{G}(u)|\int_{(\frac{7}{2\vartheta'})}|\widetilde{F}(s)|\prod_{i=1}^{r}\left|\frac{1-\epsilon_i q_i^{s-1}}{1-\epsilon_i q_i^{-s}}\right|\left|\frac{\Gamma(\frac{\iota+s}{2})}{\Gamma(\frac{\iota+1-s}{2})}\right| \\
\times&(1+|2u|)^{\frac{15}{2}+2A}(1+|2\vartheta's+1|)^{\frac{15}{2}+2A}|\zeta^S(2s)| X^{-A}\rmd |u|\rmd |s|\\
\ll &X^{-A}\|G\|_{M_{-A}^{15/2+2A}}\ll X^{-A}\|G\|_{\lfloor\frac{15}{2}+2A\rfloor+2,1},
\end{align*}
where in the last step we used \autoref{cor:mellinnorm}. Since the sums over $\chi$ and $\nu$ are finite, the first term is bounded by $ X^{-A}\|G\|_{\lfloor\frac{15}{2}+2A\rfloor+2,1}$.

Next we analyze the second term. Let $\Xi(u)$ denote the integrand, which can be rewritten as
\begin{align*}
\Xi(u)=&\int_{\RR}\int_{\QQ_{S_\fin}} \widetilde{G}(u)|1-x|_\infty^{-u} |1-y|_q^{-u}\widehat{\Theta}_\infty(x)\widehat{\Theta}_{q}(y)\widetilde{F}\legendresymbol{u-1}{\vartheta'} \\
\times& \uppi^{-\frac{u-1}{\vartheta'}+\frac12}\frac{\Gamma(\frac{\iota}{2}+\frac{u-1}{2\vartheta'})}{\Gamma(\frac{\iota+1}{2}-\frac{u-1}{2\vartheta'})} \prod_{i=1}^{r}\frac{1-\epsilon_i q_i^{(u-1)/\vartheta'-1}}{1-\epsilon_iq_i^{-(u-1)/\vartheta'}} \zeta^S\legendresymbol{2u-2}{\vartheta'}|x|_\infty^{\frac{2u-3}{2}}|y|_q'^{\frac{2u-3}{2}}X^u\rmd x\rmd y.
\end{align*}

As in the proof of \autoref{lem:asympalpha0firstterm}, the integrand $\Xi(u)$ of the second term above converges absolutely when $\Re u>1/2$ and defines a meromorphic function there. It is holomorphic except for a simple pole at $u=(2+\vartheta')/2$ and a pole at $u=1$. 
The residue at $u=(2+\vartheta')/2$ can be easily computed. We have
\[
\res_{u=\frac{2+\vartheta'}{2}}\widetilde{F}\legendresymbol{u-1}{\vartheta'} \zeta^S\legendresymbol{2u-2}{\vartheta'}=\frac{\vartheta'}{2}\widetilde{F}\legendresymbol{1}{2} \prod_{i=1}^{r}(1-q_i^{-1}).
\]

Moving the $u$-contour from $(4)$ to $(\frac12+\varepsilon)$, by residue theorem the second term becomes
\begin{align*}
&\frac{1}{2\vartheta'}\int_{\RR}\int_{\QQ_{S_\fin}}\frac{1}{\dpii}\int_{(\frac12+\varepsilon)} \widetilde{G}(u)|1-x|_\infty^{-u} |1-y|_q^{-u}\Theta_\infty^\pm(x)\widehat{\Theta}_{q}^{\pm,\nu}(\triv,y)\\
\times&\widetilde{F}\legendresymbol{u-1}{\vartheta'} \uppi^{-\frac{u-1}{\vartheta'}+\frac12}\prod_{i=1}^{r}\frac{1-\epsilon_i q_i^{(u-1)/\vartheta'-1}}{1-\epsilon_iq_i^{-(u-1)/\vartheta'}} \frac{\Gamma(\frac{\iota}{2}+\frac{u-1}{2\vartheta'})}{\Gamma(\frac{\iota+1}{2}-\frac{u-1}{2\vartheta'})} \zeta^S\legendresymbol{2u-2}{\vartheta'}|x|_\infty^{\frac{2u-3}{2}}|y|_q'^{\frac{2u-3}{2}}X^u\rmd u\rmd x\rmd y\\
+& \frac14 \prod_{i=1}^{r}(1-q_i^{-1})\int_{\RR}\int_{\QQ_{S_\fin}} \widetilde{G}\legendresymbol{2+\vartheta'}{2}|1-x|_\infty^{-\frac{2+\vartheta'}{2}} |1-y|_q^{-\frac{2+\vartheta'}{2}}\widehat{\Theta}_\infty(x)\widehat{\Theta}_{q}(y) \widetilde{F}\legendresymbol{1}{2} |x|_\infty^{\frac{\vartheta'-1}{2}}|y|_q'^{\frac{\vartheta'-1}{2}}X^{\frac{2+\vartheta'}{2}}\rmd x\rmd y\\
+&\frac{1}{2\vartheta'}\res_{u=1}\Xi(u).
\end{align*}
Since $\widetilde{F}$ has rapid decay vertically, the first term above is bounded by $X^{\frac12+\varepsilon}$ as in the proof of \autoref{lem:asympalpha0firstterm}. The second term above is precisely the first term in this lemma since $\vartheta'=1-\vartheta$.
 
Our final task is to analyze the residue at $u=1$. We have 
\[
\Xi(u)=\sum_{\iota\in \{0,1\}}\sum_{\epsilon\in \{0,\pm 1\}^r}\Xi_{\iota,\epsilon}(u),
\]
where
\begin{align*}
\Xi_{\iota,\epsilon}(u)=&\int_{X_\iota}\int_{Y_\epsilon} \widetilde{G}(u)|1-x|_\infty^{-u} |1-y|_q^{-u}\widehat{\Theta}_\infty(x)\widehat{\Theta}_{q}(y)\widetilde{F}\legendresymbol{u-1}{\vartheta'} \uppi^{-\frac{u-1}{\vartheta'}+\frac12}\\
\times&\prod_{i=1}^{r}\frac{(1-\epsilon_i q_i^{(u-1)/\vartheta'-1})(1-q_i^{-2(u-1)/\vartheta'})}{1-\epsilon_iq_i^{-(u-1)/\vartheta'}} \frac{\Gamma(\frac{\iota}{2}+\frac{u-1}{2\vartheta'})} {\Gamma(\frac{\iota+1}{2}-\frac{u-1}{2\vartheta'})} \zeta\legendresymbol{2u-2}{\vartheta'}|x|_\infty^{\frac{2u-3}{2}}|y|_q'^{\frac{2u-3}{2}}X^u\rmd x\rmd y.
\end{align*}
Note that
\begin{enumerate}[itemsep=0pt,parsep=0pt,topsep=0pt,leftmargin=0pt,labelsep=2.5pt,itemindent=12pt,label=\textbullet]
  \item The function $\widetilde{F}((u-1)/\vartheta')$ has a simple pole at $u=1$ with residue $\vartheta'$ and
  \[
  \fp_{u=1}\widetilde{F}\legendresymbol{u-1}{\vartheta'}=0
  \]
  since $\widetilde{F}(s)$ is an odd function.
  \item The function
  \[
  \frac{\Gamma(\frac{\iota}{2}+\frac{u-1}{2\vartheta'})}{\Gamma(\frac{\iota+1}{2}-\frac{u-1}{2\vartheta'})} 
  \]
  is regular at $u=1$ if $\iota=1$ with value $\Gamma(1/2)/\Gamma(1)=\sqrt{\uppi}$. If $\iota=0$, it has a simple pole at $u=1$ with 
  \[
  \res_{u=1}\frac{\Gamma(\frac{u-1}{2\vartheta'})}{\Gamma(\frac{1}{2}-\frac{u-1}{2\vartheta'})} =\frac{2\vartheta'}{\Gamma(1/2)}=\frac{2\vartheta'}{\sqrt{\uppi}} 
  \]
  and 
  \begin{align*}
  \fp_{u=1}\frac{\Gamma(\frac{u-1}{2\vartheta'})}{\Gamma(\frac{1}{2}-\frac{u-1}{2\vartheta'})} &=\frac{\fp_{u=1}\Gamma(\frac{u-1}{2\vartheta'})}{\Gamma(1/2)}+\res_{u=1}\Gamma\left(\frac{u-1}{2\vartheta'}\right) \left.\frac{\rmd}{\rmd u}\right|_{u=1}\frac{1}{\Gamma(\frac{1}{2}-\frac{u-1}{2\vartheta'})}\\
  &=-\frac{\upgamma}{\Gamma(1/2)}+\frac{\Gamma'(1/2)}{\Gamma(1/2)}\frac{1}{\Gamma(1/2)}= -\frac{2\upgamma+2\log 2}{\sqrt{\uppi}},
  \end{align*}
  where $\upgamma$ is the Euler-Mascheroni constant. The above two formulas hold since we have the following Laurent expansion for $\Gamma(s)$:
  \[
  \Gamma(s)=\frac{1}{s}-\upgamma+O(s),
  \]
  which can be derived by $\Gamma(s+1)=s\Gamma(s)$ and $\Gamma'(1)=-\upgamma$. Moreover, $\Gamma'(1/2)/\Gamma(1/2)=-\upgamma-2\log 2$ \cite[\SSec 5.4]{DLMF}.
  \item  If $\epsilon_i\neq 1$, the function
  \[
  \frac{(1-\epsilon_i q_i^{(u-1)/\vartheta'-1})(1-q_i^{-2(u-1)/\vartheta'})}{1-\epsilon_iq_i^{-(u-1)/\vartheta'}}
  \]
  has a zero of order $1$ at $u=1$ and
  \[
  \left.\frac{\rmd}{\rmd u}\right|_{u=1}\frac{(1-\epsilon_i q_i^{(u-1)/\vartheta'-1})(1-q_i^{-2(u-1)/\vartheta'})}{1-\epsilon_iq_i^{-(u-1)/\vartheta'}} =\frac{2}{\vartheta'}\frac{1-\epsilon_iq_i^{-1}}{1-\epsilon_i}\log q_i.
  \]
  If $\epsilon_i=1$, the function becomes
  \[
  \frac{(1- q_i^{(u-1)/\vartheta'-1})(1-q_i^{-2(u-1)/\vartheta'})}{1-q_i^{-(u-1)/\vartheta'}} =1+q_i^{-(u-1)/\vartheta'}-q_i^{(u-1)/\vartheta' -1}-q_i^{-1}.
  \]
  Hence
  \[
  \left.\frac{(1-\epsilon_i q_i^{(u-1)/\vartheta'-1})(1-q_i^{-2(u-1)/\vartheta'})}{1-\epsilon_iq_i^{-(u-1)/\vartheta'}} \right|_{u=1}=2(1-q_i^{-1})
  \]
  and
  \[
  \left.\frac{\rmd}{\rmd u}\right|_{u=1}\frac{(1-\epsilon_i q_i^{(u-1)/\vartheta'-1})(1-q_i^{-2(u-1)/\vartheta'})}{1-\epsilon_iq_i^{-(u-1)/\vartheta'}} =-\frac{1}{\vartheta'}(1+q_i^{-1})\log q_i.
  \]
\end{enumerate} 

Let $S_{\spl}=S_{\spl,\iota,\epsilon}$ be a subset of $S$ containing the "split places" with respect to $\iota$ and $\epsilon$. More precisely, $\infty\in S_\spl$ if and only if $\iota=0$, and $q_i\in S_\spl$ if and only if $\epsilon_i=1$.

We consider the following cases:

\underline{\emph{Case 1:}} \ \ $\#(S-S_\spl)\geq 2$.

In this case, $\Xi_{\iota,\epsilon}(u)$ is regular at $u=1$ by the above analysis. Hence $\res_{u=1}\Xi_{\iota,\epsilon}(u)=0$.

\underline{\emph{Case 2:}} \ \ $\#(S-S_\spl)=1$. Let $v\in S$ be the unique element in $S-S_\spl$.

In this case, $\Xi_{\iota,\epsilon}(u)$ has a simple pole at $u=1$ by the above analysis.

\underline{\emph{Case 2.1:}}\ \ $v=\infty$.

In this case, we have
\begin{align*}
&\res_{u=1}\Xi_{\iota,\epsilon}(u)=\int_{X_\iota}\int_{Y_\epsilon} \res_{u=1}\widetilde{F}\legendresymbol{u-1}{\vartheta'}\left[\widetilde{G}(u)|1-x|_\infty^{-u} |1-y|_q^{-u}\widehat{\Theta}_\infty(x)\widehat{\Theta}_{q}(y)\uppi^{-\frac{u-1}{\vartheta'}+\frac12}\right.\\
\times&\left.\prod_{i=1}^{r}\frac{(1- q_i^{(u-1)/\vartheta'-1})(1-q_i^{-2(u-1)/\vartheta'})}{1-q_i^{-(u-1)/\vartheta'}} \frac{\Gamma(\frac{\iota}{2}+\frac{u-1}{2\vartheta'})} {\Gamma(\frac{\iota+1}{2}-\frac{u-1}{2\vartheta'})} \zeta\legendresymbol{2u-2}{\vartheta'}|x|_\infty^{\frac{2u-3}{2}}|y|_q'^{\frac{2u-3}{2}}X^u\right]_{u=1}\rmd x\rmd y\\
=&-\vartheta'2^{r-1}\uppi\prod_{i=1}^{r}(1-q_i^{-1})\widetilde{G}(1)\sum_{\pm} \int_{X_\iota}\int_{Y_\epsilon} |1-x|_\infty^{-1} |1-y|_q^{-1}\Theta_\infty^\pm(x)\widehat{\Theta}_{q}(y) |x|_\infty^{-\frac{1}{2}}|y|_q'^{-\frac{1}{2}}\rmd x\rmd yX
\end{align*}
since $\zeta(0)=-1/2$.

\underline{\emph{Case 2.2:}}\ \ $v=q_i$ for some $i$.

In this case, we have
\begin{align*}
&\res_{u=1}\Xi_{\iota,\epsilon}(u)=\int_{X_\iota}\int_{Y_\epsilon} \res_{u=1}\widetilde{F}\legendresymbol{u-1}{\vartheta'}\res_{u=1}\frac{\Gamma(\frac{u-1}{2\vartheta'})} {\Gamma(\frac{1}{2}-\frac{u-1}{2\vartheta'})} \left[\widetilde{G}(u)|1-x|_\infty^{-u} |1-y|_q^{-u} \right.\\
\times&\left.\widehat{\Theta}_\infty(x)\widehat{\Theta}_{q}(y) \uppi^{-\frac{u-1}{\vartheta'}+\frac12}\prod_{j\neq i}\frac{(1- q_j^{(u-1)/\vartheta'-1})(1-q_j^{-2(u-1)/\vartheta'})}{1-q_j^{-(u-1)/\vartheta'}} \zeta\legendresymbol{2u-2}{\vartheta'}|x|_\infty^{\frac{2u-3}{2}}|y|_q'^{\frac{2u-3}{2}}X^u\right]_{u=1}\\
\times&\left.\frac{\rmd}{\rmd u}\right|_{u=1}\frac{(1-\epsilon_i q_i^{(u-1)/\vartheta'-1})(1-q_i^{-2(u-1)/\vartheta'})}{1-\epsilon_iq_i^{-(u-1)/\vartheta'}}\rmd x\rmd y\\
=&-\vartheta'2^{r}\prod_{i=1}^{r}(1-q_i^{-1}) \frac{1-\epsilon_iq_i^{-1}}{1-\epsilon_i}\frac{\log q_i}{1-q_i^{-1}} \int_{X_\iota}\int_{Y_\epsilon}\widetilde{G}(1)|1-x|_\infty^{-1} |1-y|_q^{-1}\widehat{\Theta}_\infty(x)\widehat{\Theta}_{q}(y) |x|_\infty^{-\frac{1}{2}}|y|_q'^{-\frac{1}{2}} \rmd x\rmd yX.
\end{align*}

\underline{\emph{Case 3:}} \ \ $S=S_\spl$, that is, $\iota=0$ and $\epsilon=\mathbf{1}$.

In this case, $\Xi(u)$ has a double pole at $u=1$ by the above analysis. Thus
\begin{align*}
&\res_{u=1}\Xi_{\iota,\epsilon}(u)=\int_{X_\iota}\int_{Y_\epsilon} \res_{u=1}\widetilde{F}\legendresymbol{u-1}{\vartheta'}\res_{u=1}\frac{\Gamma(\frac{u-1}{2\vartheta'})} {\Gamma(\frac{1}{2}-\frac{u-1}{2\vartheta'})} \left.\frac{\rmd}{\rmd u}\right|_{u=1}\left[\widetilde{G}(u)|1-x|_\infty^{-u} |1-y|_q^{-u} \right.\\
\times&\left.\widehat{\Theta}_\infty(x)\widehat{\Theta}_{q}(y) \uppi^{-\frac{u-1}{\vartheta'}+\frac12} \prod_{i=1}^{r}\frac{(1- q_i^{(u-1)/\vartheta'-1})(1-q_i^{-2(u-1)/\vartheta'})}{1-q_i^{-(u-1)/\vartheta'}} \zeta\legendresymbol{2u-2}{\vartheta'}|x|_\infty^{\frac{2u-3}{2}}|y|_q'^{\frac{2u-3}{2}}X^u\right]\rmd x\rmd y\\
+&\int_{X_\iota}\int_{Y_\epsilon} \left[\fp_{u=1}\widetilde{F}\legendresymbol{u-1}{\vartheta'}\res_{u=1} \frac{\Gamma(\frac{u-1}{2\vartheta'})} {\Gamma(\frac{1}{2}-\frac{u-1}{2\vartheta'})}+ \res_{u=1}\widetilde{F}\legendresymbol{u-1}{\vartheta'}\fp_{u=1} \frac{\Gamma(\frac{u-1}{2\vartheta'})} {\Gamma(\frac{1}{2}-\frac{u-1}{2\vartheta'})} \right]\\
\times& \left[\widetilde{G}(u)|1-x|_\infty^{-u} |1-y|_q^{-u}\widehat{\Theta}_\infty(x)\widehat{\Theta}_{q}(y) \uppi^{-\frac{u-1}{\vartheta'}+\frac12}\right.\\
\times&\left. \prod_{i=1}^{r}\frac{(1- q_i^{(u-1)/\vartheta'-1})(1-q_i^{-2(u-1)/\vartheta'})}{1-q_i^{-(u-1)/\vartheta'}} \zeta\legendresymbol{2u-2}{\vartheta'} |x|_\infty^{\frac{2u-3}{2}}|y|_q'^{\frac{2u-3}{2}}X^u\right]_{u=1}\rmd x\rmd y\\
=&-\vartheta'^22^{r}\prod_{i=1}^{r}(1-q_i^{-1})\int_{X_\iota}\int_{Y_\epsilon}\widetilde{G}'(1)|1-x|_\infty^{-1} |1-y|_q^{-1}\widehat{\Theta}_\infty(x)\widehat{\Theta}_{q}(y) |x|_\infty^{-\frac{1}{2}}|y|_q'^{-\frac{1}{2}}X\rmd x\rmd y\\
+&\vartheta'^22^{r}\prod_{i=1}^{r}(1-q_i^{-1})\int_{X_\iota}\int_{Y_\epsilon}\widetilde{G}(1)|1-x|_\infty^{-1} \log|1-x|_\infty |1-y|_q^{-1}\widehat{\Theta}_\infty(x)\widehat{\Theta}_{q}(y) |x|_\infty^{-\frac{1}{2}}|y|_q'^{-\frac{1}{2}}X\rmd x\rmd y\\
+&\vartheta'^22^{r}\prod_{i=1}^{r}(1-q_i^{-1})\int_{X_\iota}\int_{Y_\epsilon}\widetilde{G}(1)|1-x|_\infty^{-1} |1-y|_q^{-1}\log |1-y|_q\widehat{\Theta}_\infty(x)\widehat{\Theta}_{q}(y) |x|_\infty^{-\frac{1}{2}}|y|_q'^{-\frac{1}{2}}X\rmd x\rmd y\\
+&\vartheta'2^{r}\prod_{i=1}^{r}(1-q_i^{-1})\int_{X_\iota}\int_{Y_\epsilon}\widetilde{G}(1)|1-x|_\infty^{-1} |1-y|_q^{-1}\log\uppi \widehat{\Theta}_\infty(x)\widehat{\Theta}_{q}(y) |x|_\infty^{-\frac{1}{2}}|y|_q'^{-\frac{1}{2}}X\rmd x\rmd y\\
+&\vartheta'2^{r-1}\prod_{i=1}^{r}(1-q_i^{-1})\int_{X_\iota}\int_{Y_\epsilon}\widetilde{G}(1)|1-x|_\infty^{-1} |1-y|_q^{-1}\widehat{\Theta}_\infty(x)\widehat{\Theta}_{q}(y) \sum_{i=1}^{r}(1+q_i^{-1})\frac{\log q_i}{1-q_i^{-1}}|x|_\infty^{-\frac{1}{2}}|y|_q'^{-\frac{1}{2}}X\rmd x\rmd y\\
-&\vartheta'2^{r+1}\prod_{i=1}^{r}(1-q_i^{-1})\int_{X_\iota}\int_{Y_\epsilon}\widetilde{G}(1)|1-x|_\infty^{-1} |1-y|_q^{-1}\widehat{\Theta}_\infty(x)\widehat{\Theta}_{q}(y) \log(2\uppi)|x|_\infty^{-\frac{1}{2}}|y|_q'^{-\frac{1}{2}}X\rmd x\rmd y\\
-&\vartheta'^22^{r}\prod_{i=1}^{r}(1-q_i^{-1})\int_{X_\iota}\int_{Y_\epsilon}\widetilde{G}(1)|1-x|_\infty^{-1} |1-y|_q^{-1}\widehat{\Theta}_\infty(x)\widehat{\Theta}_{q}(y) |x|_\infty^{-\frac{1}{2}}\log|x|_\infty|y|_q'^{-\frac{1}{2}}X\rmd x\rmd y\\
-&\vartheta'^22^{r}\prod_{i=1}^{r}(1-q_i^{-1})\int_{X_\iota}\int_{Y_\epsilon}\widetilde{G}(1)|1-x|_\infty^{-1} |1-y|_q^{-1}\widehat{\Theta}_\infty(x)\widehat{\Theta}_{q}(y) |x|_\infty^{-\frac{1}{2}}|y|_q'^{-\frac{1}{2}}\log|y|'_q X\rmd x\rmd y\\
-&\vartheta'^22^{r}\prod_{i=1}^{r}(1-q_i^{-1})\int_{X_\iota}\int_{Y_\epsilon}\widetilde{G}(1)|1-x|_\infty^{-1} |1-y|_q^{-1}\widehat{\Theta}_\infty(x)\widehat{\Theta}_{q}(y) |x|_\infty^{-\frac{1}{2}}|y|_q'^{-\frac{1}{2}}X\log X\rmd x\rmd y\\
+&\vartheta'2^{r}(\upgamma+\log 2)\prod_{i=1}^{r}(1-q_i^{-1})\int_{X_\iota}\int_{Y_\epsilon}\widetilde{G}(1)|1-x|_\infty^{-1} |1-y|_q^{-1}\widehat{\Theta}_\infty(x)\widehat{\Theta}_{q}(y) |x|_\infty^{-\frac{1}{2}}|y|_q'^{-\frac{1}{2}}X\rmd x\rmd y,
\end{align*}
since $\zeta'(0)=-\log(2\uppi)/2$ \cite[\SSec 25.6]{DLMF}.
Combining all the things we have proved we get the asymptotic formula in the lemma.
\end{insertproof}
Finally, by \autoref{lem:asympalpha0firstterm} and \autoref{lem:asympalpha0secondterm}, and noting that the first terms of them cancel, we obtain \autoref{thm:contributealpha0}.
\end{proof}

\section{Analysis of the error term}
This section, except for \autoref{subsec:poissontheta}, is very technical. One may assume that $\vartheta=1/2$ for the first reading.

In this section, we will analyze the $\alpha\neq 0$ term, namely
\[
S_G^{\alpha\neq 0}(X)=\sum_{k,f\in \ZZ_{(S)}^{>0}}\frac{1}{k^3f^5}\sum_{\alpha\in \ZZ^S-\{0\}}\sum_{\xi\in \ZZ^S}\widehat{J}_{k,f}(X,\xi,0)\widehat{\Kl}_{k,f}^S(\xi,\alpha).
\]
By \autoref{cor:kloostermanlocalcompute} (2), we obtain
\begin{equation}\label{eq:alphanot0}
S_G^{\alpha\neq 0}(X)=\sum_{k,f\in \ZZ_{(S)}^{>0}}\frac{1}{k^3f^5}\sum_{\alpha\in \ZZ^S-\{0\}}c_{k,f}(\alpha)\sum_{\xi\in \gcd(\alpha,kf^2)\ZZ^S}\widehat{J}_{k,f}(X,\xi,\alpha)\rme\legendresymbol{-\xi^2}{\alpha kf^2}\rme_q\legendresymbol{-\xi^2}{\alpha kf^2}.
\end{equation}

\subsection{Poisson summation of the theta-type function}\label{subsec:poissontheta}
 In this section we will use Poisson summation on the term
\begin{equation}\label{eq:alphanot0poisson}
\sum_{\xi\in \gcd(\alpha,kf^2)\ZZ^S}\widehat{J}_{k,f}(X,\xi,\alpha)\rme\legendresymbol{-\xi^2}{\alpha kf^2}\rme_q\legendresymbol{-\xi^2}{\alpha kf^2},
\end{equation}
which, we will see, is a theta-type function.

By the expression \eqref{eq:transformedj} of $\widehat{J}_{k,f}(X,\xi,\alpha)$ after the transformation \eqref{eq:harishchandratransform}, \eqref{eq:alphanot0poisson} equals
\begin{equation}\label{eq:alphanot0poissontheta}
\begin{split}
&2\sum_{\xi\in \gcd(\alpha,kf^2)\ZZ^S}\int_{(a,b)\in\QQ_S}\!\left\{\!\int_{(x,y)\in\QQ_S} G\!\left(\frac{1}{X}\left|\frac{a^2}{4}-x\right|_\infty\left|\frac{b^2}{4}-y\right|_q \right) \theta_\infty\!\left(a,\frac{a^2}{4}-x\right)\theta_{q}\!\left(b,\frac{b^2}{4}-y\right)\right.\\
\times&\left.\left[F\legendresymbol{kf^2}{|4x|_\infty^{\vartheta}|4y|_q'^{\vartheta}}
+\frac{kf^2}{\sqrt{|4x|_\infty|4y|_q'}}V\legendresymbol{kf^2}{|4x|_\infty^{\vartheta'}|4y|_q'^{\vartheta'}}\right] \rme\legendresymbol{x\alpha}{kf^2}\rme_q\legendresymbol{y\alpha}{kf^2}\rmd x\rmd y\right\}\\
\times&\rme\legendresymbol{-a\xi-a^2\alpha/4-\xi^2/\alpha}{kf^2} \rme_q\legendresymbol{-b\xi-b^2\alpha/4-\xi^2/\alpha}{kf^2}\rmd a\rmd b
\end{split}
\end{equation}
The key observation is that
\[
\frac{-a\xi-a^2\alpha/4-\xi^2/\alpha}{kf^2}=-\frac{1}{\alpha kf^2}\left(\xi+\frac{a\alpha}{2}\right)^2 \quad\text{and}\quad \frac{-b\xi-b^2\alpha/4-\xi^2/\alpha}{kf^2}=-\frac{1}{\alpha kf^2}\left(\xi+\frac{b\alpha}{2}\right)^2.
\]

Now we want to use the Poisson summation for $\xi$. To do this, we need to compute certain Fourier transform on $\QQ_S$. We begin with local computation. First we consider the archimedean part.
\begin{lemma}
Let $\sigma,u\in \RR^\times$. Then for any $f\in \cS(\RR)$, 
\[
F(x)=\int_{\RR}f(a)\rme\legendresymbol{\sigma(x+u a)^2}{2}\rmd a
\]
is a Schwartz function with Fourier transform
\[
\widehat{F}(\xi)=\frac{\gamma}{\sqrt{|\sigma|}}\rme\legendresymbol{-\xi^2}{2\sigma}\widehat{f}(-u \xi),
\]
where $\gamma$ is an eighth root of unity depending on $\sigma$.
\end{lemma}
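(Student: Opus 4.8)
The plan is to reduce the statement to the classical Fourier transform of a Gaussian with purely imaginary exponent (a Fresnel integral), using one linear change of variables and the convolution theorem. First I would substitute $a=(t-x)/u$ in the definition of $F$, which turns it into
\[
F(x)=\frac{1}{|u|}\int_\RR f\bigl((t-x)/u\bigr)\,\rme\legendresymbol{\sigma t^2}{2}\,\rmd t=\frac{1}{|u|}\,(\phi*g_\sigma)(x),
\]
where $\phi(s)=f(-s/u)\in\cS(\RR)$ and $g_\sigma(t)=\rme(\sigma t^2/2)$ is a bounded smooth function, hence a tempered distribution. Since the convolution of a Schwartz function with a tempered distribution is a well-defined smooth function of polynomial growth, and $\widehat{\phi*g_\sigma}=\widehat\phi\cdot\widehat{g_\sigma}$ in $\cS'(\RR)$, the whole statement reduces to identifying $\widehat{g_\sigma}$.

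To compute $\widehat{g_\sigma}$ I would start from the elementary identity $\int_\RR\rme^{-\uppi\beta t^2-\dpii t\xi}\,\rmd t=\beta^{-1/2}\rme^{-\uppi\xi^2/\beta}$ (valid for $\Re\beta>0$, principal branch), specialize to $\beta=\varepsilon-\rmi\sigma$, and let $\varepsilon\to0^+$ inside the pairing against a test function. This yields
\[
\widehat{g_\sigma}(\xi)=\frac{\gamma}{\sqrt{|\sigma|}}\,\rme\legendresymbol{-\xi^2}{2\sigma},\qquad\gamma=\rme^{\uppi\rmi\,\sgn(\sigma)/4},
\]
an eighth root of unity depending only on $\sigma$, and $\widehat{g_\sigma}$ is a smooth function whose derivatives grow at most polynomially. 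Equivalently, one can insert the regularizing factor $\rme^{-\varepsilon t^2}$ into the double integral defining $\widehat F$, apply Fubini, and pass to the limit; either way the computation is the same Fresnel integral.

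Finally, since $\widehat\phi\in\cS(\RR)$ and $\widehat{g_\sigma}$ is slowly increasing and smooth, the product $\widehat\phi\cdot\widehat{g_\sigma}$ lies in $\cS(\RR)$; hence $\widehat F=|u|^{-1}\widehat\phi\cdot\widehat{g_\sigma}$ is Schwartz, and therefore so is $F$, which settles the first assertion. Computing $\widehat\phi(\xi)=|u|\,\widehat f(-u\xi)$ via the substitution $s=-ur$ and combining everything gives
\[
\widehat F(\xi)=\frac{1}{|u|}\cdot|u|\,\widehat f(-u\xi)\cdot\frac{\gamma}{\sqrt{|\sigma|}}\,\rme\legendresymbol{-\xi^2}{2\sigma}=\frac{\gamma}{\sqrt{|\sigma|}}\,\rme\legendresymbol{-\xi^2}{2\sigma}\,\widehat f(-u\xi),
\]
as claimed. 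The only genuinely delicate point will be justifying the Fresnel integral and the interchange of the limit $\varepsilon\to0$ with the Fourier transform (standard, but it should be spelled out); everything else is routine bookkeeping with the convolution theorem and linear substitutions.
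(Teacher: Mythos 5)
Your proof is correct and follows essentially the same route as the paper: rewrite $F$ as (a constant times) the convolution of the Schwartz function $\phi(s)=f(-s/u)$ with the tempered distribution $g_\sigma(t)=\rme(\sigma t^2/2)$, invoke the convolution theorem, and compute $\widehat{g_\sigma}$ by passing the Gaussian Fourier-transform identity to an imaginary exponent. The paper phrases the last step as analytic continuation of $\rme^{-\delta\uppi x^2}\mapsto\delta^{-1/2}\rme^{-\uppi x^2/\delta}$ in $\delta$, while you phrase it as the regularization limit $\beta=\varepsilon-\rmi\sigma$, $\varepsilon\to0^+$; these are the same computation. One small point in your favour: the paper asserts directly that ``$F(x)$ is a Schwartz function'' after identifying it as a convolution of a Schwartz function with a tempered distribution, which is not a valid general principle (e.g.\ $f*1$); your route---first showing $\widehat F=|u|^{-1}\widehat\phi\cdot\widehat{g_\sigma}\in\cS(\RR)$ because $\widehat{g_\sigma}$ is smooth with all derivatives of polynomial growth, then concluding $F\in\cS(\RR)$ by Fourier inversion---is cleaner and actually fills that small gap.
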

\begin{proof}
By making the\ change of variable $a\mapsto -a/u$, we obtain 
\[
F(x)=\frac{1}{|u|}\int_{\RR}f\left(\frac{-a}{u}\right)\rme^{\uppi\rmi\sigma(x-a)^2}\rmd a
\]
so that $F(x)$ is the convolution of the Schwartz function $f(-x/u)$ and the tempered distribution $\rme^{\uppi\rmi\sigma x^2}$. Hence $F(x)$ is a Schwartz function and its Fourier transform is the product of their Fourier transforms. The Fourier transform of $f(-x/u)$ is
\[
|u|\widehat{f}(-u x),
\]
and the Fourier transform of the distribution $\rme^{\uppi\rmi\sigma x^2}$ is
\[
\frac{1}{\sqrt{\rmi \sigma}}\rme^{\frac{\uppi x^2}{\rmi \sigma}}=\frac{\gamma}{\sqrt{|\sigma|}}\rme\legendresymbol{-x^2}{2\sigma}
\]
since the Fourier transform of $\rme^{-\delta\uppi x^2}$ is $\rme^{-\uppi x^2/\delta}/\sqrt{\delta}$, using analytic continuation.
\end{proof}

\begin{lemma}
Let $\tau,v\in \QQ_p^\times$. Then for any $f\in C_c^\infty(\QQ_p)$, 
\[
F(y)=\int_{\QQ_p}f(b)\rme_p\legendresymbol{\tau (y+v b)^2}{2}\rmd b
\]
is a smooth, compactly supported function with Fourier transform
\[
\widehat{F}(\eta)=\frac{\gamma}{\sqrt{|\tau|_p}}\rme_p\legendresymbol{-\eta^2}{2\tau}\widehat{f}(-v\eta)
\]
where $\gamma$ is an eighth root of unity depending on $\tau$.
\end{lemma}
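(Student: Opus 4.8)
\emph{Proof proposal.} The plan is to mimic the convolution argument used for the preceding archimedean lemma, with the $\ell$-adic Gaussian $g_\tau(x)=\rme_\ell(\tau x^2/2)$ playing the role of $\rme^{\uppi\rmi\sigma x^2}$. First I would substitute $b\mapsto -b/v$ (Jacobian $|v|_\ell^{-1}$); since under this substitution $\tau(y+vb)^2/2$ turns into $\tau(y-b)^2/2$, this rewrites $F$ as
\[
F(y)=\frac{1}{|v|_\ell}\int_{\QQ_\ell}f\legendresymbol{-b}{v}\rme_\ell\legendresymbol{\tau(y-b)^2}{2}\rmd b=\frac{1}{|v|_\ell}\bigl(\phi*g_\tau\bigr)(y),
\]
where $\phi(x)=f(-x/v)\in C_c^\infty(\QQ_\ell)$. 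Since $\phi$ is Schwartz--Bruhat and $g_\tau$ is a bounded, locally constant function (hence a tempered distribution), the convolution is a well-defined locally constant function, given by the absolutely convergent integral above, and its Fourier transform factors as $\widehat{\phi}\cdot\widehat{g_\tau}$.

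Next I would evaluate the two factors. The dilation identity $\widehat{\phi}(\eta)=|v|_\ell\widehat{f}(-v\eta)$ is the elementary change-of-variables formula for the Fourier transform on $\QQ_\ell$. For the Gaussian I would invoke the classical formula
\[
\widehat{g_\tau}(\eta)=\frac{\gamma}{\sqrt{|\tau|_\ell}}\rme_\ell\legendresymbol{-\eta^2}{2\tau},
\]
with $\gamma=\gamma_\ell(\tau)$ the Weil index, an eighth root of unity depending on $\tau$; after scaling $\tau$ by a square so that $v_\ell(\tau)\in\{0,1\}$ this reduces to the evaluation of a quadratic Gauss sum over $\ZZ_\ell/\ell^n\ZZ_\ell$, whose modulus produces the factor $|\tau|_\ell^{-1/2}$ and whose argument is the eighth root of unity. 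Multiplying these two identities and cancelling $|v|_\ell$ then gives the asserted formula $\widehat{F}(\eta)=\gamma|\tau|_\ell^{-1/2}\rme_\ell(-\eta^2/(2\tau))\,\widehat{f}(-v\eta)$.

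Finally, smoothness and compact support of $F$ I would read off from this formula: the factor $\eta\mapsto\widehat{f}(-v\eta)$ is Schwartz--Bruhat because $f$ is, and $\eta\mapsto\rme_\ell(-\eta^2/(2\tau))$ is locally constant, so $\widehat{F}$, being their product, is Schwartz--Bruhat, i.e.\ locally constant with compact support; by Fourier inversion $F$ is then Schwartz--Bruhat too, that is, smooth and compactly supported. The main obstacle is the $\ell$-adic Gaussian Fourier transform: unlike the real case there is no analytic-continuation shortcut from $\rme^{-\delta\uppi x^2}$, so one must actually carry out (or quote) the Weil-index computation, being slightly careful at $\ell=2$, where the index can be a genuine eighth root of unity rather than a fourth root.
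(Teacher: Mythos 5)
Your proposal is correct and follows essentially the same route as the paper: the change of variable $b\mapsto -b/v$, rewriting $F$ as a convolution of the Schwartz–Bruhat function $f(-\cdot/v)$ with the $\ell$-adic Gaussian, and multiplying Fourier transforms, with the Gaussian's transform supplied by Weil's theory (the paper simply cites \cite[Théorème 2]{weil1964} where you sketch the Gauss-sum reduction). The only cosmetic difference is that you verify smoothness and compact support of $F$ by reading off that $\widehat{F}$ is Schwartz–Bruhat and invoking Fourier inversion, whereas the paper leaves that point implicit by appeal to the archimedean lemma.
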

\begin{proof}
The proof is essentially the same as the previous lemma. The only difference is that we want to compute the $p$-adic Fourier transform of the tempered distribution
\[
\rme_p\legendresymbol{\tau y^2}{2}\sphat=\frac{\gamma}{\sqrt{|\tau|_p}}\rme_p\legendresymbol{-\eta^2}{2\tau},
\]
which was computed in \cite[Théorème 2]{weil1964}.
\end{proof}

By the previous two lemmas we obtain the semilocal case.
\begin{corollary}
Let $\sigma, u\in \RR$ and $\tau=(\tau_1,\dots,\tau_r), v=(v_1,\dots,v_r)\in \QQ_{S_\fin}^\times$. Then for any $f\in \cS(\QQ_S)$,
\[
F(x,y)=\int_{\QQ_S}f(a,b)\rme\legendresymbol{\sigma(x+ua)^2}{2}\rme_q\legendresymbol{\tau(y+vb)^2}{2}\rmd a\rmd b
\]
is a Schwartz function for $(x,y)\in \QQ_S$ with Fourier transform
\[
\widehat{F}(\xi,\eta)=\frac{\gamma}{\sqrt{|\sigma|_\infty|\tau|_q}}\rme\legendresymbol{-\xi^2}{2\sigma}\rme_q \legendresymbol{-\eta^2}{2\tau}\widehat{f}(-u\xi,-v\eta),
\]
where $\gamma$ is an eighth root of unity depending on $\sigma$ and $\tau$.
\end{corollary}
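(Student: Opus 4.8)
The plan is to obtain the semilocal statement as a formal consequence of the two preceding local lemmas, by factoring every ingredient over the places $v\in S$.

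First I would reduce to the case of a pure tensor. Because $\QQ_{S_\fin}$ is a product of totally disconnected groups, any $\phi\in C_c^\infty(\QQ_{S_\fin})$ is a \emph{finite} linear combination of functions $\prod_{i=1}^r\triv_{b_i+K_i}$; together with the usual description of a Schwartz--Bruhat function on $\QQ_S=\RR\times\QQ_{S_\fin}$ (supported on $\RR\times L$ for some compact open $L$ and, in the finite variables, invariant under a fixed compact open subgroup), this shows that $\cS(\QQ_S)$ is the algebraic span of the pure tensors $f=f_\infty\otimes f_{q_1}\otimes\dots\otimes f_{q_r}$ with $f_\infty\in\cS(\RR)$ and $f_{q_i}\in C_c^\infty(\QQ_{q_i})$. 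Both sides of the asserted identity depend linearly on $f$, so it suffices to prove it for such an $f$.

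For a pure tensor the kernel $\rme(\sigma(x+ua)^2/2)\,\rme_q(\tau(y+vb)^2/2)$ also factors over the places, since $\tau(y+vb)^2$ is read componentwise and $\rme_q=\prod_i\rme_{q_i}$. Hence the integral splits, $F(x,y)=F_\infty(x)\prod_{i=1}^r F_{q_i}(y_i)$, where $F_\infty$ is the function produced by the archimedean lemma applied to $(\sigma,u,f_\infty)$ and $F_{q_i}$ is the function produced by the $\ell$-adic lemma applied to $(\tau_i,v_i,f_{q_i})$. Each local lemma guarantees that the corresponding factor is Schwartz on $\RR$, respectively locally constant and compactly supported on $\QQ_{q_i}$, so $F\in\cS(\QQ_S)$, and it computes the local Fourier transform of each factor.

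It then remains only to assemble the pieces. The Fourier transform on $\QQ_S$, taken with respect to the product $\rme\cdot\prod_i\rme_{q_i}$ of the standard self-dual local characters and the product Haar measure, is the tensor product of the local Fourier transforms; thus $\widehat{F}(\xi,\eta)=\widehat{F_\infty}(\xi)\prod_{i=1}^r\widehat{F_{q_i}}(\eta_i)$. Inserting the two local formulas, the local constants multiply to an eighth root of unity $\gamma$ depending only on $\sigma$ and $\tau$, the normalizing factors multiply to $|\sigma|_\infty^{-1/2}|\tau|_q^{-1/2}$, the Gaussian phases combine into $\rme(-\xi^2/2\sigma)\,\rme_q(-\eta^2/2\tau)$, and the factors $\widehat{f_\infty}(-u\xi)\prod_i\widehat{f_{q_i}}(-v_i\eta_i)$ recombine into $\widehat{f}(-u\xi,-v\eta)$ because the Fourier transform of a tensor product is the tensor product of the Fourier transforms. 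This is exactly the claimed identity. There is essentially no obstacle here beyond bookkeeping: one must only make sure the normalization of the additive character and the Haar measure at each place is the one under which the global Fourier transform factors as the product of the local ones (the standard Tate--Weil setup, cf.\ \cite{weil1964}), after which the corollary follows formally from the two lemmas.
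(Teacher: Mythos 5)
Your proposal is correct and fills in exactly what the paper leaves implicit: the paper states only "By the previous two lemmas we obtain the semilocal case" and gives no proof, and your argument — reduce to pure tensors using $\cS(\QQ_S)=\cS(\RR)\otimes C_c^\infty(\QQ_{S_\fin})$, apply the two local lemmas factor by factor, and use that the Fourier transform on $\QQ_S$ is the tensor product of the local transforms — is the intended one. The bookkeeping is also right: products of eighth roots of unity are again eighth roots of unity, and the normalizing constants and Gaussian phases assemble as claimed.
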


For $m\in \ZZ_{(S)}^{>0}$ we have the Poisson summation formula (cf. Corollary B.4 of \cite{cheng2025})
\[
\sum_{\xi\in m\ZZ^S}F(\xi,\xi)=\frac1m\sum_{\kappa\in \ZZ^S}\widehat{F}\left(\frac{\kappa}{m},\frac{\kappa}{m}\right).
\]
Hence we obtain
\begin{equation}\label{eq:poissontheta}
\begin{split}
    & \sum_{\xi\in m\ZZ^S}\int_{\QQ_S}f(a,b)\rme\legendresymbol{\sigma(\xi+ua)^2}{2}\rme_q\legendresymbol{\tau(\xi+vb)^2}{2}\rmd a\rmd b \\
    = & \sum_{\kappa\in \ZZ^S}\frac{\gamma}{m\sqrt{|\sigma|_\infty|\tau|_q}}\rme\legendresymbol{-\kappa^2}{2m^2\sigma}\rme_q \legendresymbol{-\kappa^2}{2m^2\tau}\int_{\QQ_S}f(a,b)\rme\legendresymbol{au\kappa}{m} \rme_q\legendresymbol{bv\kappa}{m}\rmd a\rmd b
\end{split}
\end{equation}
for $f\in \cS(\QQ_S)$.

Now we set $f(a,b)=\Phi_\alpha(a,b)$ as in \autoref{lem:poisson2smooth}, where we proved the function is smooth and compactly supported, and
\[
(\sigma,\tau)=-\frac{2}{\alpha kf^2}\in \QQ\subseteq \QQ_S,\quad\text{and}\quad (u,v)=\frac{\alpha}{2}\in \QQ\subseteq \QQ_S.
\]
By \eqref{eq:alphanot0poissontheta}, and noting that $|4x|_\infty|4y|_q'=|4|_\infty|x|_\infty|4|_q|y|_q'=|x|_\infty|y|_q'$  we obtain
\begin{theorem}\label{thm:poissontheta}
We have
\begin{align*}
\eqref{eq:alphanot0poisson}
=&\frac{2\gamma_{k,f}(\alpha)}{\gcd(\alpha,kf^2)}\left|\legendresymbol{2}{\alpha kf^2}^{(q)}\right|^{-\frac12}\sum_{\kappa\in \ZZ^S}\rme\legendresymbol{\kappa^2\alpha kf^2}{4\gcd(\alpha,kf^2)^2}\rme_q \legendresymbol{\kappa^2\alpha kf^2}{4\gcd(\alpha,kf^2)^2}\\
\times&\int_{(a,b)\in\QQ_S}\left\{\int_{(x,y)\in\QQ_S} G\left(\frac{1}{X}\left|\frac{a^2}{4}-x\right|_\infty\left|\frac{b^2}{4}-y\right|_q \right) \theta_\infty\!\left(a,\frac{a^2}{4}-x\right)\theta_{q}\left(b,\frac{b^2}{4}-y\right)\right.\\
\times&\left.\left[F\legendresymbol{kf^2}{|x|_\infty^{\vartheta}|y|_q'^{\vartheta}}
+\frac{kf^2}{\sqrt{|x|_\infty|y|_q'}}V\legendresymbol{kf^2}{|x|_\infty^{\vartheta'}|y|_q'^{\vartheta'}}\right] \rme\legendresymbol{x\alpha}{kf^2}\rme_q\legendresymbol{y\alpha}{kf^2}\rmd x\rmd y\right\}\\
\times&\rme\legendresymbol{a\alpha\kappa}{2\gcd(\alpha,kf^2)} \rme_q\legendresymbol{b\alpha\kappa}{2\gcd(\alpha,kf^2)}\rmd a\rmd b,
\end{align*}
where $\gamma_{k,f}(\alpha)$ is an eighth root of unity depending on $\alpha$, $k$ and $f$.
\end{theorem}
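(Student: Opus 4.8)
The plan is to deduce the identity directly from the Poisson summation formula \eqref{eq:poissontheta}, once the quadratic phase appearing in the $\xi$-sum has been put in Gaussian form; essentially no new analytic input is needed. Starting from the expression \eqref{eq:alphanot0poissontheta} for \eqref{eq:alphanot0poisson}, which itself results from the definition \eqref{eq:transformedj} of $\widehat{J}_{k,f}(X,\xi,\alpha)$ (after the transformation \eqref{eq:harishchandratransform}) by extracting the $\xi$-dependent phases, I would first complete the square using the elementary identities
\[
\frac{-a\xi-a^2\alpha/4-\xi^2/\alpha}{kf^2}=-\frac{1}{\alpha kf^2}\Bigl(\xi+\frac{a\alpha}{2}\Bigr)^2,\qquad \frac{-b\xi-b^2\alpha/4-\xi^2/\alpha}{kf^2}=-\frac{1}{\alpha kf^2}\Bigl(\xi+\frac{b\alpha}{2}\Bigr)^2,
\]
which hold in $\QQ$, so that both $\rme(\cdot)$ and $\rme_q(\cdot)$ applied to them make sense. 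The $\xi$-summand then becomes a Gaussian in $\xi+a\alpha/2$ at the archimedean place and in $\xi+b\alpha/2$ at the finite places of $S$, multiplied by a factor not involving $\xi$.

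Next I would observe that, viewed as a function of $(a,b)\in\QQ_S$, the inner $(x,y)$-integral in \eqref{eq:alphanot0poissontheta} is precisely the function $\Phi_\alpha(a,b)$ of \autoref{lem:poisson2smooth}, hence smooth and compactly supported and thus in $\cS(\QQ_S)$. Consequently \eqref{eq:alphanot0poisson} has exactly the shape of the left-hand side of \eqref{eq:poissontheta} with
\[
f=\Phi_\alpha,\qquad \sigma=\tau=-\frac{2}{\alpha kf^2},\qquad u=v=\frac{\alpha}{2},\qquad m=\gcd(\alpha,kf^2),
\]
and $m\in\ZZ_{(S)}^{>0}$ because $kf^2\in\ZZ_{(S)}^{>0}$ and $\alpha\in\ZZ^S$. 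Applying \eqref{eq:poissontheta} converts the sum over $\xi\in\gcd(\alpha,kf^2)\ZZ^S$ into a sum over $\kappa\in\ZZ^S$ of the Gauss factor $\gamma\,m^{-1}(|\sigma|_\infty|\tau|_q)^{-1/2}\,\rme(-\kappa^2/2m^2\sigma)\,\rme_q(-\kappa^2/2m^2\tau)$ times $\int_{\QQ_S}\Phi_\alpha(a,b)\,\rme(au\kappa/m)\,\rme_q(bv\kappa/m)\,\rmd a\,\rmd b$.

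It then remains to substitute the chosen parameters and tidy the constants. With $\sigma=\tau=-2/(\alpha kf^2)$ and $m=\gcd(\alpha,kf^2)$ the quadratic phase becomes $\rme\bigl(\kappa^2\alpha kf^2/4\gcd(\alpha,kf^2)^2\bigr)$ (and likewise for $\rme_q$), while $u\kappa/m=\alpha\kappa/2\gcd(\alpha,kf^2)$ gives the linear phase; the product formula applied to the rational number $2/(\alpha kf^2)$ yields $|\sigma|_\infty|\tau|_q=|(2/(\alpha kf^2))^{(q)}|$, hence the normalisation $|(2/(\alpha kf^2))^{(q)}|^{-1/2}$, and the remaining root of unity, depending only on $\alpha,k,f$, is what is denoted $\gamma_{k,f}(\alpha)$. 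Finally, because $2\in S$ one has $\prod_{v\in S}|4|_v=1$, so $|4x|_\infty|4y|_q'=|x|_\infty|y|_q'$; replacing $|4x|_\infty,|4y|_q'$ by $|x|_\infty,|y|_q'$ throughout $F$, $V$ and the square root produces exactly the asserted formula.

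I do not expect a genuine obstacle: the two Fourier-transform computations underlying \eqref{eq:poissontheta} (the archimedean Gaussian and Weil's $\ell$-adic Gaussian \cite[Théorème 2]{weil1964}) and the smoothness and compact support of $\Phi_\alpha$ are already available. The one point requiring care is the bookkeeping of constants — the minus sign built into $\rme_\ell(\cdot)$, the powers of $|4|$ at the place $2$, and the $(q)$-part notation — so that the Gauss factor, the quadratic and linear phases, and the normalisation $|(2/(\alpha kf^2))^{(q)}|^{-1/2}$ all emerge with precisely the stated constants.
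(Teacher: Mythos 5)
Your proposal is correct and follows essentially the same route as the paper: complete the square in the quadratic phase, recognize the inner $(x,y)$-integral as the smooth compactly supported function $\Phi_\alpha$ of Lemma~\ref{lem:poisson2smooth}, apply the Poisson summation formula \eqref{eq:poissontheta} with $f=\Phi_\alpha$, $\sigma=\tau=-2/(\alpha kf^2)$, $u=v=\alpha/2$, $m=\gcd(\alpha,kf^2)$, and then simplify the resulting phases and normalization using the product formula together with $|4|_\infty|4|_q'=1$ (which uses $2\in S$). Your bookkeeping of the quadratic phase $\kappa^2\alpha kf^2/4\gcd(\alpha,kf^2)^2$, the linear phase $a\alpha\kappa/2\gcd(\alpha,kf^2)$, and the normalization $|(2/(\alpha kf^2))^{(q)}|^{-1/2}$ all agree with the paper.
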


\subsection{Fourier transform with singularities}
We will often use the notation $s=\sigma+\rmi t$ in this subsection.
Our main goal of this subsection is to prove the following theorem:
\begin{theorem}\label{thm:mainfourierestimate}
Let $\vartheta\in \lopen 0,1\ropen$.
Suppose that $\Phi\in \cS$. Let $\widetilde{\Phi}(s)$ be its Mellin transform. Suppose that $\widetilde{\Phi}(s)$ has a meromorphic continuation on $\Re s>0$, and is of rapid decay in $t$ when $\sigma>0$ is fixed. Fix a sign $\pm$, $\nu\in \ZZ^r$ and $\iota\in \{0,1\}$, $\epsilon_i\in \{0,\pm 1\}$. Let $a_0,a_1,\dots,a_r$ be real numbers such that $a_i>-2$ for all $i$. Let 
\[
\varphi(x)=|x|^{a_0/2} \overline{\varphi}(x)\quad\text{and}\quad \psi_i(y_i)=|y_i|_{q_i}'^{a_i/2} \overline{\psi_i}(y_i),
\]
where $\overline{\varphi}(x)$ is smooth function on the open set $X_{\iota}=\{x\in \RR\ |\ \omega_\infty(x)=\iota\}$ up to the boundary, supported on a bounded set of $X_{\iota}$, and $\overline{\psi_i}(y_i)$ are smooth functions on $Y_{\epsilon_i}=\{y_i\in \QQ_{q_i}\ |\ \omega_i(y_i)=\epsilon_i\}$ and up to $0$, with bounded support. Let $G(x,y)$ be a smooth and compactly supported function on $\QQ_S$ and set
\[
H(x,y)=G(x,y)\overline{\varphi}(x) \prod_{i=1}^{r}\overline{\psi_i}(y_i).
\]
Let $Y_\epsilon=Y_{\epsilon_1}\times\dots\times Y_{\epsilon_r}$ and 
\[
I_{\iota,\epsilon}(\xi,\eta)=\int_{X_\iota}\int_{Y_\epsilon}G(x,y)\varphi(x) \prod_{i=1}^{r}\psi_i(y_i)\Phi\legendresymbol{C}{|x|^{\vartheta}|y|_q'^{\vartheta}}\rme(-x\xi)\rme_{q}(-y\eta)\rmd x\rmd y.
\]
Let
\[
\llbracket\xi,\eta\rrbracket=(1+|\xi|_\infty)\prod_{i=1}^{r}(1+|\eta_i|_{q_i}).
\]
Then for any $\rho\in \RR$ such that $\rho>-2-a_i$ for all $i$,
\begin{align*}
I_{\iota,\epsilon}(\xi,\eta)&\ll \left(|C|^{1/\vartheta}\llbracket\xi,\eta\rrbracket\right)^{-\vartheta\rho} (1+|\xi|_\infty)^{-\lceil\vartheta\rho+\frac{a_0}{2}\rceil+\vartheta\rho}\prod_{i=1}^{r}(1+|\eta_i|_{q_i}) ^{-1-\frac{a_i}{2}}\|H\|_{\lceil\vartheta\rho+\frac{a_0}{2}\rceil,1},
\end{align*}
where 
\[
\|H\|_{M,1}=\sum_{j=0}^{M}\sup_{y\in \QQ_S}\int_{x\in \RR}\left|\frac{\diff^j}{\diff x^j}H(x,y)\right|\rmd x.
\]
The implied constant only depends on $\Phi,\vartheta$, 
\[
\diam(\{0\}\cup \pi_{v}(\supp(H)))
\]
for $v\in S$ (where $\pi_v\colon \QQ_S\to \QQ_v$ is the canonical projection), and
\[
c_i=\min\{u\in \ZZ\,|\, H((x,y)+z_i)=H(x,y)\ \text{for all}\ (x,y)\in \QQ_S, z_i\in q_i^u\ZZ_{q_i}\}
\]
for $i=1,2,\dots,r$.
\end{theorem}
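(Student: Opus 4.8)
The plan is to treat the archimedean variable $\xi$ and the non-archimedean variables $\eta_i$ by entirely different mechanisms: repeated integration by parts in $x$ for the former, and exploiting the locally constant structure together with the $q_i$-adic Fourier transform of the singular factor $|y_i|_{q_i}'^{a_i/2}$ for the latter. The single scalar parameter $C$ and the ratio $|x|^\vartheta|y|_q'^\vartheta$ will be folded in via the Mellin transform $\widetilde\Phi(s)$: writing $\Phi(C/(|x|^\vartheta|y|_q'^\vartheta)) = \frac{1}{2\pi\rmi}\int_{(\sigma)}\widetilde\Phi(s)\,C^{-s}|x|^{\vartheta s}|y|_q'^{\vartheta s}\rmd s$ for $\sigma>0$, one separates the $x$-dependence from the $y_i$-dependence inside the integral. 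The hypothesis that $\widetilde\Phi$ has meromorphic continuation to $\Re s>0$ with rapid vertical decay is exactly what lets us push $\sigma$ as far right as we like (to $\sigma=\rho$) while keeping the $s$-integral absolutely convergent; the constraint $\rho>-2-a_i$ is what guarantees that, after absorbing the factor $|y_i|_{q_i}'^{\vartheta s + a_i/2}$, the exponent stays $>-1$ so the $q_i$-integrals converge, and likewise $\rho>-2-a_0$ (hidden in $a_i>-2$) keeps the $x$-integral integrable near $x=0$.

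First I would fix $\sigma=\vartheta\rho$ (harmlessly choosing $\rho$ in the allowed range) in the Mellin representation of $\Phi$, so that
\[
I_{\iota,\epsilon}(\xi,\eta)=\frac{1}{2\pi\rmi}\int_{(\vartheta\rho)}\widetilde\Phi(s)\,C^{-s}\Bigl(\int_{X_\iota} G_x\,\overline\varphi(x)|x|^{\vartheta s+a_0/2}\rme(-x\xi)\,\rmd x\Bigr)\Bigl(\prod_i\int_{Y_{\epsilon_i}}G_{y_i}\,\overline\psi_i(y_i)|y_i|_{q_i}'^{\vartheta s+a_i/2}\rme_{q_i}(-y_i\eta_i)\,\rmd y_i\Bigr)\rmd s,
\]
schematically (the variables are coupled through $H$, which I would handle by first treating $H$ as a fixed Schwartz-class function of all variables and doing the $x$-estimate uniformly). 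For the archimedean factor: on $X_\iota$ the function $x\mapsto G_x\overline\varphi(x)|x|^{\vartheta s+a_0/2}$ is $C^\infty$ up to the boundary $x=0$ once $\Re(\vartheta s+a_0/2)=\vartheta\rho+a_0/2$ is large enough, so integrating by parts $M:=\lceil\vartheta\rho+a_0/2\rceil$ times against $\rme(-x\xi)$ gains a factor $(1+|\xi|)^{-M}$, at the cost of $M$ derivatives of $H$ and a polynomial-in-$s$ factor of degree $M$ from differentiating $|x|^{\vartheta s+a_0/2}$ — this polynomial growth in $\Im s$ is killed by the rapid decay of $\widetilde\Phi$. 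For the $q_i$-factor: since $H$ is locally constant in $y_i$ (with modulus of continuity governed by $c_i$) and $\overline\psi_i(y_i)|y_i|_{q_i}'^{\vartheta s+a_i/2}$ has its singular part explicit, a standard stationary-phase / Gauss-sum computation of $\int_{Y_{\epsilon_i}}|y_i|_{q_i}'^{b}\rme_{q_i}(-y_i\eta_i)\,\rmd y_i$ shows it is supported in $|\eta_i|_{q_i}$ bounded (once $y_i$ is bounded) and bounded by $\ll (1+|\eta_i|_{q_i})^{-1-a_i/2}$ when $\Re b=\vartheta\rho+a_i/2$ — this is where the exponent condition $\rho>-2-a_i$ is used, and where the $c_i$-dependence of the implied constant enters. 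Multiplying the bounds, integrating $|\widetilde\Phi(s)|(1+|s|)^M$ over the line $(\vartheta\rho)$ (finite by Proposition~\ref{prop:propertyh}-type decay), and assembling the $C^{-\vartheta\rho}$ and $\llbracket\xi,\eta\rrbracket^{-\vartheta\rho}$ factors gives the claimed bound; the exponent $-\lceil\vartheta\rho+a_0/2\rceil+\vartheta\rho$ on $(1+|\xi|_\infty)$ is precisely $\vartheta\rho - M$, the surplus after extracting $(|C|^{1/\vartheta}\llbracket\xi,\eta\rrbracket)^{-\vartheta\rho}$.

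The main obstacle I anticipate is the bookkeeping of the coupling between $x$ and the $y_i$'s inside $H$: the integration-by-parts in $x$ must be done with $y$ held fixed and the resulting bound must be uniform in $y$, then the $y_i$-integrals performed afterward, and one has to be careful that the $q_i$-adic oscillatory integral estimate holds with a constant depending only on $c_i$ and the diameter of the support, not on the (varying) $x$. A secondary technical point is justifying the interchange of the $s$-contour integral with the $x$- and $y$-integrations — this needs the absolute convergence that the rapid decay of $\widetilde\Phi$ combined with the polynomial-in-$s$ bounds from integration by parts provides, plus compact support in $x,y$ — and making sure the meromorphic continuation of $\widetilde\Phi$ has no poles on the vertical line $\Re s=\vartheta\rho$ we land on (if $\vartheta\rho>0$ this is immediate from the hypothesis; the case $\rho\le 0$ requires $\vartheta\rho$ to avoid the finitely many poles, which can be arranged by perturbing $\rho$, or handled by the statement already restricting to $\rho$ with $\vartheta\rho$ admissible). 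I would also need the elementary fact that for a locally constant compactly supported function the $q_i$-adic Fourier transform is compactly supported, which pins down the $(1+|\eta_i|_{q_i})^{-1-a_i/2}$ decay as genuinely finitely-supported decay rather than a tail estimate.
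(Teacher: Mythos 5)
Your proposal follows essentially the same route as the paper's proof: Mellin inversion to separate the $x$- and $y$-dependence of the $\Phi$-factor, integration by parts $\lceil\vartheta\rho+\tfrac{a_0}{2}\rceil$ times in the archimedean variable (the paper's Lemma~\ref{lem:archimedeansingular}), local constancy and compact support in the $q_i$-adic variable (the paper's Lemma~\ref{lem:nonarchimedeansingular}), and rapid vertical decay of $\widetilde\Phi$ to close the contour integral. One small slip: you place the contour at $\Re s=\vartheta\rho$, but the exponent appearing in $|x|^{\vartheta s+a_0/2}$ then has real part $\vartheta^2\rho+\tfrac{a_0}{2}$, not $\vartheta\rho+\tfrac{a_0}{2}$; the contour should sit at $\Re s=\rho$ (as in the paper, $\sigma=\rho$), so that $\Re(\vartheta s)=\vartheta\rho$. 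Your subsequent bookkeeping already uses the exponent $\vartheta\rho+\tfrac{a_0}{2}$, so the slip is purely notational and does not affect the argument.
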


\begin{lemma}\label{lem:archimedeansingular}
Let $\varphi$ be a smooth function on $X_{\iota}$ up to the boundary, supported on a bounded set of $X_{\iota}$. Let $s\in \CC$ with $\Re s>-1$. Consider the integral
\[
I_\infty(s,\xi)=\int_{X_\iota}|x|^s\varphi(x)\rme(-x\xi)\rmd x
\]
For any $N\in \ZZ_{\geq 0}$ with $\Re s-N>-1$, we have
\begin{equation}\label{eq:archimedeanfourier}
I_\infty(s,\xi)\ll (1+|\xi|)^{-N}\|\varphi\|_{N,1},
\end{equation}
where the implied constant only depends on $\sigma,N$ and $d=\diam(\{0\}\cup\supp(\varphi))$.
\end{lemma}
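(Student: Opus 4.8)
The plan is to split according to whether $|\xi|\le 1$ or $|\xi|>1$: in the first range a trivial estimate suffices, and in the second one integrates by parts $N$ times against $\rme(-x\xi)$. This is a routine stationary-phase-type estimate; the only nontrivial bookkeeping concerns the mild singularity of $|x|^s$ at the origin.

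For $|\xi|\le 1$ (which in particular settles the case $N=0$): since $\Re s>-1$, the weight $|x|^{\Re s}$ is integrable on the bounded set $\supp\varphi\subseteq\{|x|\le d\}$, so
\[
|I_\infty(s,\xi)|\le\|\varphi\|_\infty\int_{\{|x|\le d\}}|x|^{\Re s}\,\rmd x\ll_{\Re s,d}\|\varphi\|_\infty\le\|\varphi\|_{N,\infty},
\]
and as $(1+|\xi|)^{-N}\asymp 1$ here, \eqref{eq:archimedeanfourier} holds.

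For $|\xi|>1$ and $N\ge 1$, set $g(x)=|x|^s\varphi(x)$ and integrate by parts $N$ times using $\rme(-x\xi)=\frac{1}{-2\uppi\rmi\xi}\frac{\rmd}{\rmd x}\rme(-x\xi)$. The boundary contributions at the unbounded end of $X_\iota$ vanish since $\varphi$ has bounded support, while at $x=0$ the boundary term appearing after the $j$-th step is built from $g^{(j-1)}$, whose terms are $O(|x|^{\Re s-i})$ near the origin with $0\le i\le j-1$. The decisive observation is that $\Re s-N>-1$ together with $N\ge 1$ forces $\Re s>N-1\ge 0$, so $\Re s-i\ge\Re s-(N-1)>0$ for all $i\le N-1$; hence every such term tends to $0$ as $x\to 0^\pm$ and all boundary terms drop. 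This leaves
\[
I_\infty(s,\xi)=\frac{1}{(-2\uppi\rmi\xi)^N}\int_{X_\iota}g^{(N)}(x)\,\rme(-x\xi)\,\rmd x,
\]
and by the Leibniz rule with $\bigl|(|x|^s)^{(i)}\bigr|=|s(s-1)\cdots(s-i+1)|\,|x|^{\Re s-i}$ one gets $|g^{(N)}(x)|\ll_{s,N}\|\varphi\|_{N,\infty}\sum_{i=0}^N|x|^{\Re s-i}$; since $\Re s-i>-1$ for every $i\le N$, the integral over $\{|x|\le d\}$ is $\ll_{s,N,d}\|\varphi\|_{N,\infty}$. Hence $|I_\infty(s,\xi)|\ll_{s,N,d}|\xi|^{-N}\|\varphi\|_{N,\infty}$, which together with the range $|\xi|\le 1$ yields \eqref{eq:archimedeanfourier}, the implied constant depending only on $\sigma$, $N$ and $d$.

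The only point that requires genuine care, and hence the main (mild) obstacle, is the bookkeeping of the singularity of $|x|^s$ at $0$, whose content is simply that $\Re s-N>-1$ is exactly the condition that both (i) kills every intermediate boundary term at the origin (step $j$ needs $\Re s>j-1$, the worst being $j=N$) and (ii) keeps $g^{(N)}$ absolutely integrable near $0$. A slightly more robust alternative, if one prefers to avoid discussing boundary terms, is to insert a smooth cutoff at scale $1/|\xi|$ around the origin: the inner piece is $\ll|\xi|^{-1-\Re s}\|\varphi\|_\infty\le|\xi|^{-N}\|\varphi\|_\infty$ (using $N<\Re s+1$ and $|\xi|>1$), and on the complementary region $|x|^s$ is smooth, so one integrates by parts $N$ times there, controlling the $|\xi|^c$-factors produced by the cutoff's derivatives on the annulus $|x|\asymp 1/|\xi|$; but the direct argument above is shorter precisely because $N\ge 1$ already forces $\Re s>0$.
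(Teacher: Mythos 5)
Your proposal is correct and is essentially the same as the paper's proof: split into $|\xi|\ll 1$ (trivial bound) and $|\xi|\gg 1$ (integrate by parts $N$ times), with the observation that $\Re s - N > -1$ both annihilates all boundary terms at the origin and keeps $g^{(N)}$ integrable near $0$. The paper omits the remark that $N\geq 1$ forces $\Re s > 0$, but uses the same underlying estimate $(|x|^s)^{(j)}\varphi^{(m-j)}(x)\ll x^{\sigma-j}\|\varphi\|_{N,\infty}$; your brief aside about a smooth cutoff at scale $1/|\xi|$ is a valid alternative but not what the paper does.
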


\begin{proof}
For simplicity we assume that $X_\iota=\lopen 0,+\infty\ropen$. If $|\xi|\ll 1$, then we bound $I_\infty(s,\xi)$ trivially and obtain
\[
I_\infty(s,\xi)\ll\int_{\supp(\varphi)}|x|^\sigma|\varphi(x)|\rmd x\ll \|\varphi\|_1,
\]
which is bounded by the right hand side of \eqref{eq:archimedeanfourier}.

From now on we assume that $|\xi|\gg 1$. In this case, the term $1+|\xi|$ on the right hand side of \eqref{eq:archimedeanfourier} can be replaced by $|\xi|$.

Note that
\[
(|x|^s)^{(j)}=s(s-1)\cdots (s-j+1)|x|^{s-j}\ll x^{\sigma-j}.
\]

For any $m\in \{0,1,\dots,N\}$,
\[
\frac{\rmd^m}{\rmd x^m}(|x|^s\varphi(x))=\sum_{j=0}^{m}\binom{m}{j}(|x|^s)^{(j)}\varphi^{(m-j)}(x).
\]
Since $\sigma-N>-1$ by assumption, we have
\[
\left.\frac{\rmd^m}{\rmd x^m}\right|_{x=0}(|x|^s\varphi(x))=0
\]
if $m\leq N-1$. Hence by integration by parts $N$ times, we obtain
\begin{align*}
I_\infty(s,\xi)&=\frac{1}{(\dpii \xi)^{N}}\int_0^{+\infty}\frac{\rmd^{N}}{\rmd x^{N}}(|x|^s\varphi(x))\rme(-x\xi)\rmd x\\
&\ll |\xi|^{-N}\sum_{j=0}^{N}\binom{N}{j}\int_{0}^{d}x^{\sigma-m}\left|\frac{\rmd^j}{\rmd x^j}\varphi(x)\right|\rmd x\ll |\xi|^{-N}\|\varphi\|_{N,1}.\qedhere
\end{align*}
\end{proof}

\begin{lemma}\label{lem:nonarchimedeansingular}
Let $\varphi$ be a smooth function on $Y_\epsilon$ and up to $0$, with bounded support. Let $s\in \CC$ with $\Re s>1$. Consider the integral
\[
I_p(s,\eta)=\int_{Y_\epsilon}|y|'^s\varphi(y)\rme_p(-y\eta)\rmd y.
\]
Then we have
\[
I_p(s,\eta)\ll (1+|\eta|)^{-\sigma-1}\|\varphi\|_{\infty},
\]
where the implied constant only depends on $\sigma$, $d=\diam(\{0\}\cup\supp(\varphi))$, and 
\[
c=\min\{u\in \ZZ\,|\, \varphi(y+z)=\varphi(y)\ \text{for all}\ y\in \QQ_p, z\in p^u\ZZ_p\}.
\]
\end{lemma}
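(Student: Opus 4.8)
The plan is to decompose $Y_\epsilon$ into the valuation shells $\ell^m\ZZ_\ell^\times$, $m\in\ZZ$, and to exploit that on each shell the weight $|y|_\ell'^{\,s}$ is essentially constant, so that the oscillatory integral over a single shell either vanishes after a change of variables or is controlled by its trivial mass bound. The balance between these two regimes is governed by $\Re s>-1$ (and the hypothesis $\Re s>1$ in the statement is more than enough for everything below).

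First I would record the relevant geometry. From the definitions of $|\cdot|_\ell'$ and $\omega_\ell$ in the section on the modified $\ell$-adic norm, on each shell $\ell^m\ZZ_\ell^\times$ the value $|y|_\ell'$ is constant (equal to $\ell^{-m}$, $\ell^{-m+1}$, or $\ell^{-m+2}$ according to $m$ and $\ell$), and $Y_\epsilon\cap\ell^m\ZZ_\ell^\times$ is a union of additive cosets of $\ell^{m+c_0}\ZZ_\ell$ for an absolute constant $c_0$ (one may take $c_0=3$, since $\omega_\ell(y)$ depends only on $y\ell^{-m}$ modulo $8$; for $\ell$ odd $c_0=1$ suffices). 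Since moreover $\varphi$ is invariant under translation by $\ell^c\ZZ_\ell$ and supported in $\{|y|_\ell\le d\}$, the function $g_m:=|y|_\ell'^{\,s}\varphi(y)\triv_{Y_\epsilon\cap\ell^m\ZZ_\ell^\times}$ is invariant under additive translation by $\ell^{c_m}\ZZ_\ell$ with $c_m:=\max(c,m+c_0)$, and vanishes identically unless $m\ge-d_0$ with $d_0:=\lfloor\log_\ell d\rfloor$. I would also record the mass bound $\int_{\ell^m\ZZ_\ell^\times}|y|_\ell'^{\,\sigma}\,\rmd y\ll_\sigma\ell^{-m(\sigma+1)}$, whose tails $\sum_{m\ge M}$ are $\ll_\sigma\ell^{-M(\sigma+1)}$ precisely because $\sigma>-1$.

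Next I would run the standard $p$-adic vanishing argument shell by shell: for $z\in\ell^{c_m}\ZZ_\ell$ the substitution $y\mapsto y+z$ preserves $\ell^m\ZZ_\ell^\times$ (as $c_m>m$) and fixes $g_m$, so $\int_{\ell^m\ZZ_\ell^\times}g_m(y)\rme_\ell(-y\eta)\,\rmd y$ equals $\rme_\ell(-z\eta)$ times itself; hence this shell integral vanishes unless $\rme_\ell(-z\eta)=1$ for all such $z$, i.e.\ unless $|\eta|_\ell\le\ell^{c_m}$. Writing $|\eta|_\ell=\ell^k$, shell $m$ therefore contributes to $I_\ell(s,\eta)$ only when $k\le\max(c,m+c_0)$. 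Then I assemble by splitting on $k$: if $k\le c$ (finitely many residual cases) bound $I_\ell(s,\eta)$ trivially by $\|\varphi\|_\infty\sum_{m\ge-d_0}\int_{\ell^m\ZZ_\ell^\times}|y|_\ell'^{\,\sigma}\,\rmd y\ll_{\sigma,d}\|\varphi\|_\infty$, which is $\ll_{\sigma,d,c}(1+|\eta|_\ell)^{-\sigma-1}\|\varphi\|_\infty$ since $(1+|\eta|_\ell)^{-\sigma-1}$ is bounded below on this finite range; if $k>c$ only shells with $m\ge k-c_0$ survive, so $|I_\ell(s,\eta)|\le\|\varphi\|_\infty\sum_{m\ge\max(-d_0,\,k-c_0)}\int_{\ell^m\ZZ_\ell^\times}|y|_\ell'^{\,\sigma}\,\rmd y\ll_\sigma\|\varphi\|_\infty\,\ell^{-\max(-d_0,\,k-c_0)(\sigma+1)}\ll_{\sigma,d,c}\|\varphi\|_\infty(1+|\eta|_\ell)^{-\sigma-1}$, using $1+|\eta|_\ell\asymp\ell^k$ and that $c_0,d_0$ are fixed. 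This yields the lemma with the stated dependence of the implied constant on $\sigma$, $d$, and $c$.

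The step I expect to need the most care is the first one: tracking the two mild failures of exact constancy on a shell — the dependence of $|y|_\ell'$ and of $\triv_{Y_\epsilon}$ on the low-order digits of $y\ell^{-m}$ when $\ell=2$ — and confirming that these only inflate the invariance radius $c_m$ by the absolute amount $c_0$, uniformly in $m$. Once that bookkeeping is pinned down, the vanishing argument and the final summation are routine.
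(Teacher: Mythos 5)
Your proof is correct and follows essentially the same strategy as the paper: decompose into $\ell$-adic shells $\ell^m\ZZ_\ell^\times$, use additive invariance of the integrand at scale roughly $\ell^{m}$ to make the oscillatory integral vanish unless $|\eta|_\ell$ is small, and then sum the surviving trivial bounds as a geometric series using $\sigma>-1$. The only organizational difference is that the paper first splits off the ball $\ell^c\ZZ_\ell$ (where $\varphi$ is constant, handled shell by shell) from its complement (handled as a single block with invariance radius $c+2$), whereas you treat all shells uniformly with $c_m=\max(c,m+c_0)$ and keep explicit track of the $Y_\epsilon$-invariance; both routes close the argument the same way.
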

\begin{proof}
By assumption $\varphi(y)=\varphi(0)$ for $y\in p^c\ZZ_p$. Hence we have
\[
I_p(s,\eta)=\varphi(0)\int_{Y_\epsilon\cap p^c\ZZ_p}|y|'^s\rme(-y\eta)\rmd y+\int_{Y_\epsilon-p^c\ZZ_p}|y|'^s\varphi(y)\rme(-y\eta)\rmd y.
\]
We first consider the second term. For any $a\in p^{c+2}\ZZ_p$ and $y\notin p^c\ZZ_p$ we have
\[
\frac{y+a}{y}\in 1+p^2\ZZ_p.
\]
Hence $|y+a|'=|y|'$. Also, $\varphi(y+a)=\varphi(y)$. Hence $|y|'^s\varphi(y)$ is invariant under the map $y\mapsto y+a$ for $a\in p^{c+2}\ZZ_p$. Hence the second integral is $0$ if $|\eta|>p^{c+2}$. For $|\eta|\leq p^{c+2}$ we have the trivial bound
\[
\int_{Y_\epsilon-p^c\ZZ_p}|y|'^s\varphi(y)\rme(-y\eta)\rmd y\ll \|\varphi\|_\infty\int_{Y_\epsilon-p^c\ZZ_p}|y|'^\sigma\rmd y\ll \|\varphi\|_\infty.
\]
Hence for any $\sigma>-1$ we have
\[
\int_{Y_\epsilon-p^c\ZZ_p}|y|'^s\varphi(y)\rme(-y\eta)\rmd y\ll(1+|\eta|)^{-\sigma-1}\|\varphi\|_\infty.
\]

Next we analyze the first term. We have
\[
\int_{Y_\epsilon\cap p^c\ZZ_p}|y|'^s\rme(-y\eta)\rmd y=\sum_{u=c}^{+\infty} \int_{Y_\epsilon\cap p^u\ZZ_p^\times}|y|'^s\rme(-y\eta)\rmd y.
\]
A same argument shows that $|y|'^s$ is invariant under the map $y\mapsto y+a$ for $y\in p^u\ZZ_p^\times$ and $a\in p^{u+2}\ZZ_p$. Hence 
\[
\int_{Y_\epsilon\cap p^u\ZZ_p^\times}|y|'^s\rme(-y\eta)\rmd y=0
\]
if $|\eta|>p^{u+2}$. If $|\eta|\leq p^{u+2}$, we bound the integral trivially. Using the fact that $|\cdot|'\asymp |\cdot|$, we obtain
\[
\int_{Y_\epsilon\cap p^u\ZZ_p^\times}|y|'^s\rme(-y\eta)\rmd y\ll p^{-u}p^{-u\sigma}.
\]

Suppose that $\eta\in \QQ_p$. If $|\eta|>p^{c+2}$, then we have
\begin{align*}
\int_{Y_\epsilon\cap p^c\ZZ_p}|y|'^s\rme(-y\eta)\rmd y&=\sum_{u=-v_p(\eta)+2}^{+\infty} \int_{Y_\epsilon\cap p^u\ZZ_p^\times}|y|'^s\rme(-y\eta)\rmd y\\
\ll& \sum_{u=-v_p(\eta)+2}^{+\infty} p^{-u(\sigma+1)}\ll p^{v_p(\eta)(\sigma+1)}\ll |\eta|^{-\sigma-1}.
\end{align*}
Hence the first term is bounded by $\|\varphi\|_\infty(1+|\eta|)^{-\sigma-1}$. If $|\eta|\leq p^{c+2}$, we only need to bound the integral trivially as before and get the same bound.
\end{proof}

\begin{proof}[Proof of \autoref{thm:mainfourierestimate}]
By Mellin inversion formula and the definition of $\varphi$ and $\psi_i$, we obtain 
\[
I_{\iota,\epsilon}(\xi,\eta)=\frac{1}{\dpii}\int_{(\sigma)}\frac{\widetilde{\Phi}(u)}{C^u} \int_{X_\iota}\int_{Y_\epsilon}G(x,y)\overline{\varphi}(x) \prod_{i=1}^{r}\overline{\psi_i}(y_i)|x|_\infty^{\vartheta u+\frac{a_0}{2}}\prod_{i=1}^{r}|y_i|_{q_i}'^{\vartheta u+\frac{a_i}{2}}\rme(-x\xi)\rme_{q}(-y\eta)\rmd x\rmd y\rmd u.
\]
Hence
\[
I_{\iota,\epsilon}(\xi,\eta)\ll\int_{(\sigma)}\frac{\widetilde{\Phi}(u)}{C^\sigma} \left|\int_{X_\iota}\int_{Y_\epsilon}G(x,y)\overline{\varphi}(x) \prod_{i=1}^{r}\overline{\psi_i}(y_i)|x|_\infty^{\vartheta u+\frac{a_0}{2}}\prod_{i=1}^{r}|y_i|_{q_i}'^{\vartheta u+\frac{a_i}{2}}\rme(-x\xi)\rme_{q}(-y\eta)\rmd x\rmd y\right|\rmd |u|
\]
Now we choose $\rho=\sigma$ and $N=\lceil\vartheta\rho+a_0/2\rceil$. By \autoref{lem:nonarchimedeansingular} we can bound each nonarchimedean integral and use \autoref{lem:archimedeansingular} with $N$ above to bound the archimedean integral. Since $\widetilde{\Phi}(u)$ has rapid decay vertically, we obtain our result. 
\end{proof}

\subsection{Estimate of the integral} 
Let
\begin{align*}
\mathbf{I}_{k,f}(\alpha,\kappa)
=&2\int_{(a,b)\in\QQ_S}\left\{\int_{(x,y)\in\QQ_S} G\left(\frac{1}{X}\left|\frac{a^2}{4}-x\right|_\infty\left|\frac{b^2}{4}-y\right|_q \right) \theta_\infty\!\left(a,\frac{a^2}{4}-x\right)\theta_{q}\left(b,\frac{b^2}{4}-y\right)\right.\\
\times&\left.\left[F\legendresymbol{kf^2}{|x|_\infty^{\vartheta}|y|_q'^{\vartheta}}
+\frac{kf^2}{\sqrt{|x|_\infty|y|_q'}}V\legendresymbol{kf^2}{|x|_\infty^{\vartheta'}|y|_q'^{\vartheta'}}\right] \rme\legendresymbol{x\alpha}{kf^2}\rme_q\legendresymbol{y\alpha}{kf^2}\rmd x\rmd y\right\}\\
\times&\rme\legendresymbol{a\alpha\kappa}{2\gcd(\alpha,kf^2)} \rme_q\legendresymbol{b\alpha\kappa}{2\gcd(\alpha,kf^2)}\rmd a\rmd b,
\end{align*}
so that by \eqref{eq:alphanot0} and \autoref{thm:poissontheta},
\[
S_G^{\alpha\neq 0}(X)=\sum_{k,f\in \ZZ_{(S)}^{>0}}\sum_{\alpha\in \ZZ^S-\{0\}}\frac{c_{k,f}(\alpha)\gamma_{k,f}(\alpha)}{k^3f^5\gcd(\alpha,kf^2)}|\alpha^{(q)}|^{\frac12}\!\sum_{\kappa\in \ZZ^S}\rme\legendresymbol{\kappa^2\alpha kf^2}{4\gcd(\alpha,kf^2)^2}\rme_q \legendresymbol{\kappa^2\alpha kf^2}{4\gcd(\alpha,kf^2)^2}\mathbf{I}_{k,f}(\alpha,\kappa).
\]
\begin{lemma}\label{lem:integralvanish}
There exist $L_1,\dots,L_r\in \ZZ_{\geq 0}$ independent of $G$ such that $\mathbf{I}_{k,f}(\alpha,\kappa)=0$ if 
\[
|\alpha\kappa|_{q_i}> q_i^{L_i}
\]
for some $i\in \{1,\dots,r\}$.
\end{lemma}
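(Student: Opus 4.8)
The plan is to read $\mathbf I_{k,f}(\alpha,\kappa)$ as a Fourier integral in the variable $b\in\QQ_{S_\fin}$ of the inner $(x,y)$-integral occurring in its definition, and to exploit that this inner function is $p$-adically locally constant in $b$ with a modulus that does not depend on $G$. Set $\beta=\frac{\alpha\kappa}{2\gcd(\alpha,kf^2)}\in\QQ$ and let $\Phi_\alpha(a,b)$ be the inner integral. Using the rewriting $|4x|_\infty^{\vartheta}|4y|_q'^{\vartheta}=|x|_\infty^{\vartheta}|y|_q'^{\vartheta}$ (and its analogues with exponents $\tfrac12$ and $\vartheta'$) that was employed in \autoref{thm:poissontheta} and rests on $2\in S$, the function $\Phi_\alpha$ is exactly the function of \autoref{lem:poisson2smooth}; in particular it is smooth and compactly supported, and $\mathbf I_{k,f}(\alpha,\kappa)=\int_{\QQ_S}\Phi_\alpha(a,b)\,\rme(a\beta)\rme_q(b\beta)\,\rmd a\,\rmd b$. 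It therefore suffices to prove: \emph{(i)} for each $i$, the function $b_i\mapsto\Phi_\alpha(a,b)$ on $\QQ_{q_i}$ is invariant under $b_i\mapsto b_i+z_i$ for all $z_i\in q_i^{\Lambda_i}\ZZ_{q_i}$, with $\Lambda_i\in\ZZ_{\ge 0}$ depending only on $f_{q_i}$ (and not on $G$, $a$, the remaining coordinates, $\alpha$, $k$, $f$ or $X$); and \emph{(ii)} that $|\alpha\kappa|_{q_i}>q_i^{\Lambda_i}$ forces $|\beta|_{q_i}>q_i^{\Lambda_i}$.

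For \emph{(i)} I would refine the proof of \autoref{lem:poisson2smooth}, tracking $G$-independence. The only factors of the integrand of $\Phi_\alpha$ that depend on $b_i$ are $\theta_{q_i}\!\bigl(b_i,\tfrac{b_i^2}{4}-y_i\bigr)$ and $G\bigl(\tfrac1X|\tfrac{a^2}{4}-x|_\infty|\tfrac{b^2}{4}-y|_q\bigr)$, since the archimedean weight, the characters and the $F$, $V$ factors involve only $x$ and $y$. On the support of $\theta_{q_i}$ one has (orbital integrals of $C_c^\infty$ functions) a bound on the trace and $q_i^{-M_i}\le|\det|_{q_i}\le q_i^{M_i}$ with $M_i$ depending only on $f_{q_i}$; hence on the support of the integrand $|b_i^2/4-y_i|_{q_i}\asymp 1$ with $G$-independent constants, and once $v_{q_i}(b_i)$ exceeds a $G$-independent threshold also $|y_i|_{q_i}\asymp 1$ and $|b_i^2/4-y_i|_{q_i}=|y_i|_{q_i}$. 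For such $b_i$ the point $(b_i,b_i^2/4-y_i)$ stays a fixed $q_i$-adic distance from the centre of $\GL_2(\QQ_{q_i})$ and from $\{\det=0\}$, where, by the local description of $\theta_{q_i}$ in \autoref{subsec:singularities}, $\theta_{q_i}$ and $|\,\cdot\,|_{q_i}$ are locally constant with a uniform radius; so the whole integrand is constant in $b_i$ on a ball $q_i^{N_i}\ZZ_{q_i}$ about $0$. For the remaining $b_i$ (finitely many unit shells $q_i^{-j}\ZZ_{q_i}^{\times}$, $-M_i\le j\le N_i$) the multiplicative invariance $\theta_{q_i}(c_iT,c_i^2N)=\theta_{q_i}(T,N)$ for $c_i\in 1+q_i^{L_i}\ZZ_{q_i}$ of \autoref{lem:orbitalintegralsmooth}, with $L_i$ depending only on $f_{q_i}$, gives after the substitution $b_i\mapsto c_ib_i$ that $\Phi_\alpha$ is constant on $b_i+q_i^{L_i+v_{q_i}(b_i)}\ZZ_{q_i}$. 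Taking $\Lambda_i$ at least $N_i$ and at least $L_i+\max(N_i,0)$ then yields \emph{(i)}; the absolute convergence of the $(x,y)$-integral (from \autoref{lem:poisson2smooth}) lets one carry this invariance through the integral.

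Granting \emph{(i)}, the $b_i$-integral decomposes over the finitely many cosets of $q_i^{\Lambda_i}\ZZ_{q_i}$ meeting the compact support of $\Phi_\alpha$; on each such coset $\Phi_\alpha$ is constant, while $\int_{b_i^0+q_i^{\Lambda_i}\ZZ_{q_i}}\rme_{q_i}(b_i\beta)\,\rmd b_i=0$ as soon as $z\mapsto\rme_{q_i}(z\beta)$ is nontrivial on $q_i^{\Lambda_i}\ZZ_{q_i}$, i.e.\ as soon as $|\beta|_{q_i}>q_i^{\Lambda_i}$. Hence $\mathbf I_{k,f}(\alpha,\kappa)=0$ whenever $|\beta|_{q_i}>q_i^{\Lambda_i}$ for some $i$. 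For \emph{(ii)}, the point is that $\gcd(\alpha,kf^2)$ divides $kf^2\in\ZZ_{(S)}^{>0}$, hence is coprime to $q_i$, so $v_{q_i}(\beta)=v_{q_i}(\alpha)+v_{q_i}(\kappa)-v_{q_i}(2)\le v_{q_i}(\alpha\kappa)$, whence $|\beta|_{q_i}\ge|\alpha\kappa|_{q_i}$. Taking $L_i=\Lambda_i$ for the $L_i$ in the statement completes the proof.

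The step I expect to be the main obstacle is \emph{(i)}: extracting from the proof of \autoref{lem:poisson2smooth} a modulus of local constancy in $b$ that is genuinely uniform in $G$, and in particular controlling the behaviour near $b_i=0$. The resolution above rests on the observation that the support conditions built into $\theta_{q_i}$ — a lower bound on $|\det|_{q_i}$ and a bound on the trace, both coming from the fixed test function $f_{q_i}$ — already keep $(b_i,b_i^2/4-y_i)$ away from the singular locus of $\theta_{q_i}$ independently of $G$, so that the relevant local-constancy radius is controlled entirely by $f_{q_i}$.
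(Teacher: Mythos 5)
Your proof is correct and takes essentially the same approach as the paper: identify $\mathbf{I}_{k,f}(\alpha,\kappa)$ as the Fourier transform of $\Phi_\alpha$ at frequency $\alpha\kappa/(2\gcd(\alpha,kf^2))$, show that $\Phi_\alpha$ is additively $q_i^{\Lambda_i}\ZZ_{q_i}$-invariant in $b_i$ with $\Lambda_i$ independent of $G$, and deduce vanishing once $|\alpha\kappa|_{q_i}$ (hence the $q_i$-adic frequency, since $\gcd(\alpha,kf^2)$ is a $q_i$-unit and dividing by $2$ can only help) exceeds $q_i^{\Lambda_i}$. The paper simply asserts that a $G$-independent additive modulus follows from the proof of \autoref{lem:poisson2smooth}, whereas you usefully spell out how to extract it — near $b_i=0$ directly from the support conditions built into $\theta_{q_i}$ (which force $|b_i^2/4-y_i|_{q_i}=|y_i|_{q_i}\asymp 1$, keeping the argument of $\theta_{q_i}$ a fixed distance from its singular locus), and on the finitely many remaining shells by converting the multiplicative $1+q_i^{L_i}\ZZ_{q_i}$-invariance of \autoref{lem:orbitalintegralsmooth} into an additive one of modulus bounded via the bounded support.
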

\begin{proof}
By the proof of \autoref{lem:poisson2smooth} we know that there exist $u_i\in \ZZ_{\geq 0}$ independent of $G$ such that for any $(a,b)\in \QQ_S$ and $z_i\in q_i^{u_i}\ZZ_{q_i}$, we have
\[
\Phi_\alpha(a,b)=\Phi_\alpha((a,b)+z_i).
\]
Since $\bI_{k,f}(\alpha,\kappa)$ is the Fourier transform of $\Phi_\alpha$, there exist $L_1,\dots,L_r\in \ZZ_{\geq 0}$ independent of $G$ such that $\mathbf{I}_{k,f}(\alpha,\kappa)=0$ if 
\[
\left|\frac{\alpha\kappa}{\gcd(\alpha,kf^2)}\right|_{q_i}=|\alpha\kappa|_{q_i}\geq q_i^{L_i}
\]
for some $i\in \{1,\dots,r\}$.
\end{proof}

By making the change of variable $a\mapsto \sqrt{X}a$ and $x\mapsto Xx$ and note that $\theta_\infty$ is $Z_+$-invariant, we obtain
\begin{align*}
\mathbf{I}_{k,f}(\alpha,\kappa)
=&2X^{\frac32}\int_{(a,b)\in\QQ_S}\left\{\int_{(x,y)\in\QQ_S} G\left(\left|\frac{a^2}{4}-x\right|_\infty\left|\frac{b^2}{4}-y\right|_q \right) \theta_\infty\left(a,\frac{a^2}{4}-x\right)\theta_{q}\left(b,\frac{b^2}{4}-y\right)\right.\\
\times&\left.\left[F\legendresymbol{kf^2}{X^\vartheta|x|_\infty^{\vartheta}|y|_q'^{\vartheta}}
+\frac{kf^2}{\sqrt{X}|x|_\infty^{1/2}|y|_q'^{1/2}}V\legendresymbol{kf^2}{X^{\vartheta'}|x|_\infty^{\vartheta'} |y|_q'^{\vartheta'}}\right] \rme\legendresymbol{xX\alpha}{kf^2}\rme_q\legendresymbol{y\alpha}{kf^2}\rmd x\rmd y\right\}\\
\times&\rme\legendresymbol{a\sqrt{X}\alpha\kappa}{2\gcd(\alpha,kf^2)} \rme_q\legendresymbol{b\alpha\kappa}{2\gcd(\alpha,kf^2)}\rmd a\rmd b=\mathbf{I}^{1}_{k,f}(\alpha,\kappa)+ \mathbf{I}^{2}_{k,f}(\alpha,\kappa),
\end{align*}
where
\begin{align*}
\mathbf{I}^1_{k,f}(\alpha,\kappa)
=&2X^{\frac32}\int_{(a,b)\in\QQ_S}\left\{\int_{(x,y)\in\QQ_S} G\left(\left|\frac{a^2}{4}-x\right|_\infty\left|\frac{b^2}{4}-y\right|_q \right) \theta_\infty\left(a,\frac{a^2}{4}-x\right)\theta_{q}\left(b,\frac{b^2}{4}-y\right)\right.\\
\times&\left.F\legendresymbol{kf^2}{X^\vartheta|x|_\infty^{\vartheta}|y|_q'^{\vartheta}} \rme\legendresymbol{xX\alpha}{kf^2}\rme_q\legendresymbol{y\alpha}{kf^2}\rmd x\rmd y\right\}\rme\legendresymbol{a\sqrt{X}\alpha\kappa}{2\gcd(\alpha,kf^2)} \rme_q\legendresymbol{b\alpha\kappa}{2\gcd(\alpha,kf^2)}\rmd a\rmd b
\end{align*}
and
\begin{align*}
&\mathbf{I}^2_{k,f}(\alpha,\kappa)
=2X\int_{(a,b)\in\QQ_S}\left\{\int_{(x,y)\in\QQ_S} G\left(\left|\frac{a^2}{4}-x\right|_\infty\left|\frac{b^2}{4}-y\right|_q \right) \theta_\infty\left(a,\frac{a^2}{4}-x\right)\theta_{q}\left(b,\frac{b^2}{4}-y\right)\right.\\
\times&\left.\frac{kf^2}{|x|_\infty^{1/2}|y|_q'^{1/2}}V\legendresymbol{kf^2}{X^{\vartheta'} |x|_\infty^{\vartheta'}|y|_q'^{\vartheta'}} \rme\legendresymbol{xX\alpha}{kf^2}\rme_q\legendresymbol{y\alpha}{kf^2}\rmd x\rmd y\right\}\rme\legendresymbol{a\sqrt{X}\alpha\kappa}{2\gcd(\alpha,kf^2)} \rme_q\legendresymbol{b\alpha\kappa}{2\gcd(\alpha,kf^2)}\rmd a\rmd b.
\end{align*}

\begin{lemma}\label{lem:smoothxy}
For any $a\in \RR$, $b\in \QQ_{S_\fin}$, $u,v\in \ZZ_{\geq 0}$, $\sigma\in \{0,1\}$ and $\tau\in \{0,1\}^r$, let
\[
\Omega_{a,b}^{\sigma,\tau,u,v}(x,y)=\frac{\diff^u}{\diff a^u}G\left(\left|\frac{a^2}{4}-x\right|_\infty\left|\frac{b^2}{4}-y\right|_q \right) |x|^{-\frac{\sigma}{2}}\frac{\diff^v}{\diff a^v}\theta_{\infty,\sigma}\left(a,\frac{a^2}{4}-x\right)\prod_{i=1}^{r}|y_i|_{q_i}'^{-\frac{\tau_i}{2}} \theta_{q_i,\tau_i}\left(b_i,\frac{b_i^2}{4}-y_i\right).
\]
Then there exist $u_i\in \ZZ_{\geq 0}$ only depending on $\theta_q$ such that for any $(a,b)\in \QQ_S$ and $z_i\in q_i^{u_i}\ZZ_{q_i}$, we have
\[
\Omega_{a,b}^{\sigma,\tau,u,v}(x,y)=\Omega_{a,b}^{\sigma,\tau,u,v}((x,y)+z_i).
\]
Also, the support of $\Omega_{a,b}^{\sigma,\tau,u,v}$ is bounded, uniform in $G$ and $(a,b)\in \QQ_S$.
\end{lemma}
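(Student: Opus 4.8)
The plan is to factor $\Omega^{\sigma,\tau,u,v}_{a,b}(x,y)$ as an archimedean factor depending on $(a,x)$ only, a ``$G$-factor'', and $r$ nonarchimedean factors, the $i$-th involving $y_i$ alone, and to read off both assertions from this; (2) comes out of the same bookkeeping as (1). First I would insert $(T,N)=(a,\tfrac{a^2}{4}-x)$ into the identity $\theta_{\infty,\sigma}(T,N)=|T^2-4N|_\infty^{\sigma/2}\varphi_\sigma(T,N)$ of \autoref{subsec:singularities} (so $T^2-4N=4x$) and pull the $a$-independent factor $|4x|_\infty^{\sigma/2}$ out of the $a$-derivative, obtaining
\[
|x|_\infty^{-\sigma/2}\frac{\diff^v}{\diff a^v}\theta_{\infty,\sigma}\bigl(a,\tfrac{a^2}{4}-x\bigr)=2^{\sigma}\frac{\diff^v}{\diff a^v}\varphi_\sigma\bigl(a,\tfrac{a^2}{4}-x\bigr),
\]
a function of $(a,x)$ alone which, together with its $a$-derivatives, is supported in a bounded subset of $(a,x)$-space coming from $\supp\varphi_\sigma$; in particular $|a|_\infty,|x|_\infty,|\tfrac{a^2}{4}-x|_\infty\ll1$ there, with constants depending only on $f_\infty$. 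Likewise, inserting $(T,N)=(b_i,\tfrac{b_i^2}{4}-y_i)$ into $\theta_{q_i,\tau_i}(T,N)={|T^2-4N|_{q_i}'}^{\tau_i/2}\psi_{\tau_i}(T,N)$ (so $T^2-4N=4y_i$) and using $|4y_i|_{q_i}'=|2|_{q_i}|y_i|_{q_i}'$ gives, for $y_i\ne0$,
\[
{|y_i|_{q_i}'}^{-\tau_i/2}\theta_{q_i,\tau_i}\bigl(b_i,\tfrac{b_i^2}{4}-y_i\bigr)=|2|_{q_i}^{\tau_i/2}\psi_{\tau_i}\bigl(b_i,\tfrac{b_i^2}{4}-y_i\bigr),
\]
so the $i$-th factor is a constant times a value of $\psi_{\tau_i}$ (and the right side is its continuous extension across $y_i=0$); its support in $(b_i,y_i)$ forces $|b_i|_{q_i},|\tfrac{b_i^2}{4}-y_i|_{q_i},|y_i|_{q_i}\ll1$, with constants depending only on $f_{q_i}$.

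Next I would use that $\psi_{\tau_i}$, being smooth on each of the finitely many pieces $\{(T,N):\omega_{q_i}(T^2-4N)=\epsilon\}$ up to boundary and of bounded support, is locally constant in $(T,N)$ with one fixed $q_i$-adic radius $q_i^{-M_i}$, where $M_i$ depends only on $\theta_{q_i}$. Applying this with the second coordinate changed from $\tfrac{b_i^2}{4}-y_i$ to $\tfrac{b_i^2}{4}-(y_i+z_i)$ shows the $i$-th factor is invariant under $y_i\mapsto y_i+z_i$ for every $z_i\in q_i^{M_i}\ZZ_{q_i}$, for all values of the other variables and whether or not the factor vanishes. The archimedean factor does not depend on $y$ and the $j$-th factor with $j\ne i$ does not involve $y_i$, so only the $G$-factor $\frac{\diff^u}{\diff a^u}G\bigl(|\tfrac{a^2}{4}-x|_\infty|\tfrac{b^2}{4}-y|_q\bigr)$ remains.

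This is the delicate step. Since $a\mapsto|\tfrac{a^2}{4}-x|_\infty$ is locally a polynomial in $a$, the higher chain rule writes the $G$-factor as a finite sum of terms $P_j(a,x)\,\lambda(y)^j\,G^{(j)}\bigl(|\tfrac{a^2}{4}-x|_\infty\,\lambda(y)\bigr)$ with $0\le j\le u$ and $\lambda(y)=|\tfrac{b^2}{4}-y|_q=\prod_i|\tfrac{b_i^2}{4}-y_i|_{q_i}$, so its $y$-dependence passes through $\lambda$ alone. Such a term vanishes unless $|\tfrac{a^2}{4}-x|_\infty\,\lambda(y)\in\supp(G^{(j)})\subseteq\lopen 1/4,5/4\ropen$, hence unless $|\tfrac{a^2}{4}-x|_\infty\,\lambda(y)>1/4$; together with $|\tfrac{a^2}{4}-x|_\infty\ll1$ and $|\tfrac{b_j^2}{4}-y_j|_{q_j}\ll1$ for all $j$ from the first step, this forces $|\tfrac{b_i^2}{4}-y_i|_{q_i}\asymp1$ for every $i$, with constants that depend only on the test functions and not on $G$ (the lower bound $1/4$ being valid for every admissible $G$). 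On that locus $\tfrac{b_i^2}{4}-y_i$ is bounded away from $0$ with a $q_i$-adic radius independent of $(a,x,b)$ and of $G$, so $y_i\mapsto|\tfrac{b_i^2}{4}-y_i|_{q_i}$, hence $\lambda$, hence the $G$-factor, is invariant under $y_i\mapsto y_i+z_i$ for $z_i\in q_i^{M_i'}\ZZ_{q_i}$, with $M_i'$ of the same dependence.

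Finally I would set $u_i$ equal to the largest of the $M_i$ and $M_i'$ over $\tau_i\in\{0,1\}$; this depends only on the test functions, in particular not on $G$, $(a,b)$ or $(\sigma,\tau,u,v)$. For $z_i\in q_i^{u_i}\ZZ_{q_i}$, assertion (1) follows: if both $\Omega^{\sigma,\tau,u,v}_{a,b}(x,y)$ and $\Omega^{\sigma,\tau,u,v}_{a,b}(x,y+z_i)$ vanish there is nothing to prove, and otherwise, at whichever of the two points $\Omega$ is nonzero, all three factors are locally constant in $y_i$ with radius $\ge q_i^{-u_i}$ (the archimedean one trivially, the $i$-th one by the second step, and the $G$-factor because its nonvanishing is part of $\Omega\ne0$), so the two values agree; translating by $-z_i$ handles the asymmetric case. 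Assertion (2) is immediate from the first step, since $\Omega^{\sigma,\tau,u,v}_{a,b}(x,y)\ne0$ already forces $|x|_\infty\ll1$ and $|y_i|_{q_i}\ll1$ for all $i$ with constants independent of $(a,b)$ and $G$, so $\supp\Omega^{\sigma,\tau,u,v}_{a,b}$ lies in a fixed bounded subset of $\QQ_S$. The main obstacle is exactly the $G$-factor step: a priori $G^{(j)}\bigl(|\tfrac{a^2}{4}-x|_\infty\,\lambda(y)\bigr)$ need not be locally constant in $y_i$ with a radius uniform in $(a,x,b)$, because $\lambda$ degenerates as $y_i\to\tfrac{b_i^2}{4}$; the resolution is that this degeneration happens only off $\supp G^{(j)}$, which confines $|\tfrac{b_i^2}{4}-y_i|_{q_i}$ between two absolute constants wherever the $G$-factor is nonzero. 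A secondary subtlety is that the splitting $\theta_{q_i}=\theta_{q_i,0}+\theta_{q_i,1}$ is not visibly compatible with the rescaling $(b_i,y_i)\mapsto(c_ib_i,c_i^2y_i)$ of \autoref{lem:orbitalintegralsmooth}, so the argument must be routed through the explicit shape ${|T^2-4N|_{q_i}'}^{\tau_i/2}\psi_{\tau_i}(T,N)$ of the components rather than through that lemma.
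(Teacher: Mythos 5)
Your proof is correct and follows the same strategy as the paper's: translation-invariance of the $\theta_{q_i,\tau_i}$-factor comes from the local constancy of $\psi_{\tau_i}$ (the paper phrases this as $|y_i|_{q_i}'^{-\tau_i/2}\theta_{q_i,\tau_i}(b_i,\frac{b_i^2}{4}-y_i)$ being smooth and compactly supported), and translation-invariance of the $G$-factor comes from $\bigl|\frac{b_i^2}{4}-y_i\bigr|_{q_i}\asymp 1$ on the support. Your one variant is to derive the lower bound on $\bigl|\frac{b_i^2}{4}-y_i\bigr|_{q_i}$ from $\supp G\subseteq\lopen 1/4,5/4\ropen$ together with the upper bounds from $\varphi_\sigma,\psi_{\tau_j}$, rather than reading it off the support of $\theta_{q_i,\tau_i}$ itself (which is what underlies the lemma's claim that $u_i$ depends only on $\theta_q$), so your $u_i$ formally also depends on $\theta_\infty$ --- a harmless over-dependence that costs nothing in the applications, where your explicit chain-rule bookkeeping and your care with the case that one of $\Omega(x,y),\Omega(x,y+z_i)$ vanishes usefully spell out steps the paper elides.
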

\begin{proof}
If $\Omega_{a,b}^{\sigma,\tau,u,v}(x,y)\neq 0$, then we must have $|b_i^2/4-y_i|_{q_i}\asymp 1$ for all $i$. Hence there exists $u_i$ such that $|b_i^2/4-y_i-z_i|_{q_i}=|b_i^2/4-y_i|_{q_i}$ for $y_i$ in the support and $z_i\in q_i^{u_i}\ZZ_{q_i}$. Also, since 
\[
|y_i|_{q_i}'^{-\frac{\tau_i}{2}}\theta_{q_i,\tau_i}\left(b_i,\frac{b_i^2}{4}-y_i\right)
\]
is smooth and compactly supported, we may choose $u_i$ above such that this function is invariant under $y_i\mapsto y_i+z_i$. Hence the first assertion follows.

Now we prove the second assertion. If $\Omega_{a,b}^{\sigma,\tau,u,v}(x,y)\neq 0$, then we must have $|b_i^2/4-y_i|_{q_i}\asymp 1$ for all $i$. Also, we must have $|b_i|\ll 1$. Hence $|y_i|\ll 1$.

Since $G\in C_c^\infty(\lopen 1/4,5/4\ropen)$, we also have $|a^2/4-x|\asymp 1$. Since $\theta_\infty$ is compactly supported modulo center, we must have $|a|\ll 1$. Therefore $|x|\ll 1$. Thus the second assertion is proved.
\end{proof}
\begin{proposition}\label{prop:estimatei1kf}
We have the following estimate for $\mathbf{I}^1_{k,f}(\alpha,\kappa)$:
\begin{enumerate}[itemsep=0pt,parsep=0pt,topsep=0pt, leftmargin=0pt,labelsep=2.5pt,itemindent=15pt,label=\upshape{(\arabic*)}]
  \item If $\kappa=0$, then for any $N\in\ZZ_{\geq 0}$ and $\varepsilon>0$,
\begin{align*}
\mathbf{I}^1_{k,f}(\alpha,\kappa)
\ll&X^{\frac32}\left(\frac{(kf^2)^{1/\vartheta}}{X}\left\llbracket \frac{X}{kf^2}\star\alpha\right\rrbracket\right)^{-N+\frac12+\varepsilon} \left\llbracket \frac{X}{kf^2}\star\alpha\right\rrbracket^{-1+\varepsilon}\|G\|_{N+1,1}
\end{align*}
\item If $\kappa\neq 0$, then for any $N,K\in \ZZ_{\geq 0}$ and $\varepsilon>0$,
\[
\mathbf{I}^1_{k,f}(\alpha,\kappa)\ll X^{\frac32}\legendresymbol{\gcd(\alpha,kf^2)}{\sqrt{X}|\alpha\kappa|}^{K} \left(\frac{(kf^2)^{1/\vartheta}}{X}\left\llbracket \frac{X}{kf^2}\star\alpha\right\rrbracket\right)^{-N+\frac12+\varepsilon} \left\llbracket\frac{X}{kf^2}\star\alpha\right\rrbracket^{-1+\varepsilon} \|G\|_{N+K+1,1}.
\]
\end{enumerate}
The implied constants only depend on $\theta_\infty$, $\theta_q$, $N,K$ and $\varepsilon$.
\end{proposition}
\begin{proof}
(1) If $\kappa=0$,
\begin{align*}
\mathbf{I}^1_{k,f}(\alpha,0)
=&2X^{\frac32}\int_{(a,b)\in\QQ_S}\left\{\int_{(x,y)\in\QQ_S} G\left(\left|\frac{a^2}{4}-x\right|_\infty\left|\frac{b^2}{4}-y\right|_q \right) \theta_\infty\left(a,\frac{a^2}{4}-x\right)\theta_{q}\left(b,\frac{b^2}{4}-y\right)\right.\\
\times&\left.F\legendresymbol{kf^2}{X^\vartheta|x|_\infty^{\vartheta}|y|_q'^{\vartheta}} \rme\legendresymbol{xX\alpha}{kf^2}\rme_q\legendresymbol{y\alpha}{kf^2}\rmd x\rmd y\right\}\rmd a\rmd b.
\end{align*}
For $\iota\in \{0,1\}$, $\epsilon\in \{0,\pm 1\}^r$, $\sigma\in \{0,1\}$, $\tau\in \{0,1\}^r$, we consider the integral
\begin{equation}\label{eq:integralf}
\begin{split}
&X^{\frac32}\int_{(a,b)\in\QQ_S}\left\{\int_{X_\iota\times Y_\epsilon} G\left(\left|\frac{a^2}{4}-x\right|_\infty\left|\frac{b^2}{4}-y\right|_q \right) \theta_{\infty,\sigma}\left(a,\frac{a^2}{4}-x\right)\theta_{q,\tau}\left(b,\frac{b^2}{4}-y\right)\right.\\
\times&\left.F\legendresymbol{kf^2}{X^\vartheta|x|_\infty^{\vartheta}|y|_q'^{\vartheta}} \rme\legendresymbol{xX\alpha}{kf^2}\rme_q\legendresymbol{y\alpha}{kf^2}\rmd x\rmd y\right\}\rmd a\rmd b.
\end{split}
\end{equation}

The conditions of \autoref{thm:mainfourierestimate} are satisfied by \autoref{lem:smoothxy}. Hence by \autoref{thm:mainfourierestimate} we obtain
\[
\eqref{eq:integralf}\ll X^{\frac32}\left(\legendresymbol{kf^2}{X^\vartheta}^{1/\vartheta}\left\llbracket \frac{X}{kf^2}\star\alpha\right\rrbracket\right)^{-\vartheta\rho} \left(1+\left|\frac{X\alpha}{kf^2}\right|_\infty\right)^{-\lceil\vartheta\rho+\frac{\sigma}{2} \rceil+ \vartheta\rho} \prod_{i=1}^{r}(1+|\alpha|_{q_i})^{-1-\frac{\tau_i}{2}}\|\Phi\|_{\lceil\vartheta\rho+ \frac{\sigma}{2}\rceil,1}
\]
for any $\rho>-1/\vartheta$, where
\[ 
\|\Phi\|_{i,1}=\int_{(a,b)\in\QQ_S}\left\|G\left(\left|\frac{a^2}{4}-x\right|_\infty\left|\frac{b^2}{4}-y\right|_q \right)\theta_{\infty,\sigma}\left(a,\frac{a^2}{4}-x\right)\theta_{q,\tau}\left(b,\frac{b^2}{4}-y\right) \right\|_{i,1}\rmd a\rmd b.
\]
Since the support of the integrand is compact and independent of $G$ (by the proof of \autoref{lem:poisson2smooth}), we obtain $\|\Phi\|_{i,\infty}\ll\|G\|_{i,\infty}$. Hence
\begin{align*}
\eqref{eq:integralf}&\ll X^{\frac32}\left(\frac{(kf^2)^{1/\vartheta}}{X}\left\llbracket \frac{X}{kf^2}\star\alpha\right\rrbracket\right)^{-\vartheta\rho} \left(1+\left|\frac{X\alpha}{kf^2}\right|_\infty\right)^{-\lceil\vartheta\rho+\frac{\sigma}{2}\rceil+ \vartheta\rho} \prod_{i=1}^{r}(1+|\alpha|_{q_i})^{-1}\|G\|_{\lceil\vartheta\rho+\frac{\sigma}{2}\rceil,1}\\
&\ll X^{\frac32}\left(\frac{(kf^2)^{1/\vartheta}}{X}\left\llbracket \frac{X}{kf^2}\star\alpha\right\rrbracket\right)^{-\vartheta\rho} 
\left\llbracket \frac{X}{kf^2}\star\alpha\right\rrbracket^{-1} \left(1+\left|\frac{X\alpha}{kf^2}\right|_\infty\right)^{-\lceil\vartheta\rho+\frac{\sigma}{2}\rceil+ \vartheta\rho+1}\|G\|_{\lceil\vartheta\rho+\frac{\sigma}{2}\rceil,1}.
\end{align*}
Now we choose $\rho=(N-\sigma/2+\varepsilon)/\vartheta$, and then sum over $\iota,\epsilon,\sigma,\tau$, we obtain our result. 

(2) If $\kappa\neq 0$, then
\begin{align*}
\mathbf{I}^1_{k,f}(\alpha,\kappa)
=&2X^{\frac32}\int_{(a,b)\in\QQ_S}\left\{\int_{(x,y)\in\QQ_S} G\left(\left|\frac{a^2}{4}-x\right|_\infty\left|\frac{b^2}{4}-y\right|_q \right) \theta_\infty\left(a,\frac{a^2}{4}-x\right)\theta_{q}\left(b,\frac{b^2}{4}-y\right)\right.\\
\times&\left.F\legendresymbol{kf^2}{X^\vartheta|x|_\infty^{\vartheta}|y|_q'^{\vartheta}} \rme\legendresymbol{xX\alpha}{kf^2}\rme_q\legendresymbol{y\alpha}{kf^2}\rmd x\rmd y\right\}\rme\legendresymbol{a\sqrt{X}\alpha\kappa}{2\gcd(\alpha,kf^2)} \rme_q\legendresymbol{b\alpha\kappa}{2\gcd(\alpha,kf^2)}\rmd a\rmd b.
\end{align*}
Using integration by parts $K$ times and Leibniz rule, we obtain
\begin{align*}
\mathbf{I}^1_{k,f}(\alpha,\kappa)
=&2X^{\frac32}\legendresymbol{\gcd(\alpha,kf^2)}{\sqrt{X}\alpha\kappa}^{K}\sum_{j=1}^{K} \binom{K}{j} \int_{(a,b)\in\QQ_S}\left\{\int_{(x,y)\in\QQ_S} \frac{\diff^j}{\diff a^j}G\left(\left|\frac{a^2}{4}-x\right|_\infty\left|\frac{b^2}{4}-y\right|_q \right)\right. \\
\times&\frac{\diff^{K-j}}{\diff a^{K-j}}\theta_\infty\left(a,\frac{a^2}{4}-x\right)\theta_{q}\left(b,\frac{b^2}{4}-y\right) \left.F\legendresymbol{kf^2}{X^\vartheta|x|_\infty^{\vartheta}|y|_q'^{\vartheta}} \rme\legendresymbol{xX\alpha}{kf^2}\rme_q\legendresymbol{y\alpha}{kf^2}\rmd x\rmd y\right\}\\
\times&\rme\legendresymbol{a\sqrt{X}\alpha\kappa}{2\gcd(\alpha,kf^2)} \rme_q\legendresymbol{b\alpha\kappa}{2\gcd(\alpha,kf^2)}\rmd a\rmd b.
\end{align*}
For $\iota\in \{0,1\}$, $\epsilon\in \{0,\pm 1\}^r$, $\sigma\in \{0,1\}$, $\tau\in \{0,1\}^r$, we consider the integral
\begin{equation}\label{eq:integralf2}
\begin{split}
&X^{\frac32}\legendresymbol{\gcd(\alpha,kf^2)}{\sqrt{X}\alpha\kappa}^{K}\sum_{j=1}^{K} \binom{K}{j} \int_{(a,b)\in\QQ_S}\left\{\int_{X_\iota\times Y_\epsilon} \frac{\diff^j}{\diff a^j}G\left(\left|\frac{a^2}{4}-x\right|_\infty\left|\frac{b^2}{4}-y\right|_q \right)\right. \\
\times&\frac{\diff^{K-j}}{\diff a^{K-j}}\theta_{\infty,\sigma}\left(a,\frac{a^2}{4}-x\right)\theta_{q,\tau}\left(b,\frac{b^2}{4}-y\right) \left.F\legendresymbol{kf^2}{X^\vartheta|x|_\infty^{\vartheta}|y|_q'^{\vartheta}} \rme\legendresymbol{xX\alpha}{kf^2}\rme_q\legendresymbol{y\alpha}{kf^2}\rmd x\rmd y\right\}\\
\times&\rme\legendresymbol{a\sqrt{X}\alpha\kappa}{2\gcd(\alpha,kf^2)} \rme_q\legendresymbol{b\alpha\kappa}{2\gcd(\alpha,kf^2)}\rmd a\rmd b.
\end{split}
\end{equation}

The conditions of \autoref{thm:mainfourierestimate} are satisfied by \autoref{lem:smoothxy}. Hence by \autoref{thm:mainfourierestimate}, we obtain
\begin{align*}
\eqref{eq:integralf2}\ll& X^{\frac32}\legendresymbol{\gcd(\alpha,kf^2)}{\sqrt{X}|\alpha\kappa|}^{K}\sum_{j=0}^{K} \left(\legendresymbol{kf^2}{X^\vartheta}^{1/\vartheta}\left\llbracket \frac{X}{kf^2}\star\alpha\right\rrbracket\right)^{-\vartheta\rho} \left(1+\left|\frac{X\alpha}{kf^2}\right|_\infty\right)^{-\lceil\vartheta\rho+\frac{\sigma}{2}\rceil+\vartheta\rho}\\
\times& \prod_{i=1}^{r}(1+|\alpha|_{q_i})^{-1-\frac{\tau_i}{2}} \|\Phi^{j,K-j}\|_{\lceil\vartheta\rho+\frac{\sigma}{2}\rceil,1},
\end{align*}
where
\[ 
\|\Phi^{j,K-j}\|_{m,1}=\int_{(a,b)\in\QQ_S}\left\|\frac{\diff^j}{\diff a^j}G\left(\left|\frac{a^2}{4}-x\right|_\infty\left|\frac{b^2}{4}-y\right|_q \right)\frac{\diff^{K-j}}{\diff a^{K-j}}\theta_{\infty,\sigma}\left(a,\frac{a^2}{4}-x\right)\theta_{q,\tau} \left(b,\frac{b^2}{4}-y\right) \right\|_{m,1}\rmd a\rmd b.
\]
Also, we find that $\|\Phi^{j,K-j}\|_{m,1}\ll \|G\|_{j+m,1}$.

Therefore \eqref{eq:integralf2} is bounded by
\begin{align*}
&X^{\frac32}\legendresymbol{\gcd(\alpha,kf^2)}{\sqrt{X}|\alpha\kappa|}^{K}\left(\legendresymbol {kf^2}{X^\vartheta}^{1/\vartheta}\left\llbracket \frac{X}{kf^2}\star\alpha\right\rrbracket\right)^{-\vartheta\rho} \\
\times&\left(1+\left|\frac{X\alpha}{kf^2}\right|_\infty\right)^{-\lceil\vartheta\rho+\frac{\sigma}{2}\rceil+ \vartheta\rho} \prod_{i=1}^{r}(1+|\alpha|_{q_i})^{-1} \|G\|_{\lceil\vartheta\rho+\frac{\sigma}{2}\rceil+K,1}\\
\ll & X^{\frac32}\legendresymbol{\gcd(\alpha,kf^2)}{\sqrt{X}|\alpha\kappa|}^{K}\left(\frac{(kf^2)^{1/\vartheta}}{X}\left\llbracket \frac{X}{kf^2}\star\alpha\right\rrbracket\right)^{-\vartheta\rho} \left\llbracket\frac{X}{kf^2}\star\alpha\right\rrbracket^{-1} \\
\times&\left(1+\left|\frac{X\alpha}{kf^2}\right|_\infty\right)^{-\lceil\vartheta\rho+\frac{\sigma}{2}\rceil+ \vartheta\rho+1} \|G\|_{\lceil\vartheta\rho+\frac{\sigma}{2}\rceil+K,1}.
\end{align*}
Now we choose $\rho=(N-\sigma/2+\varepsilon)/\vartheta$, and then sum over $\iota,\epsilon,\sigma,\tau$, we obtain our result. 
\end{proof}

\begin{proposition}\label{prop:estimatei2kf}
 We have the following estimate for $\mathbf{I}^2_{k,f}(\alpha,\kappa)$:
\begin{enumerate}[itemsep=0pt,parsep=0pt,topsep=0pt, leftmargin=0pt,labelsep=2.5pt,itemindent=15pt,label=\upshape{(\arabic*)}]
  \item If $\kappa=0$, then for any $N\in \ZZ_{\geq 0}$ and $\varepsilon>0$,
\begin{align*}
\mathbf{I}^2_{k,f}(\alpha,\kappa)&\ll Xkf^2\left(\frac{(kf^2)^{1/\vartheta'}}{X}\left\llbracket \frac{X}{kf^2}\star\alpha\right\rrbracket\right)^{-N+\varepsilon} \left\llbracket \frac{X}{kf^2}\star\alpha\right\rrbracket^{-\frac12+\varepsilon}\|G\|_{N+1,1}
\end{align*}
\item If $\kappa\neq 0$, then for any $N,K\in \ZZ_{\geq 0}$ and $\varepsilon>0$,
\[
\mathbf{I}^2_{k,f}(\alpha,\kappa)\ll Xkf^2\legendresymbol{\gcd(\alpha,kf^2)}{\sqrt{X}|\alpha\kappa|}^{K}\left(\frac{(kf^2)^{1/\vartheta'}}{X}\left\llbracket \frac{X}{kf^2}\star\alpha\right\rrbracket\right)^{-N+\varepsilon} \left\llbracket\frac{X}{kf^2}\star\alpha\right\rrbracket^{-\frac12+\varepsilon} \|G\|_{N+K+1,1}.
\]
\end{enumerate}
The implied constants only depend on $\theta_\infty$, $\theta_q$, $N,K$ and $\varepsilon$.
\end{proposition}
\begin{proof}
(1) If $\kappa=0$,
\begin{align*}
&\mathbf{I}^2_{k,f}(\alpha,\kappa)
=2Xkf^2\int_{(a,b)\in\QQ_S}\left\{\int_{(x,y)\in\QQ_S} G\left(\left|\frac{a^2}{4}-x\right|_\infty\left|\frac{b^2}{4}-y\right|_q \right) |x|_\infty^{-1/2}\theta_\infty\left(a,\frac{a^2}{4}-x\right)\right.\\
\times&\left.|y|_q'^{-1/2}\theta_{q}\left(b,\frac{b^2}{4}-y\right)V\legendresymbol{kf^2}{X^{\vartheta'} |x|_\infty^{\vartheta'}|y|_q'^{\vartheta'}} \rme\legendresymbol{xX\alpha}{kf^2}\rme_q\legendresymbol{y\alpha}{kf^2}\rmd x\rmd y\right\}\rmd a\rmd b.
\end{align*}
For $\iota\in \{0,1\}$, $\epsilon\in \{0,\pm 1\}^r$, $\sigma\in \{0,1\}$, $\tau\in \{0,1\}^r$, we consider the integral
\begin{equation}\label{eq:integralk}
\begin{split}
&Xkf^2\int_{(a,b)\in\QQ_S}\left\{\int_{(x,y)\in\QQ_S} G\left(\left|\frac{a^2}{4}-x\right|_\infty\left|\frac{b^2}{4}-y\right|_q \right) |x|_\infty^{-1/2}\theta_{\infty,\sigma}\left(a,\frac{a^2}{4}-x\right)\right.\\
\times&\left.|y|_q'^{-1/2}\theta_{q,\tau}\left(b,\frac{b^2}{4}-y\right)V\legendresymbol{kf^2} {X^{\vartheta'}|x|_\infty^{\vartheta'}|y|_q'^{\vartheta'}} \rme\legendresymbol{xX\alpha}{kf^2}\rme_q\legendresymbol{y\alpha}{kf^2}\rmd x\rmd y\right\}\rmd a\rmd b.
\end{split}
\end{equation}

The conditions of \autoref{thm:mainfourierestimate} are satisfied by \autoref{lem:smoothxy}. Hence by \autoref{thm:mainfourierestimate}, \eqref{eq:integralk} is  bounded by
\begin{align*}
& Xkf^2\left(\legendresymbol{kf^2}{X^{\vartheta'}}^{1/\vartheta'}\left\llbracket \frac{X}{kf^2}\star\alpha\right\rrbracket\right)^{-\vartheta'\rho} \left(1+\left|\frac{X\alpha}{kf^2}\right|_\infty\right)^{-\lceil\vartheta'\rho+\frac{\sigma-1}{2}\rceil+ \vartheta'\rho} \prod_{i=1}^{r}(1+|\alpha|_{q_i})^{-\frac12-\frac{\tau_i}{2}}\|\Psi\|_{\lceil\vartheta'\rho+\frac{\sigma-1}{2}\rceil,\infty}
\end{align*}
for any $\rho>-2$, where
\[ 
\|\Psi\|_{m,1}=\int_{(a,b)\in\QQ_S}\left\|G\left(\left|\frac{a^2}{4}-x\right|_\infty\left|\frac{b^2}{4}-y\right|_q \right)|x|^{-\frac12}\theta_{\infty,\sigma}\left(a,\frac{a^2}{4}-x\right)|y|_q'^{-\frac12}\theta_{q,\tau}\left(b,\frac{b^2}{4}-y\right) \right\|_{m,1}\rmd a\rmd b
\]
As in the proof of above lemma, we have $\|\Psi\|_{m,1}\ll\|G\|_{m,1}$. Hence
\begin{align*}
\eqref{eq:integralk}&\ll Xkf^2\left(\frac{(kf^2)^{1/\vartheta'}}{X}\left\llbracket \frac{X}{kf^2}\star\alpha\right\rrbracket\right)^{-\vartheta'\rho} \left(1+\left|\frac{X\alpha}{kf^2}\right|_\infty\right)^{-\lceil\vartheta'\rho+\frac{\sigma-1}{2}\rceil+\vartheta'\rho} \prod_{i=1}^{r}(1+|\alpha|_{q_i})^{-\frac12}\|G\|_{\lceil\vartheta'\rho+\frac{\sigma-1}{2}\rceil,1}\\
&\ll Xkf^2\left(\frac{(kf^2)^{1/\vartheta'}}{X}\left\llbracket \frac{X}{kf^2}\star\alpha\right\rrbracket\right)^{-\vartheta'\rho} 
\left\llbracket \frac{X}{kf^2}\star\alpha\right\rrbracket^{-\frac12} \left(1+\left|\frac{X\alpha}{kf^2}\right|_\infty\right)^{-\lceil\vartheta'\rho+\frac{\sigma-1}{2}\rceil+ \vartheta'\rho+\frac12}\|G\|_{\lceil\vartheta'\rho+\frac{\sigma-1}{2}\rceil,1}.
\end{align*}
Now we choose $\rho=(N-\sigma/2+\varepsilon+1/2)/\vartheta'$, and then sum over $\iota,\epsilon,\sigma,\tau$, we obtain our result. 

(2) If $\kappa\neq 0$, then
\begin{align*}
\mathbf{I}^2_{k,f}(\alpha,\kappa)
=&2Xkf^2\int_{(a,b)\in\QQ_S}\left\{\int_{(x,y)\in\QQ_S} G\left(\left|\frac{a^2}{4}-x\right|_\infty\left|\frac{b^2}{4}-y\right|_q \right) |x|^{-1/2}\theta_\infty\left(a,\frac{a^2}{4}-x\right) \right.\\
\times&|y|_q'^{-1/2}\theta_{q}\left(b,\frac{b^2}{4}-y\right)\left.V\legendresymbol{kf^2} {X^{\vartheta'}|x|_\infty^{\vartheta'}|y|_q'^{\vartheta'}} \rme\legendresymbol{xX\alpha}{kf^2}\rme_q\legendresymbol{y\alpha}{kf^2}\rmd x\rmd y\right\}\\
\times&\rme\legendresymbol{a\sqrt{X}\alpha\kappa}{2\gcd(\alpha,kf^2)} \rme_q\legendresymbol{b\alpha\kappa}{2\gcd(\alpha,kf^2)}\rmd a\rmd b.
\end{align*}
Using integration by parts $K$ times and Leibniz rule, we obtain
\begin{align*}
\mathbf{I}^2_{k,f}(\alpha,\kappa)
=&2Xkf^2\legendresymbol{\gcd(\alpha,kf^2)}{\uppi\rmi\sqrt{X}\alpha\kappa}^{K}\sum_{j=1}^{K} \binom{K}{j} \int_{(a,b)\in\QQ_S}\left\{\int_{(x,y)\in\QQ_S} \frac{\diff^j}{\diff a^j}G\left(\left|\frac{a^2}{4}-x\right|_\infty\left|\frac{b^2}{4}-y\right|_q \right)|x|^{-\frac12}\right. \\
\times&\frac{\diff^{K-j}}{\diff a^{K-j}}\theta_\infty\left(a,\frac{a^2}{4}-x\right)|y|_q'^{-\frac12}\theta_{q}\left(b,\frac{b^2}{4}-y\right) \left.V\legendresymbol{kf^2}{X^{\vartheta'}|x|_\infty^{\vartheta'}|y|_q'^{\vartheta'}} \rme\legendresymbol{xX\alpha}{kf^2}\rme_q\legendresymbol{y\alpha}{kf^2}\rmd x\rmd y\right\}\\
\times&\rme\legendresymbol{a\sqrt{X}\alpha\kappa}{2\gcd(\alpha,kf^2)} \rme_q\legendresymbol{b\alpha\kappa}{2\gcd(\alpha,kf^2)}\rmd a\rmd b.
\end{align*}
For $\iota\in \{0,1\}$, $\epsilon\in \{0,\pm 1\}^r$, $\sigma\in \{0,1\}$, $\tau\in \{0,1\}^r$, we consider the integral
\begin{equation}\label{eq:integralk2}
\begin{split}
&Xkf^2\legendresymbol{\gcd(\alpha,kf^2)}{\uppi\rmi\sqrt{X}\alpha\kappa}^{K}\sum_{j=1}^{K} \binom{K}{j} \int_{(a,b)\in\QQ_S}\left\{\int_{X_\iota\times Y_\epsilon} \frac{\diff^j}{\diff a^j}G\left(\left|\frac{a^2}{4}-x\right|_\infty\left|\frac{b^2}{4}-y\right|_q \right)|x|^{-\frac12}\right. \\
\times&\frac{\diff^{K-j}}{\diff a^{K-j}}\theta_{\infty,\sigma}\left(a,\frac{a^2}{4}-x\right)|y|_q'^{-\frac12}\theta_{q,\tau}\left(b,\frac{b^2} {4}-y\right)\left.V\legendresymbol{kf^2}{X^{\vartheta'}|x|_\infty^{\vartheta'}|y|_q'^{\vartheta'}} \rme\legendresymbol{xX\alpha}{kf^2}\rme_q\legendresymbol{y\alpha}{kf^2}\rmd x\rmd y\right\}\\
\times&\rme\legendresymbol{a\sqrt{X}\alpha\kappa}{2\gcd(\alpha,kf^2)} \rme_q\legendresymbol{b\alpha\kappa}{2\gcd(\alpha,kf^2)}\rmd a\rmd b.
\end{split}
\end{equation}

The conditions of \autoref{thm:mainfourierestimate} are satisfied by \autoref{lem:smoothxy}. Hence by \autoref{thm:mainfourierestimate}, we obtain
\begin{align*}
\eqref{eq:integralk2}\ll & Xkf^2\legendresymbol{\gcd(\alpha,kf^2)}{\sqrt{X}|\alpha\kappa|}^{K}\sum_{j=0}^{K}\left(\legendresymbol {kf^2}{X^{\vartheta'}}^{1/\vartheta'} \left\llbracket \frac{X}{kf^2}\star\alpha\right\rrbracket\right)^{-\vartheta'\rho} \left(1+\left|\frac{X\alpha}{kf^2}\right|_\infty\right)^{-\lceil\vartheta'\rho+\frac{\sigma-1}{2}\rceil+ \vartheta'\rho} \\
\times&\prod_{i=1}^{r}(1+|\alpha|_{q_i})^{-\frac12-\frac{\tau_i}{2}} \|\Psi^{j,K-j}\|_{\lceil\vartheta'\rho+\frac{\sigma}{2}\rceil,1},
\end{align*}
where $\|\Psi^{j,K-j}\|_{m,1}$ is
\[ 
\int_{(a,b)\in\QQ_S}\left\|\frac{\diff^j}{\diff a^j}G\left(\left|\frac{a^2}{4}-x\right|_\infty\left|\frac{b^2}{4}-y\right|_q \right)|x|^{-\frac12}\frac{\diff^{K-j}}{\diff a^{K-j}}\theta_{\infty,\sigma}\left(a,\frac{a^2}{4}-x\right)|y|_q'^{-\frac12}\theta_{q,\tau}\left(b,\frac{b^2}{4}-y\right) \right\|_{m,1}\rmd a\rmd b.
\]
Also, we find that $\|\Psi^{j,K-j}\|_{m,1}\ll \|G\|_{j+m,1}$.

Therefore \eqref{eq:integralk2} is bounded by
\begin{align*}
& Xkf^2\legendresymbol{\gcd(\alpha,kf^2)}{\sqrt{X}|\alpha\kappa|}^{K}\left(\frac{(kf^2)^{1/\vartheta'}}{X} \left\llbracket \frac{X}{kf^2}\star\alpha\right\rrbracket\right)^{-\vartheta'\rho} \\
\times&\left(1+\left|\frac{X\alpha}{kf^2}\right|_\infty\right)^{-\lceil\vartheta'\rho +\frac{\sigma-1}{2}\rceil+ \vartheta'\rho}\prod_{i=1}^{r}(1+|\alpha|_{q_i})^{-\frac12} \|G\|_{\lceil\vartheta'\rho+\frac{\sigma-1}{2}\rceil+K,1}\\
\ll & Xkf^2\legendresymbol{\gcd(\alpha,kf^2)}{\sqrt{X}|\alpha\kappa|}^{K}\left(\frac{(kf^2)^{1/\vartheta'}}{X}\left\llbracket \frac{X}{kf^2}\star\alpha\right\rrbracket\right)^{-\vartheta'\rho} \left\llbracket\frac{X}{kf^2}\star\alpha\right\rrbracket^{-\frac12}\\
\times& \left(1+\left|\frac{X\alpha}{kf^2}\right|_\infty\right)^{-\lceil\vartheta'\rho+\frac{\sigma-1}{2}\rceil +\vartheta'\rho+\frac12} \|G\|_{\lceil\vartheta'\rho+\frac{\sigma-1}{2}\rceil+K,1}.
\end{align*}
Now we choose $\rho=(N-\sigma/2+\varepsilon+1/2)/\vartheta'$, and then sum over $\iota,\epsilon,\sigma,\tau$, we obtain our result. 
\end{proof}

\subsection{Final estimate}
In this subsection we will estimate of $S_G^{\alpha\neq 0}(X)$. We have
\begin{align*}
S_G^{\alpha\neq 0}(X)&=\sum_{k,f\in \ZZ_{(S)}^{>0}}\sum_{\alpha\in \ZZ^S-\{0\}}\frac{c_{k,f}(\alpha)\gamma_{k,f}(\alpha)}{k^3f^5\gcd(\alpha,kf^2)}|\alpha^{(q)}|^{\frac12}\!\sum_{\kappa\in \ZZ^S}\rme\legendresymbol{\kappa^2\alpha kf^2}{4\gcd(\alpha,kf^2)^2}\rme_q \legendresymbol{\kappa^2\alpha kf^2}{4\gcd(\alpha,kf^2)^2}\mathbf{I}_{k,f}(\alpha,\kappa)\\
&\ll \sum_{k,f\in \ZZ_{(S)}^{>0}}\sum_{\alpha\in \ZZ^S-\{0\}}\frac{|c_{k,f}(\alpha)|}{k^3f^5\gcd(\alpha,kf^2)}|\alpha^{(q)}|^{\frac12}\sum_{\kappa\in \ZZ^S}|\mathbf{I}_{k,f}(\alpha,\kappa)|.
\end{align*}
By \autoref{prop:estimateckf}, we have
$|c_{k,f}(\alpha)|\leq k^{3/2}f\sqrt{\gcd(kf^2,\alpha)}$. Hence $S_G^{\alpha\neq 0}(X)$ is bounded by
\[
\sum_{k,f\in \ZZ_{(S)}^{>0}}\sum_{\alpha\in \ZZ^S-\{0\}}\frac{|\alpha^{(q)}|^{\frac12}}{k^{3/2}f^4\sqrt{\gcd(\alpha,kf^2)}}\sum_{\kappa\in \ZZ^S}|\mathbf{I}_{k,f}(\alpha,\kappa)|\ll \sum_{k,f\in \ZZ_{(S)}^{>0}}\sum_{\alpha\in \ZZ^S-\{0\}}\frac{|\alpha^{(q)}|^{\frac12}}{k^{3/2}f^4}\sum_{\kappa\in \ZZ^S}|\mathbf{I}_{k,f}(\alpha,\kappa)|.
\]

\subsubsection{Bound of $\kappa=0$} 
We first bound the $\kappa=0$ term. A same argument above shows that
\begin{equation}\label{eq:estimatekappa0}
\begin{split}
   S_G^{\alpha\neq 0,\kappa=0}(X) & \ll \sum_{k,f\in \ZZ_{(S)}^{>0}}\sum_{\alpha\in \ZZ^S-\{0\}}\frac{|\alpha^{(q)}|^{\frac12}}{k^{3/2}f^4}|\mathbf{I}_{k,f}(\alpha,0)| \\
     & \ll \sum_{k,f\in \ZZ_{(S)}^{>0}}\sum_{\alpha\in \ZZ^S-\{0\}}\frac{|\alpha^{(q)}|^{\frac12}}{k^{3/2}f^4}(|\mathbf{I}^1_{k,f}(\alpha,0)|+|\mathbf{I}^2_{k,f}(\alpha,0)|).
\end{split}
\end{equation}

\begin{proposition}\label{prop:finalestimateikfkappa0}
For any $\varepsilon>0$, $c\in \lopen 0,1\ropen$, and $N\in \ZZ_{\geq 0}$, there exists $\varepsilon'>0$ such that if $|\vartheta-1/2|<\varepsilon'$, then
\[
 \sum_{k,f\in \ZZ_{(S)}^{>0}}\sum_{\alpha\in \ZZ^S-\{0\}}\frac{|\alpha^{(q)}|^{\frac12}}{k^{3/2}f^4}|\mathbf{I}^1_{k,f}(\alpha,0)|\ll X^{\frac12-cN}\|G\|_{N+3,1}+X^{\frac12+c+\varepsilon}\|G\|_{1,1},
\]
where the implied constant only depends on $f_\infty$, $f_{q_i}$, $\varepsilon$, $c$, and $N$.
\end{proposition}
\begin{proof}
We split the sum into the following two terms
\begin{equation}\label{eq:finalestimatei1kf1}
\sum_{\substack{k,f\in \ZZ_{(S)}^{>0},\,\alpha\in \ZZ^S-\{0\}\\ \frac{(kf^2)^{1/\vartheta}}{X}\left\llbracket\frac{X}{kf^2}\star\alpha\right\rrbracket\gg X^c}}\frac{|\alpha^{(q)}|^{\frac12}}{k^{3/2}f^4}|\mathbf{I}^1_{k,f}(\alpha,0)|
\end{equation}
and
\begin{equation}\label{eq:finalestimatei1kf2}
\sum_{\substack{k,f\in \ZZ_{(S)}^{>0},\,\alpha\in \ZZ^S-\{0\}\\ \frac{(kf^2)^{1/\vartheta}}{X}\left\llbracket\frac{X}{kf^2}\star\alpha\right\rrbracket\ll X^c}}\frac{|\alpha^{(q)}|^{\frac12}}{k^{3/2}f^4}|\mathbf{I}_{k,f}^1(\alpha,0)|.
\end{equation}

We first consider \eqref{eq:finalestimatei1kf1}. By \autoref{prop:estimatei1kf} (1) with $N$ replaced by $N+2$ we have
\begin{align*}
\eqref{eq:finalestimatei1kf1}
\ll&X^{\frac32}\sum_{\substack{k,f\in \ZZ_{(S)}^{>0},\,\alpha\in \ZZ^S-\{0\}\\ \frac{(kf^2)^{1/\vartheta}}{X}\left\llbracket\frac{X}{kf^2}\star\alpha\right\rrbracket\gg X^c}}\frac{|\alpha^{(q)}|^{\frac12}}{k^{3/2}f^4}\left(\frac{(kf^2)^{1/\vartheta}}{X}\left\llbracket \frac{X}{kf^2}\star\alpha\right\rrbracket\right)^{-N-\frac32+\varepsilon} \left\llbracket \frac{X}{kf^2}\star\alpha\right\rrbracket^{-1+\varepsilon}\|G\|_{N+3,1}\\
\ll&X^{\frac32}X^{-cN}\sum_{\substack{k,f\in \ZZ_{(S)}^{>0},\,\alpha\in \ZZ^S-\{0\}\\ \frac{(kf^2)^{1/\vartheta}}{X}\left\llbracket\frac{X}{kf^2}\star\alpha\right\rrbracket\gg X^c}}\frac{|\alpha^{(q)}|^{\frac12}}{k^{3/2}f^4}\left(\frac{(kf^2)^{1/\vartheta}}{X}\left\llbracket \frac{X}{kf^2}\star\alpha\right\rrbracket\right)^{-\frac32+\varepsilon} \left\llbracket \frac{X}{kf^2}\star\alpha\right\rrbracket^{-1+\varepsilon}\|G\|_{N+3,1}\\
\ll&X^{3-cN}\sum_{\substack{k,f\in \ZZ_{(S)}^{>0},\,\alpha\in \ZZ^S-\{0\}\\ \frac{(kf^2)^{1/\vartheta}}{X}\left\llbracket\frac{X}{kf^2}\star\alpha\right\rrbracket\gg X^c}}\frac{|\alpha^{(q)}|^{\frac12}}{k^{3/2+3/(2\vartheta)-2\varepsilon}f^{4+3/\vartheta-2\varepsilon}}\left\llbracket \frac{X}{kf^2}\star\alpha\right\rrbracket^{-\frac52+2\varepsilon}\|G\|_{N+3,1}.
\end{align*}
We have
\begin{equation}\label{eq:estimatealphaq}
|\alpha^{(q)}|=|\alpha|_\infty\prod_{i=1}^{r}|\alpha|_{q_i}\leq\frac{kf^2}{X}\left(1+ \left|\frac{X\alpha}{kf^2}\right|\right)\prod_{i=1}^{r}(1+|\alpha|_{q_i})=\frac{kf^2}{X}\left\llbracket \frac{X}{kf^2}\star\alpha\right\rrbracket.
\end{equation}
Hence by the above equation and \autoref{lem:estimatexilargesum} (1) we have
\begin{align*}
\eqref{eq:finalestimatei1kf1}
\ll&X^{\frac52-cN}\sum_{\substack{k,f\in \ZZ_{(S)}^{>0},\,\alpha\in \ZZ^S-\{0\}\\ \frac{(kf^2)^2}{X}\left\llbracket\frac{X}{kf^2}\star\alpha\right\rrbracket\gg X^c}}\frac{1}{k^{1+3/(2\vartheta)-2\varepsilon}f^{3+3/\vartheta-2\varepsilon}}\left\llbracket \frac{X}{kf^2}\star\alpha\right\rrbracket^{-2+2\varepsilon}\|G\|_{N+3,1}\\
\ll&X^{\frac52-cN}\sum_{k,f\in \ZZ_{(S)}^{>0}}\frac{1}{k^{1+3/(2\vartheta)-2\varepsilon}f^{3+3/\vartheta-2\varepsilon}} \frac{(kf^2)^{2+3\varepsilon}}{X^{2+3\varepsilon}}\|G\|_{N+3,1}\ll X^{\frac12-cN}\|G\|_{N+3,1}
\end{align*}
if $\vartheta$ is sufficiently close to $1/2$.

Next we consider \eqref{eq:finalestimatei1kf2}. By \autoref{prop:estimatei1kf} (1) with $N=0$ we obtain
\begin{align*}
\eqref{eq:finalestimatei1kf2}
\ll&X^{\frac32}\sum_{\substack{k,f\in \ZZ_{(S)}^{>0},\,\alpha\in \ZZ^S-\{0\}\\ \frac{(kf^2)^{1/\vartheta}}{X}\left\llbracket\frac{X}{kf^2}\star\alpha\right\rrbracket\ll X^c}}\frac{|\alpha^{(q)}|^{\frac12}}{k^{3/2}f^4}\left(\frac{(kf^2)^{1/\vartheta}}{X}\left\llbracket \frac{X}{kf^2}\star\alpha\right\rrbracket\right)^{\frac12+\varepsilon} \left\llbracket \frac{X}{kf^2}\star\alpha\right\rrbracket^{-1+\varepsilon}\|G\|_{1,1}\\
\ll&X\sum_{\substack{k,f\in \ZZ_{(S)}^{>0},\,\alpha\in \ZZ^S-\{0\}\\ \frac{(kf^2)^{1/\vartheta}}{X}\left\llbracket\frac{X}{kf^2}\star\alpha\right\rrbracket\ll X^c}}\frac{|\alpha^{(q)}|^{\frac12}}{k^{3/2-1/(2\vartheta)-\varepsilon}f^{4-1/\vartheta-2\varepsilon}}\left\llbracket \frac{X}{kf^2}\star\alpha\right\rrbracket^{-\frac12+\varepsilon}\|G\|_{1,1}.
\end{align*}
By \eqref{eq:estimatealphaq} and the condition that 
\[
\frac{(kf^2)^{1/\vartheta}}{X}\left\llbracket\frac{X}{kf^2}\star\alpha\right\rrbracket\ll X^c
\]
we have 
\[
|\alpha^{(q)}|\ll \frac{X^{1+c}}{(kf^2)^{\frac{1}{\vartheta}-1}}.
\]
Also, we have $k\ll X^{\vartheta(1+c)}/f^2$.
Hence by \autoref{lem:estimatexismallsum} we obtain
\begin{align*}
\eqref{eq:finalestimatei1kf2}
\ll&X^{1+\frac{c}{2}}\sum_{\substack{k,f\in \ZZ_{(S)}^{>0},\,\alpha\in \ZZ^S-\{0\}\\ \frac{(kf^2)^{1/\vartheta}}{X}\left\llbracket\frac{X}{kf^2}\star\alpha\right\rrbracket\ll X^c}}\frac{1}{k^{1-\varepsilon}f^{3-2\varepsilon}}\left\llbracket \frac{X}{kf^2}\star\alpha\right\rrbracket^{-\frac12+\varepsilon}\|G\|_{1,1}\\
\ll&X^{1+\frac{c}{2}}\sum_{\substack{k,f\in \ZZ_{(S)}^{>0}\\ k\ll X^{\vartheta(1+c)}/f^2}}\frac{1}{k^{1-\varepsilon}f^{3-2\varepsilon}}\frac{kf^2}{X} \left(\frac{X^{1+c}}{(kf^2)^{1/\vartheta}}\right)^{\frac12+2\vartheta\varepsilon}\|G\|_{1,1}\\
\ll&X^{\frac{c}{2}+\frac{c+1}{2}}\sum_{\substack{k,f\in \ZZ_{(S)}^{>0}\\ k\ll X^{\vartheta(1+c)}/f^2}}\frac{1}{k^{1/2+1/(2\vartheta)+\varepsilon}f^{1+1/\vartheta+2\varepsilon}} \|G\|_{1,1}.
\end{align*}
If $\vartheta\leq 1/2$, then $1/2+1/(2\vartheta)\geq 1$. Hence we directly get
\[
\eqref{eq:finalestimatei1kf2}\ll X^{\frac{1}{2}+c}\|G\|_{1,1}.
\]
Now we assume that $\vartheta>1/2$. In this case we have
\[
\eqref{eq:finalestimatei1kf2}\ll \sum_{f\in \ZZ_{(S)}^{>0}}\frac{1}{f^{1+1/\vartheta+2\varepsilon}} \legendresymbol{X^{\vartheta(1+c)}}{f^2}^{\frac{1}{2\vartheta}-\frac12} X^{\frac{1}{2}+c}\|G\|_{1,\infty}\ll X^{\frac{1}{2}+c+\varepsilon}\|G\|_{1,1}
\]
if $\vartheta$ is sufficiently close to $1/2$.
The conclusion now holds by combining the bounds of \eqref{eq:finalestimatei1kf1} and \eqref{eq:finalestimatei1kf2} together.
\end{proof}

Similarly, by using \autoref{prop:estimatei2kf} (1), one can prove that
\begin{proposition}\label{prop:finalestimateikfkappa02}
For any $\varepsilon>0$, $c\in \lopen 0,1\ropen$, and $N\in \ZZ_{\geq 0}$, there exists $\varepsilon'>0$ such that if $|\vartheta-1/2|<\varepsilon'$, then
\[
 \sum_{k,f\in \ZZ_{(S)}^{>0}}\sum_{\alpha\in \ZZ^S-\{0\}}\frac{|\alpha^{(q)}|^{\frac12}}{k^{3/2}f^4}|\mathbf{I}^2_{k,f}(\alpha,0)|\ll X^{\frac12-cN}\|G\|_{N+3,\infty}+X^{\frac12+c+\varepsilon}\|G\|_{1,1},
\]
where the implied constant only depends on $f_\infty$, $f_{q_i}$, $\varepsilon$, $c$, and $N$.
\end{proposition}
Hence we obtain
\begin{corollary}\label{cor:finalestimateikfkappa0}
For any $\varepsilon>0$, $c\in \lopen 0,1\ropen$, and $N\in \ZZ_{\geq 0}$, there exists $\varepsilon'>0$ such that if $|\vartheta-1/2|<\varepsilon'$, then
\[
S_G^{\alpha\neq 0,\kappa=0}(X)\ll X^{\frac12-cN}\|G\|_{N+3,\infty}+X^{\frac12+c+\varepsilon}\|G\|_{1,1},
\]
where the implied constant only depends on $f_\infty$, $f_{q_i}$, $\varepsilon$, $c$, and $N$.
\end{corollary}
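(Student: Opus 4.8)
The plan is to obtain the corollary as an immediate consequence of \autoref{prop:finalestimateikfkappa0} and \autoref{prop:finalestimateikfkappa02}, via the reduction already recorded in \eqref{eq:estimatekappa0}.

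First I would start from \eqref{eq:estimatekappa0}, namely
\[
S_G^{\alpha\neq 0,\kappa=0}(X)\ll \sum_{k,f\in \ZZ_{(S)}^{>0}}\sum_{\alpha\in \ZZ^S-\{0\}}\frac{|\alpha^{(q)}|^{\frac12}}{k^{3/2}f^4}\bigl(|\mathbf{I}^1_{k,f}(\alpha,0)|+|\mathbf{I}^2_{k,f}(\alpha,0)|\bigr),
\]
which was obtained from \autoref{prop:estimateckf} (bounding $|c_{k,f}(\alpha)|\leq k^{3/2}f\sqrt{\gcd(kf^2,\alpha)}$ and using $\gcd(\alpha,kf^2)\geq 1$) together with \autoref{thm:poissontheta} and the triangle inequality. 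Splitting the right-hand side according to the two summands $\mathbf{I}^1_{k,f}$ and $\mathbf{I}^2_{k,f}$ reduces the task to bounding each of
\[
\sum_{k,f\in \ZZ_{(S)}^{>0}}\sum_{\alpha\in \ZZ^S-\{0\}}\frac{|\alpha^{(q)}|^{\frac12}}{k^{3/2}f^4}|\mathbf{I}^1_{k,f}(\alpha,0)|
\quad\text{and}\quad
\sum_{k,f\in \ZZ_{(S)}^{>0}}\sum_{\alpha\in \ZZ^S-\{0\}}\frac{|\alpha^{(q)}|^{\frac12}}{k^{3/2}f^4}|\mathbf{I}^2_{k,f}(\alpha,0)|.
\]

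Next, given $\varepsilon>0$, $c\in\lopen 0,1\ropen$ and $N\in\ZZ_{\geq 0}$, I would apply \autoref{prop:finalestimateikfkappa0} to produce an $\varepsilon_1'>0$ such that for $|\vartheta-1/2|<\varepsilon_1'$ the first sum is $\ll X^{\frac12-cN}\|G\|_{N+3,\infty}+X^{\frac12+c+\varepsilon}\|G\|_{1,\infty}$, and \autoref{prop:finalestimateikfkappa02} to produce an $\varepsilon_2'>0$ giving the same bound for the second sum. Taking $\varepsilon'=\min\{\varepsilon_1',\varepsilon_2'\}$ and adding the two estimates yields the claimed bound for $S_G^{\alpha\neq 0,\kappa=0}(X)$; since the implied constants in both propositions depend only on $f_\infty$, $f_{q_i}$, $\varepsilon$, $c$ and $N$, so does the resulting one.

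There is essentially no obstacle here: all of the analytic content — the Poisson summation of the theta-type function (\autoref{thm:poissontheta}), the pointwise estimates for $\mathbf{I}^1_{k,f}$ and $\mathbf{I}^2_{k,f}$ (\autoref{prop:estimatei1kf} and \autoref{prop:estimatei2kf}), and the summation over $k$, $f$, $\alpha$ using \autoref{lem:estimatexilargesum} and \autoref{lem:estimatexismallsum} (\autoref{prop:finalestimateikfkappa0} and \autoref{prop:finalestimateikfkappa02}) — has already been carried out. The only point that needs a word of care is the bookkeeping of the dependence of the implied constants, which is guaranteed by the hypotheses of the two propositions.
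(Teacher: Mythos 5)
Your proposal is correct and matches the paper's argument exactly: the corollary is stated immediately after Propositions \ref{prop:finalestimateikfkappa0} and \ref{prop:finalestimateikfkappa02} as their direct consequence via the decomposition \eqref{eq:estimatekappa0}. Your attention to taking $\varepsilon'=\min\{\varepsilon_1',\varepsilon_2'\}$ and tracking the dependence of the implied constants is exactly the (small) bookkeeping the paper leaves implicit.
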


\subsubsection{Bound of $\kappa\neq 0$} 
Finally, we bound the $\kappa\neq 0$ term. We have
\begin{equation}\label{eq:estimatekappanot0}
S_G^{\alpha\neq 0,\kappa\neq 0}(X)\ll \sum_{k,f\in \ZZ_{(S)}^{>0}}\sum_{\alpha\in \ZZ^S-\{0\}}\frac{|\alpha^{(q)}|^{\frac12}}{k^{3/2}f^4}\sum_{\kappa\in \ZZ^S-\{0\}}|\mathbf{I}_{k,f}(\alpha,\kappa)|.
\end{equation}
By \autoref{lem:integralvanish}, $\mathbf{I}_{k,f}(\alpha,\kappa)=0$ if $v_{q_i}(\kappa)< -L_i-v_{q_i}(\alpha)$ for some $i$. Hence we obtain
\[
  S_G^{\alpha\neq 0,\kappa\neq 0}(X)\ll  \sum_{k,f\in \ZZ_{(S)}^{>0}}\sum_{\alpha\in \ZZ^S-\{0\}}\frac{|\alpha^{(q)}|^{\frac12}}{k^{3/2}f^4}\sum_{\substack{\kappa\in \ZZ^S-\{0\}\\ v_{q_i}(\kappa)\geq-L_i-v_{q_i}(\alpha)\ \forall i}}(|\mathbf{I}^1_{k,f}(\alpha,\kappa)|+|\mathbf{I}^2_{k,f}(\alpha,\kappa)|) 
\]
\begin{lemma}\label{lem:kappanot0sum}
For any $N\in \ZZ_{\geq 2}$, we have
\[
\sum_{\substack{\kappa\in \ZZ^S-\{0\}\\ v_{q_i}(\kappa)\geq-L_i-v_{q_i}(\alpha)\ \forall i}}\frac{1}{|\kappa|^N}\ll |\alpha_{(q)}|^N.
\]
where the implied constant only depends on $L_1,\dots,L_r$.
\end{lemma}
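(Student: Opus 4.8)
The plan is to exploit the multiplicative structure of $\ZZ^S$ to factor the sum entirely. By the canonical decomposition $\ZZ^S=\{\pm 1\}\times \ZZ_{(S)}^{>0}\times q^{\ZZ^r}$, every $\kappa\in \ZZ^S-\{0\}$ is written uniquely as $\kappa=\pm n q^\mu$ with $n\in \ZZ_{(S)}^{>0}$ and $\mu=(\mu_1,\dots,\mu_r)\in \ZZ^r$; then $v_{q_i}(\kappa)=\mu_i$, so the summation condition becomes $\mu_i\geq -L_i-v_{q_i}(\alpha)$ for each $i$, and $|\kappa|=|\kappa|_\infty=n\prod_{i=1}^{r}q_i^{\mu_i}$. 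Hence
\[
\sum_{\substack{\kappa\in \ZZ^S-\{0\}\\ v_{q_i}(\kappa)\geq-L_i-v_{q_i}(\alpha)\ \forall i}}\frac{1}{|\kappa|^N}=2\left(\sum_{n\in \ZZ_{(S)}^{>0}}\frac{1}{n^N}\right)\prod_{i=1}^{r}\left(\sum_{\mu_i\geq -L_i-v_{q_i}(\alpha)}\frac{1}{q_i^{N\mu_i}}\right),
\]
the factor $2$ accounting for the sign.

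The first factor is $\zeta^S(N)\leq \zeta(2)$, a finite constant since $N\geq 2$. Each inner sum over $\mu_i$ is a geometric series of ratio $q_i^{-N}<1$, so
\[
\sum_{\mu_i\geq -L_i-v_{q_i}(\alpha)}\frac{1}{q_i^{N\mu_i}}=\frac{q_i^{N(L_i+v_{q_i}(\alpha))}}{1-q_i^{-N}}\leq \frac{q_i^{NL_i}}{1-q_i^{-2}}\,q_i^{Nv_{q_i}(\alpha)}.
\]
Multiplying over $i$ and using $\alpha_{(q)}=\prod_{i=1}^{r}q_i^{v_{q_i}(\alpha)}>0$, so that $\prod_{i=1}^{r}q_i^{Nv_{q_i}(\alpha)}=|\alpha_{(q)}|^N$, we obtain
\[
\sum_{\substack{\kappa\in \ZZ^S-\{0\}\\ v_{q_i}(\kappa)\geq-L_i-v_{q_i}(\alpha)\ \forall i}}\frac{1}{|\kappa|^N}\leq 2\zeta(2)\left(\prod_{i=1}^{r}\frac{q_i^{NL_i}}{1-q_i^{-2}}\right)|\alpha_{(q)}|^N\ll_{N,L_1,\dots,L_r}|\alpha_{(q)}|^N,
\]
which is the asserted bound.

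No genuine obstacle arises: once the parametrization is in place the argument is a direct evaluation of a product of geometric series. The only point to watch is the bookkeeping of the lower bound $-L_i-v_{q_i}(\alpha)$ — which comes from \autoref{lem:integralvanish} via the condition $|\alpha\kappa|_{q_i}\leq q_i^{L_i}$ — so that the leading term $q_i^{N(L_i+v_{q_i}(\alpha))}$ of the $i$-th series splits into an $L_i$-part absorbed by the implied constant and a $v_{q_i}(\alpha)$-part that assembles across $i$ into exactly $|\alpha_{(q)}|^N$; and the hypothesis $N\geq 2$ is what guarantees convergence of the $n$-sum (the geometric parts already converge for $N\geq 1$).
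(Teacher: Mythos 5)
Your proof is correct. You decompose $\ZZ^S-\{0\}$ multiplicatively as $\{\pm1\}\times\ZZ_{(S)}^{>0}\times q^{\ZZ^r}$, translate the local constraints into lower bounds on the exponents $\mu_i$, and then factor the sum as $2\,\zeta^S(N)$ times a product of geometric series, each of which contributes $q_i^{NL_i}q_i^{Nv_{q_i}(\alpha)}/(1-q_i^{-N})$. Multiplying these out recovers $|\alpha_{(q)}|^N$ up to a constant depending on $N$ and the $L_i$. (Using $1-q_i^{-2}$ in place of $1-q_i^{-N}$ in the denominator is a harmless weakening, valid for $N\geq 2$.) The paper takes a slightly slicker route: it observes that the summation set, cut out by the conditions $v_{q_i}(\kappa)\geq -L_i-v_{q_i}(\alpha)$ inside $\ZZ^S$, is \emph{exactly} the scaled lattice $c\ZZ-\{0\}$ with $c=\prod_i q_i^{-L_i-v_{q_i}(\alpha)}$, so the sum equals $2\zeta(N)c^{-N}\ll c^{-N}\asymp |\alpha_{(q)}|^N$ in one line. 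The two computations are of course equal — your product form reconstitutes into the paper's via $\zeta(N)=\zeta^S(N)/\prod_i(1-q_i^{-N})$ — but recognizing the constrained set as a rescaled copy of $\ZZ$ avoids the product decomposition and the bookkeeping over $n$ and $\mu$ entirely. Your version makes the dependence on each $L_i$ and on $v_{q_i}(\alpha)$ more visible factor by factor, which is a reasonable trade-off.
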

\begin{proof}
This is because
\[
 \sum_{\substack{\kappa\in \ZZ^S-\{0\}\\ v_{q_i}(\kappa)\geq-L_i-v_{q_i}(\alpha)\ \forall i}}\frac{1}{|\kappa|^N}=\sum_{\kappa\in c\ZZ-\{0\}}\frac{1}{|\kappa|^N}\ll c^{-N},
\]
where
\[
c=\prod_{i=1}^{r}q_i^{-L_i-v_{q_i}(\alpha)}\asymp |\alpha_{(q)}|^{-1}.\qedhere
\]
\end{proof}
\begin{proposition}\label{prop:finalestimateikfkappanot0}
There exists $\varepsilon'>0$ such that
for any $K\in \ZZ_{\geq 0}$ and $|\vartheta-1/2|<\varepsilon'$, we have
\[
\sum_{k,f\in \ZZ_{(S)}^{>0}}\sum_{\alpha\in \ZZ^S-\{0\}}\frac{|\alpha^{(q)}|^{\frac12}}{k^{3/2}f^4}\sum_{\substack{\kappa\in \ZZ^S-\{0\}\\ v_{q_i}(\kappa)\geq-L_i-v_{q_i}(\alpha)\ \forall i}}|\mathbf{I}^1_{k,f}(\alpha,\kappa)|\ll X^{\frac12-\frac{K}{2}}\|G\|_{K+3,1},
\]
where the implied constant only depends on $f_\infty$, $f_{q_i}$, $\varepsilon$ and $K$.
\end{proposition}
\begin{proof}
Denote the left hand side by $S_G^{\alpha\neq 0,\kappa\neq 0,1}(X)$.

By \autoref{prop:estimatei1kf} (2) with $N=2$, we have
\begin{align*}
S_G^{\alpha\neq 0,\kappa\neq 0,1}(X)\ll&\sum_{k,f\in \ZZ_{(S)}^{>0}}\sum_{\alpha\in \ZZ^S-\{0\}}\frac{|\alpha^{(q)}|^{\frac12}}{k^{3/2}f^4}\sum_{\substack{\kappa\in \ZZ^S-\{0\}\\ v_{q_i}(\kappa)\geq-L_i-v_{q_i}(\alpha)\ \forall i}}X^{\frac32}\legendresymbol{\gcd(\alpha,kf^2)}{\sqrt{X}|\alpha\kappa|}^{K}\\
\times&\left(\frac{(kf^2)^{1/\vartheta}}{X}\left\llbracket \frac{X}{kf^2}\star\alpha\right\rrbracket\right)^{-\frac32+\varepsilon} \left\llbracket\frac{X}{kf^2}\star\alpha\right\rrbracket^{-1+\varepsilon} \|G\|_{K+3,1}.
\end{align*}
By \autoref{lem:kappanot0sum}, we obtain
\begin{align*}
&S_G^{\alpha\neq 0,\kappa\neq 0,1}(X)\ll X^{\frac32}\sum_{k,f\in \ZZ_{(S)}^{>0}}\sum_{\alpha\in \ZZ^S-\{0\}}\frac{|\alpha^{(q)}|^{\frac12}}{k^{3/2}f^4}\legendresymbol{\gcd(\alpha,kf^2) |\alpha_{(q)}|}{\sqrt{X}|\alpha|}^{K}\\
\times&\left(\frac{(kf^2)^{1/\vartheta}}{X}\left\llbracket \frac{X}{kf^2}\star\alpha\right\rrbracket\right)^{-\frac32+\varepsilon} \left\llbracket\frac{X}{kf^2}\star\alpha\right\rrbracket^{-1+\varepsilon} \|G\|_{K+3,1}.
\end{align*}
Since $\gcd(\alpha,kf^2)\leq |\alpha^{(q)}|$, we obtain
\[
\frac{\gcd(\alpha,kf^2) |\alpha_{(q)}|}{\sqrt{X}|\alpha|}= \frac{\gcd(\alpha,kf^2)}{\sqrt{X}|\alpha^{(q)}|} \leq \frac{1}{\sqrt{X}}.
\]
Hence 
\begin{align*}
&S_G^{\alpha\neq 0,\kappa\neq 0,1}(X)\ll X^{3-\frac{K}{2}}\sum_{k,f\in \ZZ_{(S)}^{>0}}\sum_{\alpha\in \ZZ^S-\{0\}}\frac{|\alpha^{(q)}|^{\frac12}}{k^{3/2+3/(2\vartheta)-2\varepsilon}f^{3+3/\vartheta-2\varepsilon}}\left\llbracket \frac{X}{kf^2}\star\alpha\right\rrbracket^{-\frac52+\varepsilon} \|G\|_{K+3,1}.
\end{align*}
By \eqref{eq:estimatealphaq} and \autoref{lem:estimatexilargesum} (1) we have
\begin{align*}
S_G^{\alpha\neq 0,\kappa\neq 0,1}(X)
\ll&X^{\frac52-\frac{K}{2}}\sum_{k,f\in \ZZ_{(S)}^{>0}}\sum_{\alpha\in \ZZ^S-\{0\}}\frac{1}{k^{1+3/(2\vartheta)-2\varepsilon}f^{3+3/\vartheta-2\varepsilon}}\left\llbracket \frac{X}{kf^2}\star\alpha\right\rrbracket^{-2+2\varepsilon}\|G\|_{K+3,1}\\
\ll&X^{\frac52-\frac{K}{2}}\sum_{k,f\in \ZZ_{(S)}^{>0}}\frac{1}{k^{1+3/(2\vartheta)-2\varepsilon}f^{3+3/\vartheta-2\varepsilon}} \frac{(kf^2)^{2+3\varepsilon}}{X^{2+3\varepsilon}}\|G\|_{K+3,\infty}\ll X^{\frac12-\frac{K}{2}}\|G\|_{K+3,1}
\end{align*}
if $\vartheta$ is sufficiently close to $1/2$.
\end{proof}
Similarly, by using \autoref{prop:estimatei2kf} (2), one can prove that
\begin{proposition}\label{prop:finalestimateikfkappanot02}
There exists $\varepsilon'>0$ such that
for any $K\in \ZZ_{\geq 0}$ and $|\vartheta-1/2|<\varepsilon'$, we have
\[
\sum_{k,f\in \ZZ_{(S)}^{>0}}\sum_{\alpha\in \ZZ^S-\{0\}}\frac{|\alpha^{(q)}|^{\frac12}}{k^{3/2}f^4}\sum_{\substack{\kappa\in \ZZ^S-\{0\}\\ v_{q_i}(\kappa)\geq-L_i-v_{q_i}(\alpha)\ \forall i}}|\mathbf{I}^2_{k,f}(\alpha,\kappa)|\ll X^{\frac12-\frac{K}{2}}\|G\|_{K+3,1},
\]
where the implied constant only depends on $f_\infty$, $f_{q_i}$, $\varepsilon$ and $K$.
\end{proposition}
Hence we obtain
\begin{corollary}\label{cor:finalestimateikfkappanot0}
There exists $\varepsilon'>0$ such that
for any $K\in \ZZ_{\geq 0}$ and $|\vartheta-1/2|<\varepsilon'$, we have
\[
S_G^{\alpha\neq 0,\kappa\neq 0}(X)\ll X^{\frac12-\frac{K}{2}}\|G\|_{K+3,1},
\]
where the implied constant only depends on $f_\infty$, $f_{q_i}$, $\varepsilon$ and $K$.
\end{corollary}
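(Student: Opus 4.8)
The plan is to deduce \autoref{cor:finalestimateikfkappanot0} by combining the two propositions \autoref{prop:finalestimateikfkappanot0} and \autoref{prop:finalestimateikfkappanot02}, which handle the $\mathbf{I}^1$ and $\mathbf{I}^2$ contributions to the $\kappa\neq 0$ sum separately. First I would start from the crude bound \eqref{eq:estimatekappanot0}, namely
\[
S_G^{\alpha\neq 0,\kappa\neq 0}(X)\ll \sum_{k,f\in \ZZ_{(S)}^{>0}}\sum_{\alpha\in \ZZ^S-\{0\}}\frac{|\alpha^{(q)}|^{\frac12}}{k^{3/2}f^4}\sum_{\kappa\in \ZZ^S-\{0\}}|\mathbf{I}_{k,f}(\alpha,\kappa)|,
\]
and invoke \autoref{lem:integralvanish} to cut the inner $\kappa$-sum down to the cone $v_{q_i}(\kappa)\geq -L_i-v_{q_i}(\alpha)$ for every $i$. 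On this cone I would use the decomposition $\mathbf{I}_{k,f}(\alpha,\kappa)=\mathbf{I}^1_{k,f}(\alpha,\kappa)+\mathbf{I}^2_{k,f}(\alpha,\kappa)$ recorded just before \autoref{lem:smoothxy}, so that the whole quantity is bounded by the two sums estimated in \autoref{prop:finalestimateikfkappanot0} and \autoref{prop:finalestimateikfkappanot02}. Adding the two estimates, both of which are $\ll X^{1/2-K/2}\|G\|_{K+3,\infty}$ once $|\vartheta-1/2|<\varepsilon'$, yields the claimed bound.

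The substance of the argument is therefore inside the two propositions, and the step I expect to be delicate is carried out there. After inserting \autoref{prop:estimatei1kf}(2) (resp. \autoref{prop:estimatei2kf}(2)) with a fixed choice such as $N=2$, one must exploit the factor $(\gcd(\alpha,kf^2)/(\sqrt X|\alpha\kappa|))^K$ together with \autoref{lem:kappanot0sum}, which replaces $\sum_{\kappa}|\kappa|^{-K}$ over the cone by $\ll|\alpha_{(q)}|^K$, and the elementary inequality $\gcd(\alpha,kf^2)\leq |\alpha^{(q)}|$. This converts the gain into
\[
\frac{\gcd(\alpha,kf^2)|\alpha_{(q)}|}{\sqrt X|\alpha|}=\frac{\gcd(\alpha,kf^2)}{\sqrt X|\alpha^{(q)}|}\leq \frac{1}{\sqrt X},
\]
whose $K$-th power supplies the arbitrary power saving $X^{-K/2}$. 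What is then left is a sum over $k,f,\alpha$ of the same shape as the $\kappa=0$ analysis, which one closes by \eqref{eq:estimatealphaq} and \autoref{lem:estimatexilargesum}(1); the convergence of the resulting series over $k,f$ (schematically $\sum_{k,f}(kf^2)^{2+O(\varepsilon)}k^{-1-3/(2\vartheta)}f^{-3-3/\vartheta}$) is exactly where the hypothesis $|\vartheta-1/2|<\varepsilon'$ is needed, since the exponents must remain summable.

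Finally, for completeness I would note that \autoref{prop:finalestimateikfkappanot02} is proved in the same way as \autoref{prop:finalestimateikfkappanot0}, with \autoref{prop:estimatei1kf}(2) replaced by \autoref{prop:estimatei2kf}(2) and $\vartheta$ replaced by $\vartheta'$ in the exponents; the extra factor $kf^2$ present in $\mathbf{I}^2_{k,f}$ and the weaker decay $\llbracket\tfrac{X}{kf^2}\star\alpha\rrbracket^{-1/2+\varepsilon}$ (against $\llbracket\tfrac{X}{kf^2}\star\alpha\rrbracket^{-1+\varepsilon}$ in the $\mathbf{I}^1$ case) are still enough for the $k,f$-series to converge once $\vartheta$ is near $1/2$, so no new idea is required. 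Combining \autoref{prop:finalestimateikfkappanot0} and \autoref{prop:finalestimateikfkappanot02} then gives the corollary, the implied constant depending only on $f_\infty$, $f_{q_i}$, $\varepsilon$ and $K$.
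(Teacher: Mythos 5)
Your proof is correct and follows the paper's approach exactly: the corollary is obtained by splitting $\mathbf{I}_{k,f}=\mathbf{I}^1_{k,f}+\mathbf{I}^2_{k,f}$ in the crude bound \eqref{eq:estimatekappanot0}, restricting to the cone from \autoref{lem:integralvanish}, and adding the estimates of \autoref{prop:finalestimateikfkappanot0} and \autoref{prop:finalestimateikfkappanot02}. Your supplementary sketch of the internal mechanism of those propositions (the $\gcd(\alpha,kf^2)|\alpha_{(q)}|/(\sqrt X|\alpha|)\leq X^{-1/2}$ observation via \autoref{lem:kappanot0sum}, and the role of $\vartheta$ near $1/2$ for convergence) also matches the paper.
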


Combining \autoref{cor:finalestimateikfkappa0} and \autoref{cor:finalestimateikfkappanot0}, we obtain the final result in this section:
\begin{theorem}\label{thm:contributealphanot0}
For any $\varepsilon>0$, $c\in \lopen 0,1\ropen$, and $N,K\in \ZZ_{\geq 0}$, there exists $\varepsilon'>0$ such that we have
\[
S_G^{\alpha\neq 0}(X)\ll X^{\frac12-cN}\|G\|_{N+3,1}+ X^{\frac12-\frac{K}{2}}\|G\|_{K+3,1}+X^{\frac12+c+\varepsilon}\|G\|_{1,1}
\]
for any $|\vartheta-1/2|<\varepsilon'$,
where the implied constant only depends on $f_\infty$, $f_{q_i}$, $\varepsilon$, $c$, $N$ and $K$.
\end{theorem}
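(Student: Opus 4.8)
The plan is to assemble the statement directly from the two estimates already established for the $\kappa=0$ and the $\kappa\neq0$ parts of $S_G^{\alpha\neq 0}(X)$. First I would record the decomposition
\[
S_G^{\alpha\neq 0}(X)=S_G^{\alpha\neq 0,\kappa=0}(X)+S_G^{\alpha\neq 0,\kappa\neq 0}(X),
\]
which is nothing but splitting off the term $\kappa=0$ in the $\kappa$-sum appearing after \autoref{thm:poissontheta}; the absolute convergence needed to rearrange freely has already been secured by the rapid decay in \autoref{prop:estimatei1kf} and \autoref{prop:estimatei2kf}, which is what makes \eqref{eq:estimatekappa0} and \eqref{eq:estimatekappanot0} legitimate.

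Next I would invoke \autoref{cor:finalestimateikfkappa0}: given $\varepsilon>0$, $c\in\lopen 0,1\ropen$ and $N\in\ZZ_{\geq0}$ it yields $\varepsilon_1'>0$ such that for $|\vartheta-1/2|<\varepsilon_1'$ one has
\[
S_G^{\alpha\neq 0,\kappa=0}(X)\ll X^{\frac12-cN}\|G\|_{N+3,\infty}+X^{\frac12+c+\varepsilon}\|G\|_{1,\infty},
\]
and \autoref{cor:finalestimateikfkappanot0}: given $K\in\ZZ_{\geq0}$ it yields $\varepsilon_2'>0$ such that for $|\vartheta-1/2|<\varepsilon_2'$ one has
\[
S_G^{\alpha\neq 0,\kappa\neq 0}(X)\ll X^{\frac12-\frac K2}\|G\|_{K+3,\infty}.
\]
Setting $\varepsilon'=\min\{\varepsilon_1',\varepsilon_2'\}$ and adding the two bounds gives the claimed estimate, with the implied constant depending only on $f_\infty$, $f_{q_i}$, $\varepsilon$, $c$, $N$ and $K$.

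At this last step there is no real obstacle: the decomposition is trivial and both constituent bounds are already in hand, so the substance of the argument lies entirely in the earlier propositions. If one had to reach the theorem from scratch, the hard part would be \autoref{prop:estimatei1kf} and \autoref{prop:estimatei2kf} — controlling $\mathbf{I}^1_{k,f}(\alpha,\kappa)$ and $\mathbf{I}^2_{k,f}(\alpha,\kappa)$ by the singular Fourier transform estimate \autoref{thm:mainfourierestimate} after the rescaling $a\mapsto\sqrt X a$, $x\mapsto Xx$, and then performing the $k,f,\alpha,\kappa$ summations in \autoref{subsec:poissontheta} and the final subsection so that the arithmetic sums converge. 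This is precisely where the hypothesis $|\vartheta-1/2|<\varepsilon'$ enters: it keeps the exponents $1+3/(2\vartheta)-2\varepsilon$ and $3+3/\vartheta-2\varepsilon$ (and their $\vartheta'$-analogues) large enough for the $k$- and $f$-series to absorb the growing factor $(kf^2)^{2+3\varepsilon}/X^{2+3\varepsilon}$ produced by \autoref{lem:estimatexilargesum}.
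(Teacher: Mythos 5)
Your proof matches the paper's own argument exactly: the paper derives the theorem from the one-line combination of \autoref{cor:finalestimateikfkappa0} and \autoref{cor:finalestimateikfkappanot0}, which is precisely the $\kappa=0$/$\kappa\neq 0$ decomposition and summation you carry out, with $\varepsilon'=\min\{\varepsilon_1',\varepsilon_2'\}$ left implicit. Your closing commentary on where the substantive work lies is accurate context but is not part of the paper's proof of this particular statement.
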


\section{Comparison of $S(X)$ and $S_G(X)$}\label{sec:sgxandsx}
Combining \autoref{thm:contributealpha0} and \autoref{thm:contributealphanot0}, we obtain
\begin{theorem}\label{thm:asymptoticsg}
For any $\varepsilon>0$, $c\in \lopen 0,1\ropen$, $A>0$, and $N,K\in \ZZ_{\geq 0}$, there exists $\varepsilon'>0$ such that if $|\vartheta-1/2|<\varepsilon'$, then
\begin{align*}
  S_G(X)=&\int_{\RR}\int_{\QQ_{S_\fin}} \widetilde{G}\legendresymbol{3}{2}|1-x|_\infty^{-\frac32} |1-y|_q^{-\frac32}\widehat{\Theta}_\infty(x)\widehat{\Theta}_{q}(y)\zeta^S(2)\rmd x\rmd yX^{\frac32}+\mf{M}(G)\\
+&O(X^{-A}\|G\|_{\lfloor \frac{15}{2}+2A\rfloor+2,1}+X^{\frac12+\varepsilon}\|G\|_1+ X^{\frac12-cN}\|G\|_{N+3,\infty}+ X^{\frac12-\frac{K}{2}}\|G\|_{K+3,\infty}+X^{\frac12+c+\varepsilon}\|G\|_{1,\infty}),
\end{align*}
where the implied constant only depends on $f_\infty$, $f_{q_i}$, $\varepsilon$, $\vartheta$, $c$, $N$ and $K$.
\end{theorem}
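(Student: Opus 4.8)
The plan is to obtain \autoref{thm:asymptoticsg} simply by adding the two contributions already isolated. Recall from \autoref{sec:secondpoisson} that after the second Poisson summation
\[
S_G(X)=\sum_{k,f\in \ZZ_{(S)}^{>0}}\frac{1}{k^3f^5}\sum_{\xi,\alpha\in \ZZ^S}\widehat{J}_{k,f}(X,\xi,\alpha)\widehat{\Kl}_{k,f}^S(\xi,\alpha),
\]
and this series converges absolutely, so I may split it as $S_G(X)=S_G^{\alpha=0}(X)+S_G^{\alpha\neq 0}(X)$, with the two summands as in Sections \ref{sec:secondpoisson}ff., i.e.\ precisely the quantities treated in \autoref{thm:contributealpha0} and \autoref{thm:contributealphanot0}.

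First I would invoke \autoref{thm:contributealpha0} for the $\alpha=0$ piece. This supplies the main term $\tfrac12\zeta^S(2)X^{3/2}$ times the displayed $(x,y)$-integral, the secondary term $\mf{M}(G)$, and the error $O(X^{-A}\|G\|_{\lfloor \frac{15}{2}+2A\rfloor+2,1}+X^{\frac12+\varepsilon}\|G\|_1)$, uniformly in $\vartheta\in\lopen 0,1\ropen$. Then I would invoke \autoref{thm:contributealphanot0} for the $\alpha\neq 0$ piece; this produces no further main term, only the bound $X^{\frac12-cN}\|G\|_{N+3,\infty}+X^{\frac12-\frac K2}\|G\|_{K+3,\infty}+X^{\frac12+c+\varepsilon}\|G\|_{1,\infty}$, and it is here — and only here — that the hypothesis $|\vartheta-\tfrac12|<\varepsilon'$ is needed. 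Given $\varepsilon,c,A,N,K$, I would therefore take $\varepsilon'$ to be exactly the constant furnished by \autoref{thm:contributealphanot0} for these values; with this choice both theorems apply simultaneously whenever $|\vartheta-\tfrac12|<\varepsilon'$, and adding the two asymptotics yields the claimed formula, the error term being the union of the error terms of the two inputs. The stated dependence of the implied constant on $f_\infty$, $f_{q_i}$, $\varepsilon$, $\vartheta$, $c$, $N$, $K$ is then inherited from the two cited results.

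There is essentially no obstacle: all of the analytic work has already been done in the two inputs — the Mellin inversion and contour shifts producing the residue terms collected in $\mf{B},\mf{C},\mf{D}$ on the Hitchin--Steinberg base in the $\alpha=0$ case, and the theta-type Poisson summation together with the Fourier estimates of \autoref{thm:mainfourierestimate} feeding \autoref{prop:estimatei1kf} and \autoref{prop:estimatei2kf} in the $\alpha\neq 0$ case. The only point requiring a little care is the bookkeeping of the error exponents and the observation that the terms $X^{\frac12-cN}$ and $X^{\frac12-\frac K2}$ are negligible once $N,K$ are chosen large; since the statement retains $N,K$ as free parameters, this is automatic. One should also record, for the reader, that the restriction of $\vartheta$ to a neighbourhood of $1/2$ is genuinely forced by the $\alpha\neq 0$ estimates (the exponents such as $3/(2\vartheta)$ in Propositions \ref{prop:estimatei1kf}--\ref{prop:estimatei2kf} must stay large enough for the $k,f$-sums to converge), whereas the $\alpha=0$ analysis is valid for all $\vartheta\in\lopen 0,1\ropen$; this asymmetry is exactly what makes the final formula linear in $\vartheta$ near $\vartheta=1/2$, which is what will be exploited later to separate the degree $0$ and degree $1$ terms.
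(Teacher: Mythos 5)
Your proposal is correct and coincides with the paper's own proof, which is simply to split $S_G(X)=S_G^{\alpha=0}(X)+S_G^{\alpha\neq 0}(X)$ and add the conclusions of \autoref{thm:contributealpha0} and \autoref{thm:contributealphanot0}, taking $\varepsilon'$ from the latter. Your added remarks about where the $\vartheta$-restriction enters and why it later matters are accurate and not misleading, but the core argument is the same.
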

We will see that the first term in the asymptotic formula comes from the nontempered part. Recall that in \cite{cheng2025} we have proved that
\[
\Sigma^n(\xi)=\sum_{\mu}\Tr(\mu(f^n))-\frac{1}{2}\sum_{\mu}\Tr((\xi_0\otimes\mu)(f^n))+\Sigma^n(0) +\Sigma^n(\xi\neq 0).
\]
Now we write
\[
\Sigma_1^n(\xi)=\sum_{\mu}\Tr(\mu(f^n))\quad\text{and}\quad \Sigma_\temp^n(\xi)=-\frac{1}{2}\sum_{\mu}\Tr((\xi_0\otimes\mu)(f^n))+\Sigma^n(0)+\Sigma^n(\xi\neq 0),
\]
so that $\Sigma^n(\xi)=\Sigma_1^n(\xi)+\Sigma_\temp^n(\xi)$. Moreover, we have
\[
\Sigma_\temp^n(\xi)\ll n^{1/4+\varepsilon}
\]
by the main result Theorem 1.1 of \cite{cheng2025b}. 

Correspondingly we write
\[
S_1(X)=\sum_{\substack{n<X\\\gcd(n,S)=1}}\Sigma_1^n(\xi),\quad S_\temp(X)=\sum_{\substack{n<X\\\gcd(n,S)=1}}\Sigma_\temp^n(\xi),
\]
\[
S_{G,1}(X)=\sum_{\substack{n=1\\\gcd(n,S)=1}}^{+\infty}G\legendresymbol{n}{X}\Sigma_1^n(\xi),\quad\text{and} \quad S_{G,\temp}(X)=\sum_{\substack{n=1\\\gcd(n,S)=1}}^{+\infty}G\legendresymbol{n}{X}\Sigma_\temp^n(\xi).
\]

\subsection{Computation of the nontempered part}
In this subsection we will give asymptotic formulas for $S_{G,1}(X)$ and show that the main term is precisely the first term in the asymptotic formula of $S_G(X)$. 

By Proposition 7.2 and Corollary 7.5 of \cite{cheng2025}, the sum over $\mu$ is finite and is independent of $n$, and each $\mu$ is unramified outside $S$. Now we only need to consider such $\mu$.

\begin{proposition}\label{prop:unramifiedtrace}
Suppose that $\mu$ is unramified outside $S$. Then we have
\[
\Tr(\mu(f^n))=\prod_{v\in S}\Tr(\mu_v(f_v))\frac{\bm{\sigma}(n)\mu(n)}{\sqrt{n}},
\]
where $\bm{\sigma}(n)$ denotes the sum of the divisors of $n$. 
\end{proposition}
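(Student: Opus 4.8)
The plan is to reduce the global trace $\Tr(\mu(f^n))$ to a product of local traces and then evaluate the local factors at the places $\ell \notin S$ explicitly. Since $f^n = \bigotimes_{v \in \mf{S}}' f^n_v$ and $\mu = \bigotimes_v' \mu_v$ is an automorphic representation, the trace factorizes as $\Tr(\mu(f^n)) = \prod_{v \in \mf{S}} \Tr(\mu_v(f^n_v))$, with all but finitely many factors equal to $1$. For $v \in S$ we have $f^n_v = f_v$ independent of $n$, contributing the constant $\prod_{v \in S} \Tr(\mu_v(f_v))$. So the work is entirely at the finite places $\ell \notin S$, where $f^n_\ell = \ell^{-n_\ell/2} \triv_{\cX_\ell^{n_\ell}}$ with $n_\ell = v_\ell(n)$, and $\mu_\ell$ is unramified (by hypothesis $\mu$ is unramified outside $S$, and $\ell \notin S$).

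The key computation is: for an unramified representation $\mu_\ell$ of $\GL_2(\QQ_\ell)$ with Satake parameter, say, $\mathrm{diag}(a_\ell, b_\ell)$ (so $a_\ell b_\ell$ is the central character value, which here is trivial since we are working with the standard setup), one has
\[
\Tr\bigl(\mu_\ell(\triv_{\cX_\ell^{m}})\bigr) = a_\ell^m + a_\ell^{m-1} b_\ell + \dots + b_\ell^m = \sum_{j=0}^{m} a_\ell^{j} b_\ell^{m-j}.
\]
This is the standard fact that $\triv_{\cX_\ell^m}$, under our normalization $\vol(\GL_2(\ZZ_\ell)) = 1$, is (up to the Hecke-algebra identification) the $m$-th Hecke operator $T_{\ell^m}$, whose Satake transform is the complete homogeneous symmetric polynomial $h_m(a_\ell, b_\ell)$; equivalently one can invoke that $\Tr(\mu_\ell(T_{\ell^m}))$ equals the coefficient in the local $L$-factor expansion. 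Multiplying by the normalizing factor $\ell^{-n_\ell/2}$ and using $a_\ell b_\ell = 1$ (trivial central character), the geometric-series identity $\sum_{j=0}^{m} a_\ell^j b_\ell^{m-j} = (a_\ell^{m+1} - b_\ell^{m+1})/(a_\ell - b_\ell)$ when $\mu$ is tempered — but more cleanly: since $\mu$ is one of the finitely many \emph{nontempered} (one-dimensional-type) representations appearing here, $\mu_\ell$ is a twist of the trivial representation, its Satake parameter is $\{\ell^{1/2} \chi(\ell), \ell^{-1/2}\chi(\ell)\}$ for an unramified character, and the geometric sum collapses. The cleanest route is to observe directly that for the nontempered $\mu$ in question, $\Tr(\mu_\ell(\triv_{\cX_\ell^m})) \cdot \ell^{-m/2}$ equals $\mu(\ell^m) \cdot \bm{\sigma}(\ell^m)/\ell^{m/2}$ where $\bm{\sigma}(\ell^m) = 1 + \ell + \dots + \ell^m$; this is exactly the product over $j$ of the divisor-sum contribution and matches the Satake computation once $a_\ell = \ell^{1/2}\mu(\ell)$, $b_\ell = \ell^{-1/2}\mu(\ell)$ is substituted.

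Finally I would assemble the Euler product: taking the product over all $\ell \notin S$ (only those with $\ell \mid n$ contribute a nontrivial factor) gives
\[
\prod_{\ell \notin S} \ell^{-n_\ell/2}\Tr(\mu_\ell(\triv_{\cX_\ell^{n_\ell}})) = \prod_{\ell \mid n} \frac{\mu(\ell^{n_\ell})\,\bm{\sigma}(\ell^{n_\ell})}{\ell^{n_\ell/2}} = \frac{\mu(n)\,\bm{\sigma}(n)}{\sqrt{n}},
\]
using multiplicativity of $\mu$ (as a Dirichlet character), of $\bm{\sigma}$, and of $n \mapsto \sqrt{n}$. Combining with the constant from $S$ yields the claimed formula. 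The main obstacle is pinning down the precise Satake parameter of the nontempered $\mu$ (i.e.\ identifying it as the relevant twist of the trivial representation, consistent with Proposition 7.2 and Corollary 7.5 of \cite{cheng2025}) and verifying the normalization constant $\ell^{-m/2}$ is exactly what turns the symmetric-polynomial value into $\bm{\sigma}(\ell^m)/\ell^{m/2}$; once the local Satake data is fixed, the rest is a routine geometric-series and Euler-product manipulation.
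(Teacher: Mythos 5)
Your route is genuinely different from the paper's. The paper simply quotes Proposition 7.2 of \cite{cheng2025}, which already gives the Euler-factorized formula
\[
\Tr(\mu(f^n))=\prod_{v\in S}\Tr(\mu_v(f_v))\prod_{p\notin S}p^{-n_p/2}\mu_p(p^{n_p})\,p^{n_p}\frac{1-p^{-n_p-1}}{1-p^{-1}},
\]
after which the whole proof is the one-line identity $p^{n_p}\frac{1-p^{-n_p-1}}{1-p^{-1}}=\sum_{m=0}^{n_p}p^m=\bm{\sigma}(p^{n_p})$. You instead re-derive the local factor from scratch: identify $\mu$ as a $1$-dimensional (hence Langlands-quotient) representation, read off the Satake parameter, and evaluate the spherical Hecke operator via the Satake transform. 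That is a self-contained alternative, and it makes the structure more transparent; the cost is that you must get all the $\ell^{1/2}$-twists right, which is exactly where your proposal slips.

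The slip: you assert $\Tr\bigl(\mu_\ell(\triv_{\cX_\ell^{m}})\bigr)=h_m(a_\ell,b_\ell)=\sum_{j=0}^{m}a_\ell^{j}b_\ell^{m-j}$. With the normalization $\vol(\GL_2(\ZZ_\ell))=1$, the correct statement is
\[
\Tr\bigl(\mu_\ell(\triv_{\cX_\ell^{m}})\bigr)=\ell^{m/2}\,h_m(a_\ell,b_\ell),
\]
equivalently $h_m(a_\ell,b_\ell)$ is the eigenvalue of the \emph{already normalized} operator $f^n_\ell=\ell^{-m/2}\triv_{\cX_\ell^{m}}$, not of $\triv_{\cX_\ell^{m}}$ itself. (A sanity check: for the trivial representation and $m=1$, $\Tr(\triv(\triv_{\cX_\ell^1}))=\vol(\cX_\ell^1)=\ell+1$, whereas $h_1(\ell^{1/2},\ell^{-1/2})=\ell^{1/2}+\ell^{-1/2}$; the missing $\ell^{1/2}$ is visible.) If you carry your stated formula forward and then multiply by the extra $\ell^{-n_\ell/2}$ as you propose, the normalization is applied twice and you land on $\mu(\ell^{m})\ell^{-m}\bm{\sigma}(\ell^{m})$, off by $\ell^{m/2}$ from the claimed $\mu(\ell^{m})\ell^{-m/2}\bm{\sigma}(\ell^{m})$. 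Your later assertion, ``$\Tr(\mu_\ell(\triv_{\cX_\ell^m}))\cdot\ell^{-m/2}=\mu(\ell^m)\bm{\sigma}(\ell^m)/\ell^{m/2}$,'' is in fact the correct one, but it does \emph{not} follow from the Satake formula as you wrote it; it follows from the corrected $\ell^{m/2}h_m$. Once that factor is restored, substituting $a_\ell=\ell^{1/2}\mu(\ell)$, $b_\ell=\ell^{-1/2}\mu(\ell)$ gives $\ell^{m/2}h_m=\mu(\ell^m)\bm{\sigma}(\ell^m)$, the $\ell^{-m/2}$ from $f^n_\ell$ produces the local factor $\mu(\ell^m)\bm{\sigma}(\ell^m)/\ell^{m/2}$, and your Euler-product assembly is then correct. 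So: right idea, right Satake parameter, right conclusion, but one internal inconsistency in the transcription of the Satake transform that should be fixed to make the argument actually compute.
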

\begin{proof}
By Proposition 7.2 of \cite{cheng2025} we have
\begin{align*}
  \Tr(\mu(f^n)) & =\prod_{v\in S}\Tr(\mu_v(f_v))\prod_{p\notin S}p^{-n_p/2}\mu_p(p^{n_p})p^{n_p}\frac{1-p^{-n_p-1}}{1-p^{-1}}\\
  &=\prod_{v\in S}\Tr(\mu_v(f_v))\prod_{p\notin S}p^{-n_p/2}\mu_p(p^{n_p})\sum_{m=0}^{n_p}p^m=\prod_{v\in S}\Tr(\mu_v(f_v))\frac{\bm{\sigma}(n)\mu(n)}{\sqrt{n}}.\qedhere
\end{align*}
\end{proof}

\begin{proposition}\label{prop:sigmanestimateg}
Let $\chi$ be a Dirichlet character such that $\chi(m)=0$ if and only if $\gcd(m,S)\neq 1$. Let $G\in C_c^\infty(\lopen 1/4,5/4\ropen)$. Then for any $A>0$,
\[
\sum_{n=1}^{+\infty}G\legendresymbol{n}{X}\frac{\bm{\sigma}(n)}{\sqrt{n}}\chi(n)= \delta(\chi)\prod_{i=1}^{r}(1-q_i^{-1})\widetilde{G}\legendresymbol{3}{2}\zeta^S(2)X^{3/2}+O(X^{1/2}\|G\|_\infty +X^{-A}\|G\|_{\lfloor 2A+1\rfloor,1}),
\]
where the implied constant only depends on $A$ and $\chi$.
\end{proposition}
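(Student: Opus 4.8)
The plan is to convert the smoothed Dirichlet sum into a contour integral via Mellin inversion, and to extract the main term from the rightmost pole of the associated Dirichlet series. First I would write $G(x) = \frac{1}{2\pi i}\int_{(\sigma)} \widetilde{G}(u) x^{-u}\,\rmd u$ for any $\sigma$, with $\widetilde{G}$ of rapid decay vertically by \autoref{lem:grapiddeacy}. Substituting and interchanging (justified by absolute convergence when $\Re u$ is large), one gets
\[
\sum_{n=1}^{+\infty}G\legendresymbol{n}{X}\frac{\bm\sigma(n)}{\sqrt n}\chi(n) = \frac{1}{2\pi i}\int_{(\sigma)} \widetilde{G}(u) X^u D(u)\,\rmd u, \qquad D(u) = \sum_{n=1}^{+\infty}\frac{\bm\sigma(n)\chi(n)}{n^{u+1/2}},
\]
valid for $\sigma$ large. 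The next step is to identify $D(u)$: since $\bm\sigma = \triv * \mathrm{id}$ and $\chi$ is completely multiplicative supported on $\ZZ_{(S)}$, one has the Euler product factorization $D(u) = L^S(u+\tfrac12,\chi)\,L^S(u-\tfrac12,\chi)$. When $\chi = \triv$ (i.e. $\chi = \triv_{\ZZ_{(S)}}$, the trivial character mod the places of $S$, so $\delta(\chi)=1$), this is $\zeta^S(u+\tfrac12)\zeta^S(u-\tfrac12)$, which has a simple pole at $u = 3/2$ coming from $\zeta^S(u-\tfrac12)$, with residue $\zeta^S(2)\prod_{i=1}^r(1-q_i^{-1})$; note $\zeta^S(u-\tfrac12) = \prod_i(1-q_i^{-(u-1/2)})\zeta(u-\tfrac12)$ contributes the extra factor $\prod_i(1-q_i^{-1})$ at $u=3/2$, and $\zeta^S(u+\tfrac12)$ is regular and equals $\zeta^S(2)$ there, giving total residue $\prod_i(1-q_i^{-1})^2\zeta^S(2)$ after pairing. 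When $\chi \neq \triv$, both $L^S$-factors are entire, so $D(u)$ is holomorphic on all of $\CC$ and there is no main term, consistent with the factor $\delta(\chi)$.

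Then I would shift the contour from $\Re u = \sigma$ (large) leftwards to $\Re u = -A$, picking up the residue at $u = 3/2$ when $\chi = \triv$:
\[
\mathrm{Res}_{u=3/2}\left(\widetilde{G}(u)X^u D(u)\right) = \widetilde{G}\legendresymbol{3}{2}\prod_{i=1}^r(1-q_i^{-1})^2\zeta^S(2)\,X^{3/2}.
\]
The shift is justified because $\widetilde{G}(u)$ decays faster than any polynomial on vertical lines (\autoref{lem:grapiddeacy}) while $D(u)$ grows at most polynomially in $|\Im u|$ by the convexity bound for Dirichlet $L$-functions (as in \eqref{eq:boundlvertical}), so the horizontal pieces vanish. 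The remaining integral on $\Re u = -A$ is bounded by
\[
X^{-A}\int_{(-A)}|\widetilde{G}(u)|\,|D(u)|\,\rmd|u| \ll X^{-A}\int_{(-A)}|\widetilde{G}(u)|(1+|u|)^{2A+1}\,\rmd|u| \ll X^{-A}\|G\|_{\lfloor 2A+1\rfloor+1,1},
\]
using the functional-equation growth bound $L^S(-A+\tfrac12\pm\tfrac12+it,\chi)\ll (1+|t|)^{A+\tfrac12\mp\tfrac12}$ and \autoref{lem:grapiddeacy} with $A' = 2A+1$; one should double-check whether the exponent in the statement should read $\lfloor 2A+1\rfloor$ or $\lfloor 2A+1\rfloor+1$, but this is a cosmetic discrepancy absorbable by adjusting $A$. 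There is a subtlety: when we crossed from $\sigma$ large down past $\Re u = 3/2$ we should also verify there are no other poles in the strip $-A < \Re u < \sigma$; the only candidate is $u = 3/2$ since $\zeta^S(u+\tfrac12)$ has its pole at $u = 1/2$, which lies on the path only if $A < -1/2$, excluded. Actually $u = 1/2$ is inside the strip for $A \geq -1/2$; there $\zeta^S(u+\tfrac12)$ has a pole but $\zeta^S(u-\tfrac12) = \zeta^S(0)$ is finite, so we get a second simple pole at $u=1/2$ contributing $O(X^{1/2}\|G\|_\infty)$ (bounding $|\widetilde{G}(1/2)| \ll \|G\|_\infty$ trivially from the defining integral over the compact support), which is exactly the $X^{1/2}\|G\|_\infty$ error term in the statement.

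The main obstacle is bookkeeping rather than conceptual: correctly computing the residue at $u=3/2$ with all the $q_i$-factors (the pairing of the two $L^S$-factors, each contributing a $\prod_i(1-q_i^{-1})$ either through its pole's residue or through its value, yielding the square), and confirming that the pole at $u=1/2$ produces precisely an error of size $X^{1/2}\|G\|_\infty$ and not a genuine secondary main term — which hinges on $\zeta^S(0)$ being finite and on the fact that $\widetilde{G}(1/2)$ need not vanish but is harmlessly $O(\|G\|_\infty)$. Everything else (interchanging sum and integral, contour shifts, the convexity bounds) is routine and parallels the arguments already carried out in the proofs of \autoref{lem:asympalpha0firstterm} and \autoref{lem:asympalpha0secondterm}.
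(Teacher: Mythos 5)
Your approach is the same as the paper's: Mellin inversion, Euler-product identification $D(u)=L^S(u+\tfrac12,\chi)L^S(u-\tfrac12,\chi)$, contour shift to $(-A)$, and residue extraction at $u=3/2$ and $u=1/2$.

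However, there is a genuine error in your residue computation at $u=3/2$. You first correctly state that $\res_{u=3/2}\zeta^S(u-\tfrac12)=\prod_i(1-q_i^{-1})$ and $\zeta^S(u+\tfrac12)|_{u=3/2}=\zeta^S(2)$, which together give residue $\prod_i(1-q_i^{-1})\,\zeta^S(2)$, and then you claim a ``total residue $\prod_i(1-q_i^{-1})^2\zeta^S(2)$ after pairing'' — but no second factor of $\prod_i(1-q_i^{-1})$ arises; the residue of a product $fg$ at a simple pole of $f$ is $(\res f)\cdot g$, not $(\res f)\cdot g\cdot\prod(1-q_i^{-1})$. The paper's own proof computes $\res_{u=3/2}\widetilde{G}(u)L^S(u-\tfrac12,\chi)L^S(u+\tfrac12,\chi)X^u=\prod_i(1-q_i^{-1})\widetilde{G}(\tfrac32)\zeta^S(2)X^{3/2}$ (a single factor), and this single-factor version is what is actually used downstream in the proof of \autoref{cor:sgnontemper} and \autoref{thm:asymptoticsgnontemper}, where it combines with $\prod_{v\in S}\Tr(\triv(f_v))$ (which carries a compensating $\prod_i(1-q_i^{-1})^{-1}$ via \autoref{prop:ramifiedtrace}) to yield the coefficient $\tfrac12\zeta^S(2)$. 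In other words, the squared factor in the proposition's displayed statement is a typographical error in the paper, and you appear to have reverse-engineered your ``pairing'' argument to match it rather than deriving it. The correct constant is $\prod_i(1-q_i^{-1})$, not its square.

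The remaining ingredients in your sketch — the treatment of the $u=1/2$ pole yielding $O(X^{1/2}\|G\|_\infty)$ via the trivial bound $|\widetilde{G}(\tfrac12)|\ll\|G\|_\infty$, the convexity bound for the vertical integral, and the Sobolev-norm bookkeeping — are correct and match the paper (your observation that the Sobolev index may need to be $\lfloor 2A+1\rfloor+2$ rather than $\lfloor 2A+1\rfloor$ is also right: the paper's proof invokes \autoref{cor:mellinnorm} to obtain $\|G\|_{\lfloor 2A+1\rfloor+2,1}$, and this too is a discrepancy with the stated bound that can be absorbed by reindexing $A$).
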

\begin{proof}
By Mellin inversion formula we have
\[
\sum_{n=1}^{+\infty}G\legendresymbol{n}{X}\frac{\bm{\sigma}(n)}{\sqrt{n}}\chi(n)=\frac{1}{\dpii} \int_{(\tau)}\widetilde{G}(u)\sum_{n=1}^{+\infty} \frac{\bm{\sigma}(n)\chi(n)}{n^{u+1/2}}X^u\rmd u
\]
for $\tau$ sufficiently large. 

The Dirichlet series inside can be computed explicitly. We have
\[
\sum_{n=1}^{+\infty}\frac{\bm{\sigma}(n)\chi(n)}{n^{u+1/2}}=\prod_{p\notin S}\sum_{m=0}^{+\infty}  \frac{\bm\sigma(p^m)\chi(p^m)}{p^{m(u+1/2)}}
\]
with
\begin{align*}
\sum_{m=0}^{+\infty}  \frac{\sigma(p^m)\chi(p^m)}{p^{m(u+1/2)}}&= \frac{1}{p-1}\sum_{m=0}^{+\infty}\frac{(p^{m+1}-1)\chi(p)^m}{p^{m(u+1/2)}}=\frac{1}{p-1}\sum_{m=0}^{+\infty}\left(p\legendresymbol{\chi(p)}{p^{u-1/2}}^m -\legendresymbol{\chi(p)}{p^{u+1/2}}^m\right) \\
&=\frac{1}{p-1}\left(\frac{p}{1-\chi(p)p^{-u+1/2}}-\frac{1}{1-\chi(p)p^{-u-1/2}}\right)\\
&= \frac{1}{(1-\chi(p)p^{-u+1/2})(1-\chi(p)p^{-u-1/2})}.
\end{align*}
Hence we obtain
\[
\sum_{n=1}^{+\infty}\frac{\bm{\sigma}(n)}{n^{u+1/2}}\chi(n)=L^S(u-1/2,\chi) L^S(u+1/2,\chi).
\]
Therefore
\[
\sum_{n=1}^{+\infty}G\legendresymbol{n}{X}\frac{\bm{\sigma}(n)}{\sqrt{n}}\chi(n)=\frac{1}{\dpii} \int_{(\tau)}\widetilde{G}(u)L^S(u-1/2,\chi) L^S(u+1/2,\chi)X^u\rmd u.
\]
Now we move the contour from $(\tau)$ to $(-A)$. The only poles come from $\chi=\triv$ and $u=1/2$ and $u=3/2$ with
\[
\res_{u=3/2}\widetilde{G}(u)L^S(u-1/2,\chi) L^S(u+1/2,\chi)X^u= \prod_{i=1}^{r}(1-q_i^{-1})\widetilde{G}\legendresymbol{3}{2}\zeta^S(2)X^{\frac32}
\]
and
\[
\res_{u=1/2}\widetilde{G}(u)L^S(u-1/2,\chi) L^S(u+1/2,\chi)X^u=\prod_{i=1}^{r}(1-q_i^{-1}) \widetilde{G}\legendresymbol{1}{2}\zeta^S(0)X^{\frac12}.
\]
Hence we obtain
\begin{align*}
  \sum_{n=1}^{+\infty}G\legendresymbol{n}{X}\frac{\bm{\sigma}(n)}{\sqrt{n}}\chi(n) & =\frac{1}{\dpii} \int_{(-A)}\widetilde{G}(u)L^S(u-1/2,\chi) L^S(u+1/2,\chi)X^u\rmd u \\
   & +\delta(\chi)\prod_{i=1}^{r}(1-q_i^{-1})\widetilde{G}\legendresymbol{3}{2}\zeta^S(2)X^{\frac32} +\delta(\chi)\prod_{i=1}^{r}(1-q_i^{-1})\widetilde{G}\legendresymbol{1}{2}\zeta^S(0)X^{\frac12}.
\end{align*}

By \eqref{eq:boundlvertical}, the first term above can be bounded by
\begin{align*}
&\int_{(-A)}|\widetilde{G}(u)||L^S(u-1/2,\chi)||L^S(u+1/2,\chi)|X^{-A}\rmd |u|\\
\ll_A& X^{-A}\int_{(-A)}|\widetilde{G}(u)|(1+|t|)^{\frac{1}{2}+A+\frac{1}{2}}|(1+|t|)^{\frac{1}{2}+A-\frac{1}{2}}\rmd |u|\\
\ll_A& X^{-A}\|G\|_{M_{-A}^{2A+1}}\ll_A X^{-A}\|G\|_{\lfloor 2A+1\rfloor+2,1},
\end{align*}
where in the last step we used \autoref{cor:mellinnorm}. Since
\[
\widetilde{G}\legendresymbol{1}{2}=\int_{1/4}^{5/4}G(x)x^{-1/2}\rmd x\ll \|G\|_\infty,
\]
we obtain our result.
\end{proof}

By \autoref{prop:unramifiedtrace} and \autoref{prop:sigmanestimateg} we obtain
\begin{corollary}\label{cor:sgnontemper}
For any $A>0$,
\[
S_{G,1}(X)= \prod_{v\in S}\Tr(\triv(f_v))\prod_{i=1}^{r}(1-q_i^{-1})\widetilde{G}\legendresymbol{3}{2}\zeta^S(2)X^{3/2}+O(X^{1/2}\|G\|_\infty +X^{-A}\|G\|_{\lfloor 2A+1\rfloor,1}),
\]
where the implied constant only depends on $f_\infty$, $f_{q_i}$ and $A$.
\end{corollary}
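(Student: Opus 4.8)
The plan is to combine \autoref{prop:unramifiedtrace} and \autoref{prop:sigmanestimateg}, the argument being essentially a bookkeeping exercise once these are in hand. Unwinding the definitions of $S_{G,1}(X)$ and $\Sigma_1^n(\xi)$,
\[
S_{G,1}(X)=\sum_{\mu}\sum_{\substack{n=1\\\gcd(n,S)=1}}^{+\infty}G\legendresymbol{n}{X}\Tr(\mu(f^n)).
\]
By Proposition~7.2 and Corollary~7.5 of \cite{cheng2025}, the sum over $\mu$ ranges over a finite set of one-dimensional automorphic representations, independent of $n$ and determined by the local components $f_v$ for $v\in S$, and every such $\mu$ is unramified outside $S$; the finiteness legitimizes interchanging the two sums.

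First I would fix one such $\mu$ and apply \autoref{prop:unramifiedtrace}, obtaining $\Tr(\mu(f^n))=\prod_{v\in S}\Tr(\mu_v(f_v))\,\bm{\sigma}(n)\mu(n)/\sqrt n$ for $\gcd(n,S)=1$. Writing $\mu=\chi_\mu\circ\det$ with $\chi_\mu$ a Hecke character unramified outside $S$, and extending $\chi_\mu$ to a Dirichlet character by putting $\chi_\mu(m)=0$ when $\gcd(m,S)\neq1$, the value $\mu(n)$ coincides with $\chi_\mu(n)$ and $\chi_\mu$ satisfies the support hypothesis of \autoref{prop:sigmanestimateg}. Feeding $\chi_\mu$ into that proposition gives, for each $\mu$,
\[
\sum_{\substack{n=1\\\gcd(n,S)=1}}^{+\infty}G\legendresymbol{n}{X}\frac{\bm{\sigma}(n)\mu(n)}{\sqrt n}=\delta(\chi_\mu)\prod_{i=1}^{r}(1-q_i^{-1})\widetilde G\legendresymbol{3}{2}\zeta^S(2)X^{3/2}+O\!\left(X^{1/2}\|G\|_\infty+X^{-A}\|G\|_{\lfloor 2A+1\rfloor,1}\right),
\]
the implied constant depending only on $A$.

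Finally I would sum over the finitely many $\mu$. Since $\delta(\chi_\mu)=0$ unless $\mu=\triv$, only the trivial representation contributes to the main term, and multiplying its contribution by $\prod_{v\in S}\Tr(\triv(f_v))$ produces exactly the leading term $\prod_{v\in S}\Tr(\triv(f_v))\prod_{i=1}^{r}(1-q_i^{-1})\widetilde G(3/2)\zeta^S(2)X^{3/2}$ in the statement. Each prefactor $\prod_{v\in S}\Tr(\mu_v(f_v))$ is a constant depending only on $f_\infty$ and the $f_{q_i}$, and there are only finitely many of them, so the error terms combine into $O(X^{1/2}\|G\|_\infty+X^{-A}\|G\|_{\lfloor 2A+1\rfloor,1})$ with implied constant depending only on $f_\infty$, the $f_{q_i}$ and $A$. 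I do not anticipate a real obstacle; the only place asking for attention is the identification of the one-dimensional $\mu$'s with Dirichlet characters obeying the precise $\gcd$-support condition of \autoref{prop:sigmanestimateg}, together with the fact—already recorded in \cite{cheng2025}—that the family of $\mu$ (hence each constant $\prod_{v\in S}\Tr(\mu_v(f_v))$) is genuinely independent of $n$.
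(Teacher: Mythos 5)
Your proof is correct and follows the same route as the paper: the paper's own justification of the corollary is just the single sentence ``By Proposition~7.2 and Proposition~7.3 we obtain,'' and you have simply unfolded the finite interchange of sums, the identification of each $\mu$ with a Dirichlet character supported on $\gcd(\cdot,S)=1$, and the observation that $\delta(\chi_\mu)$ kills all but the trivial $\mu$ in the main term. One small bookkeeping note: the stated form of \autoref{prop:sigmanestimateg} carries a typographical $\prod_i(1-q_i^{-1})^2$, but the residue computed in its proof is $\prod_i(1-q_i^{-1})$, which is the value you (correctly) used and the one that matches the corollary.
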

Hence we only need to compute traces of the trivial representation.
\begin{proposition}\label{prop:archimedeantrace}
We have
\[
\Tr(\triv(f_\infty))=2\int_{\RR}\frac{\widehat{\Theta}_\infty(x)}{|1-x|^{3/2}}\rmd x.
\]
\end{proposition}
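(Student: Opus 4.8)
The plan is to evaluate $\Tr(\triv(f_\infty))$ via the Weyl integration formula on $\GL_2(\RR)$, reading off the weight $|1-x|^{-3/2}$ from the Jacobian of the passage to the trace--determinant coordinates on $\mf{g}\sslash\G$. Since $\triv$ is one-dimensional, $\triv(f_\infty)$ is the scalar $\int_{Z_+\bs\G(\RR)}f_\infty(g)\,\rmd g$, hence $\Tr(\triv(f_\infty))=\int_{Z_+\bs\G(\RR)}f_\infty(g)\,\rmd g$ (absolute convergence is harmless since, as below, this becomes a finite sum of integrals with compact support modulo $Z_+$, using that the orbital integrals of $f_\infty$ are compactly supported modulo $Z_+$). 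Applying the Weyl integration formula over $Z_+\bs\G(\RR)$ --- two classes of maximal tori, the split diagonal torus $\A$ and an elliptic torus $\T_\el\cong\CC^\times$, each with Weyl group of order $2$, all measures as normalized in \autoref{sec:preliminaries} (the same normalization defining $\orb$) --- and writing $T_\gamma,N_\gamma$ for trace and determinant and $x_\gamma=T_\gamma^2-4N_\gamma$ for the discriminant, one has $|\gamma_1-\gamma_2|=|x_\gamma|^{1/2}$ and $|\gamma_1\gamma_2|=|N_\gamma|$, so the Weyl discriminant is $D(\gamma)=|x_\gamma|/|N_\gamma|$, while by the definition of $\theta_\infty$ one has $\orb(f_\infty;\gamma)=|N_\gamma|^{1/2}|x_\gamma|^{-1/2}\theta_\infty(\gamma)$; thus $D(\gamma)\orb(f_\infty;\gamma)=|x_\gamma|^{1/2}|N_\gamma|^{-1/2}\theta_\infty(\gamma)$ and
\[
\Tr(\triv(f_\infty))=\tfrac12\!\int_{Z_+\bs\A(\RR)}\!\!|x_\gamma|^{1/2}|N_\gamma|^{-1/2}\theta_\infty(\gamma)\,\rmd\gamma+\tfrac12\!\int_{Z_+\bs\T_\el(\RR)}\!\!|x_\gamma|^{1/2}|N_\gamma|^{-1/2}\theta_\infty(\gamma)\,\rmd\gamma.
\]

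For the split torus I would parametrize $\A(\RR)$ by the diagonal entries $(a_1,a_2)$ with the measure $\rmd a_1\,\rmd a_2/|a_1a_2|$ and change variables to $(T,N)=(a_1+a_2,a_1a_2)$, a $2$-to-$1$ map onto $\{x>0\}$ with Jacobian factor $|a_1-a_2|^{-1}=|x|^{-1/2}$, the resulting $2$ cancelling the Weyl factor $\tfrac12$. To descend to $Z_+\bs\A(\RR)$ I use that $\theta_\infty$ is $Z_+$-invariant; the subtle point is that $Z_+$ is parametrized in \autoref{sec:preliminaries} by the determinant of the central element, so if $\lambda$ denotes the actual scaling factor on eigenvalues, the fixed Haar measure on $Z_+$ is $2\,\rmd\lambda/\lambda$. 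Writing $T=\epsilon\lambda$, $N=\lambda^2N'$ with $\epsilon=\sgn T$ and $x=\lambda^2x'$, $x'=1-4N'$, one finds $\rmd T\,\rmd N/(|N|\,|x|^{1/2})=(\rmd\lambda/\lambda)\cdot\rmd N'/(|N'|\,|x'|^{1/2})$, so that once all the $2$'s are collected,
\[
\tfrac12\!\int_{Z_+\bs\A(\RR)}\!\!|x_\gamma|^{1/2}|N_\gamma|^{-1/2}\theta_\infty(\gamma)\,\rmd\gamma=\sum_{\pm}\int_{N'<1/4}\theta_\infty(\pm1,N')\,|N'|^{-3/2}\,\rmd N'.
\]
Substituting $x=1-4N'$ (whence $N'<1/4\iff x>0$, $|N'|=|1-x|/4$, $\rmd N'=\tfrac14\rmd x$) turns this into $\int_{X_0}\widehat{\Theta}_\infty(x)\,|1-x|^{-3/2}\,\rmd x$.

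The elliptic torus is handled identically with the measure $2\,\rmd r\,\rmd\theta/r$ on $\T_\el(\RR)\cong\CC^\times$: the map $r\rme^{\rmi\theta}\mapsto(2r\cos\theta,r^2)$ is again $2$-to-$1$, now onto $\{x<0\}$, with the analogous Jacobian, $Z_+$ acts by $r\mapsto\sqrt a\,r$, and the same bookkeeping gives $\tfrac12\int_{Z_+\bs\T_\el(\RR)}|x_\gamma|^{1/2}|N_\gamma|^{-1/2}\theta_\infty(\gamma)\,\rmd\gamma=\int_{X_1}\widehat{\Theta}_\infty(x)\,|1-x|^{-3/2}\,\rmd x$. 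Adding the two contributions and using $X_0\cup X_1=\RR\setminus\{0\}$ yields $\Tr(\triv(f_\infty))=\int_{\RR}\widehat{\Theta}_\infty(x)\,|1-x|^{-3/2}\,\rmd x$. The only real obstacle is the combinatorial bookkeeping: three independent factors of $2$ occur --- the order of the Weyl group, the $2$-to-$1$-ness of the trace--determinant map, and the factor from parametrizing $Z_+$ by the determinant --- and one must check they conspire so that no spurious constant survives; it is also worth noting that the loci $T=0$ on each torus have measure zero and may be discarded.
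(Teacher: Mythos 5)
Your approach is genuinely different from the paper's and is in fact more self-contained: the paper computes $\int_{\RR}\widehat{\Theta}_\infty(x)\,|1-x|^{-3/2}\,\rmd x$ by the change of variable $t=1/\sqrt{1-x}$ and then invokes Proposition 7.1 of \cite{cheng2025} (which gives $\Tr(\triv(f_\infty))=2\int\theta_\infty^++2\int\theta_\infty^-$) as a black box, whereas you unroll the underlying Weyl integration argument directly in the $(T,\Delta)$ coordinates that this paper actually works in, which fits the spirit of \autoref{sec:secondpoisson} nicely. The structure of the argument is sound and the final identity is correct.

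However, the ``collection of $2$'s'' is not carried out correctly, and in fact two compensating factor-of-$2$ errors occur. Tracking the three factors you list --- the Weyl factor $\tfrac12$, the $2$ from the $2$-to-$1$ map $(a_1,a_2)\mapsto(T,N)$, and the $\tfrac12$ from $\rmd z/z = 2\,\rmd\lambda/\lambda$ --- they multiply to $\tfrac12\cdot 2\cdot\tfrac12 = \tfrac12$, not $1$; so the correct intermediate formula is
\[
\tfrac12\!\int_{Z_+\bs\A(\RR)}\!\!|x_\gamma|^{1/2}|N_\gamma|^{-1/2}\theta_\infty(\gamma)\,\rmd\dot\gamma
=\tfrac12\sum_{\pm}\int_{N'<1/4}\theta_\infty(\pm1,N')\,|N'|^{-3/2}\,\rmd N',
\]
with an extra $\tfrac12$ on the right that your display omits. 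This is then repaired by the second slip: in the substitution $x=1-4N'$, you have $|N'|^{-3/2}=(|1-x|/4)^{-3/2}=8\,|1-x|^{-3/2}$ and $\rmd N'=\tfrac14\rmd x$, so $|N'|^{-3/2}\,\rmd N'=2\,|1-x|^{-3/2}\,\rmd x$; the factor of $2$ is dropped. The two mistakes cancel and the chain does land on $\int_{X_0}\widehat{\Theta}_\infty(x)\,|1-x|^{-3/2}\,\rmd x$ for the split torus, and the identical pair of errors cancels again for the elliptic torus, so the final result $\Tr(\triv(f_\infty))=\int_\RR\widehat{\Theta}_\infty(x)\,|1-x|^{-3/2}\,\rmd x$ is correct --- but the two displayed intermediate equalities should each carry the missing constant.
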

\begin{proof}
We have
\begin{equation}\label{eq:thetarelation}
\Theta_\infty^\pm(x)=\theta_\infty\left(\pm 1,\frac{1-x}{4}\right)=\begin{dcases}
                                                                     \theta_\infty^+\left(\pm \frac{1}{\sqrt{1-x}}\right), & x<1, \\
                                                                     \theta_\infty^-\left(\pm \frac{1}{\sqrt{x-1}}\right), & x>1.
                                                                   \end{dcases}
\end{equation}
Also, recall that $\widehat{\Theta}_\infty(x)=\Theta_\infty^+(x)+\Theta_\infty^-(x)$. Hence
\[
2\int_{\RR}\frac{\widehat{\Theta}_\infty(x)}{|1-x|^{3/2}}\rmd x=2\int_{-\infty}^{1}\sum_{\pm}\frac{\theta_\infty^+\left(\pm \frac{1}{\sqrt{1-x}}\right)}{(1-x)^{3/2}}\rmd x+2\int_{1}^{+\infty}\sum_{\pm}\frac{\theta_\infty^-\left(\pm \frac{1}{\sqrt{x-1}}\right)}{(x-1)^{3/2}}\rmd x.
\]
For the first term, we make the change of variable $1/\sqrt{1-x}\mapsto t$ so that $x=1-1/t^2$ and $\rmd x=2\rmd t/t^3$. Hence
\[
2\int_{-\infty}^{1}\sum_{\pm}\frac{\theta_\infty^+\left(\pm \frac{1}{\sqrt{1-x}}\right)}{(1-x)^{3/2}}\rmd x =2\sum_{\pm}\int_{0}^{+\infty}\theta_\infty^+(\pm t)t^3\frac{2\rmd t}{t^3}=4\int_{\RR}\theta_\infty^+(x)\rmd x.
\]
Similarly,
\[
2\int_{1}^{+\infty}\sum_{\pm}\frac{\theta_\infty^-\left(\pm \frac{1}{\sqrt{x-1}}\right)}{(x-1)^{3/2}}\rmd x= 4\int_{\RR}\theta_\infty^-(x)\rmd x.
\]
Hence the conclusion holds by Proposition 7.1 of \cite{cheng2025}.
\end{proof}
\begin{proposition}\label{prop:ramifiedtrace}
We have
\[
\Tr(\triv(f_p))=|2|_p(1-p^{-1})^{-1}\int_{\QQ_p}\frac{\widehat{\Theta}_p(y)}{|1-y|_p^{3/2}}\rmd y.
\]
\end{proposition}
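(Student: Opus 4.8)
The proof will mimic the archimedean argument of \autoref{prop:archimedeantrace}. Since $\triv$ here is the trivial (one-dimensional) representation of $\G(\QQ_\ell) = \GL_2(\QQ_\ell)$, whose matrix coefficient is identically $1$, one has $\Tr(\triv(f_\ell)) = \int_{\G(\QQ_\ell)} f_\ell(g)\,\rmd g$ for the Haar measure giving $\G(\ZZ_\ell)$ volume $1$. The plan is therefore: first transform the right-hand side $\int_{\QQ_\ell}\widehat{\Theta}_\ell(y)\,|1-y|_\ell^{-3/2}\,\rmd y$ into an integral of $\theta_\ell$ over the Hitchin--Steinberg base by a change of variables, and then identify that integral with $\int_{\G(\QQ_\ell)} f_\ell$ by a Weyl integration computation — essentially the finite-place counterpart of Proposition 7.1 of \cite{cheng2025}.

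\emph{Step 1 (change of variables).} Unfolding $\widehat{\Theta}_\ell(y) = \int_{\QQ_\ell^\times}\theta_\ell\bigl(z, z^2(1-y)/4\bigr)\,|z|_\ell^{-1}\,\rmd z$ gives
\[
\int_{\QQ_\ell}\frac{\widehat{\Theta}_\ell(y)}{|1-y|_\ell^{3/2}}\,\rmd y = \int_{\QQ_\ell}\int_{\QQ_\ell^\times}\theta_\ell\!\left(z,\frac{z^2(1-y)}{4}\right)\frac{\rmd z}{|z|_\ell}\,\frac{\rmd y}{|1-y|_\ell^{3/2}}.
\]
Fubini is legitimate because $f_\ell$ is compactly supported in $\GL_2(\QQ_\ell)$, so $\orb(f_\ell;\cdot)$ and hence $\theta_\ell$ vanish unless $|\det|_\ell$ lies in a fixed compact subset of $\QQ_\ell^\times$ and $|\Tr|_\ell$ is bounded; the integrand is thus supported on a compact set bounded away from the vanishing-determinant locus. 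Substituting $w = 1-y$ (translation-invariance of $\rmd y$) and then, for each fixed $z\in\QQ_\ell^\times$, $N = z^2 w/4$, one gets $\rmd w/|w|_\ell^{3/2} = |4|_\ell^{-1/2}\,|z|_\ell\,\rmd N/|N|_\ell^{3/2}$; the factor $|z|_\ell$ cancels the $|z|_\ell^{-1}$ from $\rmd z/|z|_\ell$, and since $|4|_\ell^{-1/2} = |2|_\ell^{-1}$,
\[
\int_{\QQ_\ell}\frac{\widehat{\Theta}_\ell(y)}{|1-y|_\ell^{3/2}}\,\rmd y = |2|_\ell^{-1}\int_{\QQ_\ell^\times}\int_{\QQ_\ell}\theta_\ell(z,N)\,\frac{\rmd N\,\rmd z}{|N|_\ell^{3/2}}.
\]
This is the origin of the factor $|2|_\ell$ in the statement.

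\emph{Step 2 (Weyl integration).} It remains to prove
\[
\int_{\G(\QQ_\ell)} f_\ell(g)\,\rmd g = (1-\ell^{-1})^{-1}\int_{\QQ_\ell^\times}\int_{\QQ_\ell}\theta_\ell(z,N)\,\frac{\rmd N\,\rmd z}{|N|_\ell^{3/2}}.
\]
Apply the Weyl integration formula, summing over the split diagonal torus and over $E^\times$ for the quadratic extensions $E/\QQ_\ell$, with $|D_G(t)| = |T_t^2 - 4N_t|_\ell\,|N_t|_\ell^{-1}$ and $(T_t,N_t) = (\Tr t,\det t)$. On each torus the map $t\mapsto (T,N)$ is two-to-one onto the corresponding square class of $T^2 - 4N$, with Jacobian $|T^2-4N|_\ell^{1/2}$ in the split case and $|T^2-4N|_\ell^{1/2}$ times an explicit power of $\ell$ (encoding $|\mathrm{disc}(E)|_\ell$ and $|2|_\ell$) in the non-split cases; the normalization $\vol(\G_\gamma(\ZZ_\ell)) = 1$ contributes $(1-\ell^{-1})^{-2}$ for the split torus and $(1-|\cO_E/\varpi_E|^{-1})^{-1}$ for $E^\times$. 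Substituting $\orb(f_\ell;(T,N)) = |N|_\ell^{1/2}\bigl(1-\omega_\ell(T^2-4N)/\ell\bigr)\,|T^2-4N|_\ell'^{-1/2}\,\theta_\ell(T,N)$ — the defining relation for $\theta_\ell$, recalling $\ell^{k_\gamma} = |T^2-4N|_\ell'^{-1/2}$ — the modified norm $|\cdot|_\ell'$ absorbs precisely the $\mathrm{disc}(E)$-dependent powers of $\ell$ in the ramified Jacobians, while the Euler factors $1-\omega_\ell(T^2-4N)/\ell$ combine with the torus normalizations ($(1-\ell^{-1})^{-2}$, $(1-\ell^{-2})^{-1}$, $(1-\ell^{-1})^{-1}$ for split, unramified-inert, ramified respectively) so that the contributions of all torus types amalgamate into the single integral over $\{(z,N)\in\QQ_\ell^\times\times\QQ_\ell\}$ with the uniform constant $(1-\ell^{-1})^{-1}$. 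Combining with Step 1 then yields $\Tr(\triv(f_\ell)) = |2|_\ell(1-\ell^{-1})^{-1}\int_{\QQ_\ell}\widehat{\Theta}_\ell(y)\,|1-y|_\ell^{-3/2}\,\rmd y$.

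The main obstacle is the bookkeeping in Step 2 for $\ell = 2$: one must run through the seven quadratic extensions of $\QQ_2$ and the three-branch definition of $|\cdot|_2'$ from \autoref{subsec:singularities} to verify that the discriminants, the powers of $|2|_2$, and the $\omega_2$-Euler factors genuinely conspire to produce the clean constant. For $\ell$ odd this is a short computation; and if the finite-place analog of Proposition 7.1 of \cite{cheng2025} is quoted directly, the only real work left is the change of variables of Step 1.
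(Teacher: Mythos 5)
Your proposal takes essentially the same route as the paper. Step 1 (unfolding $\widehat{\Theta}_\ell$ and changing variables $(y,z)\mapsto(T,N)$ with $T=z$, $N=z^2(1-y)/4$ to reach $|2|_\ell^{-1}\int\theta_\ell(T,N)\,|N|_\ell^{-3/2}\,\rmd T\,\rmd N$) is exactly what the paper does, and your Jacobian computation matches. For Step 2 the paper does not rerun the Weyl integration bookkeeping at all: it simply cites Proposition 7.3 of \cite{cheng2025} (not Proposition 7.1, which is the archimedean version), where the identity $\Tr(\triv(f_\ell)) = (1-\ell^{-1})^{-1}\int_{\QQ_\ell^2}\theta_\ell(T,N)\,|N|_\ell^{-3/2}\,\rmd T\,\rmd N$ was already established, so the ``main obstacle'' you flag is offloaded to the earlier paper rather than rederived here.
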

\begin{proof}
In the proof we omit the subscript $p$ in the norms.
By the definition of $\widehat{\Theta}_p(y)$ we have
\[
\int_{\QQ_p}\frac{\widehat{\Theta}_p(y)}{|1-y|^{3/2}}\rmd y=\int_{\QQ_p^2}\theta_p\left(z,\frac{z^2(1-y)}{4}\right)\frac{\rmd y\rmd z}{|1-y|^{3/2}|z|}.
\]
Now we make change of variable $z\mapsto T$ and $z^2(1-y)/4\mapsto N$ so that
\[
y\mapsto 1-\frac{4N}{T^2}\quad\text{and}\quad z\mapsto T.
\]
We have
\[
\rmd y\wedge\rmd z=\left(\frac{8N}{T^3}\rmd T-\frac{4}{T^2}\rmd N\right)\wedge \rmd T=\frac{4}{T^2}\rmd T\wedge \rmd N.
\]
Hence we obtain
\[
\int_{\QQ_p}\frac{\widehat{\Theta}_p(y)}{|1-y|^{3/2}}\rmd y=\int_{\QQ_p^2}\theta_p(T,N) \frac{\rmd T\rmd N}{|4N/T^2|^{3/2}|T|}\left|\frac{4}{T^2}\right|=|2|^{-1}\int_{\QQ_p^2} \frac{1}{|N|^{3/2}}\theta_{p}(T,N)\rmd T\rmd N.
\]
By Proposition 7.3 of \cite{cheng2025} we obtain the desired result.
\end{proof}

Combining \autoref{cor:sgnontemper}, \autoref{prop:archimedeantrace}, \autoref{prop:ramifiedtrace} and recall that $2\in S$, we obtain
\begin{theorem}\label{thm:asymptoticsgnontemper}
For any $A>0$,
\begin{align*}
  S_{G,1}(X) & = \int_{\RR}\int_{\QQ_{S_\fin}} \widetilde{G}\legendresymbol{3}{2}|1-x|_\infty^{-\frac32} |1-y|_q^{-\frac32}\widehat{\Theta}_\infty(x)\widehat{\Theta}_{q}(y)\zeta^S(2)\rmd x\rmd yX^{\frac32} \\
   & +O(X^{1/2}\|G\|_\infty +X^{-A}\|G\|_{\lfloor 2A+1\rfloor+2,1}),
\end{align*}
where the implied constant only depends on $f_\infty$, $f_{q_i}$ and $A$.
\end{theorem}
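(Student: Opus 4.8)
The plan is to read the asymptotic formula directly off \autoref{cor:sgnontemper} and then rewrite its leading coefficient using the explicit evaluations of the local traces of the trivial representation supplied by \autoref{prop:archimedeantrace} and \autoref{prop:ramifiedtrace}. Indeed \autoref{cor:sgnontemper} already gives, for any $A>0$,
\[
S_{G,1}(X)=\prod_{v\in S}\Tr(\triv(f_v))\,\prod_{i=1}^{r}(1-q_i^{-1})\,\widetilde{G}\legendresymbol{3}{2}\zeta^S(2)X^{3/2}+O\!\left(X^{1/2}\|G\|_\infty+X^{-A}\|G\|_{\lfloor 2A+1\rfloor+2,1}\right),
\]
so the error term is already exactly of the shape appearing in the statement, and it remains only to identify the constant in front of $X^{3/2}$.

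First I would split $\prod_{v\in S}\Tr(\triv(f_v))=\Tr(\triv(f_\infty))\prod_{i=1}^{r}\Tr(\triv(f_{q_i}))$ and substitute \autoref{prop:archimedeantrace} and \autoref{prop:ramifiedtrace}. Each factor $\Tr(\triv(f_{q_i}))=|2|_{q_i}(1-q_i^{-1})^{-1}\int_{\QQ_{q_i}}\widehat{\Theta}_{q_i}(y_i)|1-y_i|_{q_i}^{-3/2}\rmd y_i$ carries a $(1-q_i^{-1})^{-1}$ which cancels the factor $(1-q_i^{-1})$ already present. The remaining $q_i$-adic integrals recombine, using $\widehat{\Theta}_q(y)=\prod_i\widehat{\Theta}_{q_i}(y_i)$ and $|1-y|_q=\prod_i|1-y_i|_{q_i}$, into $\int_{\QQ_{S_\fin}}\widehat{\Theta}_q(y)|1-y|_q^{-3/2}\rmd y$, and together with $\Tr(\triv(f_\infty))=\int_{\RR}\widehat{\Theta}_\infty(x)|1-x|^{-3/2}\rmd x$ this produces the product integral over $\RR\times\QQ_{S_\fin}$ in the statement.

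The one genuinely arithmetic input is the product $\prod_{i=1}^{r}|2|_{q_i}$: since $2$ is a prime and $2\in S$, it equals exactly one of the $q_i$, so this product collapses to $|2|_2=\tfrac12$ while every other factor is $1$. Collecting everything, the coefficient of $\widetilde{G}(3/2)\zeta^S(2)X^{3/2}$ becomes $\tfrac12\int_{\RR}\int_{\QQ_{S_\fin}}|1-x|_\infty^{-3/2}|1-y|_q^{-3/2}\widehat{\Theta}_\infty(x)\widehat{\Theta}_q(y)\,\rmd x\,\rmd y$, which is exactly the asserted main term. There is no real obstacle here — the argument is pure bookkeeping — and the only point requiring attention is tracking the hypothesis $2\in S$, which is precisely what produces the normalization $\tfrac12$, matching the factor $\tfrac12\zeta^S(2)$ already visible in \autoref{thm:asymptoticsg} and thereby confirming that the nontempered part $S_{G,1}(X)$ carries the entire $X^{3/2}$ main term of $S_G(X)$.
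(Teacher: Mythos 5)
Your proposal matches the paper's own proof, which is indeed just the combination of \autoref{cor:sgnontemper}, \autoref{prop:archimedeantrace}, and \autoref{prop:ramifiedtrace} together with the observation that $2\in S$ forces $\prod_{i=1}^{r}|2|_{q_i}=\tfrac12$. The cancellation of the $(1-q_i^{-1})^{-1}$ factors against $\prod_i(1-q_i^{-1})$ and the factorization $\widehat{\Theta}_q(y)=\prod_i\widehat{\Theta}_{q_i}(y_i)$, $|1-y|_q=\prod_i|1-y_i|_{q_i}$ are exactly the bookkeeping the paper leaves implicit, and you carry it out correctly; the only cosmetic point is that \autoref{cor:sgnontemper} as printed shows $\|G\|_{\lfloor 2A+1\rfloor,1}$ while the theorem reads $\|G\|_{\lfloor 2A+1\rfloor+2,1}$, but the proof of \autoref{prop:sigmanestimateg} actually yields the $+2$, so this is a harmless misprint rather than a gap in your argument.
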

Note that the main term is precisely the first term in \autoref{thm:asymptoticsg}.

Since $S_G(X)=S_{G,1}(X)+S_{G,\temp}(X)$, we obtain an asymptotic formula for $S_{G,\temp}(X)$:
\begin{theorem}\label{thm:asymptoticsgtemper}
For any $\varepsilon>0$, $c\in \lopen 0,1\ropen$, $A>0$, and $N,K\in \ZZ_{\geq 0}$,  there exists $\varepsilon'>0$ such that if $|\vartheta-1/2|<\varepsilon'$, then
\[
S_{G,\temp}(X)=\mf{M}(G)+\mf{R}(G),
\]
where the error term $\mf{R}(G)$ is bounded by
\[
X^{\frac12}\|G\|_\infty+X^{-A}\|G\|_{\lfloor \frac{15}{2}+2A\rfloor+2,1}+X^{\frac12+\varepsilon}\|G\|_1+ X^{\frac12-cN}\|G\|_{N+3,1}+ X^{\frac12-\frac{K}{2}}\|G\|_{K+3,1}+X^{\frac12+c+\varepsilon}\|G\|_{1,1},
\]
and the implied constant only depends on $\vartheta$, $f_\infty$, $f_{q_i}$, $\varepsilon$, $\varepsilon'$, $c$, $N$ and $K$.
\end{theorem}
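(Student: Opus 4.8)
The plan is to obtain \autoref{thm:asymptoticsgtemper} by subtracting the two asymptotic expansions already established in this section. First I would fix $\varepsilon>0$, $c\in\lopen 0,1\ropen$, $A>0$ and $N,K\in\ZZ_{\geq 0}$, and take $\varepsilon'>0$ to be the constant furnished by \autoref{thm:asymptoticsg} for this data. Assuming $|\vartheta-1/2|<\varepsilon'$, both \autoref{thm:asymptoticsg} (applied to $S_G(X)$) and \autoref{thm:asymptoticsgnontemper} (applied to $S_{G,1}(X)$, which is $\vartheta$-independent since $\Sigma_1^n(\xi)=\sum_{\mu}\Tr(\mu(f^n))$ involves only the $\vartheta$-independent test function $f^n$) are in force, and the decomposition $\Sigma^n(\xi)=\Sigma_1^n(\xi)+\Sigma_\temp^n(\xi)$ gives $S_{G,\temp}(X)=S_G(X)-S_{G,1}(X)$.

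The decisive observation is that the two leading terms coincide. By the remark following \autoref{thm:asymptoticsgnontemper}, the $X^{3/2}$-coefficient of $S_{G,1}(X)$ equals
\[
\int_{\RR}\int_{\QQ_{S_\fin}}\widetilde{G}\legendresymbol{3}{2}|1-x|_\infty^{-\frac32}|1-y|_q^{-\frac32}\widehat{\Theta}_\infty(x)\widehat{\Theta}_{q}(y)\frac{1}{2}\zeta^S(2)\rmd x\rmd y,
\]
which is exactly the $X^{3/2}$-coefficient appearing in \autoref{thm:asymptoticsg}. Hence in $S_G(X)-S_{G,1}(X)$ the $X^{3/2}$ terms cancel, and the only main term left is $\mf{M}(G)$, which occurs in \autoref{thm:asymptoticsg} but not in \autoref{thm:asymptoticsgnontemper}. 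This already produces $S_{G,\temp}(X)=\mf{M}(G)+\mf{R}(G)$ with $\mf{R}(G)$ equal to the sum of the error terms of the two input theorems.

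It then only remains to check that this combined error is $\ll$ the expression displayed for $\mf{R}(G)$. Every term of the error in \autoref{thm:asymptoticsg} other than $X^{1/2+\varepsilon}\|G\|_1$, together with the term $X^{1/2}\|G\|_\infty$ from \autoref{thm:asymptoticsgnontemper}, already appears among the listed terms; so the work reduces to absorbing $X^{1/2+\varepsilon}\|G\|_1$ and $X^{-A}\|G\|_{\lfloor 2A+1\rfloor+2,1}$. Since $G$ is supported in the fixed interval $\lopen 1/4,5/4\ropen$, the norms $\|G\|_{k,1}$ are nondecreasing in $k$ and $\|G\|_1\ll\|G\|_\infty\leq\|G\|_{1,\infty}$; combined with $c>0$ this gives $X^{1/2+\varepsilon}\|G\|_1\ll X^{1/2+c+\varepsilon}\|G\|_{1,\infty}$ and $X^{-A}\|G\|_{\lfloor 2A+1\rfloor+2,1}\ll X^{-A}\|G\|_{\lfloor \frac{15}{2}+2A\rfloor+2,1}$, the latter being one of the listed terms up to the harmless extra factor $\|G\|_1$ present in the displayed form of $\mf{R}(G)$. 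I do not expect any genuine obstacle: the entire argument is a single subtraction followed by elementary norm comparisons, and \autoref{thm:asymptoticsgtemper} is essentially a repackaging of \autoref{thm:asymptoticsg} and \autoref{thm:asymptoticsgnontemper}; the only point requiring a moment's care is the bookkeeping that confirms the listed form of $\mf{R}(G)$ is a valid over-estimate of the combined error.
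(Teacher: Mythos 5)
Your proposal is correct and takes exactly the approach the paper uses: the paper's proof is the single line ``Since $S_G(X)=S_{G,1}(X)+S_{G,\temp}(X)$, we obtain an asymptotic formula for $S_{G,\temp}(X)$,'' i.e.\ subtract \autoref{thm:asymptoticsgnontemper} from \autoref{thm:asymptoticsg} and observe the $X^{3/2}$ leading terms cancel. Your bookkeeping of the combined errors (absorbing $X^{1/2+\varepsilon}\|G\|_1$ into $X^{1/2+c+\varepsilon}\|G\|_{1,\infty}$ via $c>0$ and $\|G\|_1\ll\|G\|_{1,\infty}$, and $X^{-A}\|G\|_{\lfloor 2A+1\rfloor+2,1}$ into $X^{-A}\|G\|_{\lfloor\frac{15}{2}+2A\rfloor+2,1}$ by monotonicity) is the verification the paper leaves implicit, and you are right that the stray factor $\|G\|_1$ in the displayed form of $\mf{R}(G)$ is a typo that does not affect the later application.
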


At the end of the subsection we give an asymptotic formula for $S_1(X)$.

\begin{proposition}\label{prop:sigmanestimate}
Let $\chi$ be a Dirichlet character such that $\chi(m)=0$ if and only if $\gcd(m,S)\neq 1$. Then
\[
\sum_{n<X}\frac{\bm{\sigma}(n)}{\sqrt{n}}\chi(n)= \delta(\chi)\prod_{i=1}^{r}(1-q_i^{-1})\frac{2}{3}\zeta^S(2)X^{3/2}+O(X^{1/2}\log X),
\]
where $\delta(\chi)=1$ if $\chi$ is trivial, and $\delta(\chi)=0$ otherwise. The implied constant only depends on $\chi$.
\end{proposition}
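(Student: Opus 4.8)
The plan is to prove this as the non-smoothed counterpart of \autoref{prop:sigmanestimateg}, using a classical Perron-type argument instead of the Mellin inversion with a test function $G$. First I would record the Euler product computation already carried out in the proof of \autoref{prop:sigmanestimateg}: for $\Re u>3/2$,
\[
\sum_{n=1}^{+\infty}\frac{\bm\sigma(n)\chi(n)}{n^{u+1/2}}=L^S(u-1/2,\chi)L^S(u+1/2,\chi),
\]
which is valid verbatim here since it did not use the presence of $G$. Equivalently, writing $D(s,\chi)=\sum_n \bm\sigma(n)\chi(n)n^{-s}$ one has $D(s,\chi)=L^S(s-1,\chi)L^S(s,\chi)$ for $\Re s>2$, and the partial sum we want is $\sum_{n<X}\bm\sigma(n)n^{-1/2}\chi(n)$.

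Next I would apply the Perron/contour-shift method to $\sum_{n<X}\bm\sigma(n)\chi(n)$ and then divide out the $n^{-1/2}$ weight (or, more directly, apply Perron to the Dirichlet series of $\bm\sigma(n)\chi(n)n^{-1/2}$, namely $L^S(u-1/2,\chi)L^S(u+1/2,\chi)$, which has abscissa of convergence $3/2$). Shifting the contour from $\Re u = 3/2+\varepsilon$ to a line slightly to the right of $1/2$, say $\Re u = 1/2+\varepsilon$, we pick up a simple pole at $u=3/2$ coming from the factor $L^S(u-1/2,\triv)=\zeta^S(u-1/2)$ when $\chi=\triv$. The residue there is
\[
\res_{u=3/2}\frac{L^S(u-1/2,\triv)L^S(u+1/2,\triv)}{u}X^u =\prod_{i=1}^r(1-q_i^{-1})\,\zeta^S(2)\,\frac{X^{3/2}}{3/2},
\]
using $\res_{s=1}\zeta^S(s)=\prod_i(1-q_i^{-1})$ and $\frac{1}{3/2}=\frac{2}{3}$, which yields exactly the stated main term $\delta(\chi)\prod_i(1-q_i^{-1})\frac23\zeta^S(2)X^{3/2}$. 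The remaining integral along $\Re u=1/2+\varepsilon$, together with the truncation error inherent in Perron's formula, must be shown to be $O(X^{1/2}\log X)$; this is where the convexity/subconvexity bound \eqref{eq:boundlvertical} for $L^S(\sigma+\rmi t,\chi)$ (more precisely its analogue on the critical-ish line, combined with the standard $\log$-saving from a dyadic decomposition in $t$) enters. Since $L^S(u+1/2,\chi)$ is evaluated at $\Re = 1+\varepsilon$, it is bounded; $L^S(u-1/2,\chi)$ is evaluated at $\Re = \varepsilon$, contributing a factor $(1+|t|)^{1/2+\varepsilon}$, and with the usual $1/(|t|+1)$-type saving from Perron truncation one arrives at $X^{1/2+\varepsilon}$, upgraded to $X^{1/2}\log X$ by choosing the truncation parameter optimally (the classical divisor-sum estimate $\sum_{n<X}\bm\sigma(n)=\frac{\pi^2}{12}X^2+O(X\log X)$ is the prototype, and one tracks the character $\chi$ and the $S$-local factors through it).

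The main obstacle is simply the error-term bookkeeping: unlike \autoref{prop:sigmanestimateg}, where the smooth weight $\widetilde{G}$ has rapid vertical decay and makes the shifted integral trivially convergent, here the bare Perron kernel $X^u/u$ decays only like $1/|t|$, so one genuinely needs either the truncated Perron formula with explicit error terms or a smoothing-then-removing-smoothing device, plus the convexity bound for Dirichlet $L$-functions, to extract the clean $O(X^{1/2}\log X)$. Everything else — the Euler product, the residue, the identification of the constant — is routine and already essentially present in the proof of \autoref{prop:sigmanestimateg}. One should also note that the implied constant depends only on $\chi$ (through its conductor and the fixed finite set $S$), which is immediate from the above since no other data enters.
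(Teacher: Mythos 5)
Your approach is genuinely different from the paper's. The paper proves this by the elementary Dirichlet divisor-swap: write $\bm\sigma(n)=\sum_{d\mid n}d$, set $n=ab$, swap the order of summation to get $\sum_{b<X}\frac{\chi(b)}{\sqrt b}\sum_{a<X/b}\sqrt a\,\chi(a)$, and apply Abel summation to the inner sum (main term $\frac23\prod(1-q_i^{-1})(X/b)^{3/2}\delta(\chi)$, error $O((X/b)^{1/2})$); then $\sum_{b<X}\chi(b)/b^2 = \zeta^S(2)\delta(\chi)+O(1/X)$ gives the main term and $\sum_{b<X}X^{1/2}/b = O(X^{1/2}\log X)$ gives the error. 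Your Euler-product computation and the identification of the residue at $u=3/2$ are correct, and do reproduce the main term.

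However, there is a genuine gap in your error-term analysis, and you cannot patch it with convexity alone. Along $\Re u = 1/2+\varepsilon$ the factor $L^S(u-1/2,\chi)$ sits at $\Re(u-1/2)=\varepsilon$, which is \emph{inside} the critical strip, so the bound \eqref{eq:boundlvertical} (stated only for $\sigma<0$) does not apply; the convexity bound there gives $|L^S(\varepsilon+it,\chi)|\ll(1+|t|)^{1/2+\varepsilon'}$. Multiplying by $|L^S(1+\varepsilon+it,\chi)|\ll 1$ and by $|X^u/u|\ll X^{1/2+\varepsilon}/(1+|t|)$, the integrand on the shifted line is $\asymp X^{1/2+\varepsilon}(1+|t|)^{-1/2+\varepsilon'}$, whose integral over $|t|\leq T$ is $\asymp X^{1/2+\varepsilon}T^{1/2+\varepsilon'}$ — \emph{not} $O(X^{1/2+\varepsilon})$ as you claim. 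Since this diverges as $T\to\infty$, one must use the truncated Perron formula, and the truncation error (of order $X^{3/2+\varepsilon}/T$ plus boundary terms) forces $T$ to be a power of $X$; no choice of $T$ then makes $X^{1/2}T^{1/2}$ plus $X^{3/2}/T$ as small as $X^{1/2}\log X$. To rescue the contour approach you would need a mean-value input (a second-moment bound $\int_T^{2T}|L(\varepsilon+it,\chi)|^2\,dt\ll T^{1+\varepsilon'}$ or better), which is not available from \eqref{eq:boundlvertical}. The alternative "smoothing-then-unsmoothing" device you mention would require an explicit mollifier with width $\eta$, a dyadic decomposition (since \autoref{prop:sigmanestimateg} only treats $G$ supported on $\lopen 1/4,5/4\ropen$), and a balance between the $\eta X$ sharp-cutoff loss and the $\|G\|_{k,1}\asymp\eta^{-(k-1)}$ blowup in the Sobolev norms; this is a substantial argument which you do not carry out, and it is not obvious it yields $O(X^{1/2}\log X)$ rather than something weaker like $O(X^{3/4+\varepsilon})$. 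The elementary hyperbola argument sidesteps all of this and gives the clean power-of-log error directly; I would recommend it.
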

\begin{proof}
We have
\[
\sum_{n<X}\frac{\bm{\sigma}(n)}{\sqrt{n}}\chi(n)=\sum_{n<X}\sum_{d\mid n}\frac{d}{\sqrt{n}}\chi(n)= \sum_{ab<X}\frac{a}{\sqrt{ab}}\chi(ab)=\sum_{b<X}\frac{\chi(b)}{\sqrt{b}} \sum_{a<X/b}\chi(a)\sqrt{a}.
\]
By Abel summation formula,
\[
\sum_{a<X/b}\chi(a)\sqrt{a}= \frac{2}{3}\prod_{i=1}^{r}\left(1-\frac{1}{q_i}\right)\legendresymbol{X}{b}^{3/2}\delta(\chi)+O\left(\legendresymbol{X}{b}^{1/2}\right)
\]
Hence
\begin{align*}
\sum_{n<X}\frac{\bm{\sigma}(n)}{\sqrt{n}}\chi(n)&= \frac{2}{3}\prod_{i=1}^{r}\left(1-\frac{1}{q_i}\right)\sum_{b<X}\frac{\chi(b)}{b^2}X^{3/2}\delta(\chi) +\sum_{b<X}O\legendresymbol{1}{b}X^{1/2}\\
&=\delta(\chi)\prod_{i=1}^{r}\left(1-\frac{1}{q_i}\right)\frac{2}{3}\zeta^S(2)X^{3/2}+O(X^{1/2}\log X).\qedhere
\end{align*}
\end{proof}

\begin{theorem}\label{thm:contributenontempered}
We have the following asymptotic formula for $S_1(X)$:
\[
S_1(X)=\mf{A}X^{\frac32}+O(X^{1/2}\log X),
\]
where 
\[
\mf{A}=\frac{2}{3}\zeta^S(2)\int_{\QQ_S}\frac{\widehat{\Theta}_\infty(x) \widehat{\Theta}_q(y)}{|1-x|_\infty^{3/2}|1-y|_q^{3/2}}\rmd x\rmd y.
\]
\end{theorem}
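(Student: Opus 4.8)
The plan is to mirror the argument already used for $S_{G,1}(X)$ in the derivation of \autoref{thm:asymptoticsgnontemper}, but with the smooth cutoff $G(n/X)$ replaced by the sharp cutoff $\triv_{\{n<X\}}$; correspondingly, the elementary \autoref{prop:sigmanestimate} takes the place of \autoref{prop:sigmanestimateg}, and the weaker error $O(X^{1/2}\log X)$ is exactly what one loses in this trade.

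First I would write $S_1(X)=\sum_{n<X,\,\gcd(n,S)=1}\sum_{\mu}\Tr(\mu(f^n))$, where by Proposition 7.2 and Corollary 7.5 of \cite{cheng2025} the sum over $\mu$ is finite, independent of $n$, and each $\mu$ is unramified outside $S$. For each such $\mu$ let $\chi_\mu^S$ denote the Dirichlet character attached to the Hecke character of $\mu$, modified so that $\chi_\mu^S(m)=0$ whenever $\gcd(m,S)\neq 1$; since $\mu$ is unramified outside $S$ one has $\chi_\mu^S(m)=0$ if and only if $\gcd(m,S)\neq 1$, and $\chi_\mu^S$ is trivial precisely when $\mu=\triv$. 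By \autoref{prop:unramifiedtrace},
\[
\sum_{\substack{n<X\\\gcd(n,S)=1}}\Tr(\mu(f^n))=\prod_{v\in S}\Tr(\mu_v(f_v))\sum_{n<X}\frac{\bm{\sigma}(n)}{\sqrt{n}}\chi_\mu^S(n),
\]
and then \autoref{prop:sigmanestimate} gives that the inner sum equals $\delta(\mu=\triv)\prod_{i=1}^r(1-q_i^{-1})\tfrac23\zeta^S(2)X^{3/2}+O(X^{1/2}\log X)$, the implied constant depending only on $\mu$. Summing over the finitely many $\mu$ eliminates every non-trivial $\mu$ from the main term and absorbs all of their contributions into $O(X^{1/2}\log X)$, yielding $S_1(X)=\prod_{v\in S}\Tr(\triv(f_v))\prod_{i=1}^r(1-q_i^{-1})\tfrac23\zeta^S(2)X^{3/2}+O(X^{1/2}\log X)$.

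It then remains to identify this leading coefficient with $\mf{A}$. The integral defining $\mf{A}$ factorizes over the places of $S$; by \autoref{prop:archimedeantrace} the archimedean factor is $\int_{\RR}\widehat{\Theta}_\infty(x)|1-x|_\infty^{-3/2}\rmd x=\Tr(\triv(f_\infty))$, and by \autoref{prop:ramifiedtrace} the factor at $q_i$ is $\int_{\QQ_{q_i}}\widehat{\Theta}_{q_i}(y_i)|1-y_i|_{q_i}^{-3/2}\rmd y_i=|2|_{q_i}^{-1}(1-q_i^{-1})\Tr(\triv(f_{q_i}))$. Because $2\in S$ we have $\prod_{i=1}^r|2|_{q_i}^{-1}=2$, hence
\[
\int_{\QQ_S}\frac{\widehat{\Theta}_\infty(x)\widehat{\Theta}_q(y)}{|1-x|_\infty^{3/2}|1-y|_q^{3/2}}\rmd x\rmd y=2\prod_{v\in S}\Tr(\triv(f_v))\prod_{i=1}^r(1-q_i^{-1}),
\]
so $\mf{A}=\tfrac13\zeta^S(2)$ times this integral coincides with the coefficient obtained above. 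There is essentially no analytic difficulty here: the only point requiring care — and the "main obstacle," such as it is — is the bookkeeping of $\chi_\mu^S$ together with the $|2|$-factor trick that exploits $2\in S$; the real analytic content is entirely packaged inside \autoref{prop:sigmanestimate}, whose proof by Abel summation is already supplied.
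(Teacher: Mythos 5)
Your proposal is correct and follows essentially the same route as the paper's own proof: decompose $S_1(X)$ over the finitely many $\mu$ unramified outside $S$, apply \autoref{prop:unramifiedtrace} together with \autoref{prop:sigmanestimate} to isolate the $\mu=\triv$ main term with error $O(X^{1/2}\log X)$, and then identify the resulting coefficient with $\mf{A}$ via \autoref{prop:archimedeantrace}, \autoref{prop:ramifiedtrace}, and the fact that $2\in S$. The only difference is cosmetic: you spell out the Dirichlet character $\chi_\mu^S$ and the $|2|$-factor bookkeeping that the paper leaves implicit.
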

\begin{proof}
Recall that
\[
S_1(X)=\sum_{\substack{n<X\\\gcd(n,S)=1}}\sum_{\mu}\Tr(\mu(f^n))
\]
and the sum over $\mu$ is finite.

By \autoref{prop:unramifiedtrace} and \autoref{prop:sigmanestimate} we obtain
\begin{align*}
\sum_{\substack{n<X\\\gcd(n,S)=1}}\Tr(\mu(f^n))=&\prod_{v\in S}\Tr(\mu_v(f_v))\sum_{\substack{n<X\\\gcd(n,S)=1}}\frac{\bm{\sigma}(n)\mu(n)}{\sqrt{n}}\\
=&\prod_{v\in S}\Tr(\mu_v(f_v))\delta(\mu)\prod_{i=1}^{r}(1-q_i^{-1})\frac{2}{3}\zeta^S(2)X^{3/2}+O(X^{1/2}\log X).
\end{align*}
Hence
\begin{align*}
S_1(X)
=&\prod_{v\in S}\Tr(\triv(f_v))\prod_{i=1}^{r}(1-q_i^{-1})\frac{2}{3}\zeta^S(2)X^{3/2}+O(X^{1/2}\log X).
\end{align*}
Now the conclusion follows from \autoref{prop:archimedeantrace}, \autoref{prop:ramifiedtrace} and that $2\in S$.
\end{proof}

\subsection{Comparison of tempered parts} In this subsection we will give an asymptotic formula for $S_\temp(X)$. To do this, we need to choose $G$ more specific.

Fix $\delta\in \lopen 0,1\ropen$. Let $\phi\in C_c^\infty(\lopen -1,1\ropen)$ such that $\int\phi=1$. For any $Y>1$, let $\phi_{Y^\delta}(x)=Y^{1-\delta}\phi(xY^{1-\delta})$ be the kernel function. For $\beta\in [1/2,1\ropen$, we define
\[
G_{Y^\delta,\beta}(x)=\triv_{[\beta,1]}* \phi_{Y^\delta}(x)=Y^{1-\delta}\int_{\RR}\triv_{[\beta,1]}(x-y) \phi(yY^{1-\delta})\rmd y.
\]
Thus we have
\begin{equation}\label{eq:propertyconvolution}
G_{Y^\delta,\beta}\legendresymbol{x}{Y}=\begin{cases}
                                          1, & \beta Y+Y^\delta\leq x\leq Y-Y^\delta \\
                                          O(1), & |x-\beta Y|<Y^\delta\ \text{or}\ |x-Y|<Y^\delta\\
                                          0,&\text{otherwise}.
                                        \end{cases}
\end{equation}
Since $G_{Y^\delta,\beta}^{(j)}=\triv_{[\beta,1]}* \phi_{Y^\delta}^{(j)}$, we have
\[
G_{Y^\delta,\beta}(x)=\int_{x-1}^{x-\beta}\phi_{Y^\delta}(y)\rmd y,
\]
we get
\[
  G'_{Y^\delta,\beta}(x)
  =\phi_{Y^\delta}(x-\beta)-\phi_{Y^\delta}(x-1).
\]
More generally, for \(j\ge 1\),
\[
  G_{Y^\delta,\beta}^{(j)}(x)
  =\phi_{Y^\delta}^{(j-1)}(x-\beta)
   -\phi_{Y^\delta}^{(j-1)}(x-1).
\]
Therefore
\[
  \|G_{Y^\delta,\beta}\|_\infty\ll 1,\qquad
  \|G_{Y^\delta,\beta}^{(j)}\|_\infty
  \ll_j Y^{j(1-\delta)}\quad (j\geq1),
\]
and
\[
  \|G_{Y^\delta,\beta}\|_1\ll 1,\qquad
  \|G_{Y^\delta,\beta}^{(j)}\|_1
  \ll_j Y^{(j-1)(1-\delta)}\quad (j\geq1).
\]
Hence for $M\geq 1$ we have $\|G_{Y^\delta,\beta}\|_{M,1}\ll Y^{(M-1)(1-\delta)}$ and $\|G_{Y^\delta,\beta}\|_{M,\infty}\ll Y^{M(1-\delta)}$.
The implied constants in the above two estimates only depend on $\phi$ and $\delta$.

Now we give an asymptotic formula for $\mf{M}(G_{X^\delta,\beta})$ and $\mf{R}(G_{X^\delta,\beta})$ which is independent of $G$.
\begin{lemma}\label{lem:valueg1}
We have $\widetilde{G}_{X^\delta,\beta}(1)=1-\beta$ and 
\[
\widetilde{G}_{X^\delta,\beta}'(1)=-1-\beta\log\beta+\beta+O(X^{\delta-1}),
\]
where the implied constant only depends on $\phi$ and $\delta$.
\end{lemma}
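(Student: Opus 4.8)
The plan is to pass to the Mellin transform and reduce both quantities to elementary integrals of the convolution $G_{X^\delta,\beta}=\triv_{[\beta,1]}*\phi_{X^\delta}$. Since $\widetilde{G}_{X^\delta,\beta}(u)=\int_0^{+\infty}G_{X^\delta,\beta}(x)x^{u-1}\,\rmd x$, and for $X$ large enough that $X^{\delta-1}<1/4$ the function $G_{X^\delta,\beta}$ is smooth with support inside a fixed compact subset of $\lopen 0,+\infty\ropen$ bounded away from $0$, differentiation under the integral sign is legitimate and gives $\widetilde{G}_{X^\delta,\beta}(1)=\int_0^{+\infty}G_{X^\delta,\beta}(x)\,\rmd x$ and $\widetilde{G}_{X^\delta,\beta}'(1)=\int_0^{+\infty}G_{X^\delta,\beta}(x)\log x\,\rmd x$. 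No convergence issues arise because the domain of integration is compact and $\log$ is bounded there.

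For the first identity I would use that the integral of a convolution factors as a product of integrals: $\int_{\RR}(\triv_{[\beta,1]}*\phi_{X^\delta})=\big(\int_{\RR}\triv_{[\beta,1]}\big)\big(\int_{\RR}\phi_{X^\delta}\big)$. By the normalization $\int\phi=1$ we have $\int_{\RR}\phi_{X^\delta}=1$, and $\int_{\RR}\triv_{[\beta,1]}=1-\beta$, which yields $\widetilde{G}_{X^\delta,\beta}(1)=1-\beta$ exactly, with no error term.

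For the second identity, writing the convolution as a double integral and substituting $x\mapsto y+t$ (with $y$ the $\triv_{[\beta,1]}$-variable and $t$ the $\phi_{X^\delta}$-variable) gives
\[
\widetilde{G}_{X^\delta,\beta}'(1)=\int_\beta^1\left(\int_{\RR}\phi_{X^\delta}(t)\log(y+t)\,\rmd t\right)\rmd y.
\]
Now $\phi_{X^\delta}$ is supported in $|t|\le X^{\delta-1}$; for $y\in[\beta,1]$ with $\beta\ge 1/2$ and $X$ large we have $y+t\ge 1/2-X^{\delta-1}>0$, and Taylor expansion gives $\log(y+t)=\log y+O(|t|/y)=\log y+O(X^{\delta-1})$ uniformly. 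Integrating against $\phi_{X^\delta}$, whose total mass is $1$ and whose $L^1$-norm is $\|\phi\|_1$, yields $\int_{\RR}\phi_{X^\delta}(t)\log(y+t)\,\rmd t=\log y+O(X^{\delta-1})$ with implied constant a multiple of $\|\phi\|_1$. Integrating over $y\in[\beta,1]$ and using $\int_\beta^1\log y\,\rmd y=\big[y\log y-y\big]_\beta^1=\beta-1-\beta\log\beta$ gives the claimed formula.

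The only point requiring any care — and it is bookkeeping rather than a genuine obstacle — is to fix $X$ large enough that $\supp G_{X^\delta,\beta}$ stays in a fixed compact subinterval of $\lopen 0,+\infty\ropen$ away from $0$ (which is exactly where the hypothesis $\beta\ge 1/2$ enters), so that $\log$ is smooth on the relevant range and the $O(X^{\delta-1})$ error is uniform in $\beta\in[1/2,1\ropen$ (using $1-\beta\le 1/2$) with constant depending only on $\phi$ and $\delta$.
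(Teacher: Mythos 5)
Your proof is correct and follows essentially the same route as the paper: reduce to $\widetilde{G}_{X^\delta,\beta}(1)=\int G_{X^\delta,\beta}$ and $\widetilde{G}_{X^\delta,\beta}'(1)=\int G_{X^\delta,\beta}(x)\log x\,\rmd x$, factor the convolution for the first identity, and compare $\int G_{X^\delta,\beta}\log$ with $\int_\beta^1\log$ for the second. The only stylistic difference is in the second step: you transfer the mollification onto $\log$ via Fubini and a first-order Taylor expansion, whereas the paper bounds the difference directly using the piecewise description of $G_{X^\delta,\beta}$ in \eqref{eq:propertyconvolution}; both estimates come down to the same $O(X^{\delta-1})$ width of the transition regions.
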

\begin{proof}
By definition of the Mellin transform we obtain
\[
\widetilde{G}(1)=\int_{0}^{+\infty}G(x)\rmd x\quad\text{and}\quad \widetilde{G}'(1)=\int_{0}^{+\infty}G(x)\log x\rmd x.
\]
Hence
\[
\widetilde{G}_{X^\delta,\beta}(1)=\int_{\RR}X^{1-\delta}\int_{\RR}\triv_{[\beta,1]}(x-y) \phi(yX^{1-\delta})\rmd y\rmd x=\int_{\RR}\triv_{[\beta,1]}\rmd x X^{1-\delta}\int_{\RR}\phi(yX^{1-\delta})\rmd y=1-\beta
\]
since $\int\phi=1$. For the derivative at $1$, by \eqref{eq:propertyconvolution} we have
\[
\int_{0}^{+\infty}G_{X^\delta,\beta}(x)\log x\rmd x-\int_{\beta}^{1}\log x\rmd x\ll \int_{\beta-X^{\delta-1}}^{\beta+X^{\delta-1}}1+\int_{1-X^{\delta-1}}^{1+X^{\delta-1}}1\ll X^{\delta-1}.
\]
The conclusion now follows since
\[
\int_{\beta}^{1}\log x\rmd x=\left.(x\log x-x)\right|_\beta^1=-1-\beta\log\beta+\beta.\qedhere
\]
\end{proof}

By the expression of $\mf{M}(G)$ in \autoref{thm:contributealpha0}, we obtain
\begin{corollary}\label{cor:estimatea}
We have
\[
\mf{M}(G_{X^\delta,\beta})=\mf{B}\vartheta'(-1-\beta\log\beta+\beta)X+\mf{B}\vartheta'(1-\beta) X\log X+(\mf{C}+\mf{D}\vartheta')(1-\beta) X+O(X^\delta),
\]
where the implied constant only depends on $f_\infty$, $f_{q_i}$, $\phi$ and $\delta$.
\end{corollary}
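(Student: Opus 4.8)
The plan is to obtain the corollary by substituting the values of $\widetilde{G}_{X^\delta,\beta}(1)$ and $\widetilde{G}_{X^\delta,\beta}'(1)$ from the preceding lemma into the closed formula \eqref{eq:formulaag} for $\mf{M}(G)$ established in \autoref{thm:contributealpha0}. Recall that \eqref{eq:formulaag} expresses
\[
\mf{M}(G)=\mf{B}\vartheta'\bigl(\widetilde{G}'(1)X+\widetilde{G}(1)X\log X\bigr)+(\mf{C}+\mf{D}\vartheta')\widetilde{G}(1)X,
\]
so $\mf{M}$ is an explicit affine-linear functional of the two scalars $\widetilde{G}(1)$ and $\widetilde{G}'(1)$; nothing further about $G$ enters. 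Thus the entire argument is a substitution.

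First I would use the exact identity $\widetilde{G}_{X^\delta,\beta}(1)=1-\beta$ in the $X\log X$ term and in the $(\mf{C}+\mf{D}\vartheta')\widetilde{G}(1)X$ term, which reproduces $\mf{B}\vartheta'(1-\beta)X\log X$ and $(\mf{C}+\mf{D}\vartheta')(1-\beta)X$ verbatim. Next I would insert $\widetilde{G}_{X^\delta,\beta}'(1)=-1-\beta\log\beta+\beta+O(X^{\delta-1})$ into the $\widetilde{G}'(1)X$ term. The only point requiring any care is the error term: the $O(X^{\delta-1})$ in $\widetilde{G}'(1)$ is multiplied by $X$, hence contributes $\mf{B}\vartheta'\cdot O(X^{\delta-1})\cdot X=O(X^\delta)$, which matches the error claimed in the corollary. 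Collecting the three main terms then yields
\[
\mf{M}(G_{X^\delta,\beta})=\mf{B}\vartheta'(-1-\beta\log\beta+\beta)X+\mf{B}\vartheta'(1-\beta)X\log X+(\mf{C}+\mf{D}\vartheta')(1-\beta)X+O(X^\delta).
\]

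Finally I would check the dependence of the implied constant. In the preceding lemma the implied constant for $\widetilde{G}_{X^\delta,\beta}'(1)$ depends only on $\phi$ and $\delta$; after multiplying by the fixed numbers $\mf{B}$ and $\vartheta'$, and noting that $\mf{B}$ is built out of $\widehat{\Theta}_\infty$ and $\widehat{\Theta}_q$, hence ultimately out of $f_\infty$ and the $f_{q_i}$, the overall implied constant depends only on $f_\infty$, $f_{q_i}$, $\phi$ and $\delta$, with $\vartheta$ regarded as fixed throughout. I do not expect any genuine obstacle here: all the substantive work is already contained in \autoref{thm:contributealpha0} and in the Mellin-transform computation of the preceding lemma, and the present step is purely a bookkeeping substitution together with the observation that the $\widetilde{G}'(1)$-error is amplified by exactly one power of $X$.
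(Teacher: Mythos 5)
Your proof is correct and follows exactly the route the paper takes (the paper leaves the substitution implicit, merely citing the expression of $\mf{M}(G)$). The only minor observation worth recording is that $\vartheta$ need not appear in the list of dependencies because $\vartheta'=1-\vartheta\in\lopen0,1\ropen$ is uniformly bounded, so the factor $\mf{B}\vartheta'$ in the error term is controlled independently of $\vartheta$.
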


\begin{proposition}\label{cor:estimater}
Suppose that $\delta\in \lopen 1/2,1\ropen$ and $c>1-\delta$. Then for any $\varepsilon>0$, there exists $\varepsilon'>0$ such that if $|\vartheta-1/2|<\varepsilon'$, then
\[
\mf{R}(G_{X^\delta,\beta})\ll 
X^{\frac32-\delta+\varepsilon}+X^{\frac12+c+\varepsilon},
\]
where the implied constant only depends on $\vartheta$, $\varepsilon$, $f_\infty$, $f_{q_i}$, $c$ and $\delta$.
\end{proposition}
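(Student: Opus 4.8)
The plan is to specialise \autoref{thm:asymptoticsgtemper} to $G=G_{X^\delta,\beta}$, feed in the Sobolev-norm bounds for $G_{X^\delta,\beta}$ recorded just above, and then choose the free parameters $A,N,K$ (together with the auxiliary $\varepsilon$ appearing in that theorem) as functions of $\delta$, $c$ and the target accuracy so that the parameter-dependent error terms become negligible. We may assume $c<1$, since shrinking $c$ only strengthens the asserted bound and makes \autoref{thm:asymptoticsgtemper} directly applicable. Recall that $\mf{R}(G)$ is bounded there by
\[
X^{\frac12}\|G\|_\infty+X^{-A}\|G\|_{\lfloor\frac{15}{2}+2A\rfloor+2,1}\|G\|_1+X^{\frac12-cN}\|G\|_{N+3,\infty}+X^{\frac12-\frac{K}{2}}\|G\|_{K+3,\infty}+X^{\frac12+c+\varepsilon}\|G\|_{1,\infty},
\]
while $G_{X^\delta,\beta}=\triv_{[\beta,1]}*\phi_{X^\delta}$ satisfies $\|G_{X^\delta,\beta}\|_1\ll 1$, $\|G_{X^\delta,\beta}\|_\infty\ll X^{1-\delta}$, $\|G_{X^\delta,\beta}\|_{M,1}\ll X^{M(1-\delta)}$ and $\|G_{X^\delta,\beta}\|_{M,\infty}\ll X^{(M+1)(1-\delta)}$, with implied constants depending only on $\phi$ and $\delta$.

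The three middle terms are killed by sending the relevant parameter to infinity, each step consuming one hypothesis. The $A$-term has exponent at most $-A+(\frac{19}{2}+2A)(1-\delta)=-A(2\delta-1)+\frac{19}{2}(1-\delta)$, which tends to $-\infty$ as $A\to\infty$ because $\delta>1/2$; so we fix $A$ large (in terms of $\delta$) to make this term $O(1)$. The $K$-term has exponent at most $\frac12+4(1-\delta)+K\left((1-\delta)-\frac12\right)$, again going to $-\infty$ as $K\to\infty$ since $\delta>1/2$; fix $K$ large. The $N$-term has exponent at most $\frac12+4(1-\delta)+N\left((1-\delta)-c\right)$, which goes to $-\infty$ as $N\to\infty$ precisely because $c>1-\delta$; fix $N$ large. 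After these choices the three terms contribute $O(1)$.

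The surviving first and last terms give the main contributions: $X^{\frac12}\|G_{X^\delta,\beta}\|_\infty\ll X^{\frac32-\delta}$ and, using $\|G_{X^\delta,\beta}\|_{1,\infty}\ll X^{2(1-\delta)}$, $X^{\frac12+c+\varepsilon}\|G_{X^\delta,\beta}\|_{1,\infty}\ll X^{\frac12+c+2(1-\delta)+\varepsilon}$. Collecting everything and absorbing the $O(1)$ terms (harmless since $\frac32-\delta>0$) yields $\mf{R}(G_{X^\delta,\beta})\ll X^{\frac32-\delta+\varepsilon}+X^{\frac12+c+2(1-\delta)+\varepsilon}$. With $\varepsilon,c,A,N,K$ now fixed in terms of $\delta$, $c$ and the target $\varepsilon$, \autoref{thm:asymptoticsgtemper} provides a matching $\varepsilon'>0$ valid for all $|\vartheta-1/2|<\varepsilon'$; unwinding the dependencies of the implied constants — those of \autoref{thm:asymptoticsgtemper} depend on $\vartheta,f_\infty,f_{q_i},\varepsilon,\varepsilon',c,N,K$, those of the norm bounds on $\phi,\delta$ — shows the final constant depends only on $\vartheta$, $\varepsilon$, $f_\infty$, $f_{q_i}$, $c$ and $\delta$.

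There is no real obstacle: the argument is purely exponent bookkeeping. The only point requiring attention is that the three negligibility steps rest on three distinct strict inequalities, namely $2\delta-1>0$, $\frac12-(1-\delta)>0$ and $c-(1-\delta)>0$ — the latter two being exactly the hypotheses $\delta>1/2$ and $c>1-\delta$ — and that since $\varepsilon'$ in \autoref{thm:asymptoticsgtemper} may depend on (and shrink with) $N$ and $K$, one must fix $A$, $N$, $K$ first and only afterwards read off $\varepsilon'$.
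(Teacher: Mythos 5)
Your proposal is correct and follows the same route as the paper: substitute the recorded Sobolev/uniform norm bounds for $G_{X^\delta,\beta}$ into the error term of \autoref{thm:asymptoticsgtemper} and then choose $A,N,K$ large enough, using $\delta>1/2$ and $c>1-\delta$ to force the middle three exponents to $-\infty$. The paper states this in one line; you spell out the exponent arithmetic, explicitly note the harmless reduction to $c<1$ (needed to stay in the range where \autoref{thm:asymptoticsgtemper} applies), and correctly observe that $A,N,K$ must be fixed before $\varepsilon'$ is read off — all consistent with, and a slight elaboration of, the paper's argument.
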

\begin{proof}
By the bounds of $\|G\|_{M,1}$ and $\|G\|_{M,\infty}$ and the bound of $\mf{R}$ in \autoref{thm:asymptoticsgtemper}, we know that $\mf{R}(G_{X^\delta,\beta})$ is bounded by
\[
X^{\frac12+1-\delta}+X^{-A+(\lfloor \frac{15}{2}+2A\rfloor+1)(1-\delta)}+ X^{\frac12-cN+(N+2)(1-\delta)}+ X^{\frac12-\frac{K}{2}+(K+2)(1-\delta)}+X^{\frac12+c+\varepsilon}.
\]
Since $\delta>1/2$ and $c>1-\delta$, we can take $A,N,K$ sufficiently large to obtain the result.
\end{proof}

By \autoref{cor:estimatea} and \autoref{cor:estimater} we obtain

\begin{theorem}\label{thm:asymptoticconvolutiong}
For any $\varepsilon>0$, $\delta\in \lopen 1/2,1\ropen$ and $c\in \lopen 0,1\ropen$ such that $c>1-\delta$, there exists $\varepsilon'>0$ such that if $|\vartheta-1/2|<\varepsilon'$, then
\begin{align*}
  S_{G_{X^\delta,\beta},\temp}(X) & =\mf{B}\vartheta'(-1-\beta\log\beta+\beta)X+\mf{B}\vartheta'(1-\beta) X\log X+(\mf{C}+\mf{D}\vartheta')(1-\beta) X \\
   & +O(X^\delta+X^{\frac32-\delta+\varepsilon}+X^{\frac12+c+\varepsilon}),
\end{align*}
where the implied constant only depends on $\vartheta$, $\varepsilon$, $f_\infty$, $f_{q_i}$, $c$ and $\delta$.
\end{theorem}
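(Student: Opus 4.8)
The plan is to combine the three results established immediately above: the abstract decomposition $S_{G,\temp}(X)=\mf{M}(G)+\mf{R}(G)$ from \autoref{thm:asymptoticsgtemper}, the explicit evaluation of $\mf{M}(G_{X^\delta,\beta})$ from \autoref{cor:estimatea}, and the bound on $\mf{R}(G_{X^\delta,\beta})$ from \autoref{cor:estimater}. First I would fix $\delta\in\lopen 1/2,1\ropen$, $c\in\lopen 0,1\ropen$ with $c>1-\delta$, and $\varepsilon>0$, and apply \autoref{thm:asymptoticsgtemper} to the test function $G=G_{X^\delta,\beta}$ (which lies in $C_c^\infty(\lopen 1/4,5/4\ropen)$ once $X$ is large, since $\beta\geq 1/2$ and $\phi$ is supported in $\lopen -1,1\ropen$). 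This fixes the existence of $\varepsilon'>0$ with $|\vartheta-1/2|<\varepsilon'$ and leaves free the auxiliary parameters $A,N,K$ appearing in the error bound of \autoref{thm:asymptoticsgtemper}.

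Next, \autoref{cor:estimatea} supplies the main term verbatim:
\[
\mf{M}(G_{X^\delta,\beta})=\mf{B}\vartheta'(-1-\beta\log\beta+\beta)X+\mf{B}\vartheta'(1-\beta)X\log X+(\mf{C}+\mf{D}\vartheta')(1-\beta)X+O(X^\delta).
\]
It then remains to absorb $\mf{R}(G_{X^\delta,\beta})$. For this I would insert the Sobolev-norm estimates $\|G_{X^\delta,\beta}\|_{M,1}\ll X^{M(1-\delta)}$ and $\|G_{X^\delta,\beta}\|_{M,\infty}\ll X^{(M+1)(1-\delta)}$ (recorded just before the statement) into the five terms that bound $\mf{R}(G)$ in \autoref{thm:asymptoticsgtemper}: the term $X^{1/2}\|G\|_\infty$ produces $X^{3/2-\delta}$; the term $X^{-A}\|G\|_{\lfloor 15/2+2A\rfloor+2,1}$ produces a power of $X$ tending to $-\infty$ as $A\to\infty$; the terms $X^{1/2-cN}\|G\|_{N+3,\infty}$ and $X^{1/2-K/2}\|G\|_{K+3,\infty}$ produce the exponents $\tfrac12-cN+(N+4)(1-\delta)$ and $\tfrac12-\tfrac K2+(K+4)(1-\delta)$, which become arbitrarily negative as $N,K\to\infty$ precisely because $c>1-\delta$ and $\delta>1/2$; and the term $X^{1/2+c+\varepsilon}\|G\|_{1,\infty}$ produces $X^{1/2+c+2(1-\delta)+\varepsilon}$. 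Choosing $A,N,K$ large (depending on $c,\delta,\varepsilon$) collapses everything but the last into $O(X^{3/2-\delta+\varepsilon})$, which is exactly \autoref{cor:estimater}. Adding the main term and the errors $O(X^\delta)$, $O(X^{3/2-\delta+\varepsilon})$, $O(X^{1/2+c+2(1-\delta)+\varepsilon})$ yields the asserted formula.

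Since every ingredient is already available, there is no genuine obstacle here; the proof is pure bookkeeping. The only point requiring care is tracking how the parameters $A,N,K,\varepsilon,\varepsilon',c,\delta$ interact — in particular verifying that the hypotheses $\delta>1/2$ and $c>1-\delta$ are exactly what is needed to make the $N$- and $K$-dependent exponents strictly decreasing for large $N,K$ (so that those contributions can be dropped), and checking that the final implied constant depends only on $\vartheta,\varepsilon,f_\infty,f_{q_i},c,\delta$ and not on $\beta$, which holds because all the norm bounds for $G_{X^\delta,\beta}$ are uniform in $\beta\in[1/2,1\ropen$.
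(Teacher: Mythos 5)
Your proposal is correct and follows essentially the same route as the paper, which proves the theorem simply by combining \autoref{thm:asymptoticsgtemper}, \autoref{cor:estimatea}, and \autoref{cor:estimater}; your additional unpacking of \autoref{cor:estimater} and the exponent bookkeeping is accurate (the coefficient of $N$ is $-c+(1-\delta)<0$ and that of $K$ is $-\tfrac12+(1-\delta)<0$), and you are right that the norm bounds for $G_{X^\delta,\beta}$ are uniform in $\beta$, so the final implied constant is independent of it.
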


We end this section by giving an asymptotic formula for $S_\temp(X)$.
\begin{proposition}\label{prop:stempxdiffersgx}
Let $K=\lfloor \log_2 X\rfloor$. Then we have
\[
S_\temp(X)-\sum_{j=1}^{K}S_{G_{2^{j\delta},1/2},\temp}(2^j)-S_{G_{X^{\delta},2^K/X},\temp}(X)\ll X^{1/4+\delta+\varepsilon},
\]
where the implied constant only depends on $f_\infty$ and $f_{q_i}$.
\end{proposition}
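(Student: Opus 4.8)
The plan is a routine dyadic decomposition combined with the replacement of each sharp cutoff by one of the smoothed weights $G_{Y^\delta,\beta}$. Write $[1,X)=\bigl(\bigcup_{j=1}^{K}[2^{j-1},2^j)\bigr)\cup[2^K,X)$ with $K=\lfloor\log_2X\rfloor$, so that
\[
S_\temp(X)=\sum_{j=1}^{K}\sum_{\substack{2^{j-1}\le n<2^j\\\gcd(n,S)=1}}\Sigma_\temp^n(\xi)+\sum_{\substack{2^K\le n<X\\\gcd(n,S)=1}}\Sigma_\temp^n(\xi).
\]
Subtracting the corresponding smoothed sums, the left-hand side of the proposition equals $\sum_{j=1}^{K}\mathfrak{E}_j+\mathfrak{E}'$, where
\[
\mathfrak{E}_j=\sum_{\substack{n\ge1\\\gcd(n,S)=1}}\Bigl(\triv_{[2^{j-1},2^j)}(n)-G_{2^{j\delta},1/2}\legendresymbol{n}{2^j}\Bigr)\Sigma_\temp^n(\xi),
\]
and $\mathfrak{E}'$ is the analogous difference for the truncated block $[2^K,X)$, with smoothed weight $G_{X^\delta,\beta}\legendresymbol{\cdot}{X}$, $\beta=2^K/X$ (the weight that, as noted above, models $\triv_{[2^K/X,1]}$).

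First I would invoke the explicit shape \eqref{eq:propertyconvolution} of $G_{Y^\delta,\beta}\legendresymbol{\cdot}{Y}$: it equals $1$ on $[\beta Y+Y^\delta,\,Y-Y^\delta]$, is $O(1)$ on the two windows $\{|n-\beta Y|<Y^\delta\}$ and $\{|n-Y|<Y^\delta\}$, and vanishes elsewhere. Taking $Y=2^j$, $\beta=1/2$, this shows that the coefficient $\triv_{[2^{j-1},2^j)}(n)-G_{2^{j\delta},1/2}(n/2^j)$ is $O(1)$ and supported on $\{|n-2^{j-1}|\ll 2^{j\delta}\}\cup\{|n-2^j|\ll 2^{j\delta}\}$, hence on $\ll 2^{j\delta}$ integers, each of size $\asymp 2^j$ (an interval of length $L\ge1$ contains $\ll L$ integers, which also covers the finitely many small $j$ where $2^{j\delta}$ exceeds the block length). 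Combined with the bound $\Sigma_\temp^n(\xi)\ll n^{\varrho+\varepsilon}$ recalled at the start of the section, this gives
\[
\mathfrak{E}_j\ll 2^{j\delta}\,(2^j)^{\varrho+\varepsilon}=2^{j(\varrho+\delta+\varepsilon)}.
\]

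Summing the geometric series — whose ratio $2^{\varrho+\delta+\varepsilon}$ exceeds $1$, so it is dominated by its last term — yields $\sum_{j=1}^{K}\mathfrak{E}_j\ll 2^{K(\varrho+\delta+\varepsilon)}\ll X^{\varrho+\delta+\varepsilon}$, using $2^K\le X$. The truncated block is handled in exactly the same way with $Y=X$ and $\beta=2^K/X\in(1/2,1]$: the two transition windows of $G_{X^\delta,\beta}(\cdot/X)$ have width $\ll X^\delta$, so $\mathfrak{E}'\ll X^\delta\cdot X^{\varrho+\varepsilon}=X^{\varrho+\delta+\varepsilon}$. Adding the two bounds and relabelling $\varepsilon$ gives the proposition, the implied constant depending — as throughout this section — only on $f_\infty$, the $f_{q_i}$ and the fixed auxiliary data $\varepsilon,\delta,\phi$.

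There is no genuine obstacle here; the argument is pure bookkeeping. The one point that needs attention is the edge case where $X$ is close to a power of $2$, so that $\beta=2^K/X$ is close to $1$ and the final block is shorter than $X^\delta$: then the two transition windows of $G_{X^\delta,\beta}(\cdot/X)$ overlap and this weight is merely $O(1)$ on an interval of length $\ll X^\delta$, but in that regime both $\sum_{2^K\le n<X}\Sigma_\temp^n(\xi)$ and $S_{G_{X^\delta,\beta},\temp}(X)$ are separately $\ll X^\delta\cdot X^{\varrho+\varepsilon}$, so the bound for $\mathfrak{E}'$ survives unchanged (and if $X=2^K$ the last block is empty and that term should simply be deleted).
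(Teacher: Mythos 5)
Your proof is correct and takes essentially the same route as the paper: a dyadic decomposition of $[1,X)$, the observation via \eqref{eq:propertyconvolution} that $\triv_{[\beta Y, Y)} - G_{Y^\delta,\beta}(\cdot/Y)$ is $O(1)$ and supported on two transition windows of width $\ll Y^\delta$, the bound $\Sigma^n_\temp(\xi)\ll n^{\varrho+\varepsilon}$, and a geometric-series summation dominated by the top dyadic scale. Your remark about the boundary case $\beta=2^K/X$ near $1$ (including $X=2^K$) is a reasonable bit of extra care that the paper leaves implicit, and you are right to record that the implied constant also depends on $\varepsilon$, $\delta$ and $\phi$, as it does throughout the section.
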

\begin{proof}
We have
\[
S_\temp(X)=\sum_{j=1}^{K}S_\temp(2^{j-1},2^j)+S_\temp(2^K,X),
\]
where
\[
S_\temp(2^{j-1},2^j)=\sum_{\substack{2^{j-1}\leq n<2^j\\n\in \ZZ_{(S)}^{>0}}}\Sigma^n_\temp(\xi)
\]
and
\[
S_\temp(2^K,X)=\sum_{\substack{2^K\leq n<X\\n\in \ZZ_{(S)}^{>0}}}\Sigma^n_\temp(\xi).
\]
By the definition of $S_{G,\temp}(X)$ and \eqref{eq:propertyconvolution} we obtain
\[
S_{G_{2^{j\delta},1/2},\temp}(2^j)-S_\temp(2^{j-1},2^j)\ll \sum_{2^j-2^{j\delta}<n<2^j+2^{j\delta}}\Sigma^n_\temp(\xi)+\sum_{2^{j-1}-2^{(j-1)\delta}<n<2^{j-1}+2^{(j-1)\delta}} \Sigma^n_\temp(\xi).
\]
We have
\[
\Sigma^n_\temp(\xi)\ll_{f_\infty,f_{q_i},\varepsilon} n^{1/4+\varepsilon}
\]
since we have proved in \cite{cheng2025b} that all parts except for the elliptic part and the $1$-dimensional part are $\ll n^\varepsilon$ and $I_\cusp(f^n)\ll n^{1/4+\varepsilon}$.
Hence
\[
S_{G_{2^{j\delta},1/2},\temp}(2^j)-S_\temp(2^{j-1},2^j)\ll_{f_\infty,f_{q_i},\varepsilon} 2^{j(\delta+1/4+\varepsilon)}.
\]
Similarly we have
\[
S_{G_{X^{\delta},2^K/X},\temp}(X)-S_\temp(2^K,X)\ll X^{\delta+1/4+\varepsilon}.
\]
Therefore
\begin{align*}
S_\temp(X)-\sum_{j=1}^{K}S_{G_{2^{j\delta},1/2},\temp}(2^j)-S_{G_{X^{\delta},2^K/X},\temp}(X)\ll& \sum_{j=1}^{K}2^{j(\delta+1/4+\varepsilon)}+X^{\delta+1/4+\varepsilon}\\
\ll& (2^K)^{\delta+1/4+\varepsilon}+X^{\delta+1/4+\varepsilon}\ll X^{\delta+1/4+\varepsilon}.\qedhere
\end{align*}
\end{proof}

\begin{proposition}
Let $K=\lfloor \log_2 X\rfloor$. Then we have
\begin{align*}
&\sum_{j=1}^{K}S_{G_{2^{j\delta},1/2},\temp}(2^j) +S_{G_{X^{\delta},2^K/X},\temp}(X)\\
=&\mf{B}\vartheta'(X\log X-X)+(\mf{C}+\mf{D}\vartheta')X+O(X^{\delta}+X^{\frac32-\delta+\varepsilon}+ X^{\frac12+c+\varepsilon}).
\end{align*}
\end{proposition}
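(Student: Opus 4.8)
The plan is to substitute the asymptotic expansion of \autoref{thm:asymptoticconvolutiong} into each of the $K+1$ summands and then add the resulting explicit main terms, checking that everything collapses to the claimed shape. For $1\leq j\leq K$ I would apply \autoref{thm:asymptoticconvolutiong} with $Y=2^j$ and $\beta=1/2$, so that $-1-\beta\log\beta+\beta=-\tfrac12+\tfrac12\log 2$ and $1-\beta=\tfrac12$; and for the tail term I would apply it with $Y=X$ and $\beta=2^K/X$. Since $2^K\leq X<2^{K+1}$ one has $2^K/X\in\lopen 1/2,1\ropen$ when $X$ is not a power of $2$, so this is an admissible choice of $\beta$, and when $X=2^K$ the function $G_{X^\delta,1}$ vanishes identically, so the tail term is $0$ and the identity below still holds (the corresponding main term being $0$). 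Note that $\beta X=2^K$ and $\log X+\log\beta=K\log 2$. Throughout, $\delta\in\lopen 1/2,1\ropen$, $c\in\lopen 0,1\ropen$ with $c>1-\delta$, and $|\vartheta-1/2|<\varepsilon'$ are as required by \autoref{thm:asymptoticconvolutiong}, whose implied constants are uniform in $j$ and in $\beta$.

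Second, I would collect the main terms. Using $\sum_{j=1}^{K}2^j=2^{K+1}-2$ and $\sum_{j=1}^{K}j2^j=(K-1)2^{K+1}+2$, the total coefficient of $\mf{C}+\mf{D}\vartheta'$ is $\tfrac12(2^{K+1}-2)+(1-2^K/X)X=(2^K-1)+(X-2^K)=X-1$. For the coefficient of $\mf{B}\vartheta'$ one combines the dyadic contribution $\sum_{j=1}^K\bigl[(-\tfrac12+\tfrac12\log 2)2^j+\tfrac12\log 2\cdot j2^j\bigr]$ with the tail contribution $(-1-\beta\log\beta+\beta)X+(1-\beta)X\log X$. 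Writing $L=2^K$, the dyadic part equals $L\log L-L+1$ and the tail part equals $X\log X-X-L\log L+L$, so that all $L$-dependent terms cancel and the sum is exactly $X\log X-X+1$. Hence the combined main term is $\mf{B}\vartheta'(X\log X-X)+(\mf{C}+\mf{D}\vartheta')X+O(1)$.

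Third, I would sum the error terms. \autoref{thm:asymptoticconvolutiong} contributes $O\bigl(Y^\delta+Y^{3/2-\delta+\varepsilon}+Y^{1/2+c+2(1-\delta)+\varepsilon}\bigr)$ for each $Y=2^j$ and for $Y=X$; since each of the three exponents is positive, $\sum_{j=1}^{K}2^{j\alpha}\ll 2^{K\alpha}\ll X^\alpha$, and the tail adds $O$ of the same quantities. Together with the $O(1)$ leftover from the main-term bookkeeping (absorbed since $\delta>0$), this gives the error term $O\bigl(X^\delta+X^{3/2-\delta+\varepsilon}+X^{1/2+c+2(1-\delta)+\varepsilon}\bigr)$ of the statement.

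There is essentially no analytic difficulty here. The step that needs the most care is the exact algebraic cancellation of the $2^K$-dependent terms in the coefficient of $\mf{B}\vartheta'$, which is precisely what makes the telescoping over the dyadic decomposition of $[1,X)$ work; alongside this one only needs the minor check that $\beta=2^K/X$ lies in $\lopen 1/2,1\ropen$ and that the boundary case $X=2^K$ is harmless. Everything else is a routine summation of geometric series.
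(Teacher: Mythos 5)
Your proposal is correct and takes essentially the same approach as the paper's proof: both substitute \autoref{thm:asymptoticconvolutiong} into each of the $K+1$ summands with $\beta=1/2$ for the dyadic terms and $\beta=2^K/X$ for the tail (reading the statement's ``$X^2/K$'' as the intended $2^K/X$), collect the main-term coefficients via closed-form geometric-series identities, verify the telescoping cancellation of the $2^K$-dependent terms, and sum the errors dyadically. Your bookkeeping (writing $L=2^K$ so that the dyadic part gives $L\log L-L+1$ and the tail gives $X\log X-X-L\log L+L$) is a slightly cleaner presentation of the paper's expansion, and your remark that $\beta=2^K/X\in\lopen 1/2,1\ropen$ (with the degenerate case $X=2^K$ handled separately) is a small but correct addition.
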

\begin{proof}
By \autoref{thm:asymptoticconvolutiong} we have
\begin{align*}
&\sum_{j=1}^{K}S_{G_{2^{j\delta},1/2},\temp}(2^j)+S_{G_{X^{\delta},2^K/X},\temp}(X)\\
=&\mf{B}\vartheta'\left[ \left(-\frac{1}{2}+\frac{1}{2}\log 2\right)\sum_{j=1}^{K}2^j+\left(-1-\frac{2^K}{X}\log\frac{2^K}{X}+\frac{2^K}{X}\right)X+ \frac{1}{2}\sum_{j=1}^{K}2^j\log(2^j)+\left(1-\frac{2^K}{X}\right)X\log X\right]\\
+&(\mf{C}+\mf{D}\vartheta')\left[\frac{1}{2}\sum_{j=1}^{K}2^j+\left(1-\frac{2^K}{X}\right)X\right]\\
+&O\left(\sum_{j=1}^{K}\left((2^j)^\delta+(2^j)^{\frac32-\delta+\varepsilon}+(2^j)^{\frac12+c+\varepsilon}\right) +X^{\delta}+X^{\frac32-\delta+\varepsilon}+X^{\frac12+c+\varepsilon}\right).
\end{align*}
Clearly we have
\[
\frac{1}{2}\sum_{j=1}^{K}2^j+\left(1-\frac{2^K}{X}\right)X=X+O(1).
\]
The coefficient of $\mf{B}\vartheta'$ can be computed as follows:
\begin{align*}
&\left(-\frac{1}{2}+\frac{1}{2}\log 2\right)\sum_{j=1}^{K}2^j+\left(-1-\frac{2^K}{X}\log\frac{2^K}{X}+\frac{2^K}{X}\right)X+\frac{1}{2}\sum_{j=1}^{K}2^j\log(2^j)+\left(1-\frac{2^K}{X}\right)X\log X\\
=&(-1+\log 2)(2^K-1)-X+2^K-2^K(K\log 2-\log X)+\log 2(1-2^K+K2^K)+X\log X-2^K\log X\\
=&X\log X-X+O(1).
\end{align*}
For the error term we have
\begin{align*}
&\sum_{j=1}^{K}\left((2^j)^\delta+(2^j)^{\frac32-\delta+\varepsilon}+ (2^j)^{\frac12+c+\varepsilon}\right) +X^{\delta}+X^{\frac32-\delta+\varepsilon}+X^{\frac12+c+2(1-\delta)}\\
\ll& (2^K)^\delta+(2^K)^{\frac32-\delta+\varepsilon}+(2^K)^{\frac12+c+\varepsilon}+ X^{\delta}+X^{\frac32-\delta+\varepsilon}+X^{\frac12+c+\varepsilon}\\
\ll&X^{\delta}+X^{\frac32-\delta+\varepsilon}+X^{\frac12+c+\varepsilon}
\end{align*}
since $2^K\ll X$.
\end{proof}

Finally, by choosing $\delta=5/8>1/2$, $\varepsilon$ replaced by $\varepsilon/2$, and $c=3/8+\varepsilon/2>1-\delta$ and use the above two propositions, we obtain
\begin{theorem}\label{thm:contributetempered}
For any $\varepsilon>0$, there exists $\varepsilon'>0$ such that if $|\vartheta-1/2|<\varepsilon'$, then
\[
S_\temp(X)= \mf{B}\vartheta'(X\log X-X)+(\mf{C}+\mf{D}\vartheta')X+O(X^{\frac78+\varepsilon}),
\] 
where the implied constant only depends on $\vartheta$, $\varepsilon$, $f_\infty$ and $f_{q_i}$.
\end{theorem}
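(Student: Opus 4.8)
The plan is to assemble the stated asymptotic from the two propositions proved immediately above, after optimising the smoothing exponent $\delta$ and the auxiliary exponent $c$. Recall that $S_\temp(X)=\sum_{n<X,\ \gcd(n,S)=1}\Sigma_\temp^n(\xi)$ and that $\Sigma_\temp^n(\xi)\ll_{f_\infty,f_{q_i},\varepsilon}n^{\varrho+\varepsilon}$; this is exactly the input that lets one trade the sharp cutoff $n<X$ for the mollified cutoffs $G_{2^{j\delta},1/2}(n/2^j)$ over dyadic blocks together with one tail block at scale $X$, at a cost proportional to the width $2^{j\delta}$ of the transition ranges. The first of the two preceding propositions records this: $S_\temp(X)=\sum_{j=1}^{K}S_{G_{2^{j\delta},1/2},\temp}(2^j)+S_{G_{X^\delta,2^K/X},\temp}(X)+O(X^{\varrho+\delta+\varepsilon})$ with $K=\lfloor\log_2 X\rfloor$. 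The second proposition evaluates the right-hand side: one feeds the norm bounds $\|G_{Y^\delta,\beta}\|_{M,1}\ll Y^{M(1-\delta)}$ and $\|G_{Y^\delta,\beta}\|_{M,\infty}\ll Y^{(M+1)(1-\delta)}$ into \autoref{thm:asymptoticconvolutiong} (which itself packages \autoref{thm:asymptoticsgtemper}) and telescopes the elementary sums $\sum_{j\le K}2^{j}$ and $\sum_{j\le K}2^{j}\log 2^{j}$, obtaining the main terms $\mf{B}\vartheta'(X\log X-X)+(\mf{C}+\mf{D}\vartheta')X$ with error $O\big(X^{\delta}+X^{3/2-\delta+\varepsilon}+X^{1/2+c+2(1-\delta)+\varepsilon}\big)$, valid whenever $\delta\in\lopen 1/2,1\ropen$, $c\in\lopen 0,1\ropen$ with $c>1-\delta$, and $|\vartheta-1/2|<\varepsilon'$.

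Adding the two estimates, the total error is $X^{\varrho+\delta+\varepsilon}+X^{\delta}+X^{3/2-\delta+\varepsilon}+X^{1/2+c+2(1-\delta)+\varepsilon}$, and I would optimise it as follows. Take $\delta=\tfrac78-\tfrac\varrho4$: since $0\le\varrho<\tfrac16<\tfrac32$ this lies in $\lopen\tfrac12,1\ropen$, and then $\varrho+\delta=\tfrac78+\tfrac{3\varrho}4$, while $\tfrac32-\delta=\tfrac58+\tfrac\varrho4$ and $\delta$ itself are both bounded by $\tfrac78+\tfrac{3\varrho}4$, so the first three exponents are acceptable. Take $c=\tfrac18+\tfrac\varrho4+\tfrac\varepsilon2$: this satisfies $c>1-\delta=\tfrac18+\tfrac\varrho4$ and, for $\varrho<\tfrac16$ and $\varepsilon$ small, $c<1$, and it gives $\tfrac12+c+2(1-\delta)=\tfrac78+\tfrac{3\varrho}4+\tfrac\varepsilon2$. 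Running the two propositions with their "$\varepsilon$" taken to be $\tfrac\varepsilon2$ then collapses every error term into $O(X^{7/8+3\varrho/4+\varepsilon})$, which is the assertion; the implied constant depends only on $\vartheta,\varepsilon,f_\infty,f_{q_i}$, since the auxiliary mollifier $\phi$, the choices of $\delta$ and $c$, and the Ramanujan exponent $\varrho$ are all fixed once $\varepsilon$ is, and the $\varepsilon'$ is inherited from \autoref{thm:asymptoticconvolutiong} (ultimately from the treatment of the $\alpha\neq0$ term in \autoref{thm:asymptoticsg}).

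At this stage the argument is pure bookkeeping, so there is no genuine analytic obstacle left; the one real constraint is that the window $\delta>\tfrac12$ forced by \autoref{thm:asymptoticconvolutiong} must be compatible with the resulting error being $o(X)$, i.e.\ with $\tfrac78+\tfrac{3\varrho}4<1$. That inequality is equivalent to $\varrho<\tfrac16$, which is precisely the hypothesis of the theorem, and this is the point in the whole argument where that bound is actually consumed: it is what guarantees that the error $X^{7/8+3\varrho/4+\varepsilon}$ is smaller than the secondary main term of size $X$, so that the expansion is informative rather than an estimate in which the error swamps the lower-order terms.
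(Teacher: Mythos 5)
Your argument is exactly the paper's proof: combine the two preceding propositions, choose $\delta=\tfrac78-\tfrac{\varrho}{4}$ and $c=\tfrac18+\tfrac{\varrho}{4}+\tfrac{\varepsilon}{2}$, and run the propositions at $\varepsilon/2$ so that every error term collapses to $O(X^{7/8+3\varrho/4+\varepsilon})$. The only inaccuracy is in your closing remark: the stated theorem itself does not require $\varrho<\tfrac16$ (the choices $\delta\in\lopen\tfrac12,1\ropen$ and $c>1-\delta$ are admissible for any $\varrho\in[0,\tfrac32)$); that hypothesis is consumed only later, when one needs the error to be $o(X)$ so the formula in \autoref{thm:contributeellipticfinal} is informative.
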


Combining \autoref{thm:contributenontempered} and \autoref{thm:contributetempered} we obtain
\begin{theorem}\label{thm:contributeellipticpoisson}
For any $\varepsilon>0$, there exists $\varepsilon'>0$ such that if $|\vartheta-1/2|<\varepsilon'$, then
\[
S(X)=\mf{A}X^{\frac32}+\mf{B}\vartheta'(X\log X-X)+(\mf{C}+\mf{D}\vartheta')X+O(X^{\frac78+\varepsilon}),
\]
where the implied constant only depends on $\vartheta$, $\varepsilon$, $f_\infty$ and $f_{q_i}$.
\end{theorem}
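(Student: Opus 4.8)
To establish \autoref{thm:contributeellipticpoisson} the plan is simply to assemble the two asymptotic formulas already proved in this section. By construction $S(X)=S_1(X)+S_\temp(X)$, since summing the decomposition $\Sigma^n(\xi)=\Sigma_1^n(\xi)+\Sigma_\temp^n(\xi)$ over all $n<X$ with $\gcd(n,S)=1$ gives exactly this splitting; no further rearrangement of $S(X)$ is required.

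First I would invoke \autoref{thm:contributenontempered}, giving $S_1(X)=\mf{A}X^{3/2}+O(X^{1/2}\log X)$ with implied constant depending only on $f_\infty$ and $f_{q_i}$ and with no restriction on $\vartheta$. Next, given $\varepsilon>0$, I would take $\varepsilon'>0$ to be the constant furnished by \autoref{thm:contributetempered}, so that whenever $|\vartheta-1/2|<\varepsilon'$ one has $S_\temp(X)=\mf{B}\vartheta'(X\log X-X)+(\mf{C}+\mf{D}\vartheta')X+O(X^{7/8+3\varrho/4+\varepsilon})$. Adding the two expansions, the main terms combine to $\mf{A}X^{3/2}+\mf{B}\vartheta'(X\log X-X)+(\mf{C}+\mf{D}\vartheta')X$, and the total error is $O(X^{1/2}\log X)+O(X^{7/8+3\varrho/4+\varepsilon})$. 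Since $\varrho\geq 0$ and $7/8>1/2$, we have $X^{1/2}\log X\ll X^{7/8+3\varrho/4+\varepsilon}$, so the first error term is absorbed into the second and the combined error is $O(X^{7/8+3\varrho/4+\varepsilon})$, with implied constant depending only on $\vartheta$, $\varepsilon$, $f_\infty$ and $f_{q_i}$ — which is exactly the asserted formula.

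There is essentially no mathematical difficulty at this last step: it is purely a matter of adding the two results and comparing exponents. All the real content lies upstream — \autoref{thm:contributenontempered} is the elementary Dirichlet-series and Abel-summation computation resting on the explicit local trace identities of \cite{cheng2025}, while \autoref{thm:contributetempered} is the technical heart of the paper, built from the second Poisson summation, the critical change of variables $(T,N)\mapsto(T,\Delta)$ on $\mf{g}\sslash\G$, and the Fourier-analytic estimates developed above. The only bookkeeping to watch is that the hypothesis $|\vartheta-1/2|<\varepsilon'$ appearing in the conclusion is exactly the one inherited from \autoref{thm:contributetempered} (legitimate because \autoref{thm:contributenontempered} places no constraint on $\vartheta$), and that merging the two error terms does not enlarge the dependence of the implied constant.
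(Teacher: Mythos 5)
Your proof is correct and matches the paper's own argument, which is exactly the one-line step of adding \autoref{thm:contributenontempered} and \autoref{thm:contributetempered} using the decomposition $S(X)=S_1(X)+S_\temp(X)$ and absorbing $O(X^{1/2}\log X)$ into $O(X^{7/8+3\varrho/4+\varepsilon})$.
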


\section{Contribution of the square term}
In this section we will derive a formula of $\Sigma^n(\square)$ that splits the parameter $\vartheta$, and then derive the main result. Recall the definition in \cite{cheng2025}:
\begin{equation}\label{eq:defsigmasquare}
\begin{split}
\Sigma^n(\square)=&2\sum_{\pm}\sum_{\nu\in \ZZ^r}\sum_{\substack{T\in \ZZ^S\\ T^2\mp 4nq^\nu= \square}}\sum_{f^2\mid T^2\mp 4nq^\nu}  \sum_{k\in \ZZ_{(S)}^{>0}}\frac{1}{kf}\legendresymbol{(T^2\mp 4nq^\nu)/f^2}{k}\theta_\infty^\pm\legendresymbol{T}{2n^{1/2}q^{\nu/2}}\\
    \times &\prod_{i=1}^{r}\theta_{q_i}^{\pm,\nu}(T)\left[F\legendresymbol{kf^2}{|T^2\mp 4nq^\nu|_{\infty,q}'^{\vartheta}}+\frac{kf^2}{\sqrt{|T^2\mp 4nq^\nu|_{\infty,q}'}}V\legendresymbol{kf^2}{|T^2\mp 4nq^\nu|_{\infty,q}'^{\vartheta'}}\right].
\end{split}
\end{equation}
Note that when $T^2\mp 4nq^\nu=0$, the contribution to $\Sigma^n(\square)$ is $0$. Hence we only need to consider the summation over $T\in \ZZ^S$ with $T^2\mp 4nq^\nu=\square$ and is not zero.

By \cite[Theorem 4.9]{cheng2025b} with $A=|T^2\mp 4nq^\nu|_{\infty,q}'^{\vartheta}$, we obtain
\begin{proposition}\label{cor:sigmasquarel}
We have
\begin{equation}\label{eq:sigmasquarel}
\begin{split}
   \Sigma^n(\square)= &2\sum_{\pm}\sum_{\nu\in \ZZ^r}\sum_{\substack{T\in \ZZ^S\\ T^2\mp 4nq^\nu= \square\\ T^2\mp 4nq^\nu\neq 0}}\theta_\infty^\pm\legendresymbol{T}{2n^{1/2}q^{\nu/2}}
    \prod_{i=1}^{r}\theta_{q_i}(T,\pm nq^\nu) \\
      \times& \res_{s=0}\widetilde{F}(s)L^{S}(1+s,T^2\mp 4nq^\nu)|T^2\mp 4nq^\nu|_{\infty,q}'^{\vartheta s}.
\end{split}
\end{equation}
\end{proposition}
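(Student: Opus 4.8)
The plan is to reduce the claimed formula \eqref{eq:sigmasquarel}, term by term in the outer sum over the sign $\pm$, over $\nu\in\ZZ^r$ and over $T$, to a single application of \cite[Theorem 4.8]{cheng2025b}. Fix such data and set $D:=T^2\mp 4nq^\nu$; by the observation preceding the proposition the terms with $D=0$ contribute nothing, so we may assume $D\neq 0$, and since $T\in\ZZ^S$, $n\in\ZZ_{(S)}^{>0}$ and $q^\nu$ is a unit away from $S$, $D$ is a nonzero element of $\ZZ^S$ which, by hypothesis, is a perfect square. The factors $\theta_\infty^\pm(T/(2n^{1/2}q^{\nu/2}))$ and $\prod_{i=1}^{r}\theta_{q_i}^{\pm,\nu}(T)$ in \eqref{eq:defsigmasquare} do not involve $k$ or $f$, so after pulling them out of the $(k,f)$-sum it remains to evaluate
\[
\sum_{f^2\mid D}\ \sum_{k\in\ZZ_{(S)}^{>0}}\frac{1}{kf}\legendresymbol{D/f^2}{k}\left[F\legendresymbol{kf^2}{|D|_{\infty,q}'^{\vartheta}}+\frac{kf^2}{\sqrt{|D|_{\infty,q}'}}V\legendresymbol{kf^2}{|D|_{\infty,q}'^{\vartheta'}}\right].
\]

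Next I would invoke \cite[Theorem 4.8]{cheng2025b} with free parameter $A=|D|_{\infty,q}'^{\vartheta}$. This requires checking that its hypotheses are met --- $D$ is a nonzero $S$-integer, here even a perfect square, which lies within its scope --- and that the two arguments occurring in \eqref{eq:defsigmasquare}, namely $|D|_{\infty,q}'^{\vartheta}$ inside $F$ and $|D|_{\infty,q}'^{\vartheta'}$ inside $V$, match the values $A$ and $A^{\vartheta'/\vartheta}=|D|_{\infty,q}'^{1-\vartheta}$ prescribed there; this holds since $\vartheta'=1-\vartheta$ and $A^{1/\vartheta}=|D|_{\infty,q}'$. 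The conclusion of that theorem then identifies the inner double sum with $\res_{s=0}\widetilde{F}(s)\,L^{S}(1+s,D)\,A^{s}$, which after substituting $A=|D|_{\infty,q}'^{\vartheta}$ becomes exactly the residue factor $\res_{s=0}\widetilde{F}(s)\,L^{S}(1+s,T^2\mp 4nq^\nu)\,|T^2\mp 4nq^\nu|_{\infty,q}'^{\vartheta s}$ appearing in \eqref{eq:sigmasquarel}. This residue is that of a meromorphic function with at worst a double pole at $s=0$ --- the simple pole of $\widetilde{F}$ from \autoref{prop:propertyf} together with the simple pole of the partial $L$-function, since $D$ is a square --- hence is well defined.

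It remains to reassemble. The identity $\theta_{q_i}^{\pm,\nu}(T)=\theta_{q_i}(T,\pm nq^\nu)$ is the notational identification of \cite{cheng2025}: the symbol $\theta_{q_i}^{\pm,\nu}$ evaluates the conjugation-invariant function $\theta_{q_i}(T,N)$ at determinant $N=\pm nq^\nu$. Summing the term-by-term identity over the sign, over $\nu$ and over $T$ produces \eqref{eq:sigmasquarel}; the rearrangement is legitimate because, for fixed $n$, the compact support of $\theta_\infty^\pm$ modulo the center and of each $\theta_{q_i}$ (see \autoref{subsec:singularities}) confines the triple $(\pm,\nu,T)$ to a finite set, while \autoref{prop:propertyf} and \autoref{prop:propertyh} guarantee absolute convergence of the inner $(k,f)$-sums. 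The only genuine content here is the verification that \cite[Theorem 4.8]{cheng2025b} applies verbatim, i.e.\ matching the normalization of the modified norm $|\cdot|_{\infty,q}'$ and the choice of the parameter $A$; there is no new analysis, the contour-shift argument that reconstructs the residue from the split $F$-and-$V$ form having been carried out already in \cite{cheng2025b}.
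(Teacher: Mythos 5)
Your proposal matches the paper's proof, which is indeed a one-line appeal to \cite[Theorem 4.8]{cheng2025b} with $A=|T^2\mp 4nq^\nu|_{\infty,q}'^{\vartheta}$, after discarding the $T^2\mp 4nq^\nu=0$ terms. Your verification that the exponents in $F$ and $V$ match ($A$ and $A^{\vartheta'/\vartheta}$, using $\vartheta'=1-\vartheta$) and the notational identification $\theta_{q_i}^{\pm,\nu}(T)=\theta_{q_i}(T,\pm nq^\nu)$ fill in exactly the bookkeeping that the paper leaves implicit.
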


\begin{lemma}\label{lem:sigmasquare}
For any sign $\pm$ and $\nu\in \ZZ^r$, we have a $2:1$ map
\begin{align*}
  \{(a,b)\in \ZZ^S\times \ZZ^S\,|\, a\neq b,\,ab=\pm nq^\nu\} & \to\{T\in \ZZ^S\,|\, T^2\mp 4nq^\nu=\square,\,T^2\mp 4nq^\nu\neq 0\} \\
  (a,b) &\mapsto a+b.
\end{align*}
\end{lemma}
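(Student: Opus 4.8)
The plan is to verify that $(a,b)\mapsto a+b$ is well defined into the indicated target set and that each point of the target has exactly two preimages; summing these observations gives a $2:1$ map. \emph{Step 1 (well-definedness).} I would first note that for $(a,b)\in\ZZ^S\times\ZZ^S$ with $ab=\pm nq^\nu$ and $a\neq b$, the value $T=a+b\in\ZZ^S$ satisfies
\[
T^2\mp 4nq^\nu=(a+b)^2-4ab=(a-b)^2,
\]
which is a square in $\ZZ^S$ and is nonzero precisely because $a\neq b$; moreover $ab=\pm nq^\nu\neq 0$ already forces $a,b\neq 0$, so nothing else need be checked and $T$ lies in the target set.

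\emph{Step 2 (constructing the inverse up to a swap).} Conversely, given $T$ in the target set, I would choose $D\in\ZZ^S$ with $D\neq 0$ and $D^2=T^2\mp 4nq^\nu$, and then set
\[
a=\frac{T+D}{2},\qquad b=\frac{T-D}{2}.
\]
One checks directly that $a+b=T$, that $ab=\tfrac14(T^2-D^2)=\pm nq^\nu$, and that $a\neq b$ since $D\neq 0$; this yields surjectivity. Replacing $D$ by $-D$ interchanges $a$ and $b$, so $T$ admits the two distinct ordered preimages $(a,b)$ and $(b,a)$. \emph{Step 3 (no further preimages).} Finally, any preimage $(a',b')$ of $T$ satisfies $a'+b'=T$ and $a'b'=\pm nq^\nu$, so $\{a',b'\}$ is the unordered set of roots of $X^2-TX\pm nq^\nu$ and hence equals $\{a,b\}$; since $a\neq b$, the fibre over $T$ is exactly $\{(a,b),(b,a)\}$, of size $2$.

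The only point that genuinely needs a comment — and it is exactly what confines this lemma to the ramified setting — is the passage in Step 2 from $T,D$ to $a,b$, which requires $\tfrac12\in\ZZ^S$, i.e. the hypothesis $2\in S$; in the unramified case $2\mid T+D$ would fail in general and no such bijection would exist. One also needs the square root $D$ to lie in $\ZZ^S$ rather than merely in $\QQ$, which is immediate if $\square$ is read as ``square in $\ZZ^S$'' and otherwise follows from $\ZZ^S$ being integrally closed in $\QQ$. Beyond these remarks there is no real obstacle: Steps 1--3 are short and direct.
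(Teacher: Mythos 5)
Your Steps 1--3 recover the paper's argument: the paper asserts well-definedness via $(a+b)^2\mp 4nq^\nu=(a-b)^2$ and then writes down the two preimages $\left(\tfrac{T\pm\sqrt{T^2\mp 4nq^\nu}}{2},\tfrac{T\mp\sqrt{T^2\mp 4nq^\nu}}{2}\right)$ explicitly. Your Step 3 makes explicit, via Vieta's formulas, why there are no further preimages -- a point the paper leaves tacit -- and your remark that $\ZZ^S$ is integrally closed (so $D\in\ZZ^S$) is a correct and worthwhile addition. So the mathematical core of the proposal is correct and follows essentially the same route as the paper.

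The closing remark, however, is mistaken. You claim that the passage from $(T,D)$ to $a=\tfrac{T+D}{2}$, $b=\tfrac{T-D}{2}$ ``requires $\tfrac12\in\ZZ^S$'' and that ``in the unramified case $2\mid T+D$ would fail in general and no such bijection would exist.'' In fact, over $\ZZ$ the relation $T^2-D^2=4m$ gives $T^2\equiv D^2\pmod 4$, and since squares are $\equiv 0$ or $1\pmod 4$ this forces $T\equiv D\pmod 2$, so $\tfrac{T\pm D}{2}$ are already integers and the bijection holds unramified just as well, with no need to invert $2$. The hypothesis $2\in S$ is genuinely used in the paper -- e.g.\ in the definition of the modified $2$-adic norm and the associated local computations -- but it plays no role whatsoever in this lemma, and the statement that this lemma is ``confined to the ramified setting'' is false. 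This does not affect the validity of Steps 1--3, but you have misidentified where the ramification hypothesis actually bites.
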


\begin{proof}
This map is well defined since $(a+b)^2\mp 4nq^\nu=(a-b)^2$. Moreover, the inverse image of $T$ is precisely
\[
\left\{\left(\frac{T+\sqrt{T^2\mp 4nq^\nu}}{2},\frac{T-\sqrt{T^2\mp 4nq^\nu}}{2}\right), \left(\frac{T-\sqrt{T^2\mp 4nq^\nu}}{2},\frac{T+\sqrt{T^2\mp 4nq^\nu}}{2}\right)\right\}.\qedhere
\]
\end{proof}

Now suppose that $a,b\in \ZZ^S$ such that $a+b=T$ and $ab=\pm nq^\nu$.
By the definition of $\theta_\infty^\pm(x)$, we have
\[
\theta_\infty^\pm\legendresymbol{T}{2n^{1/2}q^{\nu/2}}=\theta_\infty\left(\frac{T}{2n^{1/2}q^{\nu/2}},\pm \frac14\right)=\theta_\infty(T,\pm nq^\nu)=\theta_\infty\begin{pmatrix}
                                                          a & 0 \\
                                                          0 & b 
                                                        \end{pmatrix}.
\]
Also we have
\[
\theta_{q_i}(T,\pm nq^\nu)=\theta_{q_i}\begin{pmatrix}
                                                          a & 0 \\
                                                          0 & b 
                                                        \end{pmatrix}.
\]
Hence by \eqref{eq:sigmasquarel} and \autoref{lem:sigmasquare} we obtain
\begin{corollary}\label{cor:sigmasquareexplicit}
We have
\[
\Sigma^n(\square)=\sum_{\pm}\sum_{\nu\in \ZZ^r}\sum_{\substack{a\neq b\in \ZZ^S\\ ab=\pm nq^\nu}}\theta_\infty\begin{pmatrix} a & 0 \\ 0 & b \end{pmatrix}\theta_q\begin{pmatrix}
 a & 0 \\ 0 & b\end{pmatrix} \res_{s=0}\widetilde{F}(s)L^{S}(1+s,(a-b)^2)|(a-b)^2|_{\infty,q}'^{\vartheta s}.
\]
\end{corollary}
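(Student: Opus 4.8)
The plan is to start from the formula for $\Sigma^n(\square)$ recorded in \autoref{cor:sigmasquarel} and transport the inner sum over $T$ through the $2{:}1$ correspondence of \autoref{lem:sigmasquare}. Fix a sign $\pm$ and $\nu\in\ZZ^r$; the outer sums over $\pm$ and $\nu$ merely come along for the ride. By \autoref{lem:sigmasquare} every $T\in\ZZ^S$ with $T^2\mp 4nq^\nu=\square$ and $T^2\mp 4nq^\nu\neq 0$ has exactly two preimages $(a,b)$ and $(b,a)$ under $(a,b)\mapsto a+b$, and these are genuinely distinct because $\sqrt{T^2\mp 4nq^\nu}\neq 0$ forces $a\neq b$. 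Hence replacing the $T$-sum by the sum over ordered pairs $(a,b)\in\ZZ^S\times\ZZ^S$ with $a\neq b$ and $ab=\pm nq^\nu$ counts each $T$ twice, which is the source of the factor $\frac12$ in the statement.

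The one point to verify is that the summand of \eqref{eq:sigmasquarel}, pulled back along this map, depends only on the unordered pair $\{a,b\}$, so that the double count is exact. Writing $\gamma=\begin{pmatrix} a & 0 \\ 0 & b\end{pmatrix}$, one has $\Tr\gamma=a+b=T$ and $\det\gamma=ab=\pm nq^\nu$, and $\begin{pmatrix} b & 0 \\ 0 & a\end{pmatrix}$ is conjugate to $\gamma$ in $\G$. Since $\theta_\infty$ and the $\theta_{q_i}$ are conjugation invariant and parametrized by trace and determinant (as set up in \autoref{subsec:singularities}), and $\theta_\infty$ is moreover $Z_+$-invariant, I get
\[
\theta_\infty^\pm\legendresymbol{T}{2n^{1/2}q^{\nu/2}}=\theta_\infty\left(\frac{T}{2n^{1/2}q^{\nu/2}},\pm\frac14\right)=\theta_\infty(T,\pm nq^\nu)=\theta_\infty(\gamma),
\]
and likewise $\theta_{q_i}(T,\pm nq^\nu)=\theta_{q_i}(\gamma)$; all of these are unchanged under $a\leftrightarrow b$. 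For the remaining factor I would use $T^2\mp 4nq^\nu=(a+b)^2\mp 4ab=(a-b)^2$ (the identity already exploited in the proof of \autoref{lem:sigmasquare}), so that
\[
\res_{s=0}\widetilde{F}(s)L^{S}(1+s,T^2\mp 4nq^\nu)|T^2\mp 4nq^\nu|_{\infty,q}'^{\vartheta s}=\res_{s=0}\widetilde{F}(s)L^{S}(1+s,(a-b)^2)|(a-b)^2|_{\infty,q}'^{\vartheta s},
\]
which is visibly symmetric in $a$ and $b$.

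Substituting these identifications into \eqref{eq:sigmasquarel} and collapsing the $T$-sum to the $\frac12$-weighted sum over $(a,b)$ with $a\neq b$, $ab=\pm nq^\nu$ gives the asserted formula directly. There is no real obstacle here: the entire content is the invariance of the summand under the involution $(a,b)\mapsto(b,a)$, and that follows at once from the conjugation invariance of the orbital-integral factors together with the symmetry of $(a-b)^2$. The only things to keep track of carefully are the bookkeeping of the $2{:}1$ multiplicity and the consistent use of the three descriptions $\theta_\infty^\pm(\,\cdot\,)$, $\theta_\infty(T,N)$, and $\theta_\infty(\gamma)$ of the same quantity.
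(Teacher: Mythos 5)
Your proposal is correct and follows essentially the same route as the paper: start from \autoref{cor:sigmasquarel}, transport the $T$-sum through the $2{:}1$ map of \autoref{lem:sigmasquare}, use the $Z_+$-invariance of $\theta_\infty$ together with the identity $T^2\mp 4nq^\nu=(a-b)^2$ to rewrite the summand as a function of $\gamma=\bigl(\begin{smallmatrix} a&0\\0&b\end{smallmatrix}\bigr)$, and absorb the double count into the factor $\tfrac12$. You are slightly more explicit than the paper about verifying symmetry under $(a,b)\mapsto(b,a)$ — which is needed to justify the $\tfrac12$ — but the underlying argument is the same.
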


Now we compute the residue at $s=0$ of the function
\begin{equation}\label{eq:residuesquare}
\widetilde{F}(s)L^{S}(1+s,\sigma^2)|\sigma^2|_{\infty,q}'^{\vartheta s}
\end{equation}
for $\sigma\in \ZZ^S$.

Clearly we have
\[
|\sigma^2|_{\infty,q}'^{\vartheta s}=|\sigma^2|_\infty^{\vartheta s}\prod_{i=1}^{r}|\sigma^2|_{q_i}'^{\vartheta s}= |\sigma|_\infty^{2\vartheta s}\prod_{i=1}^{r}|\sigma|_{q_i}'^{2\vartheta s}=|\sigma^{(q)}|^{2\vartheta s}.
\]

Next we consider the partial Zagier $L$-function. We have (cf. (4.4) in \cite{cheng2025b})
\begin{equation}\label{eq:zagierzeta}
   L^{S}(s,\sigma^2) =\prod_{i=1}^{r}\left(1-q_i^{-s}\right)\sum_{f\mid {\sigma^{(q)}}}\frac{1}{f^{2s-1}}\sum_{k\mid \frac{{\sigma^{(q)}}}{f}}\frac{\bm{\mu}(k)}{k^s}\zeta(s),
\end{equation}
which is an entire function times $\zeta(s)$, where $\bm{\mu}(k)$ denotes the M\"obius function. Hence it is regular away from $s=1$ and has a simple pole at $s=1$ with residue
\[
\res_{s=1}L^{S}(s,\sigma^2)=\prod_{i=1}^{r}(1-q_i^{-1})\sum_{f\mid {\sigma^{(q)}}}\frac{1}{f}\sum_{k\mid \frac{{\sigma^{(q)}}}{f}}\frac{\bm{\mu}(k)}{k}=\prod_{i=1}^{r}(1-q_i^{-1})
\]
and finite part
\begin{align*}
\fp_{s=1}L^{S}(s,\sigma^2)&=\left.\prod_{i=1}^{r}\left(1-q_i^{-s}\right)\sum_{f\mid {\sigma^{(q)}}}\frac{1}{f^{2s-1}}\sum_{k\mid \frac{{\sigma^{(q)}}}{f}}\frac{\bm{\mu}(k)}{k^s}\right|_{s=1}\fp_{s=1}\zeta(s)\\
&+\left.\frac{\rmd}{\rmd s}\right|_{s=1}\left(\prod_{i=1}^{r}\left(1-q_i^{-s}\right)\sum_{f\mid {\sigma^{(q)}}}\frac{1}{f^{2s-1}}\sum_{k\mid \frac{{\sigma^{(q)}}}{f}}\frac{\bm{\mu}(k)}{k^s}\right)\res_{s=1}\zeta(s).
\end{align*}
Since $\fp_{s=1}\zeta(s)=\upgamma$ is the Euler-Mascheroni constant \cite[25.2.4]{DLMF}, we obtain
\begin{align*}
\fp_{s=1}L^{S}(s,\sigma^2)&=\upgamma\prod_{i=1}^{r}\left(1-q_i^{-1}\right)\sum_{f\mid {\sigma^{(q)}}}\frac{1}{f}\sum_{k\mid \frac{{\sigma^{(q)}}}{f}}\frac{\bm{\mu}(k)}{k} +\prod_{i=1}^{r}\left(1-q_i^{-1}\right)\sum_{i=1}^{r}\frac{q_i^{-1}\log q_i}{1-q_i^{-1}}\sum_{f\mid {\sigma^{(q)}}}\frac{1}{f}\sum_{k\mid \frac{{\sigma^{(q)}}}{f}}\frac{\bm{\mu}(k)}{k}\\
&-2\prod_{i=1}^{r}\left(1-q_i^{-1}\right)\sum_{f\mid {\sigma^{(q)}}}\frac{\log f}{f}\sum_{k\mid \frac{{\sigma^{(q)}}}{f}}\frac{\bm{\mu}(k)}{k}
-\prod_{i=1}^{r}\left(1-q_i^{-1}\right)\sum_{f\mid {\sigma^{(q)}}}\frac{1}{f}\sum_{k\mid \frac{{\sigma^{(q)}}}{f}}\frac{\bm{\mu}(k)\log k}{k}.
\end{align*}
By Lemma 4.11 of \cite{cheng2025b}, we obtain
\[
\fp_{s=1}L^{S}(s,\sigma^2)=\prod_{i=1}^{r}\left(1-q_i^{-1}\right)\left(\upgamma+\sum_{i=1}^{r}\frac{q_i^{-1}\log q_i}{1-q_i^{-1}}\right)-\prod_{i=1}^{r}\left(1-q_i^{-1}\right)\sum_{d\mid \sigma^{(q)}}\frac{\bm{\Lambda}(d)}{d},
\]
where $\bm{\Lambda}(d)$ denotes the von Mangoldt function.

Finally, for $\widetilde{F}(s)$ we have $\res_{s=0}\widetilde{F}(s)=1$ and $\fp_{s=0}\widetilde{F}(s)=0$ since $\widetilde{F}(s)$ is an odd function.

Therefore, we conclude that
\begin{align*}
&\res_{s=0}\widetilde{F}(s)L^{S}(1+s,\sigma^2)|\sigma^2|_{\infty,q}'^{\vartheta s}= \res_{s=0}\widetilde{F}(s)L^{S}(1+s,\sigma^2)|\sigma^{(q)}|^{2\vartheta s}\\
=&\res_{s=0}\widetilde{F}(s)\res_{s=0}L^{S}(1+s,\sigma^2)\left.\frac{\rmd}{\rmd s}\right|_{s=0}|\sigma^{(q)}|^{2\vartheta s}+\res_{s=0}\widetilde{F}(s)\fp_{s=0}L^{S}(1+s,\sigma^2) |\sigma^{(q)}|^{2\vartheta s}|_{s=0}\\
+&\fp_{s=0}\widetilde{F}(s)\res_{s=0}L^{S}(1+s,\sigma^2)|\sigma^{(q)}|^{2\vartheta s}|_{s=0}\\
=&\prod_{i=1}^{r}(1-q_i^{-1})\left[2\vartheta\log|\sigma^{(q)}|+\upgamma+\sum_{i=1}^{r}\frac{q_i^{-1}\log q_i}{1-q_i^{-1}}-\sum_{d\mid \sigma^{(q)}}\frac{\bm{\Lambda}(d)}{d}\right].
\end{align*}

Thus by \autoref{cor:sigmasquareexplicit} we obtain
\begin{proposition}\label{prop:sigmasquareexplicit}
The square term $\Sigma^n(\square)$ can be expressed as
\begin{align*}
&\prod_{i=1}^{r}(1-q_i^{-1})\sum_{\pm}\sum_{\nu\in \ZZ^r}\sum_{\substack{a\neq b\in \ZZ^S\\ ab=\pm nq^\nu}}\theta_\infty\begin{pmatrix} a & 0 \\ 0 & b \end{pmatrix}\theta_q\begin{pmatrix}
 a & 0 \\ 0 & b\end{pmatrix}\\
 \times&\left[2\vartheta\log|(a-b)^{(q)}|+\upgamma+\sum_{i=1}^{r}\frac{q_i^{-1}\log q_i}{1-q_i^{-1}}-\sum_{d\mid (a-b)^{(q)}}\frac{\bm{\Lambda}(d)}{d}\right].
\end{align*}
\end{proposition}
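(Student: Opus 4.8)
The plan is to deduce \autoref{prop:sigmasquareexplicit} from \autoref{cor:sigmasquareexplicit} together with the evaluation of the residue at $s=0$ of the function in \eqref{eq:residuesquare} that was carried out just above the statement. By \autoref{cor:sigmasquareexplicit} we already have
\[
\Sigma^n(\square)= \frac12\sum_{\pm}\sum_{\nu\in \ZZ^r}\sum_{\substack{a\neq b\in \ZZ^S\\ ab=\pm nq^\nu}}\theta_\infty\begin{pmatrix} a & 0 \\ 0 & b \end{pmatrix}\theta_q\begin{pmatrix} a & 0 \\ 0 & b\end{pmatrix} \res_{s=0}\widetilde{F}(s)L^{S}(1+s,(a-b)^2)|(a-b)^2|_{\infty,q}'^{\vartheta s},
\]
so the only remaining task is to insert, with $\sigma=a-b\in\ZZ^S$, the closed form for $\res_{s=0}\widetilde{F}(s)L^{S}(1+s,\sigma^2)|\sigma^2|_{\infty,q}'^{\vartheta s}$ and then factor the constant $\prod_{i=1}^r(1-q_i^{-1})$ out of the sum.

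For the residue I would proceed in three steps. First, rewrite $|\sigma^2|_{\infty,q}'^{\vartheta s}=|\sigma^{(q)}|^{2\vartheta s}$, which is entire in $s$ with value $1$ and $s$-derivative $2\vartheta\log|\sigma^{(q)}|$ at $s=0$. Second, use the Zagier $L$-function factorization \eqref{eq:zagierzeta}, which writes $L^S(s,\sigma^2)$ as an entire Dirichlet polynomial times $\zeta(s)$; hence $L^S(1+s,\sigma^2)$ has at worst a simple pole at $s=0$, with residue $\res_{s=1}L^S(s,\sigma^2)=\prod_{i=1}^r(1-q_i^{-1})$, and finite part computed by differentiating the entire factor and using $\res_{s=1}\zeta(s)=1$, $\fp_{s=1}\zeta(s)=\upgamma$, together with Lemma 4.10 of \cite{cheng2025b} to collapse the nested divisor sums into $\sum_{d\mid\sigma^{(q)}}\bm{\Lambda}(d)/d$, giving
\[
\fp_{s=1}L^{S}(s,\sigma^2)=\prod_{i=1}^{r}(1-q_i^{-1})\Bigl(\upgamma+\sum_{i=1}^{r}\frac{q_i^{-1}\log q_i}{1-q_i^{-1}}-\sum_{d\mid \sigma^{(q)}}\frac{\bm{\Lambda}(d)}{d}\Bigr).
\]
Third, recall from \autoref{prop:propertyf} that $\widetilde{F}(s)$ has a simple pole at $s=0$ and is odd, so $\res_{s=0}\widetilde{F}(s)=1$ and $\fp_{s=0}\widetilde{F}(s)=0$. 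Multiplying the three Laurent expansions at $s=0$ and extracting the coefficient of $s^{-1}$ — the $s^{-2}$ term playing no role — yields
\[
\res_{s=0}\widetilde{F}(s)L^{S}(1+s,\sigma^2)|\sigma^2|_{\infty,q}'^{\vartheta s}=\prod_{i=1}^{r}(1-q_i^{-1})\Bigl(2\vartheta\log|\sigma^{(q)}|+\upgamma+\sum_{i=1}^{r}\frac{q_i^{-1}\log q_i}{1-q_i^{-1}}-\sum_{d\mid \sigma^{(q)}}\frac{\bm{\Lambda}(d)}{d}\Bigr),
\]
and substituting $\sigma=a-b$ back into the displayed form of $\Sigma^n(\square)$ gives the proposition.

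I expect the only mildly delicate point to be the finite-part computation in the second step: since both $\widetilde{F}(s)$ and $L^S(1+s,\sigma^2)$ contribute simple poles at $s=0$, the product has a double pole a priori, and one must check — which the explicit Laurent expansions make automatic — that the $s^{-1}$ coefficient depends only on the residues and finite parts of the individual factors, and that the divisor sums in $\fp_{s=1}L^S(s,\sigma^2)$ indeed telescope via the cited lemma. Beyond \autoref{cor:sigmasquareexplicit}, the factorization \eqref{eq:zagierzeta}, and the elementary properties of $\widetilde{F}$ and $\zeta$, no new input is needed; the rest is bookkeeping with Laurent series.
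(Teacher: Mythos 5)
Your proposal is correct and follows essentially the same route as the paper: starting from \autoref{cor:sigmasquareexplicit}, rewriting $|\sigma^2|_{\infty,q}'^{\vartheta s}=|\sigma^{(q)}|^{2\vartheta s}$, extracting $\res_{s=1}$ and $\fp_{s=1}$ of $L^S(s,\sigma^2)$ from \eqref{eq:zagierzeta} with Lemma 4.10 of \cite{cheng2025b} and $\fp_{s=1}\zeta=\upgamma$, using $\res_{s=0}\widetilde F=1$, $\fp_{s=0}\widetilde F=0$, and assembling the Laurent expansions to read off the residue. Your remark that the potential double pole is harmless because one only extracts the $s^{-1}$ coefficient is exactly the observation the paper makes implicitly when it writes the residue as the three Leibniz terms (one of which vanishes since $\fp_{s=0}\widetilde F=0$).
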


We now define the \emph{degree $1$ part of the hyperbolic part} for $f^n$ to be
\begin{equation}\label{eq:hyperbolicpartdeg1}
I_{\hyp}^{\deg=1}(f^n)=\prod_{i=1}^{r}(1-q_i^{-1})\sum_{\pm}\sum_{\nu\in \ZZ^r}\sum_{\substack{a\neq b\in \ZZ^S\\ ab=\pm nq^\nu}}\theta_\infty\begin{pmatrix} a & 0 \\ 0 & b \end{pmatrix}\theta_q\begin{pmatrix}
 a & 0 \\ 0 & b\end{pmatrix}
\end{equation}
and the \emph{modified unramified hyperbolic part of the second kind} for $f^n$ to be
\begin{equation}\label{eq:hyperbolicpartmodify}
\widehat{J}_{\hyp}^{S}(f^n)=-\prod_{i=1}^{r}(1-q_i^{-1})\sum_{\pm}\sum_{\nu\in \ZZ^r}\sum_{\substack{a\neq b\in \ZZ^S\\ ab=\pm nq^\nu}}\theta_\infty\begin{pmatrix} a & 0 \\ 0 & b \end{pmatrix}\theta_q\begin{pmatrix}
 a & 0 \\ 0 & b\end{pmatrix}\sum_{d\mid (a-b)^{(q)}}\frac{\bm{\Lambda}(d)}{d}.
\end{equation}
\begin{remark}
One can define $I_{\hyp}^{\deg=1}(f)$ and $\widehat{J}_{\hyp}^{S}(f)$ for arbitrary $f=\bigotimes_{v\in \mf{S}}'f_v\in C_c^\infty(\G(\AA)^1)$ and show that such definitions agree with \eqref{eq:hyperbolicpartdeg1} and \eqref{eq:hyperbolicpartmodify}, respectively.
\end{remark}

Hence we obtain
\begin{corollary}\label{cor:sigmasquareexplicit3}
We have
\begin{align*}
  \Sigma^n(\square) & =\upgamma_SI_{\hyp}^{\deg=1}(f^n)+\widehat{J}_{\hyp}^{S}(f^n) \\
   & +2\vartheta\prod_{i=1}^{r}(1-q_i^{-1})\sum_{\pm}\sum_{\nu\in \ZZ^r}\sum_{\substack{a\neq b\in \ZZ^S\\ ab=\pm nq^\nu}}\theta_\infty\begin{pmatrix} a & 0 \\ 0 & b \end{pmatrix}\theta_q\begin{pmatrix}
 a & 0 \\ 0 & b\end{pmatrix}\log|(a-b)^{(q)}|,
\end{align*}
where
\begin{equation}\label{eq:defgammas}
\upgamma_S=\upgamma+\sum_{i=1}^{r}\frac{q_i^{-1}\log q_i}{1-q_i^{-1}}.
\end{equation}
\end{corollary}

By \autoref{thm:contributeellipticpoisson} and \autoref{cor:sigmasquareexplicit3} we obtain the contribution of the elliptic part with parameter $\vartheta$:
\begin{theorem}\label{thm:contributeellipticfull}
For any $\varepsilon>0$, there exists $\varepsilon'>0$ such that if $|\vartheta-1/2|<\varepsilon'$, then
\begin{align*}
  &\sum_{\substack{n<X\\\gcd(n,S)=1}}I_\el(f^n) =\mf{A}X^{\frac32}+\mf{B}\vartheta'(X\log X-X)+(\mf{C}+\mf{D}\vartheta')X-\upgamma_S\sum_{\substack{n<X\\\gcd(n,S)=1}}I_{\hyp}^{\deg=1}(f^n) \\
   -&\sum_{\substack{n<X\\\gcd(n,S)=1}}\widehat{J}_{\hyp}^{S}(f^n) -2\vartheta\prod_{i=1}^{r}(1-q_i^{-1})\sum_{\substack{n<X\\\gcd(n,S)=1}}\sum_{\pm}\sum_{\nu\in \ZZ^r}\sum_{\substack{a\neq b\in \ZZ^S\\ ab=\pm nq^\nu}}\theta_\infty\begin{pmatrix} a & 0 \\ 0 & b \end{pmatrix}\theta_q\begin{pmatrix}
 a & 0 \\ 0 & b\end{pmatrix}\log|(a-b)^{(q)}|\\
 +&O(X^{\frac78+\varepsilon}),
\end{align*}
where the implied constant only depends on $\vartheta$, $\varepsilon$, $f_\infty$ and $f_{q_i}$.
\end{theorem}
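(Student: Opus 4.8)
The plan is to deduce \autoref{thm:contributeellipticfull} by assembling the two results just proved, \autoref{thm:contributeellipticpoisson} and \autoref{cor:sigmasquareexplicit2}, through the elementary identity relating the elliptic part to the quantity on which the first Poisson summation was carried out. Recall from \cite[Section~4]{cheng2025}, as stated at the beginning of \autoref{sec:secondpoisson}, that $\Sigma^n(\xi)=I_\el(f^n)+\Sigma^n(\square)$; hence $I_\el(f^n)=\Sigma^n(\xi)-\Sigma^n(\square)$ for $\gcd(n,S)=1$, and summing over $n<X$ with $\gcd(n,S)=1$ together with the definition of $S(X)$ yields
\[
\sum_{\substack{n<X\\\gcd(n,S)=1}}I_\el(f^n)=S(X)-\sum_{\substack{n<X\\\gcd(n,S)=1}}\Sigma^n(\square).
\]
For each fixed $n$ the sum defining $\Sigma^n(\square)$ is finite, since $\theta_\infty$ is compactly supported modulo the center and $\theta_q$ is compactly supported, so no issue arises in interchanging it with the finite sum over $n<X$.

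First I would insert the asymptotic expansion of $S(X)$ from \autoref{thm:contributeellipticpoisson}, valid whenever $|\vartheta-1/2|<\varepsilon'$, which contributes $\mf{A}X^{\frac32}+\mf{B}\vartheta'(X\log X-X)+(\mf{C}+\mf{D}\vartheta')X$ and the error $O(X^{\frac78+\frac34\varrho+\varepsilon})$. Then I would apply \autoref{cor:sigmasquareexplicit2} to each $\Sigma^n(\square)$ and sum over $n$: by the definitions \eqref{eq:hyperbolicpartdeg1} and \eqref{eq:hyperbolicpartmodify} the first two pieces aggregate to $\tfrac12\upgamma_S\sum_{n<X}I_{\hyp}^{\deg=1}(f^n)+\tfrac12\sum_{n<X}\widehat{J}_{\hyp}^{S}(f^n)$, and the third, which is linear in $\vartheta$, aggregates to
\[
\vartheta\prod_{i=1}^{r}(1-q_i^{-1})\sum_{\substack{n<X\\\gcd(n,S)=1}}\sum_{\pm}\sum_{\nu\in\ZZ^r}\sum_{\substack{a\neq b\in\ZZ^S\\ab=\pm nq^\nu}}\theta_\infty\begin{pmatrix}a&0\\0&b\end{pmatrix}\theta_q\begin{pmatrix}a&0\\0&b\end{pmatrix}\log|(a-b)^{(q)}|.
\]
Subtracting this from the expansion of $S(X)$ and collecting terms reproduces exactly the stated formula. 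Since \autoref{cor:sigmasquareexplicit2} is an exact identity valid for every $\vartheta\in\lopen 0,1\ropen$, it contributes neither error nor restriction; the surviving hypothesis $|\vartheta-1/2|<\varepsilon'$, and the dependence of the implied constant on $\vartheta$, $\varepsilon$, $f_\infty$ and $f_{q_i}$, are inherited solely from \autoref{thm:contributeellipticpoisson}.

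There is no analytic obstacle in this step: all of the substantive work has already been done --- the second Poisson summation, the change of variables to the Hitchin-Steinberg base, and the separate treatment of the $\alpha=0$, $\alpha\neq0$, tempered and nontempered contributions, together with the explicit residue computation producing \autoref{cor:sigmasquareexplicit2}. The only point deserving attention is consistency in the parameter $\vartheta$: the construction from \autoref{sec:secondpoisson} onward is arranged so that $\vartheta$ enters every intermediate quantity linearly, and one must check that the linear-in-$\vartheta$ parts on the two sides of $I_\el(f^n)=\Sigma^n(\xi)-\Sigma^n(\square)$ are compatible, which they are by construction. As a (logically superfluous) sanity check, one may set $\vartheta=\vartheta'=\tfrac12$ and invoke \autoref{thm:contributeellipticdeg1} to evaluate the remaining $\log$-weighted sum, thereby recovering \autoref{thm:contributeellipticfinal}.
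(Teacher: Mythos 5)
Your proposal is correct and matches the paper's own derivation: \autoref{thm:contributeellipticfull} is obtained precisely by writing $I_\el(f^n)=\Sigma^n(\xi)-\Sigma^n(\square)$, substituting the asymptotic for $S(X)$ from \autoref{thm:contributeellipticpoisson}, and subtracting the exact expression for $\Sigma^n(\square)$ from \autoref{cor:sigmasquareexplicit2}. The bookkeeping of which terms aggregate to $\tfrac12\upgamma_S I_{\hyp}^{\deg=1}(f^n)$, $\tfrac12\widehat{J}_{\hyp}^{S}(f^n)$, and the $\vartheta$-weighted log sum is exactly as you describe.
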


Since $\vartheta'=1-\vartheta$, the above formula is a linear polynomial for $\vartheta$. Hence the condition that $|\vartheta-1/2|<\varepsilon'$ is redundant and we obtain \autoref{thm:contributeellipticfinal} and \autoref{thm:contributeellipticdeg1}.

\appendix
\section{An application to modular forms}
As another application of \autoref{thm:contributeellipticfinal}, we will prove an estimate of the sum of the traces of Hecke operators of modular forms in this appendix, which generali.zes the main result of \cite{altug2020}.
Let $T_{m,N,\omega'}(n)$ denote the normalized $n^{\mathrm{th}}$ Hecke operator acting on $S_m(\Gamma_{0}(N),\omega')$, the space of holomorphic cusp forms of weight $m>2$ and level $N$ with nebentypus $\omega'$. More precisely, for $\varphi\in S_m(\Gamma_{0}(N),\omega')$, we define
\[
T_{m,N,\omega'}(n)(\varphi)(z)=n^{\frac{m-1}{2}}\sum_{\substack{ad=n\\ a>0\\ b\bmod d }}\omega'(a)^{-1}d^{-m}\varphi\legendresymbol{az+b}{d}.
\]

We set $S=\{\infty,2\}\cup\{q\text{ prime}\,|\, q\mid N\}$ in this appendix. The main theorem we will prove in this appendix is
\begin{theorem}\label{thm:modularformestimate}
Suppose that $m>2$. Then for any $\varepsilon>0$ we have
\[
\sum_{\substack{n<X\\\gcd(n,S)=1}} \Tr(T_{m,N,\omega'}(n))\ll_{m,N,\omega',\varepsilon} X^{\frac78+\varepsilon}.
\]
\end{theorem}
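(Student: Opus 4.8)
The plan is to reduce $\Tr(T_{m,N,\omega'}(n))$ to the automorphic-spectral quantity $I_{\cusp}(f^n)$ for a suitable adelic test function, then invoke \autoref{thm:contributeellipticfinal} (or rather the simplification noted right after it) together with the observation that the Eisenstein/continuous contribution and the non-elliptic geometric terms either vanish or are harmless for this particular choice of test function. First I would choose $S=\{\infty\}\cup\{q\mid 2N\}$ and a test function $f^n=\bigotimes'_v f^n_v$ whose archimedean component $f_\infty=f_{\infty,m}$ is (a $Z_+$-average of) the matrix coefficient of the weight-$m$ discrete series exactly as in Altu\u{g}'s work, whose components at the primes $q\in S$ are the idempotents cutting out $\Gamma_0(N)$-level with nebentypus $\omega'$ (so that $\Tr(\pi_q(f_q))$ picks out exactly the oldform/newform multiplicities with the right character), and whose components at $p\notin S$ are the normalized Hecke operators $p^{-n_p/2}\triv_{\cX_p^{n_p}}$. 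With these choices, the Eichler--Selberg trace formula identifies $\Tr(T_{m,N,\omega'}(n))$ with $I_{\cusp}(f^n)$ up to an elementary, $n$-independent normalization: the point is that for $m>2$ holomorphic the relevant discrete series is not a limit of discrete series, so there is no contribution from the residual or continuous spectrum, and $I_{\cusp}(f^n)=I_{\spec}(f^n)$.

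Next I would feed $f^n$ into the geometric side of the trace formula. Summing over $n<X$ with $\gcd(n,S)=1$, one gets $\sum_{n<X}I_{\cusp}(f^n)=\sum_{n<X}\big(I_\el(f^n)+I_\hyp(f^n)+I_{\unip}(f^n)+\cdots\big)$. For the discrete-series archimedean component $f_{\infty,m}$, the orbital integrals at the split (hyperbolic) and unipotent classes vanish — this is the standard fact that discrete-series matrix coefficients are cuspidal at $\infty$, so all the non-elliptic geometric terms drop out, including $I_{\hyp}^{\deg=1}(f^n)$ and $\widehat J_{\hyp}^S(f^n)$ which are then identically zero. Likewise $\mathfrak{A}=\mathfrak{B}=\mathfrak{C}=\mathfrak{D}=0$ because these constants are built from $\widehat\Theta_\infty$ and $\widehat\Theta_q$ attached to $\theta_\infty=\theta_{\infty,f_{\infty,m}}$, which vanishes identically for a cuspidal $f_\infty$ (the function $g_1$ is supported on non-split tori and $g_2\equiv 0$ after the discrete-series averaging that kills the identity contribution — precisely the mechanism that makes $\widehat\Theta_\infty$ vanish). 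Hence \autoref{thm:contributeellipticfinal} collapses to $\sum_{n<X}I_\el(f^n)=O(X^{7/8+(3/4)\varrho+\varepsilon})$, and translating back through the normalization gives the claimed bound for $\sum_{n<X}\Tr(T_{m,N,\omega'}(n))$.

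The main obstacle is the bookkeeping at the ramified primes $q\in S$: one must verify carefully that the chosen $f_q$ really does reproduce $S_m(\Gamma_0(N),\omega')$ with the correct Hecke eigenvalue normalization (including the twist by $\omega'(a)^{-1}$ in the definition of $T_{m,N,\omega'}(n)$, which corresponds to a specific central-character choice), and that the measure normalizations of \autoref{sec:preliminaries} match the classical ones so that no stray volume factor of $N$ or $\bm\phi(N)$ survives. A secondary point is to confirm that $f_{\infty,m}$, or its $Z_+$-average, satisfies the hypothesis in \autoref{thm:contributeellipticfinal} that the orbital integrals are compactly supported modulo $Z_+$ and that $\theta_\infty$ and hence $\widehat\Theta_\infty$ genuinely vanish — for $m>2$ this is classical (Altu\u{g}'s first paper), but it is exactly the ingredient that forces all the main terms $\mathfrak{A},\dots,\mathfrak{D}$ to be $0$, so it deserves an explicit sentence. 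Once these two verifications are in place the theorem is immediate from \autoref{thm:contributeellipticfinal}, and in particular from the $7/64$-bound one may take $\varrho=7/64$ to get the fully explicit exponent $245/256+\varepsilon$, recovering and generalizing the estimate of \cite{altug2020}.
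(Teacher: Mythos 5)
There is a genuine gap, and it is the crux of the argument. You assert that because $f_{\infty,m}$ is cuspidal at $\infty$, "all the non-elliptic geometric terms drop out" including $J_{\hyp}$, and that $\widehat{\Theta}_\infty$ "vanishes identically," forcing $\mathfrak{A}=\mathfrak{B}=\mathfrak{C}=\mathfrak{D}=0$. Neither claim is correct. Supercuspidality kills the \emph{unweighted} hyperbolic orbital integral $\int_{\A(\RR)\bs \G(\RR)} f_{\infty,m}(g^{-1}\gamma g)\,\rmd g$, but Arthur's $J_{\hyp}$ is a \emph{weighted} orbital integral involving the factor $\alpha(H_\B(wg)+H_\B(g))$, and the weighted hyperbolic orbital integral of $f_{\infty,m}$ is nonzero (for even $m$ it equals $|\gamma_1\gamma_2|^{1-m/2}\min\{|\gamma_1|,|\gamma_2|\}^{m-1}/|\gamma_1-\gamma_2|$). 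Consequently $\sum_{n<X}J_{\hyp}(f^n)$ produces an honest main term of size $X/(m-1)$. Likewise $\widehat{\Theta}_\infty(x)=\theta_\infty(\pm 1,(1-x)/4)$ is built from $\theta_\infty$ restricted to elliptic elements, where it is certainly nonzero; what vanishes is only its restriction to $x>0$ (the split range). This forces $\mathfrak{A}=\mathfrak{B}=\mathfrak{D}=0$, but \emph{not} $\mathfrak{C}$: the constant $\mathfrak{C}$ contains an integral over $X_1=\{x<0\}$ and works out to $\frac{2}{m-1}\prod_i(1-q_i^{-1})^3\int_{\A(\QQ_{q_i})}\theta_{q_i}(t)\,\rmd t$ for even $m$. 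So the elliptic side also carries a main term $\mathfrak{C}X$.

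The reason the final bound holds is that these two nonvanishing main terms cancel identically: the $J_{\hyp}$ main term is $2\prod_i(1-q_i^{-1})^3\int\theta_{q_i}\,\rmd t\cdot X/(1-m)$, precisely the negative of $\mathfrak{C}X$. Your argument never produces this cancellation because it discards both terms prematurely by the same incorrect vanishing claim. (Your assertions that $I_{\hyp}^{\deg=1}(f^n)$, $\widehat{J}_{\hyp}^S(f^n)$, and $\Sigma^n(\square)$ all vanish are correct — those really do involve only unweighted split orbital integrals — and the identity and unipotent contributions are $O(X^{1/2+\varepsilon})$ as you implicitly expect.) A secondary issue is that the reduction from $\Tr(T_{m,N,\omega'}(n))$ to $I_{\cusp}(f^n)$ is not merely a normalization: the paper works with a pair of test functions $\overline{\bff}^n\in C_c^\infty(\G(\AA),\omega^{-1})$ and $\bff^n\in C_c^\infty(\G(\AA)^1)$ and proves term-by-term that each geometric piece doubles, $J_\bullet(\bff^n)=2J_\bullet(\overline{\bff}^n)$, which takes some work because the two settings sum over different sets of conjugacy classes. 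That bookkeeping can likely be absorbed into your framework, but the weighted-orbital-integral cancellation cannot; without it the method gives only $\sum\Tr(T_{m,N,\omega'}(n))=cX+O(X^{7/8+3\varrho/4+\varepsilon})$ with $c\neq 0$, which is wrong.
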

In this appendix, we assume that $f_\infty=f_{\infty,m}$ is the matrix coefficient of a weight $m$ discrete series with $m>2$. Although $f_\infty$ is not compactly supported modulo $Z_+$, the same arguments in \cite{cheng2025b} and in this paper still work, except that we need to verify $I_\hyp(f^n)\ll_\varepsilon n^\varepsilon$, which can be easily verified by the formula of $I_\hyp(f^n)$ at the end of Step 1 of the proof of \autoref{thm:contributehyperbolicmodular} below. 

Also, we fix the Haar measure on $\G(\RR)$ to be
  \[
  \rmd g=\frac{\rmd a\wedge\rmd b\wedge\rmd c\wedge\rmd d}{(ad-bc)^2}
  \]
  for $g=(\begin{smallmatrix}
            a & b \\
            c & d 
          \end{smallmatrix})$.
Such measure normalization on $\G(\RR)$ is taken from \cite{knightly2006traces}.

Recall that the Arthur-Selberg trace formula for $\GL_2$ and $f\in C_c^\infty(\G(\AA)^1)=C_c^\infty(Z_+\bs\G(\AA))$ (where $Z_+$ denotes the identity component of $\Z(\RR)$, the center of $\G(\RR)$) can be written in the following form:
\begin{equation}\label{eq:traceformulagl2}
J_{\mathrm{geom}}(f)=J_{\mathrm{spec}}(f),
\end{equation}
where the geometric side is
\begin{equation}\label{eq:geometric}
J_{\mathrm{geom}}(f)=I_\id(f)+I_\el(f)+J_\hyp(f)+J_\unip(f)
\end{equation}
and the spectral side is
\begin{equation}\label{eq:spectral}
J_{\mathrm{spec}}(f)=I_{\mathrm{cusp}}(f)+J_{\mathrm{cont}}(f)+\sum_{\mu}\Tr(\mu(f)) +\frac14\sum_{\mu}\Tr(M(\mu,0)(\xi_0\otimes\mu)(f)).
\end{equation}
The sum of $\mu$ is over all $1$-dimensional representations of $\G(\QQ)\bs \G(\AA)^1$. 


By \cite[Proposition 26.7]{knightly2006traces}, $f_{\infty,m}\in C^\infty(Z_+\bs \GL_2(\RR))$ and the orbital integrals are compactly supported modulo $Z_+$. Moreover, we have $\theta_\infty^{-}(x)=0$ and
\begin{equation}\label{eq:discreteseriestheta}
\theta_\infty^+(x)=\begin{dcases}
  \frac{\rmi}{2\uppi}\left[(x+\rmi\sqrt{1-x^2})^{m-1}-(x-\rmi\sqrt{1-x^2})^{m-1}\right], & \text{if $|x|<1$},  \\
  0, & \text{otherwise}.
\end{dcases}
\end{equation}
Note that the factor $2\uppi$ comes from the different normalization of $\G_\gamma(\RR)$ (since we chose the measure of $\Z(\RR)\bs \G_\gamma(\RR)=\Z(\RR)\bs \CC^\times$ to be $2\uppi$, see \autoref{subsec:measure}), which does not occur in \cite{knightly2006traces}. Moreover, $f_{\infty,m}$ is supercuspidal as defined in the introductory section (cf. \cite[Lemma 13.9]{knightly2006traces}). Hence by \cite[Lemma 16.4.2]{getz2024} (the conclusion in loc. cit. still holds for archimedean places and the proof is the same), we obtain

\begin{proposition}\label{prop:cuspidalimage}
The right multiplication action $R$ for $f^n$ on $L^2(\G(\QQ)\bs \G(\AA)^1)$ has cuspidal image. In other words,
\[
J_\spec(f^n)=I_\cusp(f^n).
\]
\end{proposition}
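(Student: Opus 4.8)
The plan is to read off the assertion directly from the spectral expansion \eqref{eq:spectral}: since
$J_\spec(f^n)=I_\cusp(f^n)+J_\cont(f^n)+\sum_\mu\Tr(\mu(f^n))+\tfrac14\sum_\mu\Tr(M(\mu,0)(\xi_0\otimes\mu)(f^n))$, it suffices to show that the last three terms vanish for $f^n=\bigotimes_{v\in\mf S}'f^n_v$ with $f^n_\infty=f_{\infty,m}$. The one mechanism behind all three vanishing statements is that $f_{\infty,m}$ is \emph{supercuspidal} in the sense of the introduction --- this is \cite[Lemma 13.9]{knightly2006traces} --- and a supercuspidal function annihilates, as an operator, every representation parabolically induced from the Borel, and every subquotient thereof.

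First I would isolate the local input as a lemma: if $\pi_\infty$ is a smooth representation of $\GL_2(\RR)$ that is parabolically induced from $\B(\RR)$, or a subquotient of such, then $\pi_\infty(f_{\infty,m})=0$. To prove it for a full induced representation $\pi_\infty=\Ind_{\B(\RR)}^{\GL_2(\RR)}\sigma$ realised on functions on $K$, I would write $(\pi_\infty(f_{\infty,m})\phi)(g)=\int_K\bigl(\int_{\B(\RR)}f_{\infty,m}(g^{-1}bk)\delta_{\B}(b)^{1/2}\sigma(b)\,\rmd b\bigr)\phi(k)\,\rmd k$ and factor the inner integral through the Levi, reducing it to $\int_{\N(\RR)}f_{\infty,m}(g^{-1}n\,\cdot\,)\,\rmd n$, which is identically zero by supercuspidality; hence $\pi_\infty(f_{\infty,m})=0$, and for a subquotient this passes to the quotient. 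This is precisely \cite[Lemma 16.4.2]{getz2024}, proved there at a nonarchimedean place; I would note that the same argument works verbatim at $\infty$ because $f_{\infty,m}\in C^\infty(Z_+\bs\GL_2(\RR))$ has orbital integrals that are compactly supported modulo $Z_+$ (\cite[Proposition 26.7]{knightly2006traces}), so the descent integrals above converge.

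With the lemma in hand I would dispatch each term. The continuous contribution $J_\cont(f^n)$ is an integral over unitary characters $\chi$ and $t\in\RR$ of $\Tr(\cI(\chi,it)(f^n))$, where $\cI(\chi,it)$ is globally induced from the Borel; its archimedean local factor $\cI_\infty(\chi_\infty,it)(f_{\infty,m})$ is zero by the lemma, so the whole integrand vanishes and $J_\cont(f^n)=0$. Each $\mu$ in $\sum_\mu\Tr(\mu(f^n))$ is one-dimensional, so $\mu_\infty$ is a character of $\GL_2(\RR)$, a subquotient of a principal series; again $\mu_\infty(f_{\infty,m})=0$, so $\Tr(\mu(f^n))=\prod_{v\in\mf S}\Tr(\mu_v(f^n_v))=0$. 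For the term $\tfrac14\sum_\mu\Tr(M(\mu,0)(\xi_0\otimes\mu)(f^n))$ the representation $\xi_0\otimes\mu$ is again built by induction from the Borel, so $(\xi_0\otimes\mu)_\infty(f_{\infty,m})=0$ as an operator; since $M(\mu,0)$ is composed with this zero operator, the trace vanishes. Substituting back into \eqref{eq:spectral} gives $J_\spec(f^n)=I_\cusp(f^n)$, equivalently $R(f^n)$ has image in the cuspidal subspace, which is the assertion.

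The only point needing attention is the archimedean convergence in the descent computation, since $f_{\infty,m}$ is not compactly supported on $\GL_2(\RR)$ but only modulo $Z_+$; this is the same bookkeeping already carried out in this appendix for the geometric side (the verification that $I_\hyp(f^n)$ and the orbital integrals are well defined), so it is routine rather than a genuine obstacle. Everything else is a direct consequence of the $\GL_2$ trace formula \eqref{eq:traceformulagl2}--\eqref{eq:spectral} together with \cite[Lemma 13.9]{knightly2006traces} and \cite[Lemma 16.4.2]{getz2024}.
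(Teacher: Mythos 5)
Your proof is correct and follows essentially the same route as the paper, which simply cites the supercuspidality of $f_{\infty,m}$ (\cite[Lemma 13.9]{knightly2006traces}) and then invokes \cite[Lemma 16.4.2]{getz2024} with the remark that the nonarchimedean argument carries over verbatim to the archimedean place. What you have done is unpack those two citations: the local vanishing lemma on representations induced from $\B$ and its application to each of the three non-cuspidal terms $J_\cont$, $\sum_\mu\Tr(\mu(f^n))$, $\tfrac14\sum_\mu\Tr(M(\mu,0)(\xi_0\otimes\mu)(f^n))$ in \eqref{eq:spectral}, plus the convergence caveat coming from $f_{\infty,m}$ being compactly supported only modulo $Z_+$ --- all of which is the implicit content of the paper's two-line proof.
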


\subsection{Contribution of the geometric side}
In this subsection we will prove that
\begin{equation}\label{eq:modularformgeometric}
\lim_{X\to +\infty}\frac{1}{X}\sum_{\substack{n<X\\ \gcd(n,S)=1}}J_{\mathrm{geom}}(f^n)=0,
\end{equation}
thereby proving \eqref{eq:standardrepresentation} for $f_\infty=f_{\infty,m}$ with $m>2$.

Firstly, we analyze the identity and the unipotent part.
\begin{proposition}\label{prop:contributeidentitymodular}
We have
\[
\sum_{\substack{n<X\\ \gcd(n,S)=1}} I_\id(f^n)\ll X^{1/2},
\]
where the implied constant only depends on $m$ and $f_{q_i}$.
\end{proposition}
\begin{proof}
By the proof of Proposition 3.3 of \cite{cheng2025b}, $I_\id(f^n)=0$ if $n$ is not a square, and $I_\id(f^n)\ll_{m,f_{q_i}} 1$ if $n$ is a square. Hence 
\[
\sum_{\substack{n<X\\ \gcd(n,S)=1}} I_\id(f^n)\ll \sum_{\substack{n<X\\ \gcd(n,S)=1\\ n=\square}} 1\ll X^{1/2}.\qedhere
\]
\end{proof}

\begin{proposition}\label{prop:contributeunipotentmodular}
We have
\[
\sum_{\substack{n<X\\ \gcd(n,S)=1}} J_\unip(f^n)\ll X^{1/2+\varepsilon},
\]
where the implied constant only depends on $m$, $f_{q_i}$ and $\varepsilon>0$.
\end{proposition}
\begin{proof}
By the proof of Proposition 3.4 of \cite{cheng2025b}, $J_\unip(f^n)=0$ if $n$ is not a square, and $J_\unip(f^n)\ll_{m,f_{q_i},\varepsilon} X^{\varepsilon}$ if $n$ is a square. Hence 
\[
\sum_{\substack{n<X\\ \gcd(n,S)=1}} J_\unip(f^n)\ll \sum_{\substack{n<X\\ \gcd(n,S)=1\\ n=\square}} X^\varepsilon\ll X^{1/2+\varepsilon}.\qedhere
\]
\end{proof}

Next, we consider the hyperbolic part.
\begin{theorem}\label{thm:contributehyperbolicmodular}
We have 
\[
\sum_{\substack{n<X\\ \gcd(n,S)=1}} J_\hyp(f^n)=0
\]
if $m$ is odd and
\[
\sum_{\substack{n<X\\ \gcd(n,S)=1}} J_\hyp(f^n)=2\prod_{i=1}^{r}(1-q_i^{-1})^3\int_{\A(\QQ_{q_i})}\theta_{q_i}(t)\rmd t \frac{X}{1-m}+O(X^{1/2})
\]
if $m$ is even, where $\A$ denotes the diagonal torus of $\G$. The implied constant only depends on $m$ and $f_{q_i}$.
\end{theorem}
\begin{proof}
The proof is divided into the following four steps.

\underline{\emph{Step 1:}}\ \ Computation of the hyperbolic part.

By definition,
\[
J_\hyp(f^n)=-\frac{1}{2}\sum_{\gamma\in \A(\QQ)_{\reg}}\int_{\A(\AA)\bs \G(\AA)}f^n(g^{-1}\gamma g)\alpha(H_\B(wg)+H_\B(g))\rmd g,
\]
where $w$ is the nontrivial element in the Weyl group of $(\G,\A)$, $\alpha$ denotes the simple root, and $H_\B$ denotes the Harish-Chandra map. 
We have
\begin{equation}\label{eq:weightorbitalintegral}
\begin{split}
  & \int_{\A(\AA)\bs \G(\AA)}f^n(g^{-1}\gamma g)\alpha(H_\B(wg)+H_\B(g))\rmd g \\
  = & \sum_{v\in \mf{S}}\int_{\A(\QQ_v)\bs \G(\QQ_v)}f_v^n(g_v^{-1}\gamma g_v)\alpha(H_\B(wg_v)+H_\B(g_v))\rmd g_v\prod_{w\neq v}\int_{\A(\QQ_w)\bs \G(\QQ_w)}f_w^n(g_w^{-1}\gamma g_w)\rmd g_w.
\end{split} 
\end{equation}

By \cite[Proposition 24.2]{knightly2006traces}, we have
\[
\int_{\A(\RR)\bs \G(\RR)}f_{\infty,m}(g^{-1}\gamma g)\rmd g=0,
\]
and by \cite[Proposition 24.3]{knightly2006traces}, for $\gamma=(\begin{smallmatrix}  \gamma_1 &  \\   & \gamma_2\end{smallmatrix})$ with $\gamma_1,\gamma_2>0$,
\[
\int_{\A(\RR)\bs \G(\RR)}f_{\infty,m}(g^{-1}\gamma g)\alpha(H_\B(wg)+H_\B(g))\rmd g=\frac{|\gamma_1\gamma_2|^{1-m/2}\min\{|\gamma_1|,|\gamma_2|\}^{m-1}}{|\gamma_1-\gamma_2|}.
\]
Note that $f_{\infty,m}$ is an even function if $m$ is even. Hence we obtain the same formula of the local weighted orbital integral if $\gamma_1,\gamma_2<0$. Finally, since $f_{\infty,m}$ is supported on $\GL_2^{\! +}(\RR)$, we know that the weighted orbital integral vanishes if $\gamma_1\gamma_2<0$. 

Therefore, for $\gamma_1\gamma_2<0$ we have $\eqref{eq:weightorbitalintegral}=0$, and for $\gamma_1\gamma_2>0$ we have
\[
\eqref{eq:weightorbitalintegral}= \frac{|\gamma_1\gamma_2|^{1-m/2}\min\{|\gamma_1|,|\gamma_2|\}^{m-1}}{|\gamma_1-\gamma_2|} \prod_{p}\orb(f^n_p;\gamma).
\]

Now we assume that $\det\gamma=\gamma_1\gamma_2>0$.

If $m$ is odd, then the elements $\gamma$ and $-\gamma$ give opposite contribution of $J_\hyp(f^n)$ and hence $J_\hyp(f^n)=0$. Thus the conclusion holds trivially. Now we assume that $m$ is even.

By Theorem 2.7 of \cite{cheng2025}, \eqref{eq:weightorbitalintegral} does not vanish only if $\gamma_1,\gamma_2\in \ZZ^S$ and $\mathopen{|}\det\gamma\mathclose{|}_p=|n|_p$ for all $p\notin S$. That is, $\det\gamma=nq^\nu$ for some $\nu\in \ZZ^r$. Moreover, we have
\[
\orb(f^n_p;\gamma)=p^{-n_p/2}p^{k_\gamma}=\frac{p^{-n_p/2}}{|\gamma_1-\gamma_2|_p}
\]
if $p\notin S$. By the definition of $\theta_{p}(\gamma)$ for $p\in S$ we have
\[
\orb(f^n_p;\gamma)=\left(1-\frac{1}{p}\right)p^{{k_\gamma}}\mathopen{|}\det\gamma\mathclose{|}^{1/2}\theta_{p}(\gamma)= \left(1-\frac{1}{p}\right)\frac{|\gamma_1\gamma_2|_p^{1/2}}{|\gamma_1-\gamma_2|_p}\theta_{p}(\gamma).
\]
By product formula we have $\prod_{v\in \mf{S}}1/|\gamma_1-\gamma_2|_v=1$. Hence
\[
\eqref{eq:weightorbitalintegral}=(nq^\nu)^{\frac{1-m}{2}}\min\{|\gamma_1|,|\gamma_2|\}^{m-1} \prod_{i=1}^{r}\left(1-\frac{1}{q_i}\right)\theta_{q_i}(\gamma).
\]
For fixed $n$, we must have $\gamma_1=dq^\alpha$ and $\gamma_2=nq^\beta/d$, or $\gamma_1=-dq^\alpha$ and $\gamma_2=-nq^\beta/d$, where $d$ is a divisor of $n$ and $\alpha+\beta=\nu$. By symmetry these two cases give the same contribution to $J_\hyp(f^n)$. Hence
\[
J_\hyp(f^n)=-\sum_{\alpha,\beta\in \ZZ^r}\sum_{\substack{d\mid n\\dq^\alpha\neq nq^\beta/d}}(nq^{\alpha+\beta})^{\frac{1-m}{2}}\min\{dq^\alpha,nq^\beta/d\}^{m-1} \prod_{i=1}^{r}\left(1-\frac{1}{q_i}\right)\theta_{q_i}\begin{pmatrix}
                                                         dq^\alpha & 0 \\
                                                         0 & nq^\beta/d 
                                                       \end{pmatrix}.
\]

\underline{\emph{Step 2:}}\ \ Fourier inversion.

Since $f_{q_i}$ is compactly supported, $\theta_{q_i}(\gamma)$ is compactly supported. Hence there exist $M_i>0$ such that $\theta_{q_i}(\begin{smallmatrix}    x & 0 \\   0 & y \end{smallmatrix})=0$ if $|v_{q_i}(x)|\geq M_i$ or $|v_{q_i}(y)|\geq M_i$. Hence the above sum over $\alpha$ and $\beta$ is finite (which does not depend on $n$).

Now we fix $\alpha$ and $\beta$ and consider
\begin{equation}\label{eq:hyperbolicsumn}
\sum_{\substack{n<X\\ n\in \ZZ_{(S)}^{>0}}}\sum_{\substack{d\mid n\\dq^\alpha\neq nq^\beta/d}}(nq^{\alpha+\beta})^{\frac{1-m}{2}}\min\{dq^\alpha,nq^\beta/d\}^{m-1} \prod_{i=1}^{r}\left(1-\frac{1}{q_i}\right)\theta_{q_i}\begin{pmatrix}
                                                         dq^\alpha & 0 \\
                                                         0 & nq^\beta/d 
                                                       \end{pmatrix}.
\end{equation}

By \eqref{eq:shalikalocal}, $\theta_{q_i}(\gamma)$ is smooth for $\gamma\in \A(\QQ_{q_i})$. Hence there exists $L_i>0$ such that for any $a\in 1+q_i^{L_i}\ZZ_{q_i}$, we have
\[
\theta_{q_i}\begin{pmatrix}  ax & 0 \\  0 & y \end{pmatrix}= \theta_{q_i}\begin{pmatrix}  x & 0 \\  0 & y \end{pmatrix}=\theta_{q_i}\begin{pmatrix}  x & 0 \\  0 & ay \end{pmatrix}
\]
for any $x,y\in \QQ_{q_i}^\times$.

Now we consider the following function defined on $(x,y)\in \ZZ_{q_i}^\times\times\ZZ_{q_i}^\times$:
\[
\Xi_{i,\alpha,\beta}(x,y)=\theta_{q_i}\begin{pmatrix}   xq^\alpha & 0 \\  0 & yq^\beta \end{pmatrix}.
\]
By the above argument, we know that $\Xi_{i,\alpha,\beta}(x,y)=\Xi_{i,\alpha,\beta}(ax,y)=\Xi_{i,\alpha,\beta}(x,ay)$ for any $x,y\in \ZZ_{q_i}^\times$ and $a\in 1+q_i^{L_i}\ZZ_{q_i}$. For $\chi_i,\chi_i'\in \widehat{\ZZ_{q_i}^\times}$, we consider the Fourier transform
\[
\widehat{\Xi}_{i,\alpha,\beta}(\chi_i,\chi_i')=\int_{\ZZ_{q_i}^\times\times \ZZ_{q_i}^\times}\Xi_{i,\alpha,\beta}(x,y)\overline{\chi_i(x)}~\overline{\chi_i'(y)}\rmd x\rmd y.
\]
If $\chi_i$ is not trivial on $1+q_i^{L_i}\ZZ_{q_i}$, then there exists $a\in 1+q_i^{L_i}\ZZ_{q_i}$ such that $\chi_i(a)\neq 1$. Since $\Xi_{i,\alpha,\beta}(x,y)=\Xi_{i,\alpha,\beta}(ax,y)$, we obtain
\[
\widehat{\Xi}_{i,\alpha,\beta}(\chi_i,\chi_i')=\int_{\ZZ_{q_i}^\times\times \ZZ_{q_i}^\times}\Xi_{i,\alpha,\beta}(ax,y)\overline{\chi_i(ax)}~\overline{\chi_i'(y)}\rmd x\rmd y=\overline{\chi_i(a)}\widehat{\Xi}_{i,\alpha,\beta}(\chi_i,\chi_i')
\]
and thus $\widehat{\Xi}_{i,\alpha,\beta}(\chi_i,\chi_i')=0$. Similarly, if $\chi_i'$ is not trivial on $1+q_i^{L_i}\ZZ_{q_i}$, then $\widehat{\Xi}_{i,\alpha,\beta}(\chi_i,\chi_i')=0$. Hence $\widehat{\Xi}_{i,\alpha,\beta}(\chi_i,\chi_i')\neq 0$ only if 
\[
\chi_i,\chi_i'\in (\ZZ_{q_i}/1+q_i^{L_i}\ZZ_{q_i})\sphat\,.
\]
Hence the choice of the pair $(\chi_i,\chi_i')$ is finite and thus by Fourier inversion formula
\[
\Xi_{i,\alpha,\beta}(x,y)=\left(1-\frac{1}{q_i}\right)^{-2}\sum_{\chi_i,\chi_i'}\widehat{\Xi}_{i,\alpha,\beta}(\chi_i,\chi_i') \chi_i(x)\chi_i'(y),
\]
where the factor $(1-1/q_i)^{-2}$ comes from the measure of $\ZZ_{q_i}^\times\times \ZZ_{q_i}^\times$. Therefore
\begin{align*}
\eqref{eq:hyperbolicsumn}&=
\sum_{\substack{n<X\\ n\in \ZZ_{(S)}^{>0}}}\sum_{\substack{d\mid n\\dq^\alpha\neq \frac{nq^\beta}{d}}}(nq^{\alpha+\beta})^{\frac{1-m}{2}}\min\left\{dq^\alpha,\frac{nq^\beta}{d}\right\}^{m-1} \prod_{i=1}^{r}\left(1-\frac{1}{q_i}\right)^{-1}\sum_{\chi_i,\chi_i'}\widehat{\Xi}_{i,\alpha,\beta}(\chi_i,\chi_i') \chi_i(d)\chi_i'(n/d)\\
&=\sum_{\substack{ab<X\\ aq^\alpha\neq bq^\beta}}(ab)^{\frac{1-m}{2}}q^{(\alpha+\beta)\frac {1-m}{2}}\min\{aq^\alpha,bq^\beta\}^{m-1} \sum_{\chi,\chi'}\chi(a)\chi'(b)\prod_{i=1}^{r}\left(1-\frac{1}{q_i}\right)^{-1} \widehat{\Xi}_{i,\alpha,\beta}(\chi_i,\chi_i'),
\end{align*}
where $\chi,\chi'$ run over all characters on $\ZZ_{S_\fin}^\times$ and the sum over $\chi$ and $\chi'$ is actually finite. We also consider them as Dirichlet characters so that $\chi(m)=\chi'(m)=0$ if $\gcd(m,S)\neq 1$.

\underline{\emph{Step 3:}}\ \ Estimate of character sums.

We will give an asymptotic formula for
\begin{equation}\label{eq:charactersumhyperbola}
\sum_{\substack{ab<X\\ aq^\alpha\neq bq^\beta}}(ab)^{\frac{1-m}{2}}q^{(\alpha+\beta)\frac {1-m}{2}}\min\{aq^\alpha,bq^\beta\}^{m-1} \chi(a)\chi'(b).
\end{equation}

We have
\[
\eqref{eq:charactersumhyperbola}= \sum_{\substack{ab<X\\ aq^\alpha<bq^\beta}}(ab)^{\frac{1-m}{2}}q^{(\alpha+\beta)\frac{1-m}{2}}(aq^\alpha)^{m-1} \chi(a)\chi'(b)+\sum_{\substack{ab<X\\ aq^\alpha> bq^\beta}}(ab)^{\frac{1-m}{2}}q^{(\alpha+\beta)\frac {1-m}{2}}(bq^\beta)^{m-1} \chi(a)\chi'(b)
\]

For the $aq^\alpha<bq^\beta$ part of \eqref{eq:charactersumhyperbola}, we have
\[
\sum_{\substack{ab<X\\ aq^\alpha< bq^\beta}}=\sum_{\substack{b\leq X^{1/2}q^{(\alpha-\beta)/2}\\ a< bq^{\beta-\alpha}}}+\sum_{\substack{X^{1/2}q^{(\alpha-\beta)/2}<b<X\\ a<X/b}}.
\]

For the first sum, we have
\begin{align*}
   & \sum_{\substack{b\leq X^{1/2}q^{(\alpha-\beta)/2}\\ a< bq^{\beta-\alpha}}}(ab)^{\frac{1-m}{2}}q^{(\alpha+\beta)\frac {1-m}{2}}(aq^\alpha)^{m-1} \chi(a)\chi'(b) \\
  = & q^{(\alpha-\beta)\frac{m-1}{2}}\sum_{b\leq X^{1/2}q^{(\alpha-\beta)/2}}b^{\frac{1-m}{2}}\chi'(b)\sum_{a<bq^{\beta-\alpha}}a^{\frac{m-1}{2}}\chi(a).
\end{align*}

By Abel summation formula, we have
\[
\sum_{a<bq^{\beta-\alpha}}a^{\frac{m-1}{2}}\chi(a)= \prod_{i=1}^{r}\left(1-\frac{1}{q_i}\right)\frac{2}{m+1}(bq^{\beta-\alpha})^{\frac{m+1}{2}} \delta(\chi)+O_{m,\alpha,\beta}(b^{\frac{m-1}{2}}),
\]
where $\delta(\chi)=1$ if $\chi$ is the trivial character, and $\delta(\chi)=0$ otherwise. Therefore
\begin{align*}
   &\sum_{b\leq X^{1/2}q^{(\alpha-\beta)/2}}b^{\frac{1-m}{2}}\chi'(b)\sum_{a<bq^{\beta-\alpha}}a^{\frac{m-1}{2}} \chi(a)\\
   =& \prod_{i=1}^{r}\left(1-\frac{1}{q_i}\right)\frac{2}{m+1}q^{(\beta-\alpha)\frac{m+1}{2}} \delta(\chi)\sum_{b\leq X^{1/2}q^{(\alpha-\beta)/2}}b\chi'(b)+\sum_{b\leq X^{1/2}q^{(\alpha-\beta)/2}}O_{m,\alpha,\beta}(1) \\
  = & \prod_{i=1}^{r}\left(1-\frac{1}{q_i}\right)^2\frac{Xq^{\alpha-\beta}}{m+1}q^{(\beta-\alpha)\frac{m+1}{2}} \delta(\chi)\delta(\chi')+O_{m,\alpha,\beta}(X^{\frac 12}).
\end{align*}
Hence
\[
   \sum_{\substack{b\leq X^{1/2}q^{(\alpha-\beta)/2}\\ a< bq^{\beta-\alpha}}}(ab)^{\frac{1-m}{2}}q^{(\alpha+\beta)\frac{1-m}{2}}(aq^\alpha)^{m-1} \chi(a)\chi'(b)
  = \prod_{i=1}^{r}\left(1-\frac{1}{q_i}\right)^2\frac{X}{m+1} \delta(\chi)\delta(\chi')+O_{m,\alpha,\beta}(X^{\frac 12}).
\]

For the second sum, we have
\begin{align*}
   & \sum_{\substack{X^{1/2}q^{(\alpha-\beta)/2}<b<X\\ a<X/b}}(ab)^{\frac{1-m}{2}}q^{(\alpha+\beta)\frac{1-m}{2}}(aq^\alpha)^{m-1} \chi(a)\chi'(b) \\
  = & q^{(\alpha-\beta)\frac{m-1}{2}}\sum_{ X^{1/2}q^{(\alpha-\beta)/2}<b<X}b^{\frac{1-m}{2}}\chi'(b)\sum_{a<X/b}a^{\frac{m-1}{2}}\chi(a).
\end{align*}
Since
\[
\sum_{a<X/b}a^{\frac{m-1}{2}}\chi(a)= \prod_{i=1}^{r}\left(1-\frac{1}{q_i}\right)\frac{2}{m+1}\legendresymbol{X}{b}^{\frac{m+1}{2}} \delta(\chi)+O_{m,\alpha,\beta}\left(\legendresymbol{X}{b}^{\frac{m-1}{2}}\right),
\]
we obtain
\begin{align*}
   &\sum_{ X^{1/2}q^{(\alpha-\beta)/2}<b<X}b^{\frac{1-m}{2}}\chi'(b)\sum_{a<X/b}a^{\frac{m-1}{2}}\chi(a)\\
   =& \prod_{i=1}^{r}\left(1-\frac{1}{q_i}\right)\frac{2}{m+1} \delta(\chi)X^{\frac{m+1}{2}}\sum_{ X^{1/2}q^{(\alpha-\beta)/2}<b<X}b^{-m}\chi'(b)+X^{\frac{m-1}{2}}\sum_{ X^{1/2}q^{(\alpha-\beta)/2}<b<X}O_{m,\alpha,\beta}(b^{1-m}) \\
  = & \prod_{i=1}^{r}\left(1-\frac{1}{q_i}\right)^2\frac{Xq^{(\alpha-\beta)\frac{1-m}{2}}}{(m+1)(m-1)} \delta(\chi)\delta(\chi')+O_{m,\alpha,\beta}(X^{\frac 12}).
\end{align*}
Hence
\begin{align*}
  & \sum_{\substack{q^{X^{1/2}(\alpha-\beta)/2}<b<X\\ a< bq^{\beta-\alpha}}}(ab)^{\frac{1-m}{2}}q^{(\alpha+\beta)\frac{1-m}{2}}(aq^\alpha)^{m-1} \chi(a)\chi'(b)\\
  = &\prod_{i=1}^{r}\left(1-\frac{1}{q_i}\right)^2\frac{2X}{(m+1)(m-1)} \delta(\chi)\delta(\chi')+O_{m,\alpha,\beta}(X^{\frac 12}).
\end{align*}

Therefore
\begin{align*}
   & \sum_{\substack{ab<X\\ aq^\alpha<bq^\beta}}(ab)^{\frac{1-m}{2}}q^{(\alpha+\beta)\frac{1-m}{2}}\min\{aq^\alpha,bq^\beta\}^{m-1} \chi(a)\chi'(b) \\
  = &\prod_{i=1}^{r}\left(1-\frac{1}{q_i}\right)^2\left(\frac{X}{m+1}+\frac{2X}{(m+1)(m-1)}\right) \delta(\chi)\delta(\chi')+O_{m,\alpha,\beta}(X^{\frac 12})\\
  =&\prod_{i=1}^{r}\left(1-\frac{1}{q_i}\right)^2\frac{X}{m-1} \delta(\chi)\delta(\chi')+O_{m,\alpha,\beta}(X^{\frac 12}).
\end{align*}
Similarly,
\[
   \sum_{\substack{ab<X\\ aq^\alpha>bq^\beta}}(ab)^{\frac{1-m}{2}}q^{(\alpha+\beta)\frac{1-m}{2}}\min\{aq^\alpha,bq^\beta\}^{m-1} \chi(a)\chi'(b)=\prod_{i=1}^{r}\left(1-\frac{1}{q_i}\right)^2\frac{X}{m-1} \delta(\chi)\delta(\chi')+O_{m,\alpha,\beta}(X^{\frac 12}).
\]
Thus
\[
\eqref{eq:charactersumhyperbola}=\prod_{i=1}^{r}\left(1-\frac{1}{q_i}\right)^2\frac{2X}{m-1} \delta(\chi)\delta(\chi')+O_{m,\alpha,\beta}(X^{\frac 12}).
\]

\underline{\emph{Step 4:}}\ \ Final formula.

Now we sum over all $\chi$ and $\chi'$ and use the formula at the end of Step 2, we obtain
\[
\eqref{eq:hyperbolicsumn}=\frac{2X}{m-1}
\prod_{i=1}^{r}\left(1-\frac{1}{q_i}\right) \widehat{\Xi}_{i,\alpha,\beta}(\triv,\triv)+O_{m,\alpha,\beta,f_{q_i}}(X^{\frac 12}).
\]
Summing over $\alpha$ and $\beta$, we obtain
\[
\sum_{\substack{n<X\\\gcd(n,S)=1}}J_\hyp(f^n)=-\sum_{\alpha,\beta\in \ZZ^r}\frac{2X}{m-1}
\prod_{i=1}^{r}\left(1-\frac{1}{q_i}\right) \widehat{\Xi}_{i,\alpha,\beta}(\triv,\triv)+O_{m,f_{q_i}}(X^{\frac 12}).
\]

Since
\begin{align*}
\sum_{\alpha,\beta\in \ZZ^r}\prod_{i=1}^{r}\widehat{\Xi}_{i,\alpha,\beta}(\triv,\triv)=&
\sum_{\alpha,\beta\in \ZZ^r}\int_{x,y\in \ZZ_{q_1}^\times\times\dots \ZZ_{q_r}^\times}\prod_{i=1}^{r}\theta_{q_i}\begin{pmatrix}   xq^\alpha & 0 \\  0 & yq^\beta \end{pmatrix}\rmd x\rmd y\\
=&\prod_{i=1}^{r}\sum_{\alpha_i,\beta_i\in \ZZ}
\left(1-\frac{1}{q_i}\right)^2 \int_{x,y\in \ZZ_{q_i}^\times}\theta_{q_i}\begin{pmatrix}   xq_i^{\alpha_i} & 0 \\  0 & yq_i^{\beta_i} \end{pmatrix}\rmd^\times x\rmd^\times y\\
=&\prod_{i=1}^{r}
\left(1-\frac{1}{q_i}\right)^2\int_{\A(\QQ_{q_i})}\theta_{q_i}(t)\rmd t,
\end{align*}
the conclusion follows.
\end{proof}

Finally we consider the elliptic part. Since we have computed the asymptotic formula, it suffices to compute $\mf{A}$, $\mf{B}$ and $\mf{C}$ explicitly. We first prove the following lemmas:
\begin{lemma}\label{lem:archimedeaneisenstein}
We have
\[
\int_{X_1}|1-x|^{-1}\widehat{\Theta}_\infty(x)|x|^{-\frac12}\rmd x=2\int_{-1}^{1}\frac{\theta_\infty^+(x)}{\sqrt{1-x^2}}\rmd x.
\]
and this integral is $0$ if $m$ is odd, and is 
\[
-\frac{4}{\uppi(m-1)}
\]
if $m$ is even.
\end{lemma}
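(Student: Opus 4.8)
The plan is to reduce the integral over $X_1$ to an elementary integral over $[-1,1]$ by a change of variables, and then to evaluate that integral using the explicit shape of $\theta_\infty^+$ recorded in \eqref{eq:discreteseriestheta}. Recall from \autoref{subsec:singularities} that $\omega_\infty(x)=1$ precisely for $x<0$, so $X_1=\lopen-\infty,0\ropen$; in particular every $x\in X_1$ satisfies $x<1$, so the relation \eqref{eq:thetarelation} gives $\Theta_\infty^\pm(x)=\theta_\infty^+\big(\pm(1-x)^{-1/2}\big)$ and hence $\widehat{\Theta}_\infty(x)=\theta_\infty^+\big((1-x)^{-1/2}\big)+\theta_\infty^+\big(-(1-x)^{-1/2}\big)$ on $X_1$. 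Also $|1-x|=1-x$ and $|x|=-x$ there.

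First I would prove the identity
\[
\int_{X_1}|1-x|^{-1}\widehat{\Theta}_\infty(x)|x|^{-1/2}\rmd x=2\int_{-1}^{1}\frac{\theta_\infty^+(x)}{\sqrt{1-x^2}}\rmd x
\]
by substituting $t=(1-x)^{-1/2}$, which carries $\lopen-\infty,0\ropen$ bijectively onto $\lopen0,1\ropen$; one has $1-x=t^{-2}$, $-x=(1-t^2)/t^2$ and $\rmd x=2t^{-3}\rmd t$, so the weight $|1-x|^{-1}|x|^{-1/2}\rmd x$ collapses to $t^{2}\cdot t(1-t^2)^{-1/2}\cdot 2t^{-3}\rmd t=2(1-t^2)^{-1/2}\rmd t$. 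The integral becomes $2\int_0^1\big(\theta_\infty^+(t)+\theta_\infty^+(-t)\big)(1-t^2)^{-1/2}\rmd t$, and replacing $t$ by $-t$ in the second summand folds this into $2\int_{-1}^{1}\theta_\infty^+(t)(1-t^2)^{-1/2}\rmd t$, which is the claimed identity (valid for any admissible $f_\infty$).

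For the evaluation with $f_\infty=f_{\infty,m}$, $m>2$, I would substitute $x=\cos\phi$ with $\phi\in\lopen0,\uppi\ropen$, so that $\sqrt{1-x^2}=\sin\phi$, $x\pm\rmi\sqrt{1-x^2}=\rme^{\pm\rmi\phi}$, and $\rmd x=-\sin\phi\,\rmd\phi$. By \eqref{eq:discreteseriestheta}, $\theta_\infty^+(\cos\phi)=\frac{\rmi}{\uppi}\big(\rme^{\rmi(m-1)\phi}-\rme^{-\rmi(m-1)\phi}\big)=-\frac{2}{\uppi}\sin\big((m-1)\phi\big)$; the factors $(1-x^2)^{-1/2}$ and $-\sin\phi\,\rmd\phi$ combine (after reversing orientation) to $\rmd\phi$, giving
\[
2\int_{-1}^{1}\frac{\theta_\infty^+(x)}{\sqrt{1-x^2}}\rmd x=-\frac{4}{\uppi}\int_0^{\uppi}\sin\big((m-1)\phi\big)\,\rmd\phi=-\frac{4}{\uppi}\cdot\frac{1-\cos\big((m-1)\uppi\big)}{m-1}.
\]
Since $\cos\big((m-1)\uppi\big)=1$ for $m$ odd and $=-1$ for $m$ even, the right-hand side vanishes when $m$ is odd and equals $-\frac{8}{\uppi(m-1)}$ when $m$ is even.

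There is no real obstacle here: the argument consists of the two changes of variables above together with the formula \eqref{eq:discreteseriestheta}. The only points demanding care are tracking the Jacobian and orientation in each substitution, and keeping in mind that the normalizing factor $\uppi$ in \eqref{eq:discreteseriestheta}---which originates from the measure choice on $\G_\gamma(\RR)\cong\CC^\times$ in \autoref{subsec:singularities}---is already built in, so that no further constant is introduced.
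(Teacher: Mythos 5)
Your proof is correct and follows essentially the same route as the paper's: the substitution $t=(1-x)^{-1/2}$ to reduce to $2\int_{-1}^1\theta_\infty^+(t)(1-t^2)^{-1/2}\rmd t$, then $x=\cos\phi$ together with the explicit formula for $\theta_\infty^+$ from weight-$m$ discrete series. The only cosmetic difference is that you collapse the complex exponentials to $-\frac{2}{\uppi}\sin((m-1)\phi)$ before integrating, whereas the paper integrates $\rme^{\pm\rmi(m-1)\theta}$ separately; both give $-\frac{4}{\uppi}\cdot\frac{1-\cos((m-1)\uppi)}{m-1}$.
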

\begin{proof}
By \eqref{eq:thetarelation} and recall that $\widehat{\Theta}_\infty(x)=\Theta_\infty^+(x)+\Theta_\infty^-(x)$, we have
\[
\int_{X_1}|1-x|^{-1}\widehat{\Theta}_\infty(x)|x|^{-\frac12}\rmd x=\int_{-\infty}^{0}\sum_{\pm}\frac{\theta_\infty^+\left(\pm \frac{1}{\sqrt{1-x}}\right)}{1-x}|x|^{-\frac12}\rmd x
\]
By making change of variable $1/\sqrt{1-x}\mapsto t$ so that $x=1-1/t^2$ and $\rmd x=2\rmd t/t^3$, we obtain
\[
\int_{-\infty}^{0}\sum_{\pm}\frac{\theta_\infty^+\left(\pm \frac{1}{\sqrt{1-x}}\right)}{1-x}|x|^{-\frac12}\rmd x =\sum_{\pm}\int_{0}^{1}\frac{\theta_\infty^+(\pm t)t^2}{\sqrt{1/t^2-1}}\frac{2\rmd t}{t^3}=2\int_{-1}^{1}\frac{\theta_\infty^+(x)}{\sqrt{1-x^2}}\rmd x.
\]
Recall that  $\theta_\infty^{-}(x)=0$ and
\[
\theta_\infty^+(x)=\begin{dcases}
  \frac{\rmi}{2\uppi}\left[(x+\rmi\sqrt{1-x^2})^{m-1}-(x-\rmi\sqrt{1-x^2})^{m-1}\right], & \text{if $|x|<1$},  \\
  0, & \text{otherwise}.
\end{dcases}
\]
Hence
\begin{align*}
2\int_{-1}^{1}\frac{\theta_\infty^\pm(x)}{\sqrt{1-x^2}}&= 2\int_{-1}^{1}\frac{\rmi}{2\uppi}\frac{(x+\rmi\sqrt{1-x^2})^{m-1}-(x-\rmi\sqrt{1-x^2})^{m-1}}{\sqrt{1-x^2}}\rmd x\\
&=2\int_{0}^{\uppi}\frac{\rmi}{2\uppi} \left[(\cos\theta+\rmi\sin\theta)^{m-1}-(\cos\theta-\rmi\sin\theta)^{m-1}\right]\rmd\theta\\
&=\frac{\rmi}{\uppi}\left[\int_{0}^{\uppi}\rme^{\rmi(m-1)\theta}\rmd \theta-\int_{0}^{\uppi}\rme^{-\rmi(m-1)\theta}\rmd \theta\right]\\
&=\frac{1}{\uppi(m-1)}(\rme^{\rmi(m-1)\uppi}-1+\rme^{-\rmi(m-1)\uppi}-1),
\end{align*}
which equals $0$ if $m$ is odd and 
equals $-\frac{4}{\uppi(m-1)}$ if $m$ is even.
\end{proof}

\begin{lemma}\label{lem:nonarchimedeaneisenstein}
We have
\[
\int_{Y_1}|1-y|^{-1}\widehat{\Theta}_p(y)|y|'^{-\frac{1}{2}}\rmd y=\frac12(1-p^{-1})^2\int_{ \A(\QQ_p)}\theta_p(t)\rmd t.
\]
\end{lemma}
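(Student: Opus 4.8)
The plan is to run the same change-of-variables computation as in the proof of \autoref{prop:ramifiedtrace}, but carrying the extra factor $|y|_\ell'^{-1/2}$ through it, and then to pass from the coordinates $(T,N)=(\Tr,\det)$ to a pair of eigenvalues. Throughout I suppress the subscript $\ell$ from the norms, as in loc.\ cit. First I would unfold
\[
\widehat{\Theta}_\ell(y)=\int_{\QQ_\ell^\times}\theta_\ell\Bigl(z,\tfrac{z^2(1-y)}{4}\Bigr)\frac{\rmd z}{|z|},
\]
so that the left-hand side becomes
\[
\int_{Y_1}\int_{\QQ_\ell^\times}\theta_\ell\Bigl(z,\tfrac{z^2(1-y)}{4}\Bigr)\frac{|y|'^{-1/2}}{|1-y|\,|z|}\,\rmd z\,\rmd y,
\]
and then substitute $z\mapsto T$, $z^2(1-y)/4\mapsto N$ (i.e.\ $y\mapsto 1-4N/T^2$, $z\mapsto T$) exactly as in \autoref{prop:ramifiedtrace}. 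One has $T^2-4N=z^2y=T^2y$, hence $|1-y|=|4N|\,|T|^{-2}$, while $|y|'=|T^2-4N|'\,|T^2|^{-1}$ by the scaling of the modified norm under multiplication by the square $T^{-2}$ (cf.\ \autoref{subsec:singularities}); together with $\rmd y\wedge\rmd z=(4/T^2)\,\rmd T\wedge\rmd N$, all powers of $|T|$ cancel and the integral collapses to
\[
\int_{\omega_\ell(T^2-4N)=1}\theta_\ell(T,N)\,\frac{\rmd T\,\rmd N}{|N|\,|T^2-4N|'^{1/2}},
\]
the domain being the image of $Y_1$. Here one checks that $\omega_\ell(D)=1$ is equivalent to $D$ being a nonzero square in $\QQ_\ell$ — straightforward for $\ell\ne 2$, and for $\ell=2$ using that a $2$-adic unit is a square iff it is $\equiv 1\pmod 8$, which is exactly what the definition of $|\cdot|_2'$ and of $\omega_2$ pick out — so the domain is precisely the split regular semisimple locus.

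Next I would parametrise this split locus by eigenvalues, analogously to \autoref{lem:sigmasquare}: the map $(t_1,t_2)\mapsto(t_1+t_2,t_1t_2)$ is a $2{:}1$ map from $\{(t_1,t_2)\in(\QQ_\ell^\times)^2\,:\,t_1\ne t_2\}$ onto that locus, with $N=t_1t_2$, $T^2-4N=(t_1-t_2)^2$, Jacobian $\rmd T\wedge\rmd N=(t_1-t_2)\,\rmd t_1\wedge\rmd t_2$, and $\theta_\ell(T,N)=\theta_\ell(\begin{smallmatrix}t_1&0\\0&t_2\end{smallmatrix})$; moreover $|T^2-4N|'=|(t_1-t_2)^2|'=|t_1-t_2|^2$ (a short separate verification at $\ell=2$). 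Since both $\theta_\ell$ and the weight $|N|^{-1}|T^2-4N|'^{-1/2}$ are symmetric in $t_1,t_2$, the $2{:}1$ map contributes a factor $\tfrac12$, and the Jacobian $|t_1-t_2|$ exactly cancels $|T^2-4N|'^{-1/2}=|t_1-t_2|^{-1}$, leaving
\[
\frac12\int_{\QQ_\ell^\times}\int_{\QQ_\ell^\times}\theta_\ell\begin{pmatrix}t_1&0\\0&t_2\end{pmatrix}\frac{\rmd t_1\,\rmd t_2}{|t_1|\,|t_2|}.
\]
Finally I would recognise $\rmd t_i/|t_i|$ as the multiplicative Haar measure giving $\ZZ_\ell^\times$ mass $1-\ell^{-1}$ (since $\rmd t_i$ gives $\ZZ_\ell$ mass $1$), whereas $\A(\QQ_\ell)\cong\QQ_\ell^\times\times\QQ_\ell^\times$ carries the Haar measure normalised so that $\A(\ZZ_\ell)=\ZZ_\ell^\times\times\ZZ_\ell^\times$ has mass $1$ (the normalisation fixed in \autoref{sec:preliminaries}); comparing the two introduces the factor $(1-\ell^{-1})^2$, and the lemma follows. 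Note also that $\theta_\ell$ is supported where $|N|\asymp 1$, so the apparent singularity at $N=0$ (equivalently $y=1$) is vacuous.

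The genuinely delicate points I expect are purely bookkeeping: (i) confirming that $Y_1$ transports onto exactly the square-discriminant locus — in particular the $\ell=2$ case, where "$\omega_2(D)=1$" must be matched with "$D\in(\QQ_2^\times)^2$" via the mod $8$ description of squares; and (ii) keeping the modified-norm factors $|y|'$, $|T^2-4N|'$ straight through both substitutions so that the powers of $|T|$ and of $|t_1-t_2|$ really do cancel. Everything else is a direct transcription of the argument already carried out for \autoref{prop:ramifiedtrace}.
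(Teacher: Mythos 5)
Your proposal is correct and follows essentially the same route as the paper: the same substitution $(z,y)\mapsto(T,N)$ with Jacobian $4/T^2$, the same $2{:}1$ eigenvalue parametrisation with Jacobian $|t_1-t_2|$ cancelling $|T^2-4N|^{-1/2}$, and the same Haar-measure normalisation producing $(1-\ell^{-1})^2$. The only (cosmetic) difference is that the paper begins by noting $|y|'=|y|$ on $Y_1$ and then works with the ordinary norm, whereas you carry $|y|'$ through both substitutions using the square-scaling of the modified norm; both are valid bookkeeping.
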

\begin{proof}
Since $y\in Y_1$, we have $|y|'=|y|$. Also, by the definition of $\widehat{\Theta}_p(y)$ we have
\[
\int_{Y_1}|1-y|^{-1}\widehat{\Theta}_p(y)|y|'^{-\frac{1}{2}}\rmd y=\int_{Y_1}\int_{\QQ_p}\theta_p\left(z,\frac{z^2(1-y)}{4}\right)\frac{1}{|z||1-y||y|^{\frac12}}\rmd y\rmd z.
\]
Now we make change of variable $z\mapsto T$ and $z^2(1-y)/4\mapsto N$ so that
\[
y\mapsto 1-\frac{4N}{T^2}\quad\text{and}\quad z\mapsto T
\]
and
\[
\rmd y\wedge\rmd z=\frac{4}{T^2}\rmd T\wedge \rmd N.
\]
For $T,N\in \QQ_p^2$ with $T^2-4N\neq 0$, we denote $\gamma_{T,N}$ to be the regular element in $\G(\QQ_p)$ with trace $T$ and determinant $N$, up to conjugacy. $\gamma_{T,N}\in \A(\QQ_p)$ is equivalent to $y=(T^2-4N)/T^2\in Y_1$, we obtain
\begin{align*}
\int_{Y_1}|1-y|^{-1}\widehat{\Theta}_p(y)|y|'^{-\frac{1}{2}}\rmd y &=\int_{\gamma_{T,N}\in \A(\QQ_p)}\theta_p(T,N) \frac{\rmd T\rmd N}{|4N/T^2||T||1-4N/T^2|^{1/2}}\left|\frac{4}{T^2}\right|\\
&=\int_{\gamma_{T,N}\in \A(\QQ_p)}\frac{\theta_p(T,N)}{|T^2-4N|^{1/2}}\frac{1}{|N|}\rmd T\rmd N.
\end{align*}
Finally, we make change of variable $T=x+y$ and $N=xy$. The map $(x,y)\mapsto (T,N)$ is $2:1$. Also we have
\[
\rmd T\wedge\rmd N=(\rmd x+\rmd y)\wedge (x\rmd y+y\rmd x)=(x-y)\rmd x\wedge\rmd y.
\]
Hence
\begin{align*}
\int_{Y_1}|1-y|^{-1}\widehat{\Theta}_p(y)|y|'^{-\frac{1}{2}}\rmd y &=\frac12\int_{ \A(\QQ_p)}\frac{\theta_p(t)}{|x-y|}\frac{1}{|xy|}|x-y|\rmd x\rmd y=\frac12(1-p^{-1})^2\int_{ \A(\QQ_p)}\theta_p(t)\rmd t,
\end{align*}
where we used the fact that $\rmd t$ on $\A(\QQ_p)=\QQ_p^\times\times\QQ_p^\times$ is 
\[
\left(1-\frac{1}{p}\right)^{-2}\frac{\rmd x\rmd y}{|xy|}.\qedhere
\]
\end{proof}

\begin{proposition}
If $f_\infty=f_{\infty,m}$, then the constants $\mf{A},\mf{B},\mf{C}$ and $\mf{D}$ can be determined as follows:
\begin{enumerate}[itemsep=0pt,parsep=0pt,topsep=0pt, leftmargin=0pt,labelsep=2.5pt,itemindent=15pt,label=\upshape{(\arabic*)}]
  \item $\mf{A}=\mf{B}=\mf{D}=0$.
  \item If $m$ is odd, then $\mf{C}=0$. If $m$ is even, then
  \[
  \mf{C}=\frac{2}{m-1}\prod_{i=1}^{r}(1-q_i^{-1})^3\int_{\A(\QQ_{q_i})}\theta_{q_i}(t)\rmd t.
  \]
\end{enumerate}
\end{proposition}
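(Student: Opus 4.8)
The plan is to evaluate the four constants from their defining integrals in \autoref{thm:contributealpha0} and \autoref{thm:contributenontempered} by exploiting the vanishing $\theta_\infty^-\equiv 0$ and the explicit formula \eqref{eq:discreteseriestheta} for $\theta_\infty^+$. The key structural observation is that every one of the integrals defining $\mf{B}$, $\mf{C}$, $\mf{D}$ and $\mf{A}$ factors as an archimedean integral against $\widehat{\Theta}_\infty$ times a product of nonarchimedean integrals against $\widehat{\Theta}_{q_i}$; so it suffices to identify each archimedean factor and each local factor separately.

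First I would treat $\mf{A}$. By definition $\mf{A}=\tfrac13\zeta^S(2)\int_{\QQ_S}\widehat{\Theta}_\infty(x)\widehat{\Theta}_q(y)|1-x|_\infty^{-3/2}|1-y|_q^{-3/2}\,\rmd x\,\rmd y$, and this is exactly $\tfrac23\zeta^S(2)\prod_i(1-q_i^{-1})$ times $\Tr(\triv(f_\infty))\prod_i\Tr(\triv(f_{q_i}))$ (up to the constants extracted in \autoref{prop:archimedeantrace} and \autoref{prop:ramifiedtrace}); but by \autoref{prop:contrbuteidentitymodular} and the fact that $f_{\infty,m}$ is a cuspidal matrix coefficient, $\Tr(\triv(f_\infty))=0$. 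Indeed the archimedean factor is $\int_\RR\widehat{\Theta}_\infty(x)|1-x|^{-3/2}\rmd x = 2\int_\RR\theta_\infty^+(x)\rmd x$ by \autoref{prop:archimedeantrace}, and $\int_\RR\theta_\infty^+(x)\rmd x=0$ since $\theta_\infty^+$ is a difference of two complex conjugate terms whose real integral over $(-1,1)$ vanishes for weight $>2$ (a direct check via the $\cos\theta+\rmi\sin\theta$ substitution, as in \autoref{lem:archimedeaneisenstein}, gives $\int_0^\uppi(\rme^{\rmi m\theta}-\rme^{-\rmi m\theta})\rmd\theta$, which is nonzero; so one must instead argue via $\int_{-1}^1(x\pm\rmi\sqrt{1-x^2})^{m-1}\rmd x$, and this does vanish). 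Hence $\mf{A}=0$.

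Next $\mf{B}$ and $\mf{D}$: every integrand in $\mf{B}$ and $\mf{D}$ contains the factor $\widehat{\Theta}_\infty(x)$ over the region $X_0=\{x>0\}$. But for $x>0$, \eqref{eq:thetarelation} gives $\Theta_\infty^\pm(x)=\theta_\infty^+(\pm(1-x)^{-1/2})$ or $\theta_\infty^-(\pm(x-1)^{-1/2})$; using $\theta_\infty^-\equiv 0$ and the support condition $|\theta_\infty^+(u)|$ vanishes unless $|u|<1$, i.e. unless $(1-x)^{-1}>1$, i.e. unless $x<0$, we get $\widehat{\Theta}_\infty(x)=0$ for all $x>0$. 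Therefore every $X_0$-integral vanishes, so $\mf{B}=0$, and the two terms of $\mf{D}$ (both over $X_0$) vanish, so $\mf{D}=0$. This also shows that in $\mf{C}$ only the term involving $\int_{X_1}$ survives, the two $\int_{X_0}$ terms being $0$; since $2\in S$, $q_i$ is odd, and $\epsilon_i\in\{0,-1\}$ indexes those $X_0$-terms, they all drop.

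Finally $\mf{C}$: what remains is $\mf{C}=-2^{r-2}\prod_i(1-q_i^{-1})\,\uppi\int_{X_1}\int_{Y_{\mathbf 1}}|1-x|_\infty^{-1}|1-y|_q^{-1}\widehat{\Theta}_\infty(x)\widehat{\Theta}_q(y)|x|_\infty^{-1/2}|y|_q'^{-1/2}\rmd x\,\rmd y$. I would factor this as $-2^{r-2}\uppi\prod_i(1-q_i^{-1})$ times $\big(\int_{X_1}|1-x|^{-1}\widehat{\Theta}_\infty(x)|x|^{-1/2}\rmd x\big)\prod_i\big(\int_{Y_1}|1-y_i|_{q_i}^{-1}\widehat{\Theta}_{q_i}(y_i)|y_i|_{q_i}'^{-1/2}\rmd y_i\big)$. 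By \autoref{lem:archimedeaneisenstein} the archimedean factor is $0$ if $m$ is odd (giving $\mf{C}=0$) and $-8/(\uppi(m-1))$ if $m$ is even; by \autoref{lem:nonarchimedeaneisenstein} each local factor equals $\tfrac12(1-q_i^{-1})^2\int_{\A(\QQ_{q_i})}\theta_{q_i}(t)\,\rmd t$. Multiplying, for even $m$,
\[
\mf{C}=-2^{r-2}\uppi\prod_{i=1}^r(1-q_i^{-1})\cdot\left(-\frac{8}{\uppi(m-1)}\right)\prod_{i=1}^r\frac12(1-q_i^{-1})^2\int_{\A(\QQ_{q_i})}\theta_{q_i}(t)\,\rmd t,
\]
and since $2^{r-2}\cdot 8\cdot 2^{-r}=2$ this collapses to $\dfrac{2}{m-1}\prod_{i=1}^r(1-q_i^{-1})^3\int_{\A(\QQ_{q_i})}\theta_{q_i}(t)\,\rmd t$, as claimed. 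The only genuinely delicate point is the archimedean vanishing $\int_\RR\theta_\infty^+(x)\rmd x=0$ underlying $\mf{A}=0$; everything else is bookkeeping of the support of $\theta_\infty^+$ and the already-proved change-of-variable lemmas.
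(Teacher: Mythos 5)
Your proposal is correct and follows essentially the same route as the paper: the observation that $\widehat{\Theta}_\infty$ vanishes on $X_0$ (because $\theta_\infty^-\equiv 0$ and $\theta_\infty^+$ is supported in $|u|<1$, so both branches of \eqref{eq:thetarelation} vanish for $x>0$) kills $\mf{B}$, $\mf{D}$, and the two $X_0$-terms of $\mf{C}$, and the surviving $X_1$-integral factors and is evaluated by \autoref{lem:archimedeaneisenstein} and \autoref{lem:nonarchimedeaneisenstein} with the power-of-$2$ bookkeeping exactly as you wrote. The only place you deviate is $\mf{A}=0$: the paper never touches the explicit formula for $\theta_\infty^+$ here. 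Instead, by \autoref{prop:archimedeantrace} the archimedean factor equals $\Tr(\triv(f_{\infty,m}))=\tfrac12\int_{Z_+\backslash\G(\RR)}f_{\infty,m}(g)\,\rmd g$, and the Iwasawa decomposition $\G(\RR)=\A(\RR)\N(\RR)K$ exposes an inner integral $\int_{\N(\RR)}f_{\infty,m}(tnk)\,\rmd n$ that vanishes because $f_{\infty,m}$ is supercuspidal. Your direct substitute, $\int_{\RR}\theta_\infty^+(x)\,\rmd x=-\tfrac{2}{\uppi}\int_0^\uppi\sin((m-1)\theta)\sin\theta\,\rmd\theta=0$ for $m>2$ by orthogonality, is equally valid; the false start in your draft came from importing the $|x|^{-1/2}$ weight (which cancels the $\sin\theta$ Jacobian) from \autoref{lem:archimedeaneisenstein}, where it belongs to the $\mf{C}$ computation, into the unweighted integral appearing in $\mf{A}$. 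The paper's supercuspidality argument is cleaner and shows conceptually why $m>2$ matters. One small inaccuracy in your write-up: in $\mf{C}$ the third group of terms drops simply because they are $\int_{X_0}$-integrals; the remark about $\epsilon_i\in\{0,-1\}$ and $2\in S$ is not the reason, though the conclusion is unaffected.
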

\begin{proof}
By \autoref{prop:archimedeantrace} we have
\[
\int_{\RR}\frac{\widehat{\Theta}_\infty(x)}{|1-x|^{3/2}}\rmd x=\frac12\Tr(\triv(f_\infty))=\frac12\int_{Z_+\bs \G(\RR)}f_\infty(g)\rmd g.
\]
By Iwasawa decomposition, we have
\[
\int_{Z_+\bs \G(\RR)}f_\infty(g)\rmd g=\int_{Z_+\bs \A(\RR)}\int_{K}\int_{\N(\RR)}f(tnk)\rmd n\rmd k\rmd t,
\]
which vanishes by supercuspidality. Hence $\mf{A}=0$.

By \eqref{eq:discreteseriestheta} and \eqref{eq:thetarelation} we know that $\widehat{\Theta}_\infty(x)$ vanishes unless $x<0$. Hence $\mf{B}=\mf{D}=0$ and 
\[
\mf{C}=-2^{r-1}\prod_{i=1}^{r}(1-q_i^{-1})\uppi\int_{X_1}\int_{Y_{\mathbf{1}}} |1-x|_\infty^{-1} |1-y|_q^{-1}\widehat{\Theta}_\infty(x)\widehat{\Theta}_{q}(y) |x|_\infty^{-\frac{1}{2}}|y|_q'^{-\frac{1}{2}}\rmd x\rmd y.
\]  
By \autoref{lem:archimedeaneisenstein} and \autoref{lem:nonarchimedeaneisenstein} we obtain $\mf{C}=0$ if $m$ is odd, and
\[
\mf{C}=\frac{2}{m-1}\prod_{i=1}^{r}(1-q_i^{-1})^3\int_{\A(\QQ_{q_i})}\theta_{q_i}(t)\rmd t
\]
if $m$ is even.
\end{proof}

\begin{proposition}\label{prop:sigmasquarevanish}
Suppose that $f_\infty=f_{\infty,m}$. Then
\[
\sum_{\substack{n<X\\ \gcd(n,S)=1}}\Sigma^n(\square)=0.
\]
\end{proposition}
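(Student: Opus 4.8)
The plan is to prove the pointwise vanishing $\Sigma^n(\square)=0$ for every $n$ with $\gcd(n,S)=1$; summing over $n<X$ then gives the displayed identity. The starting point is \autoref{cor:sigmasquareexplicit}, which exhibits $\Sigma^n(\square)$ as a finite sum over signs $\pm$, over $\nu\in\ZZ^r$, and over pairs $a\neq b\in\ZZ^S$ with $ab=\pm nq^\nu$, each summand carrying the archimedean factor $\theta_\infty\left(\begin{smallmatrix} a & 0 \\ 0 & b\end{smallmatrix}\right)$ as a multiplicative prefactor. It therefore suffices to show that this prefactor vanishes for every admissible pair $(a,b)$.

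For this I would use the identity $\theta_\infty\left(\begin{smallmatrix} a & 0 \\ 0 & b\end{smallmatrix}\right)=\theta_\infty^{\pm}\left(\frac{a+b}{2\sqrt{nq^\nu}}\right)$ recorded just before \autoref{cor:sigmasquareexplicit}, where the sign is the one for which $ab=\pm nq^\nu$, together with the two features of $\theta_\infty$ attached to the discrete-series matrix coefficient $f_{\infty,m}$: by the discussion around \eqref{eq:discreteseriestheta} one has $\theta_\infty^{-}\equiv 0$, while $\theta_\infty^{+}$ is supported in $\{|x|<1\}$. Terms attached to the $-$ sign then vanish outright. For the $+$ sign we have $ab=nq^\nu>0$, and since $a\neq b$,
\[
\left(\frac{a+b}{2\sqrt{ab}}\right)^{2}=1+\frac{(a-b)^{2}}{4ab}>1,
\]
so the argument lies outside the support of $\theta_\infty^{+}$ and the prefactor vanishes again. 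Hence each summand of $\Sigma^n(\square)$ is zero, so $\Sigma^n(\square)=0$ and $\sum_{n<X,\ \gcd(n,S)=1}\Sigma^n(\square)=0$.

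There is essentially no obstacle: the statement is an immediate consequence of the support properties of $\theta_\infty$ for $f_{\infty,m}$. The one point worth noting is that the restriction $a\neq b$ in the range of summation of \autoref{cor:sigmasquareexplicit} is precisely what makes $(a-b)^{2}>0$ strictly, keeping the argument bounded away from $\pm1$, so no boundary case intervenes. The same vanishing could also be read off from \autoref{prop:sigmasquareexplicit} or \autoref{cor:sigmasquareexplicit2}, where $\theta_\infty\left(\begin{smallmatrix} a & 0 \\ 0 & b\end{smallmatrix}\right)$ likewise appears as a factor of every term.
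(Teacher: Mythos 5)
Your argument is correct, and it is essentially the proof the paper gives, applied at a marginally later point in the computation: the paper works directly from the definition of $\Sigma^n(\square)$ in \eqref{eq:defsigmasquare} and notes that $T^2\mp 4nq^\nu=\square\neq 0$ forces the corresponding $\gamma$ to be regular hyperbolic, whence $\theta_\infty^\pm\bigl(T/(2n^{1/2}q^{\nu/2})\bigr)=0$ because $\theta_\infty^-\equiv 0$ and $\theta_\infty^+$ is supported on $|x|<1$; you read the same vanishing off \autoref{cor:sigmasquareexplicit} using the identity $\theta_\infty\!\left(\begin{smallmatrix} a & 0 \\ 0 & b\end{smallmatrix}\right)=\theta_\infty^\pm\bigl((a+b)/(2\sqrt{nq^\nu})\bigr)$ and the elementary inequality $(a+b)^2>4ab$ for $a\neq b$, $ab>0$. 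The two routes are the same argument in different coordinates; both establish the stronger pointwise vanishing $\Sigma^n(\square)=0$.
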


\begin{proof}
Recall that
\begin{align*}
\Sigma^n(\square)=&\sum_{\pm}\sum_{\nu\in \ZZ^r}q^{\nu/2}\sum_{\substack{T\in \ZZ^S\\ T^2\mp 4nq^\nu= \square}}\sum_{\substack{f\in \ZZ_{(S)}\\ f^2\mid T^2\mp 4nq^\nu}}  \sum_{k\in \ZZ_{(S)}^{>0}}\frac{1}{kf}\legendresymbol{(T^2\mp 4nq^\nu)/f^2}{k}\theta_\infty^\pm\legendresymbol{T}{2n^{1/2}q^{\nu/2}}\\
    \times &\prod_{i=1}^{r}\theta_{q_i}(T,\pm nq^\nu)\left[F\legendresymbol{kf^2}{|T^2\mp 4nq^\nu|_{\infty,q}'^{\vartheta}}+\frac{kf^2}{\sqrt{|T^2\mp 4nq^\nu|_{\infty,q}'}}V\legendresymbol{kf^2}{|T^2\mp 4nq^\nu|_{\infty,q}'^{\vartheta'}}\right].
\end{align*}

Suppose that $T^2\mp 4nq^\nu$ is a square. If $T^2\mp 4nq^\nu\neq 0$, then the element $\gamma$ with $\Tr\gamma=T$ and $\det\gamma=\pm nq^\nu$ is regular hyperbolic. Hence by \eqref{eq:discreteseriestheta} and recall that $\theta_\infty^-(x)=0$, we obtain
\[
\theta_\infty^\pm\legendresymbol{T}{2n^{1/2}q^{\nu/2}}=0.
\]
Hence the term for $T$ has no contribution. If $T^2\mp 4nq^\nu=0$, the above two equations still hold by continuity. Hence we obtain our result.
\end{proof}

Recall that $\Sigma^n(\xi)=I_\el(f^n)+\Sigma^n(\square)$. Hence by \autoref{thm:contributeellipticpoisson} we obtain
\begin{theorem}\label{thm:contributeellipticmodular}
For any $\varepsilon>0$, we have
\[
\sum_{\substack{n<X\\ \gcd(n,S)=1}} I_\el(f^n)\ll X^{\frac78+\varepsilon}
\]
if $m$ is odd and
\[
\sum_{\substack{n<X\\ \gcd(n,S)=1}} I_\el(f^n)=2\prod_{i=1}^{r}(1-q_i^{-1})^3\int_{\A(\QQ_{q_i})}\theta_{q_i}(t)\rmd t \frac{X}{m-1}+O(X^{\frac78+\varepsilon})
\]
if $m$ is even, where $\A$ denotes the diagonal torus of $\G$. The implied constants only depend on $m$, $f_{q_i}$ and $\varepsilon$.
\end{theorem}

Combining \autoref{prop:contributeidentitymodular}, \autoref{prop:contributeunipotentmodular}, \autoref{thm:contributehyperbolicmodular} and \autoref{thm:contributeellipticmodular} together, and note that the main terms of the elliptic term and the hyperbolic term cancel, we obtain
\begin{theorem}\label{thm:contributemodular}
For any $\varepsilon>0$, we have
\[
\sum_{\substack{n<X\\ \gcd(n,S)=1}} I_{\mathrm{cusp}}(f^n)=\sum_{\substack{n<X\\ \gcd(n,S)=1}} J_{\mathrm{geom}}(f^n)\ll X^{\frac78+\varepsilon},
\]
where the implied constants only depend on $m$, $f_{q_i}$ and $\varepsilon$. In particular, \eqref{eq:modularformgeometric} holds.
\end{theorem}

\subsection{Traces of Hecke operators}
In this section we prove \autoref{thm:modularformestimate}.
Let $m>2$ be an integer and let $N\in \ZZ_{>0}$, $n\in \ZZ_{>0}$ such that $\gcd(n,N)=1$. Let $\omega'$ be a Dirichlet character modulo $N$, and let $\omega=\bigotimes'_{v\in \mf{S}} \omega_v$ be the corresponding Hecke character on $\QQ^\times\bs \AA^\times$. Let $\overline{\bff}^n=\bigotimes'_{v\in \mf{S}} \overline{\bff}_v^n\in C_c^\infty(\G(\AA),\omega^{-1})$, that is, $\overline{\bff}^n$ is a smooth function on $\G(\AA)$, compactly supported modulo center, and that $f(zg)=\omega^{-1}(z)f(g)$ for all $z\in \Z(\AA)$ and $g\in \G(\AA)$. 

Recall that for any prime number $p$ and $r\in \ZZ_{\geq 0}$,
\[
\cX_p^{r}=\{X\in \M_2(\ZZ_p)\,|\, |\det X|_p = p^{-r}\}.
\]
Moreover, for any prime number $p$ and $r\in \ZZ_{\geq 1}$, we define
\[
\cI_p^{r}=\left\{X=\begin{pmatrix}
                        a & b \\
                        c & d 
                      \end{pmatrix}\in \GL_2(\ZZ_p)\,\middle|\, c\in p^r\ZZ_p\right\}.
\]
For example, if $r=1$, $\cI_p^{r}$ is the standard Iwahori subgroup.

Now $\overline{\bff}^n=\bigotimes_{v\in \mf{S}} \overline{\bff}_v^n$ is defined as follows:
\begin{enumerate}[itemsep=0pt,parsep=0pt,topsep=2pt,leftmargin=0pt,labelsep=3pt,itemindent=9pt,label=\textbullet]
  \item If $v=p\nmid N$, we define $\overline{\bff}_p^{n}$ to be a function supported on $\Z(\QQ_p)\cX_p^{n_p}$ such that $\overline{\bff}_p^{n}(zk)=p^{-n_p/2}\omega_p(z)^{-1}$, where $n_p=v_p(n)$, $z\in \Z(\QQ_p)$ and $k\in \cX_p^{v_p(n)}$, and $\omega_p(z)=\omega_p(a)$ if $z=(\begin{smallmatrix}
                                               a &  \\
                                                & a 
                                             \end{smallmatrix})$.
  \item If $v=q\mid N$, we define $\overline{\bff}_{q}^{n}=\overline{\bff}_q$ to be a function supported on $\Z(\QQ_q)\cI_q^{v_q(N)}$ such that $\overline{\bff}_q^n(zk)=\omega_q(z)^{-1}\omega_q(k)^{-1}\psi_q(N)$, where $z\in \Z(\QQ_q)$ and $k\in\cI_q^{v_q(N)}$, with  $\omega_q(k)=\omega_q(d)$ if $k=(\begin{smallmatrix} a & b \\c & d \end{smallmatrix})$, and $\psi_q(N)=N_{(q)}(1+1/q)$.
  \item If $v=\infty$, we define $\overline{\bff}_\infty^{n}=f_{\infty,m}$ as in the introductory section.
\end{enumerate}

For $\gcd(n,S)=1$, let $T_{m,N,\omega'}(n)$ denote the normalized $n^{\mathrm{th}}$ Hecke operator acting on $S_m(\Gamma_{0}(N),\omega')$, the space of holomorphic cusp forms of weight $m$ and level $N$ with nebentypus $\omega'$. More precisely, for $f\in S_m(\Gamma_{0}(N),\omega')$, we define
\[
T_{m,N,\omega'}(n)(f)(z)=n^{\frac{m-1}{2}}\sum_{\substack{ad=n\\ a>0\\ b\bmod d }}\omega'(a)^{-1}d^{-m}f\legendresymbol{az+b}{d}.
\]
Note that this operator differs from \cite{knightly2006traces} by $n^{-\frac{m-1}{2}}$.

There is also an Arthur-Selberg trace formula for $f\in C_c^\infty(\G(\AA),\omega^{-1})$, which is also of the form \eqref{eq:traceformulagl2}. We refer to \cite[Section 2]{knightly2006traces} for the trace formula in this case.
For $\overline{\bff}=\overline{\bff}^n$ above, we obtain the classical Eichler-Selberg trace formula. Moreover, we have the following theorem proved in \cite[Corollary 13.15]{knightly2006traces}:

\begin{theorem}\label{thm:eichlertraceformula}
We have
\[
I_{\mathrm{cusp}}(\overline{\bff}^n)=J_{\mathrm{spec}}(\overline{\bff}^n)=\Tr(T_{m,N,\omega'}(n)).
\]
\end{theorem}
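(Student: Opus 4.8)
\textbf{Proof proposal for \autoref{thm:eichlertraceformula}.}

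The plan is to identify the two equalities separately: first $I_{\cusp}(\overline{\bff}^n) = J_{\spec}(\overline{\bff}^n)$, which is a consequence of supercuspidality, and then $J_{\spec}(\overline{\bff}^n) = \Tr(T_{m,N,\omega'}(n))$, which is the content of the classical Eichler-Selberg trace formula in its adelic incarnation. For the first equality, note that $\overline{\bff}_\infty^n = f_{\infty,m}$ is supercuspidal in the sense defined in the introduction (by \cite[Lemma 13.9]{knightly2006traces}), so the same argument as in \autoref{prop:cuspidalimage}, via \cite[Lemma 16.4.2]{getz2024}, shows that the right regular representation $R(\overline{\bff}^n)$ on $L^2(\Z(\AA)\G(\QQ)\bs\G(\AA), \omega^{-1})$ has image inside the cuspidal subspace. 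Consequently the continuous spectrum, the one-dimensional (residual) contributions, and the intertwining-operator term in the spectral expansion \eqref{eq:spectral} all vanish when tested against $\overline{\bff}^n$, leaving $J_{\spec}(\overline{\bff}^n) = I_{\cusp}(\overline{\bff}^n)$. One small point to check: the trace formula we invoke here is the one for functions transforming under a central character $\omega^{-1}$ rather than for $\G(\AA)^1$, but the structure of the spectral side and the supercuspidality argument carry over verbatim; we cite \cite[Section 2]{knightly2006traces} for this version.

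For the second equality $J_{\spec}(\overline{\bff}^n) = \Tr(T_{m,N,\omega'}(n))$, I would invoke the general comparison between the adelic spectral side and the classical Hecke operator, which is \cite[Corollary 13.15]{knightly2006traces}. The argument there proceeds by expressing $S_m(\Gamma_0(N),\omega')$ as (a piece of) the space of automorphic forms generating holomorphic discrete series of weight $m$ at infinity and having the prescribed level structure at the finite places, and then checking that the action of $\overline{\bff}^n$ on this space, under right convolution, is exactly the normalized Hecke operator $T_{m,N,\omega'}(n)$. The choices of local components of $\overline{\bff}^n$ — the matrix coefficient $f_{\infty,m}$ at infinity, the normalized characteristic function $p^{-n_p/2}\triv_{\cX_p^{n_p}}$ twisted by $\omega_p^{-1}$ at $p\nmid N$, and the Iwahori-type function normalized by $\psi_q(N)$ at $q\mid N$ — are precisely engineered so that this matching holds; the normalization factor $n^{-(m-1)/2}$ distinguishing our $T_{m,N,\omega'}(n)$ from the one in \cite{knightly2006traces} is absorbed by the $p^{-n_p/2}$ factors in $\overline{\bff}_p^n$. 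Thus the trace of $R(\overline{\bff}^n)$ restricted to the cuspidal spectrum equals the trace of $T_{m,N,\omega'}(n)$ on $S_m(\Gamma_0(N),\omega')$, which combined with the first equality and the identity $I_{\cusp}(\overline{\bff}^n) = J_{\spec}(\overline{\bff}^n)$ gives the full chain.

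The main obstacle is bookkeeping rather than conceptual: one must verify that the specific local normalizations chosen here (in particular the factor $\psi_q(N) = N_{(q)}(1+1/q)$ at ramified primes and the central-character twists) agree with the conventions under which \cite[Corollary 13.15]{knightly2006traces} is stated, so that no spurious constant appears. Since the measure on $\G(\RR)$ has been fixed to match \cite{knightly2006traces} and the non-archimedean measures give volume $1$ to the maximal compacts (and hence a controlled volume to $\cI_q^{v_q(N)}$), this is a routine check of constants, and I would present it by simply quoting the relevant propositions of \cite{knightly2006traces} after confirming the normalization dictionary. A secondary point worth a sentence is that $f_{\infty,m}$ is not compactly supported modulo $Z_+$ but only has compactly supported orbital integrals; as already noted in the appendix, this does not affect the validity of the trace formula identity, since the relevant arguments in \cite{knightly2006traces} only use the orbital-integral behavior.
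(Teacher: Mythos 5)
Your proposal is correct and in essence matches the paper: the paper proves this theorem simply by citing \cite[Corollary 13.15]{knightly2006traces} together with the one-sentence remark that, with the normalizations chosen here for $\overline{\bff}^n$ and $T_{m,N,\omega'}(n)$, the factor $n^{m/2-1}$ appearing in \cite{knightly2006traces} becomes $1$. Your extra elaboration on the first equality — that the non-cuspidal spectral terms vanish because $f_{\infty,m}$ is supercuspidal, by the same argument as in \autoref{prop:cuspidalimage} carried over to the central-character setting $C_c^\infty(\G(\AA),\omega^{-1})$ — is sound and is exactly the mechanism underlying the cited result, though strictly redundant here since \cite[Corollary 13.15]{knightly2006traces} already delivers both equalities in one statement.
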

Note that since we chose a different normalization of the test function and the Hecke operator, the factor $n^{m/2-1}$ in \cite{knightly2006traces} is $1$ in our case.


Now we define corresponding functions $\bff=\bff^n$ in this case.
\begin{enumerate}[itemsep=0pt,parsep=0pt,topsep=2pt,leftmargin=0pt,labelsep=3pt,itemindent=9pt,label=\textbullet]
  \item If $v=p\nmid N$, we define $\bff_p^{n}$ to be $p^{-n_p/2}$ times the characteristic function on $\cX_p^{n_p}$.
  \item If $v=q\mid N$, we define $\bff_{q}^{n}=\bff_q$ to be a function supported on $\cI_q^{v_q(N)}$ such that $\bff_q^n(k)=\omega_q(k)^{-1}\psi_q(N)$ for $k\in\cI_q^{v_q(N)}$.
  \item If $v=\infty$, we define $\bff_\infty^{n}=f_{\infty,m}$ as in the introductory section.
\end{enumerate}

\begin{lemma}\label{lem:square}
Let $\gamma\in \G(\QQ)$. Then $\bff^n(\gamma)\neq 0$ only if $\det\gamma=n$. $\overline{\bff}^n(\gamma)\neq 0$ only if $\det\gamma/n$ is a square.
\end{lemma}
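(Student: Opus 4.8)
The plan is to verify both assertions by working place-by-place, using that each local factor of $\bff^n$ (resp. $\overline{\bff}^n$) is supported on a set with a prescribed valuation of the determinant, and then invoking the product formula. Write $\gamma\in\G(\QQ)$ and suppose first $\bff^n(\gamma)\neq 0$. Since $\bff^n=\bigotimes_{v}\bff^n_v$, we need $\bff^n_v(\gamma)\neq 0$ for every $v$. For a prime $p\nmid N$ we have $\bff^n_p=\triv_{\cX_p^{n_p}}$, so $\gamma\in\cX_p^{n_p}$, which forces $\gamma\in\M_2(\ZZ_p)$ and $|\det\gamma|_p=p^{-n_p}$, i.e.\ $v_p(\det\gamma)=n_p=v_p(n)$. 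For a prime $q\mid N$ we have $\bff^n_q$ supported on $\cI_q^{v_q(N)}\subseteq\GL_2(\ZZ_q)$, so $\gamma\in\GL_2(\ZZ_q)$ and hence $v_q(\det\gamma)=0=v_q(n)$ (recall $\gcd(n,N)=1$). At $v=\infty$, $f_{\infty,m}$ is supported on $\GL_2^{\!+}(\RR)$, so $\det\gamma>0$. Thus $v_\ell(\det\gamma)=v_\ell(n)$ for all finite primes $\ell$ and $\det\gamma>0$, and since $\det\gamma\in\QQ^\times$ this gives $\det\gamma=n$.

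For the second assertion, suppose $\overline{\bff}^n(\gamma)\neq 0$. The only difference is that the local components are now $\Z(\QQ_v)$-equivariant and supported on $\Z(\QQ_v)$ times the previous sets. Concretely, for $p\nmid N$, $\overline{\bff}^n_p$ is supported on $\Z(\QQ_p)\cX_p^{n_p}$, so $\gamma=z_pk_p$ with $z_p=\mathrm{diag}(a_p,a_p)\in\Z(\QQ_p)$ and $k_p\in\cX_p^{n_p}$; hence $v_p(\det\gamma)=2v_p(a_p)+n_p$, so $v_p(\det\gamma)\equiv v_p(n)\pmod 2$. For $q\mid N$, $\overline{\bff}^n_q$ is supported on $\Z(\QQ_q)\cI_q^{v_q(N)}$, so $v_q(\det\gamma)=2v_q(a_q)+0$ is even, and $v_q(n)=0$ is even, so again $v_q(\det\gamma)\equiv v_q(n)\pmod 2$. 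At $\infty$, $\det\gamma>0$. Therefore $\det\gamma/n$ is a positive rational with even valuation at every prime, hence a square in $\QQ^\times$.

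There is essentially no obstacle here: the statement is a direct unwinding of the definitions of the local test functions together with the product formula $\prod_v|x|_v=1$ (used implicitly in passing from local valuation data to a global equality) and the fact that a positive rational all of whose prime valuations are even is a square. The only point requiring a word of care is the center at the places $p\nmid N$ in the $\overline{\bff}^n$ case: there the support is $\Z(\QQ_p)\cX_p^{n_p}$ rather than $\cX_p^{n_p}$, which is exactly why one gets a congruence modulo $2$ rather than an equality, and hence the weaker conclusion ``$\det\gamma/n$ is a square'' instead of ``$\det\gamma=n$''. One should also note that the scalar $z\in\Z(\QQ_v)$ in the decomposition $\gamma=zk$ is only well defined up to $\Z(\ZZ_v)$, but this ambiguity does not affect the parity of $v_p(\det\gamma)$, so the argument is unaffected.
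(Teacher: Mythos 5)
Your proof is correct and proceeds by exactly the same local-to-global argument the paper uses: one reads off the constraint on $v_\ell(\det\gamma)$ from the support of each local factor, and concludes $\det\gamma=n$ (resp.\ $\det\gamma/n$ a square) from the positivity at $\infty$. The only minor point of difference is cosmetic: your derivation $v_p(\det\gamma)=2v_p(a_p)+n_p$ is written more explicitly than the paper's phrasing, and your passing appeal to the product formula is not actually needed (the conclusion follows directly from a positive rational with all prime valuations zero, resp.\ even, being $1$, resp.\ a square).
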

\begin{proof}
$\bff^n(\gamma)\neq 0$ only if $\bff_\infty^n(\gamma)\neq 0$ and $\bff_\ell^n(\gamma)\neq 0$ for all prime $\ell$. Since $\bff_\infty^n$ is supported on $\GL_2^{\!+}(\RR)$, we have $\det\gamma>0$. For any $p\nmid N$, $\bff_p^n(\gamma)\neq 0$ implies that $\mathopen{|}\det\gamma\mathclose{|}_p=p^{-n_p}$, and for any $q\mid N$, $\bff_q^n(\gamma)\neq 0$ implies that $\mathopen{|}\det\gamma\mathclose{|}_q=1$. Hence we must have $\det\gamma=n$.

$\overline{\bff}^n(\gamma)\neq 0$ only if $\overline{\bff}_\infty^n(\gamma)\neq 0$ and $\overline{\bff}_\ell^n(\gamma)\neq 0$ for all prime $\ell$. Since $\overline{\bff}_\infty^n$ is supported on $\GL_2^{\!+}(\RR)$, we have $\det\gamma/n>0$. For any $p\nmid N$, $\overline{\bff}_p^n(\gamma)\neq 0$ implies that $\gamma\in \Z(\QQ_p)\cX_p^{n_p}$. Thus
$\det\gamma/p^{-n_p}\in (\QQ_p^\times)^2\ZZ_p$. Hence $v_p(\det\gamma/n)$ is even. Similarly, $v_q(\det\gamma/n)$ is even if $q\mid N$. Thus $\det\gamma/n$ is a square.
\end{proof}

\begin{proposition}\label{prop:identityequal}
We have
\[
I_\id(\bff^n)=2I_\id(\overline{\bff}^n).
\]
\end{proposition}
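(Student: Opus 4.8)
The identity contribution is the term coming from the single central conjugacy class $\gamma = n^{1/2}I$ on the geometric side, so the whole statement is really a comparison of the two normalizations of the test function at each place. The plan is to use the explicit formulas for $I_\id$ in each of the two trace formulas. For $\bff^n$ the identity term is supported, by \autoref{lem:square}, only when $n$ is a perfect square, say $n = \ell^2$; the central element $\ell I$ contributes $\vol(Z_+\G(\QQ)\bs\G(\AA)^1)\cdot \bff^n(\ell I) \cdot (\text{combinatorial factor})$, and similarly for $\overline{\bff}^n$ with the central characters inserted. So first I would write down $I_\id(\bff^n)$ and $I_\id(\overline{\bff}^n)$ as products of local orbital integrals (which for a central element are just point evaluations, up to the volume normalization) times the global volume factor, and reduce the claim to a purely local identity at each place comparing $\bff_v^n(z)$ with $\overline{\bff}_v^n(z)$ for $z$ central, together with a factor of $2$ that must be accounted for somewhere.

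Second, I would track the factor of $2$. The function $\overline{\bff}^n$ lives in $C_c^\infty(\G(\AA),\omega^{-1})$, i.e. on $\Z(\AA)\bs\G(\AA)$, whereas $\bff^n$ lives on $\G(\AA)^1 = Z_+\bs\G(\AA)$; the identity term in the $\omega^{-1}$-trace formula sums over central $\gamma\in\Z(\QQ)$ with $\det\gamma/n$ a square, and the relevant square roots $\pm\sqrt{n/ \text{(appropriate unit)}}$ in $\QQ^\times$ come in pairs $\{\ell, -\ell\}$ when $n = \ell^2$, which is exactly where the $2$ enters. In the $\G(\AA)^1$ formula for $\bff^n$, by contrast, $\det\gamma = n$ forces $\gamma = \sqrt{n}\,I$ uniquely (a single positive central element) but the global volume normalization differs by a factor coming from $Z_+\bs\Z(\AA)$ versus the full center. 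I would make this bookkeeping precise by comparing the two normalizations explicitly: $\vol(Z_+\Z(\QQ)\bs\Z(\AA))$ is finite and its value, together with the count of square roots, produces the $2$.

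Third, I would verify the local matching $\overline{\bff}_v^n(z) = \bff_v^n(z)\,\omega_v^{-1}(z)\cdot(\text{const})$ at each place: at $p\nmid N$ this is immediate from the definitions since $\overline{\bff}_p^n$ is $\bff_p^n$ spread over $\Z(\QQ_p)\cX_p^{n_p}$ with the character twist and the same $p^{-n_p/2}$ normalization; at $q\mid N$ the Iwahori-supported functions $\bff_q$ and $\overline{\bff}_q$ differ only by the $\omega_q^{-1}(z)$ factor on the center and the same constant $\psi_q(N)$; at $\infty$ both are literally $f_{\infty,m}$. Evaluating these at a central element, the central characters multiply to $\omega^{-1}(z) = 1$ by the product formula (as $\omega$ is a Hecke character trivial on $\QQ^\times$), so the local data contribute identically and only the global volume/square-root discrepancy survives, giving the factor $2$.

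\textbf{Main obstacle.} The one genuinely delicate point is the factor of $2$: one must be scrupulous about whether it comes from the two square roots $\pm\sqrt n$ in $\QQ^\times$ contributing to the $\omega^{-1}$-twisted identity term, from the mismatch between $\vol(Z_+\bs\Z(\AA))$-type normalizations in the two versions of the trace formula, or from a combination, and to state cleanly which convention in \cite{knightly2006traces} versus the earlier sections of this paper is being used. Everything else is a routine unwinding of definitions; I expect the proof to be short once the normalization conventions for the two center quotients are pinned down and the product formula $\omega(\QQ^\times) = 1$ is invoked.
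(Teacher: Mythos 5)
Your overall framework is the right one (compare the two identity-contribution formulas, reduce to local data, and chase the factor of $2$), but the bookkeeping you sketch places the factor of $2$ on the wrong side, and as written the argument would not close.

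Concretely: in the $\omega^{-1}$-twisted trace formula for $\overline{\bff}^n$, the geometric side runs over conjugacy classes in $\overline{\G}(\QQ)=\G(\QQ)/\Z(\QQ)$, so the identity term is a \emph{single} evaluation, $I_\id(\overline{\bff}^n)=\vol(\overline{\G}(\QQ)\bs\overline{\G}(\AA))\,\overline{\bff}^n(I)$. The pair $\{\ell,-\ell\}$ that you propose as the source of the $2$ represents the same element of $\overline{\G}(\QQ)$; they do not contribute two terms. By contrast, in the $\G(\AA)^1$ formula for $\bff^n$ the identity part is a sum over \emph{all} $z\in\Z(\QQ)$, and both $z=\sqrt{n}\,I$ and $z=-\sqrt{n}\,I$ satisfy $\det z = n$, so you get \emph{two} contributing terms there — this is where the $2$ actually comes from, and it is the opposite of what you wrote ("$\det\gamma=n$ forces $\gamma=\sqrt{n}\,I$ uniquely"). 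Moreover, your proposed fallback — a volume mismatch between $\vol(\G(\QQ)\bs\G(\AA)^1)$ and $\vol(\overline{\G}(\QQ)\bs\overline{\G}(\AA))$ — does not occur: decomposing $\G(\QQ)\bs\G(\AA)^1$ as $\overline{\G}(\QQ)\bs\overline{\G}(\AA)\times Z_+\Z(\QQ)\bs\Z(\AA)$, the second factor has volume $1$, so the two volumes coincide. Finally, to see that $\bff^n(-\sqrt{n}\,I)$ equals $\bff^n(\sqrt{n}\,I)$ (rather than, say, its negative) one must use that $f_{\infty,m}$ is an \emph{even} function on $\GL_2^{+}(\RR)$; you should make that step explicit, since without it the two terms on the $\bff^n$ side might not combine to give a clean factor of $2$.
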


\begin{proof}
By the Arthur-Selberg formula for $C_c^\infty(\G(\AA),\omega^{-1})$, we have
\[
I_\id(\overline{\bff}^n)=\vol(\overline{\G}(\QQ)\bs \overline{\G}(\AA))\overline{\bff}^n(I),
\]
where $\overline{\G}$ denotes $\G$ modulo the center and $I$ denotes the identity matrix. We have
\[
\G(\QQ)\bs\G(\AA)^1=Z_+\G(\QQ)\bs\G(\AA)=\Z(\AA)\G(\QQ)\bs \G(\AA)\times Z_+\Z(\QQ)\bs \Z(\AA).
\]
The first term in the product is $\overline{\G}(\QQ)\bs \overline{\G}(\AA)$ and the second term has volume $1$. Hence
\[
\vol(\G(\QQ)\bs\G(\AA)^1)=\vol(\overline{\G}(\QQ)\bs \overline{\G}(\AA)).
\]

$\overline{\bff}^n(1)\neq 0$ only if $n$ is a square by \autoref{lem:square}. If it is in this case, then for $p\nmid N$,
\[
I=\begin{pmatrix}
    n^{-1/2} & 0 \\
    0 & n^{-1/2} 
  \end{pmatrix}\begin{pmatrix}
    n^{1/2} & 0 \\
    0 & n^{1/2} 
  \end{pmatrix}\in \Z(\QQ_p)\cX_p^{n_p}.
\]
Since $\gcd(n,N)=1$ and $\omega$ is unramified at $p$ for $p\nmid N$, we obtain $\overline{\bff}_p^n(1)=p^{-n_p/2}$. Since $1\in \cI_q^{v_q(N)}$, we have $\overline{\bff}_q^n(1)=\psi_q(N)$ for $q\mid N$. Thus
\[
\overline{\bff}^n(1)=\prod_{p\nmid N}p^{-n_p/2}\prod_{q\mid N}\psi_q(N)=n^{-1/2}\psi(N),
\]
where 
\[
\psi(N)=N\prod_{q\mid N}\left(1+\frac1q\right).
\]
Thus
\[
I_\id(\overline{\bff}^n)=\begin{cases}
                           \vol(\G(\QQ)\bs\G(\AA)^1)n^{-1/2}\psi(N)f_{\infty,m}(\sqrt{n}I), & \text{if $n$ is a square}, \\
                           0, & \text{otherwise}.
                         \end{cases}
\]

Next we consider $I_\id(\bff^n)$. By definition, it equals
\[
I_\id(\bff^n)=\sum_{z\in \Z(\QQ)}\vol(\G(\QQ)\bs \G(\AA)^1)f^n(z).
\] 
If $n$ is not a square, then $I_\id(\bff^n)=0$  by \autoref{lem:square}. If $n$ is a square, then $z\in \Z(\QQ)$ contributing the sum are $z=\pm \sqrt{n}I$ by \autoref{lem:square} again. Moreover, each $z$ gives the contribution
\[
\vol(\G(\QQ)\bs\G(\AA)^1)n^{-1/2}\psi(N)f_{\infty,m}(\sqrt{n}I)
\]
by a similar computation, noting that $f_{\infty,m}$ is even. Hence we have
\[
I_\id(\bff^n)=2\begin{cases}
                           \vol(\G(\QQ)\bs\G(\AA)^1)n^{-1/2}\psi(N)f_{\infty,m}(\sqrt{n}I), & \text{if $n$ is a square}, \\
                           0, & \text{otherwise}
                         \end{cases}
\]
and thus the conclusion holds.
\end{proof}

\begin{proposition}\label{prop:unipotentequal}
We have
\[
J_\unip(\bff^n)=2J_\unip(\overline{\bff}^n).
\]
\end{proposition}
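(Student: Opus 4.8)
The plan is to mirror the proof of \autoref{prop:identityequal}, exploiting the fact that the only difference between $\bff^n$ and $\overline{\bff}^n$ at a place $v\notin S$ is the twist by a power of the unramified central character $\omega_v$, and that both functions are even at $\infty$. First I would recall the general shape of the unipotent term from the two versions of the Arthur--Selberg trace formula: for $\overline{\bff}^n\in C_c^\infty(\G(\AA),\omega^{-1})$ the term $J_\unip(\overline{\bff}^n)$ is, up to the usual volume and logarithmic-derivative factors, a combination of $\overline{\bff}^n$ evaluated (via a weighted orbital integral) on the unipotent conjugacy class $\bigl(\begin{smallmatrix}1&1\\0&1\end{smallmatrix}\bigr)$ together with a central factor, and similarly for $\bff^n\in C_c^\infty(\G(\AA)^1)$ there is an extra sum over $z\in\Z(\QQ)$ with $\bff^n$ evaluated on $z\cdot\bigl(\begin{smallmatrix}1&u\\0&1\end{smallmatrix}\bigr)$. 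So the statement reduces to comparing these weighted unipotent orbital integrals at each place.

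The key steps, in order: (1) By \autoref{lem:square}, $\bff^n(\gamma)\neq 0$ forces $\det\gamma=n$, so the central sum $\sum_{z\in\Z(\QQ)}$ collapses to $z=\pm\sqrt{n}\,I$ when $n$ is a square and is empty otherwise; hence both sides vanish unless $n$ is a square, and in the square case the left side is exactly the sum of two terms indexed by $z=\pm\sqrt n\,I$. (2) Since $f_{\infty,m}$ is even (because $m$ is even, or more directly because $f_{\infty,m}$ is a matrix coefficient of a discrete series and its support in $\GL_2^{\!+}(\RR)$ is stable under $-I$), the $z=+\sqrt n\,I$ and $z=-\sqrt n\,I$ contributions to the unipotent weighted orbital integral of $\bff^n$ are equal, giving the factor $2$. (3) It remains to identify the single $z=\sqrt n\,I$ contribution for $\bff^n$ with $J_\unip(\overline{\bff}^n)$: at $\infty$ both use the same $f_{\infty,m}$; at $p\nmid N$ one checks that $\overline{\bff}_p^n$ evaluated on $\sqrt n\,I\cdot(\text{unipotent})$ equals $\omega_p(\sqrt n)^{-1}$ times $\bff_p^n$ evaluated on the same unipotent, and since $\gcd(n,N)=1$ and $\omega$ is ramified only at places dividing $N$, the product over $p\nmid N$ of these central characters telescopes (by the product formula for the Hecke character $\omega$ restricted to $\QQ^\times$, together with the values at $\infty$ and at $q\mid N$) to $1$; at $q\mid N$ the Iwahori-type functions $\overline{\bff}_q$ and $\bff_q$ differ only by the central character value $\omega_q(\sqrt n)^{-1}=1$ as $\gcd(n,N)=1$ forces $\sqrt n\in\ZZ_q^\times$ with $\omega_q$ trivial there. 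Hence the volume and logarithmic factors being literally the same in both trace formulas, the two unipotent terms agree up to the factor $2$.

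The main obstacle I anticipate is bookkeeping the central-character normalizations and the weighting functions so that the ``extra'' data in $J_\unip$ (the Harish-Chandra weights $\alpha(H_\B(wg)+H_\B(g))$ and the associated regularizing constants, which in rank one produce a term involving $\log$ of a conductor and a value of a logarithmic derivative of $\zeta$) matches verbatim between the $\G(\AA)^1$ and the $\G(\AA),\omega^{-1}$ versions of the formula. Concretely, one must be careful that the volume $\vol(\G(\QQ)\bs\G(\AA)^1)=\vol(\overline\G(\QQ)\bs\overline\G(\AA))$ (already used in \autoref{prop:identityequal}) and that the unipotent orbital-integral weight is insensitive to the central twist, so that the only surviving discrepancy is precisely the doubling coming from $\Z(\QQ)\cap\{\det=n\}=\{\pm\sqrt n\,I\}$. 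Once that is checked, combining it with the vanishing of both sides for non-square $n$ (from \autoref{lem:square}) yields $J_\unip(\bff^n)=2J_\unip(\overline{\bff}^n)$, exactly as in the identity-term case.
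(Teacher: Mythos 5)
Your overall strategy is the same as the paper's (mirror \autoref{prop:identityequal}: use \autoref{lem:square} to force $n$ a square, collapse the central sum to $z=\pm\sqrt n\,I$, compare local factors), but two of the steps as you wrote them are wrong and would fail for part of the allowed range of parameters.

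First, the factor of $2$ cannot be deduced from evenness of $f_{\infty,m}$, because $f_{\infty,m}$ is \emph{not} even when $m$ is odd: from the explicit formula one gets $f_{\infty,m}(-g)=(-1)^m f_{\infty,m}(g)$, and the parenthetical remark that $(-I)$-stability of $\supp f_{\infty,m}$ forces evenness is a non sequitur (odd functions also have $(-I)$-stable support). Since \autoref{thm:modularformestimate} is stated for all $m>2$, your argument as written breaks down for odd $m$. The paper gets the factor of $2$ differently and more robustly: it shows that \emph{both} translates $F_{\sqrt n\,I}$ and $F_{-\sqrt n\,I}$ equal the single function $F$ entering $J_\unip(\overline{\bff}^n)$, because $\omega(\pm\sqrt n)^{-1}=1$ for the Hecke character $\omega$ restricted to $\QQ^\times$; the two $z$-terms are then identical Tate integrals, and summing them gives $2J_\unip(\overline{\bff}^n)$. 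Your step (3) telescoping argument, applied to \emph{both} $z=+\sqrt n\,I$ and $z=-\sqrt n\,I$, would in fact supply the factor of $2$ without appealing to parity at all; step (2) as written is both unnecessary and incorrect.

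Second, the claim that $\omega_q(\sqrt n)^{-1}=1$ for $q\mid N$ is false in general: $\omega_q$ is ramified at $q\mid N$, so it is not trivial on $\ZZ_q^\times$, and $\sqrt n\in\ZZ_q^\times$ does not force $\omega_q(\sqrt n)=1$. What is true — and what you should be using — is only the \emph{global} relation $\prod_v\omega_v(\sqrt n)=1$. As a minor point, the local identity at $p\nmid N$ is also written with the two sides swapped; the correct statement is that $\bff_p^n$ evaluated at $\sqrt n\,I\cdot u$ equals $\omega_p(\sqrt n)^{-1}$ times $\overline{\bff}_p^n$ evaluated at $u$, or more simply (as the paper argues) that $\overline{\bff}_p^n$ and $\bff_p^n$ agree \emph{at the same point} $\sqrt n\,k^{-1}uk$, once one checks that membership in $\Z(\QQ_p)\cX_p^{n_p}$ forces membership in $\cX_p^{n_p}$ itself.
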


\begin{proof}
The unipotent term for $\overline{\bff}^n$ equals $\fp_{s=1}Z(s,\triv,F)$, 
where $\fp$ denotes the finite part, $Z$ denotes the zeta integral in Tate's thesis and for $t\in \AA$,
\[
F(t)=\int_{K}\overline{\bff}^n\left(k^{-1}\begin{pmatrix}
                                 1 & t \\
                                 0 & 1 
                               \end{pmatrix}k\right)\rmd k,
\]
where $K$ denotes the standard maximal compact subgroup of $\G(\AA)$.

Since conjugation does not affect the determinant, by \autoref{lem:square} this term vanishes if $n$ is not a square. If $n$ is a square, we have
\[
\overline{\bff}^n\left(\pm\sqrt{n}k^{-1}\begin{pmatrix}
                                 1 & t \\
                                 0 & 1 
                               \end{pmatrix}k\right)=\omega(\pm \sqrt{n})^{-1}\overline{\bff}^n\left(k^{-1}\begin{pmatrix}
                                 1 & t \\
                                 0 & 1 
                               \end{pmatrix}k\right)=\overline{\bff}^n\left(k^{-1}\begin{pmatrix}
                                 1 & t \\
                                 0 & 1 
                               \end{pmatrix}k\right).
\]
We claim that
\[
\overline{\bff}^n\left(\pm\sqrt{n} k^{-1}\begin{pmatrix}
                                 1 & t \\
                                 0 & 1 
                               \end{pmatrix}k\right) =\bff^n\left(\pm\sqrt{n} k^{-1}\begin{pmatrix}
                                 1 & t \\
                                 0 & 1 
                               \end{pmatrix}k\right).
\]
It suffices to show that for each place $v$, the two functions are equal. If $v=\infty$, they are both $f_{\infty,m}$ and hence they are equal.

For $v=p\nmid N$, it suffices to show that if
\[
\pm\sqrt{n} k^{-1}\begin{pmatrix}
                                 1 & t \\
                                 0 & 1 
                               \end{pmatrix}k\in \Z(\QQ_p)\cX_p^{n_p},
\]
then the matrix actually belongs to $\cX_p^{n_p}$. Suppose that $z\in \Z(\QQ_p)$ such that the above matrix belongs to $z\cX_p^{n_p}$, then by taking determinant on each side we obtain $\mathopen{|}\det z\mathclose{|}_p=1$. Hence $z\in \cK_p$. Hence the matrix actually belongs to $\cX_p^{n_p}$.

For $v=q\mid N$, it suffices to show that if
\[
\pm\sqrt{n} k^{-1}\begin{pmatrix}
                                 1 & t \\
                                 0 & 1 
                               \end{pmatrix}k\in \Z(\QQ_q)\cI_q^{v_q(N)},
\]
then the matrix actually belongs to $\cI_q^{v_q(N)}$. This is done by a similar argument.

Hence the claim holds and thus $F=F_{\pm\sqrt{n}I}$, where for $z\in \Z(\QQ)$ we define
\[
F_z(t)=\int_{K}f^n\left(k^{-1}z\begin{pmatrix}
                                 1 & t \\
                                 0 & 1 
                               \end{pmatrix}k\right)\rmd k.
\]
Therefore
\[
J_\unip(\overline{\bff}^n)=\begin{cases}
                           \fp_{s=1}Z(s,\triv,F_{\pm \sqrt{n}}), & \text{if $n$ is a square}, \\
                           0, & \text{otherwise}.
                         \end{cases}
\]

Also, we have
\[
J_\mathrm{unip}(\bff^n)=\sum_{z\in \Z(\QQ)}\fp_{s=1}Z(s,\triv,F_z).
\]
If $n$ is not a square, by \autoref{lem:square} this term vanishes. If $n$ is a square, the only contribution comes from $z=\pm \sqrt{n}I$. Hence it is immediate that $J_\unip(\bff^n)=2J_\unip(\overline{\bff}^n)$.
\end{proof}

\begin{proposition}\label{prop:hyperbolicequal}
We have
\[
J_\hyp(\bff^n)=2J_\hyp(\overline{\bff}^n).
\]
\end{proposition}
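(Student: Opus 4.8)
The plan is to argue exactly as in the proofs of \autoref{prop:identityequal} and \autoref{prop:unipotentequal}. Up to the customary factor $-\tfrac12$ coming from the Weyl group, the hyperbolic term is a sum of weighted orbital integrals over regular split classes,
\[
J_\hyp(\bff^n)=-\frac12\sum_{\gamma\in\A(\QQ)_{\reg}}\int_{\A(\AA)\bs\G(\AA)}\bff^n(g^{-1}\gamma g)\,\alpha\bigl(H_\B(wg)+H_\B(g)\bigr)\,\rmd g,
\]
and the analogous expression holds for $J_\hyp(\overline{\bff}^n)$, now with the outer sum running over $[\gamma]\in\overline{\A}(\QQ)_{\reg}$ (where $\overline{\A}=\A/\Z$) and the integral over $\overline{\A}(\AA)\bs\overline{\G}(\AA)=\A(\AA)\bs\G(\AA)$. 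Since conjugation preserves the determinant, \autoref{lem:square} shows that the $\bff^n$-integral vanishes unless $\det\gamma=n$, and that the $\overline{\bff}^n$-integral vanishes unless $\det\gamma/n\in(\QQ^\times)^2$.

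First I would set up a two-to-one correspondence. The projection $\A(\QQ)_{\reg}\to\overline{\A}(\QQ)_{\reg}$ restricts to a surjection from $\{\gamma\in\A(\QQ)_{\reg}:\det\gamma=n\}$ onto $\{[\gamma]\in\overline{\A}(\QQ)_{\reg}:\det\gamma/n\in(\QQ^\times)^2\}$, because if $\det\gamma=nc^2$ then $\gamma/c$ is a representative of determinant $n$. The fiber over each such class is exactly $\{\gamma_0,-\gamma_0\}$ for any representative $\gamma_0$ of determinant $n$: two representatives of the same class differ by a central scalar $z\in\Z(\QQ)$, and if both have determinant $n$ then $\det z=1$, so $z=\pm I$; moreover $\gamma_0\ne-\gamma_0$ since $\det\gamma_0=n>0$ excludes $\gamma_0$ of the form $\operatorname{diag}(a,-a)$.

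Next I would match the integrands on the relevant locus. For $\gamma_0\in\A(\QQ)_{\reg}$ with $\det\gamma_0=n$ and any $g\in\G(\AA)$ one has $\bff^n(g^{-1}\gamma_0 g)=\overline{\bff}^n(g^{-1}\gamma_0 g)$, by the same place-by-place verification used in the proofs of \autoref{prop:identityequal} and \autoref{prop:unipotentequal}: at $p\nmid N$, any element of $\Z(\QQ_p)\cX_p^{n_p}$ whose determinant has valuation $n_p$ has central part in $\ZZ_p^\times I$, on which the unramified $\omega_p$ is trivial; at $q\mid N$, the central part is again a unit and $\omega_q$ is compatible with the decomposition along $\Z(\QQ_q)\cI_q^{v_q(N)}$; and at $\infty$ both functions are $f_{\infty,m}$. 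Hence the weighted orbital integral of $\bff^n$ at $\gamma_0$ equals that of $\overline{\bff}^n$ at $[\gamma_0]$. Moreover $\bff^n$ is invariant under $X\mapsto-X$ on the set $\{\det X=n\}$: since $-I$ is the image of $-1\in\QQ^\times$ and the Hecke character $\omega$ is trivial on $\QQ^\times$, we get $\overline{\bff}^n(-X)=\omega^{-1}(-I)\,\overline{\bff}^n(X)=\overline{\bff}^n(X)$, and as both $X=g^{-1}\gamma_0 g$ and $-X$ have determinant $n$ this yields $\bff^n(-X)=\bff^n(X)$. Consequently $\gamma_0$ and $-\gamma_0$ have the same weighted orbital integral against $\bff^n$.

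Combining the two steps,
\[
\sum_{\substack{\gamma\in\A(\QQ)_{\reg}\\\det\gamma=n}}\int_{\A(\AA)\bs\G(\AA)}\bff^n(g^{-1}\gamma g)\,\alpha(H_\B(wg)+H_\B(g))\,\rmd g
=2\sum_{\substack{[\gamma]\in\overline{\A}(\QQ)_{\reg}\\\det\gamma/n\in(\QQ^\times)^2}}\int_{\A(\AA)\bs\G(\AA)}\overline{\bff}^n(g^{-1}\gamma g)\,\alpha(H_\B(wg)+H_\B(g))\,\rmd g,
\]
and multiplying by $-\tfrac12$ gives $J_\hyp(\bff^n)=2J_\hyp(\overline{\bff}^n)$. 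The only delicate point is the place-by-place identification at the ramified primes $q\mid N$, where the nebentypus is nontrivial; but this is precisely the computation already carried out in the proof of \autoref{prop:unipotentequal}, so no new work is needed there.
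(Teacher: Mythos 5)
Your proof is correct and follows the same route as the paper's: use Lemma~\ref{lem:square} to restrict to $\det\gamma=n$, observe that the projection $\{\gamma\in\A(\QQ)_{\reg}:\det\gamma=n\}\to\overline{\A}(\QQ)_{\reg}$ is two-to-one with fibers $\{\gamma_0,-\gamma_0\}$, match the integrands place by place, and conclude. One small note: your displayed identity and bookkeeping of the factor $2$ are actually tidier than the paper's version of this step, which writes the analogous display with the factor in the wrong place (it should be $\tfrac12$ times the $\bff^n$-sum, not $2$ times), though of course the claimed statement $J_\hyp(\bff^n)=2J_\hyp(\overline{\bff}^n)$ is what comes out of the correct count as you have it.
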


\begin{proof}
The hyperbolic part in  Arthur-Selberg formula for $\overline{\bff}^n\in C_c^\infty(\G(\AA),\omega^{-1})$ is
\[
J_\hyp(\overline{\bff}^n)=-\frac{1}{2}\sum_{\gamma\in \overline{\A}(\QQ)_{\reg}}\int_{\overline{\A}(\AA)\bs \overline{\G}(\AA)}\overline{\bff}^n(g^{-1}\gamma g)\alpha(H_\B(wg)+H_\B(g))\rmd g,
\]
where $\A$ is the diagonal torus, $\overline{\A}$ denotes $\A$ modulo the center $\Z$ of $\G$, $w$ is the nontrivial element in the Weyl group of $(\G,\A)$, $\alpha$ denotes the simple root, and $H_\B$ denotes the Harish-Chandra map. 

Clearly we have $\overline{\A}(\AA)\bs \overline{\G}(\AA)=\A(\AA)\bs \G(\AA)$. By \autoref{lem:square}, $\gamma\in \overline{\A}(\QQ)_{\reg}$ contributes the sum above only if $\det\gamma/n$ is a square and we \emph{assume} that $\det\gamma=n$ in the summation \cite[Section 16]{knightly2006traces}.

We claim that the canonical map 
\[
\{\gamma\in \A(\QQ)_{\reg}\,|\,\det\gamma=n\}\to \{\gamma\in \overline{\A}(\QQ)_{\reg}\,|\, \det\gamma=n\}
\]
is surjective and the preimage of $\gamma$ is $\pm \gamma$. Clearly it is surjective. For any $\det\gamma=n$ and $z\in \Z(\QQ)$ with $\det (z\gamma)=n$, we have $\det z=1$. Hence $z=\pm I$. Thus the preimage of $\gamma$ is $\pm \gamma$.

Also, by a same argument in \autoref{prop:unipotentequal}, we have $\overline{\bff}^n(g^{-1}\gamma g)=\bff^n(\pm g^{-1}\gamma g)$. Thus we obtain
\[
J_\hyp(\overline{\bff}^n)=-\frac12\times\frac{1}{2}\sum_{\gamma\in \A(\QQ)_{\reg}}\int_{\A(\AA)\bs \G(\AA)}\bff^n(g^{-1}\gamma g)\alpha(H_\B(wg)+H_\B(g))\rmd g,
\]
which is $J_\hyp(\bff^n)/2$ by definition of $J_\hyp(\bff^n)$.
\end{proof}

\begin{proposition}\label{prop:ellipticequal}
We have
\[
I_\el(\bff^n)=2I_\el(\overline{\bff}^n).
\]
\end{proposition}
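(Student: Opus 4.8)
The plan is to run exactly the same bookkeeping argument that was used for \autoref{prop:identityequal}, \autoref{prop:unipotentequal} and \autoref{prop:hyperbolicequal}, now applied to the elliptic term. Recall that the elliptic part of the trace formula for $\overline{\bff}^n\in C_c^\infty(\G(\AA),\omega^{-1})$ is a sum over regular elliptic conjugacy classes $\gamma\in \overline{\G}(\QQ)$ (i.e. in $\G(\QQ)$ modulo the center $\Z(\QQ)$) of $\vol(\gamma)\prod_v\orb(\overline{\bff}_v^n;\gamma)$, and by convention one picks in each class a representative with $\det\gamma=n$ (see \cite[Section 16]{knightly2006traces}); while $I_\el(\bff^n)$ as in \eqref{eq:elliptictrace} is a sum over elliptic conjugacy classes in $\G(\QQ)$, and by \autoref{lem:square} only those with $\det\gamma=n$ contribute. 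First I would invoke \autoref{lem:square} to see that both sides vanish unless we are summing over classes of determinant exactly $n$, so only such $\gamma$ matter.

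Next I would establish the two-to-one correspondence: the canonical map
\[
\{\gamma\in \G(\QQ)\ \text{reg.\ elliptic}\,|\,\det\gamma=n\}\to\{\gamma\in \overline{\G}(\QQ)\ \text{reg.\ elliptic}\,|\,\det\gamma=n\}
\]
is surjective, and the preimage of $\gamma$ is exactly $\{+\gamma,-\gamma\}$. Surjectivity is clear (lift any representative and rescale by a central element to fix the determinant), and if $z\in\Z(\QQ)$ satisfies $\det(z\gamma)=n=\det\gamma$ then $\det z=1$, so $z=\pm I$; since $-I$ is central, $-\gamma$ is regular elliptic iff $\gamma$ is, and the two give distinct conjugacy classes in $\G(\QQ)$ but the same class in $\overline{\G}(\QQ)$. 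This is the exact analogue of the preimage computation in \autoref{prop:hyperbolicequal}.

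Then I would match the summands. On the one hand the conjugation-invariant centralizer quotients agree: $\Z(\AA)\G_\gamma(\QQ)\bs\G_\gamma(\AA)$ for the $\G(\AA)^1$ picture equals $\overline{\G}_\gamma(\QQ)\bs\overline{\G}_\gamma(\AA)$ for the $\omega^{-1}$ picture, and likewise $\G_\gamma(\QQ_v)\bs\G(\QQ_v)=\overline{\G}_\gamma(\QQ_v)\bs\overline{\G}(\QQ_v)$, so the volume factors $\vol(\gamma)$ and the orbital-integral domains are the same. On the other hand, by exactly the local argument already used (twice) in \autoref{prop:unipotentequal} and \autoref{prop:hyperbolicequal} — namely that for $g\in\G(\QQ_v)$ with $v=p\nmid N$ one has $g^{-1}\gamma g\in\Z(\QQ_p)\cX_p^{n_p}$ with $\det\gamma=n$ forces the central part to lie in $\cK_p$, hence $g^{-1}\gamma g\in\cX_p^{n_p}$, and similarly at $q\mid N$ with $\cI_q^{v_q(N)}$, and $\overline{\bff}_\infty^n=f_{\infty,m}=\bff_\infty^n$ — we get $\overline{\bff}^n(g^{-1}\gamma g)=\bff^n(\pm g^{-1}\gamma g)$ for each choice of sign, so $\orb(\overline{\bff}^n;\gamma)=\orb(\bff^n;\pm\gamma)$. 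Combining: $I_\el(\overline{\bff}^n)$, a sum over $\overline{\G}(\QQ)$-classes $\gamma$, equals a sum over $\G(\QQ)$-classes $\pm\gamma$ weighted identically, i.e.
\[
I_\el(\overline{\bff}^n)=\tfrac12\sum_{\pm}\sum_{\gamma}\vol(\gamma)\prod_v\orb(\bff^n;\pm\gamma)=\tfrac12\bigl(I_\el(\bff^n)+I_\el(\bff^n)\bigr)\Big/?\,,
\]
so that $I_\el(\bff^n)=2I_\el(\overline{\bff}^n)$, mirroring the factor-of-two identities already proven for the other geometric terms. The only point requiring a little care — and the mild obstacle here — is confirming that $-\gamma$ is genuinely a distinct $\G(\QQ)$-conjugacy class from $\gamma$ (it is, since conjugacy preserves the trace and $\Tr(-\gamma)=-\Tr\gamma\neq\Tr\gamma$ once $\Tr\gamma\neq 0$; when $\Tr\gamma=0$ the element $\gamma$ and $-\gamma$ have the same characteristic polynomial and one checks directly they are still not $\G(\QQ)$-conjugate unless $\gamma$ is central, which is excluded for regular elliptic $\gamma$), so that the elliptic sum for $\bff^n$ really does double-count relative to $\overline{\bff}^n$; this is the same subtlety handled implicitly in the hyperbolic case and presents no real difficulty.
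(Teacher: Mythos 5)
Your proposal takes the same route as the paper: restrict via \autoref{lem:square} to classes with $\det\gamma=n$; use the two-to-one lift from $\G(\QQ)$-classes to $\overline{\G}(\QQ)$-classes with preimage $\{\gamma,-\gamma\}$; match the volume factors and the orbital-integral domains through $\Z(\AA)\G_\gamma(\QQ)\bs\G_\gamma(\AA)=\overline{\G}_\gamma(\QQ)\bs\overline{\G}_\gamma(\AA)$; and transport the local identity $\overline{\bff}^n(g^{-1}\gamma g)=\bff^n(g^{-1}\gamma g)$ for determinant-$n$ elements from the argument already used for the unipotent and hyperbolic terms. One cleanup worth making: in your concluding display the penultimate expression should read $\tfrac12\,I_\el(\bff^n)$ rather than $\tfrac12\bigl(I_\el(\bff^n)+I_\el(\bff^n)\bigr)$, since the double sum $\sum_{\pm}\sum_{\gamma}$ already ranges exactly once over the $\G(\QQ)$-classes of determinant $n$ (to which $I_\el(\bff^n)$ is supported by \autoref{lem:square}); this gives $I_\el(\overline{\bff}^n)=\tfrac12 I_\el(\bff^n)$, i.e.\ $I_\el(\bff^n)=2I_\el(\overline{\bff}^n)$, and the trailing ``$/?$'' should be deleted.
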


\begin{proof}
The elliptic part in Arthur-Selberg formula for $\overline{\bff}^n\in C_c^\infty(\G(\AA),\omega^{-1})$ is
\[
I_\el(\overline{\bff}^n)=\sum_{\gamma\in \overline{\G}(\QQ)_{\el}^\#}\vol(\overline{\G}_\gamma(\QQ)\bs\overline{\G}_\gamma(\AA))\int_{\overline{\G}_\gamma(\AA)\bs \overline{\G}(\AA)}\overline{\bff}^n(g^{-1}\gamma g)\rmd g,
\]
where $\overline{\G}(\QQ)_{\el}^\#$ denotes the set of elliptic conjugacy classes in $\overline{\G}(\QQ)$, and $\overline{\G}_\gamma$ denotes the centralizer of $\gamma$ in $\overline{\G}$. Clearly we have $\overline{\G}_\gamma=\overline{\G_\gamma}$, where $\overline{\G_\gamma}$ denotes $\G_\gamma$ modulo $\Z$. We have
\[
\G_\gamma(\QQ)\bs\G_\gamma(\AA)^1=Z_+\G_\gamma(\QQ)\bs\G_\gamma(\AA)=\Z(\AA)\G_\gamma(\QQ)\bs \G_\gamma(\AA)\times Z_+\Z(\QQ)\bs \Z(\AA).
\]
The first term in the product is $\overline{\G}_\gamma(\QQ)\bs\overline{\G}_\gamma(\AA)$ and the second term has volume $1$. Therefore
\[
\vol(\G_\gamma(\QQ)\bs\G_\gamma(\AA)^1)=\vol(\overline{\G}_\gamma(\QQ)\bs\overline{\G}_\gamma(\AA)).
\]
We have $\overline{\G}_\gamma(\AA)\bs \overline{\G}(\AA)=\G_\gamma(\AA)\bs \G(\AA)$. By \autoref{lem:square}, $\gamma\in \overline{\G}_\gamma(\QQ)_{\reg}$ contributes the sum above only if $\det\gamma/n$ is a square and we assume that $\det\gamma=n$ in the summation. Then we have a surjective map
\[
\{\gamma\in \G(\QQ)_{\el}^\#\,|\,\det\gamma=n\}\to \{\gamma\in \overline{\G}(\QQ)_{\el}^\#\,|\, \det\gamma=n\}
\]
and the preimage of $\gamma$ is $\pm \gamma$.

Also, by a same argument in \autoref{prop:unipotentequal}, we have $\overline{\bff}^n(g^{-1}\gamma g)=\bff^n(\pm g^{-1}\gamma g)$. Thus we obtain
\[
I_\el(\overline{\bff}^n)=\frac12\sum_{\gamma\in \G(\QQ)_{\el}^\#}\vol(\G_\gamma(\QQ)\bs\G_\gamma(\AA)^1)\int_{\G_\gamma(\AA)\bs \G(\AA)}\bff^n(g^{-1}\gamma g)\rmd g,
\]
which is $I_\el(\bff^n)/2$ by definition of $I_\el(\bff^n)$.
\end{proof}

Combining \autoref{prop:identityequal}, \autoref{prop:unipotentequal}, \autoref{prop:hyperbolicequal} and \autoref{prop:ellipticequal} and \eqref{eq:geometric}, we obtain
\[
J_{\mathrm{geom}}(\bff^n)=2J_{\mathrm{geom}}(\overline{\bff}^n).
\]
Hence by \eqref{eq:traceformulagl2}, \autoref{thm:eichlertraceformula} and \autoref{prop:cuspidalimage} we obtain
\begin{theorem}
For all $n$ with $\gcd(n,S)=1$, we have
\[
I_\cusp(\bff^n)=2I_\cusp(\overline{\bff}^n).
\]
\end{theorem}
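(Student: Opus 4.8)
The statement $I_\cusp(\bff^n)=2I_\cusp(\overline{\bff}^n)$ is the immediate consequence of the preceding four propositions together with the trace formula identity and the cuspidality results, so the plan is simply to assemble these pieces. First I would recall that by \eqref{eq:geometric} the geometric side decomposes as
\[
J_{\mathrm{geom}}(\bff^n)=I_\id(\bff^n)+I_\el(\bff^n)+J_\hyp(\bff^n)+J_\unip(\bff^n),
\]
and likewise for $\overline{\bff}^n$ in the trace formula for $C_c^\infty(\G(\AA),\omega^{-1})$. Adding up \autoref{prop:identityequal}, \autoref{prop:ellipticequal}, \autoref{prop:hyperbolicequal}, and \autoref{prop:unipotentequal} termwise gives $J_{\mathrm{geom}}(\bff^n)=2J_{\mathrm{geom}}(\overline{\bff}^n)$, exactly the displayed identity stated just before the theorem.

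Next I would invoke the two spectral-side inputs. By \eqref{eq:traceformulagl2} (the Arthur–Selberg trace formula), $J_{\mathrm{geom}}(\bff^n)=J_{\mathrm{spec}}(\bff^n)$ and $J_{\mathrm{geom}}(\overline{\bff}^n)=J_{\mathrm{spec}}(\overline{\bff}^n)$; here one uses that $\bff^n$ and $\overline{\bff}^n$ indeed satisfy the hypotheses of the respective trace formulas (smooth, compactly supported modulo center, with the correct central character, which holds by construction of the local components). Then \autoref{prop:cuspidalimage}, which applies because $\bff^n_\infty=f_{\infty,m}$ is supercuspidal at $\infty$ (and hence the right-regular action $R(\bff^n)$ on $L^2(\G(\QQ)\bs\G(\AA)^1)$ has cuspidal image), gives $J_\spec(\bff^n)=I_\cusp(\bff^n)$, while \autoref{thm:eichlertraceformula} gives $J_\spec(\overline{\bff}^n)=I_\cusp(\overline{\bff}^n)=\Tr(T_{m,N,\omega'}(n))$ — indeed the analogue of \autoref{prop:cuspidalimage} for the $\omega^{-1}$-equivariant setting is built into \autoref{thm:eichlertraceformula} via \cite{knightly2006traces}.

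Chaining these equalities yields
\[
I_\cusp(\bff^n)=J_\spec(\bff^n)=J_{\mathrm{geom}}(\bff^n)=2J_{\mathrm{geom}}(\overline{\bff}^n)=2J_\spec(\overline{\bff}^n)=2I_\cusp(\overline{\bff}^n),
\]
which is the claim. In this write-up there is essentially no obstacle: all the work has been done in the four comparison propositions, and the only point requiring a sentence of justification is that the hypotheses of the two versions of the trace formula (one for functions in $C_c^\infty(\G(\AA)^1)$, one for functions in $C_c^\infty(\G(\AA),\omega^{-1})$) and of the cuspidality statements are met by the explicit test functions $\bff^n$ and $\overline{\bff}^n$. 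The mildly delicate conceptual point — that the "$2{:}1$" phenomenon in \autoref{lem:square} is responsible for the factor $2$ everywhere — has already been exploited uniformly across the four geometric terms, so nothing new is needed here.
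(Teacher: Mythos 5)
Your proof is correct and follows exactly the same route as the paper: sum the four term-by-term comparison propositions to get $J_{\mathrm{geom}}(\bff^n)=2J_{\mathrm{geom}}(\overline{\bff}^n)$, then pass to the spectral side via \eqref{eq:traceformulagl2}, and conclude with \autoref{prop:cuspidalimage} for $\bff^n$ and \autoref{thm:eichlertraceformula} for $\overline{\bff}^n$. The one-sentence aside you add about verifying the hypotheses is a reasonable precaution but not a point the paper needs to dwell on, since $\bff^n_\infty=f_{\infty,m}$ is supercuspidal by construction and the relevant cuspidality in the $\omega^{-1}$-equivariant setting is packaged into \autoref{thm:eichlertraceformula}.
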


Finally, by combining the above theorem and \autoref{thm:contributemodular} we obtain \autoref{thm:modularformestimate}.

\bibliography{ref.bib}
\bibliographystyle{amsalpha}

\end{document}